\numberwithin{equation}{section}
\numberwithin{figure}{section}
\theoremstyle{plain}
\newtheorem{thm}{\protect\theoremname}[section]
  \theoremstyle{plain}
  \newtheorem{prop}[thm]{\protect\propositionname}
  \newtheorem{corollary}[thm]{Corollary}
  \theoremstyle{definition}
  \theoremstyle{plain}
  \newtheorem{lem}[thm]{\protect\lemmaname}
  \newtheorem{lemma}[thm]{Lemma}
  \newtheorem{remark}[thm]{Remark}
  \newtheorem{que}[thm]{Question}
\theoremstyle{definition}
\newtheorem{definition}[thm]{Definition}
\newtheorem*{claim*}{Claim}
\newtheorem{example}[thm]{Example}
\theoremstyle{definition}
\newtheorem*{defn*}{\protect\definitionname}
  \providecommand{\definitionname}{Definition}
  \providecommand{\lemmaname}{Lemma}
  \providecommand{\propositionname}{Proposition}
\providecommand{\theoremname}{Theorem}
\newcommand{\Rmnum}[1]{\expandafter\@slowromancap\romannumeral #1@}
\newcommand{\real}{{\mathbb R}}
\newcommand{\8}{\infty}
\newcommand{\be}{\begin{eqnarray*}}
	\newcommand{\ee}{\end{eqnarray*}}
\newcommand{\beq}{\begin{equation}}
\newcommand{\eeq}{\end{equation}}
\newcommand{\beqn}{\begin{equation*}}
\newcommand{\eeqn}{\end{equation*}}
\newcommand{\bs}{\begin{split}}
\newcommand{\es}{\end{split}}
\renewcommand{\a}{\alpha}
\renewcommand{\b}{\beta}
\begin{document}

\title[Quantitative ergodic theorems for actions of groups...]{Quantitative ergodic theorems for actions of groups of polynomial growth}
\thanks{{\it Key words:} pointwise ergodic theorems,  groups of polynomial growth, variational inequalities, jump inequalities, upcrossing inequalities, exponential estimates}

\author{Guixiang Hong}
\address{School of Mathematics and Statistics, Wuhan University, Wuhan 430072 and Hubei Key Laboratory of Computational Science, Wuhan University, Wuhan 430072, China}
\email{guixiang.hong@whu.edu.cn}

\author{Wei Liu}
\address{School of Mathematics and Statistics, Wuhan University, Wuhan 430072, China}
\email{Wl.math@whu.edu.cn}

\date{}
\begin{abstract}
We strengthen the maximal ergodic theorem for actions of groups of polynomial growth to a form involving jump quantity, which is the sharpest result among the family of variational or maximal ergodic theorems. As a consequence, we deduce in this setting the quantitative ergodic theorem, in particular, the upcrossing inequalities with exponential decay. The ideas or techniques involve probability theory, non-doubling Calder\'on-Zygmund theory, almost orthogonality argument and some delicate geometric argument involving the balls and the cubes on the group equipped with a not necessarily doubling measure.
\end{abstract}
\maketitle

\bigskip
\section{Introduction}\label{ST1}
\subsection{Background and main results}
In the past few decades, a great deal of significant results related to the pointwise ergodic theorems for group actions have been established. The earliest pointwise ergodic theorems, to our knowledge, was obtained by Birkhoff~\cite{Birkhoff31}, where he established pointwise ergodic theorems for one-parameter flow (such as a translation group on $\mathbb{R}$ or $\mathbb{Z}$).  Wiener~\cite{Wiener39} extended Birkhoff's result to the case of several commuting flows. Even further, these pointwise ergodic results were generalized by Calder\'{o}n~\cite{Cal53} for an increasing family of compact symmetric neighborhoods of the identity satisfying doubling condition, which is abundant on non-commutative groups with polynomial volume growth. Calder\'{o}n's works~\cite{Cal53, Cal69} motivate further research on pointwise ergodic theorems, such as \cite{Bewley71,Chatard70, Coifman-Weiss76, Emerson74, Herz71, Hong-Liao-Wang17, Tempelman67, Tempelman72}. In particular,
 Breuillard \cite{Breuillard14} (see also Tessera  \cite{Tessera07} ) showed that the sequence of balls with respect to any fixed word metric on groups of polynomial growth satisfy the doubling condition and are asymptotically invariant, and thus established the corresponding pointwise ergodic theorem; actually these results apply to more general metrics such as the periodic pseudodistances as defined in \cite{Breuillard14} (and recalled in Section~\ref{ST7}).  This settled a long-standing problem in ergodic theory since Calder\'on's classical paper \cite{Cal53} in 1953.
Lindenstrauss~\cite{Lindenstrauss01} established the pointwise ergodic theorem for tempered F{\o}lner sequences; this result resolves the problem of the existence of a F{\o}lner sequence which satisfies the pointwise ergodic theorem on an arbitrary amenable group. For more details we refer the reader to the survey works~\cite{AAB+, Nevo06}.

In this paper, we aim at establishing the quantitative pointwise ergodic theorems for actions of polynomial growth groups in terms of the following jump quantity. Given a sequence of measurable functions $\{\a_r(x):r>0\}$ and $\lambda>0$,  the $\lambda$-jump function of the sequence $\a=\{\a_r(x):r\in \mathcal I\}$ is defined by
\begin{equation*}
   \mathcal{N}_{\lambda}(\a)(x)=\sup\{N|\exists~r_0<r_1<\cdots<r_N,r_i\in\mathcal{I}:\min_{0<i\le N}|\a_{r_{i}}(x)-\a_{r_{i-1}}(x)|>\lambda\}.
\end{equation*}
where $\mathcal I$ is a subset of $(0,\8)$ and the supremum is taken over all finite increasing sequences in $\mathcal I$.

\bigskip

Let $G$ be a locally compact group equipped with a {measure $m$, and let $d$ be a metric on $G$. For $r>0$ and $h\in G$, we define the ball $B(h,r)=\{g\in G : d(g, h) \leq r\}$, and we will write it simply $B_r$ when $x=e$ (the identity in $G$).} Let $r_0>0$. Let $\epsilon\in(0,1]$, we say that $(G,d,m)$ satisfies the $(\epsilon,r_0)$-annular decay property if {there exists a constant $K>0$ such that for all $h\in G$, $r\in (r_0,\8)$ and $s\in (0,r]$,
\begin{equation}\label{decay property}
m(B(h,r+s))-m(B(h,r))\le K\bigg(\frac{s}{r}\bigg)^{\epsilon}m(B(h,r)).
\end{equation}}
{Let $D_0>0$, we call that $(G,d)$ satisfies the $(D_0,4r_0)$}-geometrically doubling property if for every $r\in(0,4r_0]$ and every ball $B(h,r)$, there are at most $D_0$ balls $B(h_i,r/2)$ such that
\begin{equation}\label{geo-doubling}
  B(h,r)\subseteq \bigcup_{1\le i\le{D_0}}B(h_i,r/2).
\end{equation}
Let $p\in[1,\8)$ and $f\in L^p(G,m)$, {we consider the following averages
\begin{equation}\label{averaging operator1}
  A^\prime_rf(h)=\frac{1}{m(B(h,r))}\int_{B(h,r)}f(g)dm(g).
\end{equation}}

One of the main result of this paper is the following theorem.

\begin{thm}\label{main-thm1}
Assume that $(G,d,m)$ satisfies~\eqref{decay property} and~\eqref{geo-doubling}.  Let $\mathbf{A}^\prime=\{A^\prime_r:r\ge r_0\}$ be the sequence of averaging operators given  by~\eqref{averaging operator1}. Then the following assertions hold true.
\begin{enumerate}[\noindent]
  \item \emph{(i)}~For any $p\in(1,\8)$, $\lambda\sqrt{\mathcal{N}_{\lambda}(\mathbf{A}^\prime)}$ is of strong type $(p,p)$ uniformly in $\lambda>0$, that is, there exists a constant $c_{p}>0$ such that
  \begin{equation*}
    \sup_{\lambda>0}\|\lambda\sqrt{\mathcal{N}_{\lambda}(\mathbf{A}^\prime f)(g)}\|_{L^p(G,m)}\le c_{p}\|f\|_{L^p(G,m)},\;\forall f\in L^p(G,m).
  \end{equation*}
  \item \emph{(ii)}~For $p=1$, $\lambda\sqrt{\mathcal{N}_{\lambda}(\mathbf{A}^\prime)}$ is of weak type $(1,1)$ uniformly in $\lambda>0$, that is, there exists a constant $c>0$ such that for any $\gamma>0$,
  \begin{equation*}
    \sup_{\lambda>0}m\big(\{g\in G:\lambda\sqrt{\mathcal{N}_{\lambda}(\mathbf{A}^\prime f)(g)}>\gamma\}\big)\le\frac{c}{\gamma}\|f\|_{L^1(G,m)},\;\forall f\in L^1(G,m).
  \end{equation*}
\end{enumerate}
\end{thm}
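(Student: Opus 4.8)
The plan is to transfer the jump inequality for the averaging operators $\mathbf{A}'$ from a known Euclidean-type or martingale model to the group setting, using the annular decay property to compare the averages $A'_r$ with a suitable family of expectations and to control the transition terms. First I would observe that the jump quantity $\mathcal N_\lambda$ is, up to absolute constants, comparable to a "short jump" part and a "long jump" part: splitting the scales dyadically as $r\in[2^k r_0, 2^{k+1} r_0)$, one writes $\lambda\sqrt{\mathcal N_\lambda(\mathbf A')}\lesssim \lambda\sqrt{\mathcal N_\lambda(\{A'_{2^k r_0}\}_{k\ge 0})} + \big(\sum_k V_2(A'_r f : r\in[2^k r_0, 2^{k+1} r_0))^2\big)^{1/2}$, where $V_2$ is the $2$-variation over a single dyadic block. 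This is the standard Jones--Seeger--Wright type decomposition, and it reduces matters to (a) a jump/oscillation estimate for the \emph{lacunary} sequence $\{A'_{2^k r_0}\}$ and (b) a \emph{square-function} estimate for the continuous-parameter short variations within each dyadic block.

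For part (a), the lacunary jump estimate, I would build a martingale (or a dyadic-cube filtration) adapted to the geometry. Using the $(D_0,4r_0)$-geometric doubling property one constructs, à la Christ or Hytönen--Kairema, a system of "dyadic cubes" on $(G,d)$ at scales $2^k r_0$; the geometric doubling guarantees bounded overlap and the nesting structure. One then compares $A'_{2^k r_0} f$ with the conditional expectation $E_k f$ onto the corresponding $\sigma$-algebra. The key point is that $A'_{2^k r_0} - E_k$ and $A'_{2^{k+1} r_0} - A'_{2^k r_0}$ must be shown to be "almost orthogonal" / to satisfy good off-diagonal ($L^2$) decay and a Calderón--Zygmund kernel bound, so that Theorem-type jump estimates for martingales (Pisier--Xu, Bourgain) combine with a non-doubling Calderón--Zygmund argument to give the $L^p$, $1<p<\infty$, and weak-$(1,1)$ bounds. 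Here the annular decay property \eqref{decay property} is exactly what is needed to show that $m(B(h,r))$ is Hölder-continuous in $r$, hence that the difference of two nearby averages is controlled by a manageable kernel, and that the measure, though possibly non-doubling, has enough regularity for the Calderón--Zygmund machinery (e.g. a Calderón--Zygmund decomposition relative to the balls $B_r$) to run.

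For part (b), the short-variation square function, I would again use \eqref{decay property}: for $r,r'$ in the same dyadic block $[2^k r_0, 2^{k+1} r_0)$ with $r<r'$, the operator $A'_{r'} - A'_r$ has a kernel supported in an annulus of relative width $(r'-r)/r$ and of size controlled by $K((r'-r)/r)^\epsilon m(B_r)^{-1}$; summing the $L^2$-norms of these differences (or integrating $\partial_r A'_r f$ against $\sqrt{r}\,dr/r$) yields an $L^2$ square-function bound by almost orthogonality, and the corresponding weak-$(1,1)$ bound again by a Calderón--Zygmund decomposition. I expect the \textbf{main obstacle} to be precisely the non-doubling nature of $m$: one cannot invoke the classical homogeneous-space Calderón--Zygmund theory, so the decomposition, the estimate on the "bad" part, and the geometric covering arguments relating balls $B(h,r)$ at comparable scales must all be redone carefully using only \eqref{decay property} and \eqref{geo-doubling}. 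A secondary difficulty is the delicate geometry of comparing the balls $B(h,r)$ with the constructed dyadic cubes — ensuring that a cube at generation $k$ is sandwiched between balls of radius $\sim 2^k r_0$ with the measure-overlap bounds surviving the absence of doubling — and this is where the "delicate geometric argument involving the balls and the cubes" advertised in the abstract will be carried out.
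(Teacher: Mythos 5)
Your proposal follows essentially the same route the paper takes: decompose the jump quantity into a lacunary (``dyadic'') part and a short-variation part, compare the lacunary averages with a Hyt\"onen--Kairema dyadic martingale built from the geometric doubling hypothesis, prove $L^2$ bounds by an almost-orthogonality argument, and prove weak-$(1,1)$ by a non-doubling Calder\'on--Zygmund decomposition. The identification of the two obstructions (absence of measure doubling, and the ball/cube comparison surviving that absence) is also correct; the paper addresses these via a Gundy-type decomposition (following L\'opez-S\'anchez--Martell--Parcet) rather than a decomposition relative to balls, but the spirit matches.

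There is, however, a genuine gap in your claimed range of exponents. You assert that the Calder\'on--Zygmund argument together with the martingale jump estimates yields ``the $L^p$, $1<p<\infty$, and weak-$(1,1)$ bounds,'' and for the short variation you only record strong $(2,2)$ and weak $(1,1)$. But the square function $S(f)=\bigl(\sum_n|A'_{\delta^n}f-\mathbb E_nf|^2\bigr)^{1/2}$ and the short-variation operator $SV(f)$ are sublinear square-function-type operators: a Calder\'on--Zygmund decomposition gives weak $(1,1)$, and interpolation with $L^2$ then gives only the range $1<p\le 2$. You cannot obtain $2<p<\infty$ by duality, since these operators are not linear. The paper resolves this by proving an $(L^\infty_c,\mathrm{BMO})$ estimate for both $S$ and $SV$ (Section 5, inequalities \eqref{BMO-type inequalities of square function} and \eqref{BMO-type inequalities of short variation}), and then interpolating between the strong $(2,2)$ and the $(L^\infty,\mathrm{BMO})$ endpoints. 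That BMO estimate is itself non-trivial in the non-doubling setting (it requires splitting $f$ locally against a dilated cube and exploiting the annular decay to get geometric decay of the averages), and your proposal contains no mechanism to supply the $p>2$ range. Without it, part (i) of the theorem is proved only for $1<p\le 2$.
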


Conditions~\eqref{decay property} and~\eqref{geo-doubling} are related to the geometric structure of group $G$; there exist lots of examples such as the ones introduced in~\cite{Nevo06,Tessera07}, see Section~\ref{ST7} for more details. {Moreover, as a consequence of Theorem~\ref{main-thm1}, we obtain the quantitative ergodic theorems for actions of groups of polynomial growth.}

Let $(X,\Sigma,\mu)$ be a $\sigma$-finite measure space and $T$ an action of $G$ on the associated $L^p$-spaces $L^p(X,\mu)$, under some additional assumptions recalled in later sections. In particular, if $T$ is induced by a $\mu$-preserving measurable transformation $\tau$ on $X$, then $T$ extends to an isometric action on $L^p(X,\mu)$ for all $1\le p\leq \8$, given by $T_gf(x)=f(\tau_{g^{-1}}x)$. {Given an action $T$ and a ball $B_r$, the associated averaging operator is given by
 \begin{equation}\label{averaging operator}
  A_rf(x)=\frac{1}{m(B_r)}\int_{B_r}T_gf(x)dm(g).
\end{equation}
}

\begin{thm}\label{main-thm2}
Assume that $G$ is of polynomial growth with a symmetric compact generating set $V$ and $d$ is the associated word metric on $G$, that is,
$$d(g,h)=\min\{n\in \mathbb N, \;g^{-1}h\in V^n\} .$$
{Let $m$ be a Haar measure.} Let $\mathbf{A}=\{{A}_r:r\in\mathbb{N}\}$  the corresponding sequence of averaging operators with respect to an action $T$.

\begin{enumerate}[\noindent]
\item \emph{(i)}~If $T$ is an action induced by a measure-preserving measurable transformation, then $\lambda\sqrt{\mathcal{N}_{\lambda}(\mathbf{A})}$ is of weak type $(1,1)$ and of strong type $(p,p)$ for all $1<p<\infty$ uniformly in $\lambda>0$.
\item \emph{(ii)}~If $T$ is a strongly continuous regular action of $G$ on $L^p(X,\mu)$ \emph{($1<p<\infty$)}, then  $\lambda\sqrt{\mathcal{N}_{\lambda}(\mathbf{A})}$ is of strong type $(p,p)$ uniformly in $\lambda>0$.
\end{enumerate}
\end{thm}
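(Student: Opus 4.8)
The plan is to deduce Theorem~\ref{main-thm2} from Theorem~\ref{main-thm1} by the classical transference principle of Calder\'on, once we have verified that the geometric hypotheses of Theorem~\ref{main-thm1} are met by the word metric on a group of polynomial growth. Thus the argument splits into two essentially independent parts: a geometric input and a transference step.

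First I would check the geometric hypotheses. By Gromov's theorem $G$ is virtually nilpotent, and by the work of Pansu, Breuillard and Tessera recalled in the introduction (and in Section~\ref{ST7}), the word metric balls $B_r$ satisfy the doubling condition $m(B_{2r})\le Cm(B_r)$ for $r\ge 1$; in fact $m(B_r)\sim r^D$ for the homogeneous dimension $D$, up to the scaling limit estimates of Breuillard. Doubling immediately yields the $(D_0,4r_0)$-geometrically doubling property~\eqref{geo-doubling} for a suitable $D_0$ and $r_0$ (here one may take $r_0$ a fixed constant, e.g. $r_0=1$; for small radii the discreteness of the word metric makes this automatic). The annular decay property~\eqref{decay property} is the more delicate point: one must show $m(B(h,r+s))-m(B(h,r))\lesssim (s/r)^\epsilon m(B(h,r))$ for some $\epsilon\in(0,1]$. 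For the word metric on a group of polynomial growth this follows from the fact that the balls, after rescaling, converge to the balls of a Carnot--Carath\'eodory metric on the associated graded nilpotent Lie group, whose spheres have measure zero and whose volume function is locally Lipschitz; quantifying this convergence gives a H\"older (indeed, for nice metrics, Lipschitz, i.e.\ $\epsilon=1$) annular decay bound uniformly in $h$. This is precisely the type of geometric estimate alluded to in the abstract, and I expect it to be the main obstacle: one needs a version of Breuillard's asymptotic volume estimates that is uniform over all centers $h\in G$ and captures the rate at which $m(B(h,r+s))/m(B(h,r))\to 1$ as $s/r\to 0$.

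With the geometric input in hand, part (i) follows by transference. Given a measure-preserving transformation action $T_gf(x)=f(\tau_{g^{-1}}x)$ on $L^p(X,\mu)$, one fixes a large parameter $R$, restricts attention to a ball $B_{2R}\subset G$, and for $\mu$-a.e.\ $x$ transplants the orbit $g\mapsto T_gf(x)$ to a function $F_x\in L^p(B_{2R},m)$. The key pointwise identity is that for $g\in B_R$ and $r\le R$, the ergodic average $A_rf(\tau_g x)$ coincides with the geometric average $A'_r F_x(g)$ computed on $G$ (here one uses that left translation by $g$ maps $B(e,r)$ into $B(g,r)\subset B_{2R}$ and that $m$ is Haar measure, so the normalizing volumes match). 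Hence $\mathcal N_\lambda(\mathbf A f)(\tau_g x)\le \mathcal N_\lambda(\mathbf A' F_x)(g)$ pointwise on $g\in B_R$. Applying the strong-$(p,p)$ and weak-$(1,1)$ bounds of Theorem~\ref{main-thm1} to $F_x$, integrating in $x$ against $\mu$, using the measure-preserving property of $\tau$ to identify $\int_X |\mathcal N_\lambda(\mathbf A f)(\tau_g x)|^{p/2}\,d\mu = \int_X |\mathcal N_\lambda(\mathbf A f)(x)|^{p/2}\,d\mu$ for each fixed $g$, and averaging over $g\in B_R$, yields
\begin{equation*}
  \| \lambda\sqrt{\mathcal N_\lambda(\mathbf A f)} \|_{L^p(X,\mu)} \le c_p \Big(\frac{m(B_{2R})}{m(B_R)}\Big)^{1/p}\|f\|_{L^p(X,\mu)} + o(1),
\end{equation*}
and letting $R\to\infty$ and using doubling ($m(B_{2R})/m(B_R)\le C$ uniformly) kills the ratio and the error term. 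The weak-$(1,1)$ case is handled by the analogous transference of weak-type inequalities (Calder\'on's argument adapts verbatim since $\mathcal N_\lambda$ is monotone and the transplantation is an isometry on each $L^p(B_{2R})$).

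Finally, part (ii) is obtained from part (i) (really from its $L^p$ half) by the standard reduction for strongly continuous regular actions: a regular action is one that is dominated, in the sense of positivity and the Coifman--Weiss transference machinery, by a measure-preserving action, or more directly one invokes the fact that any strongly continuous regular representation of $G$ on $L^p$ admits a transference comparison with the regular representation on $L^p(G,m)$, for which Theorem~\ref{main-thm1}(i) applies directly. One repeats the transplantation argument above, now using regularity to control $\|F_x\|_{L^p(B_{2R})}$ in terms of $\|f\|_{L^p(X)}$ with a constant independent of $R$, and concludes the strong $(p,p)$ bound for $\lambda\sqrt{\mathcal N_\lambda(\mathbf A)}$ uniformly in $\lambda$. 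The only place regularity is essential is in making the transplantation bounded on $L^p$; everything else is identical to part (i).
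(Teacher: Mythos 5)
Your overall plan — check that the word metric on a polynomial growth group verifies the hypotheses of Theorem~\ref{main-thm1} and then transfer — matches the structure of the paper's proof, but two of the three steps deviate significantly from what the paper does, and the third contains a genuine gap.

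On the geometric input, you propose to obtain the annular decay property~\eqref{decay property} by quantifying the Pansu--Breuillard scaling-limit convergence of rescaled word balls to Carnot--Carath\'eodory balls. That is not how the paper proceeds, and it is the harder route: a quantitative, centre-uniform rate of convergence of $m(B_r)/r^{D}$ is not readily available from Breuillard's asymptotics, and you flag this yourself as "the main obstacle." The paper instead invokes Proposition~\ref{word}, which follows Tessera's Theorem~4 argument: a word metric trivially has Property~($M$) (balls with radii between $r$ and $r+1$ are at bounded Hausdorff distance), and Property~($M$) plus the polynomial volume bound $C_V^{-1}r^{D_G}\le m(B_r)\le C_V r^{D_G}$ already yields the $(\epsilon,1)$-annular decay property by an elementary combinatorial argument. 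No scaling limits are needed. So your route would require a substantially stronger quantitative input, whereas the paper's route is soft.

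Your sketch of part~(i) is essentially the paper's Theorem~\ref{thm:trans}(i) (Calder\'on transference over F\o lner sets, pointwise identity between $\mathcal N_\lambda$ of the transplanted orbit and $\mathcal N_\lambda$ of the ergodic average, Fubini, and $m(AK)/m(A)\to 1$); this part is fine.

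Part~(ii) has a genuine gap. For a general strongly continuous regular action $T$ (not measure-preserving), the identity $\int_X\Phi(f)(\tau_gx)\,d\mu=\int_X\Phi(f)(x)\,d\mu$ that drove the transference in~(i) is lost, and you cannot simply repeat the argument "controlling $\|F_x\|_{L^p}$ by regularity" and be done — you have to commute $T_{h^{-1}}$ through the jump functional $\sup_{\lambda>0}\lambda\sqrt{\mathcal N_\lambda(\cdot)}$ acting on a sequence-valued object. Since $\lambda\sqrt{\mathcal N_\lambda(\cdot)}$ is \emph{a priori} not a norm, this is nontrivial. The paper's proof (Theorem~\ref{thm:trans}(ii)) uses the specific and nonobvious input that for $p>1$ the jump quantity $\sup_{\lambda>0}\|\lambda\sqrt{\mathcal N_\lambda(\cdot)}\|_{L^p}$ is equivalent to a vector-valued real-interpolation Banach norm (Lemma~\ref{equivalent norm}, from Mirek--Stein--Zorin-Kranich), together with the fact that regular operators extend boundedly to Bochner spaces (Lemma~\ref{extension of Tg}); only then can one move $T_{h^{-1}}$ inside the jump functional and close the argument, and the paper explicitly notes that handling the outer supremum over $\lambda$ is delicate. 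Your proposal's vague appeal to "Coifman--Weiss domination by a measure-preserving action" or "comparison with the regular representation" does not supply this; a regular action in the paper's (Pisier) sense is not dominated by a measure-preserving one, and the comparison with left translation on $L^p(G,m)$ is exactly what Theorem~\ref{main-thm1} already gives, not a replacement for the commutation step. The equivalence-to-a-Banach-norm lemma is the missing idea.
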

{The notions of regular action will be recalled in Subsection~\ref{strong-type}.}

{If we take $G$ to be the integer group $\mathbb Z$ and $d$ to be the usual word metric, then we recover the usual ergodic average $A_n = \frac1{2n+1}\sum^n_{k=-n} T^k$ for an automorphism $T$, as is treated in \cite{Bour89,JKRW98}. Moreover, from the definition of jump quantity, Theorems~\ref{main-thm2} imply that the underlying sequence of functions $A_rf$ converges almost everywhere as $r\rightarrow\infty$, that is the pointwise ergodic theorem.}

The jump quantity appeared first implicitly in \cite{PX88}, and then explicitly in \cite{JKRW98}, whose study was motivated by the research of variational inequality arising from probability thoery \cite{DL76}. For $q\in(0,\8]$, the $q$-variation (semi-)norm $V_q$ of a sequence
$\{\a_r(x):r\in(0,\8)\}$ of complex-valued functions is defined by
$$V_q(\a_r(x):r\in \mathcal I)=\sup_{\substack{0<r_0<\cdots<r_J\\ r_j\in \mathcal I}}\bigg(\sum_{j=0}^J|\a_{r_{j+1}}(x)-\a_{r_j}(x)|^q\bigg)^{1/q}.$$
The $\infty$-variation norm is nothing but equivalent to the maximal norm; moreover any $q$-variation norm dominates the maximal norm
\begin{equation*}
  \sup_{r\in\mathcal I}|\a_r(x)|\le |\a_{r_1}(x)|+V_q(\a_{r}(x):r\in\mathcal I),~\forall~r_1>0.
\end{equation*}
As the jump quantity, finite $q$-variation norm with $q<\infty$ also deduces immediately the pointwise convergence of the underlying sequence of operators without density argument. This idea was firstly exploited by Bourgain~\cite{Bour89} to study the pointwise ergodic theorem for dynamical systems where the density argument is not available. Bourgain's work \cite{Bour89} inspired many studies on variational inequalities and ergodic theory, see \cite{JKRW98, JKRW03, Krause14, Kra-Mirek-Tro18, Mir-stein-Tro17, Zorin-Kranich15} and the references therein.

On the other hand, by Chebychev's inequality, the $q$-variation norm dominates the jump quanity
\begin{equation*}
 \sup_{\lambda>0}\lambda(\mathcal N_\lambda(\a_r(x):r\in \mathcal I))^{1/q}\leq  V_q(\a_r(x):r\in \mathcal I).
\end{equation*}
However, it is shown in \cite{JW04, Lewko-Lewko12} that the $2$-variational inequality (and thus $q<2$) does not hold true in general. Conversely, for any family of linear operators $\mathcal T=(T_t)_{t\in\mathcal I}$, the mapping properties of $\lambda\sqrt{\mathcal{N}_{\lambda}(\mathcal T)}$ imply the corresponding $q$-variational inequalities for all $q>2$ by interpolation, see e.g. \cite{Bour89, Jones-Seeger-Wright08,Mirek-Stein-Zor20}. Thus for all $q>2$, the $q$-variational version of  Theorem~\ref{main-thm1} and~\ref{main-thm2}, in particular the maximal ergodic theorem, holds true.

Furthermore, restricted to bounded function, from Theorem  \ref{main-thm2}, we may deduce a kind of exponential decay estimates by using Vitali covering lemma and geometric argument.

\begin{thm}\label{exponential-estimate}
 Assume that $G$ is a group of polynomial growth with a symmetric finite generating set and $d$ is the resulting word metric. Let $T$ be an action induced by a measure-preserving measurable transformation  and $\mathbf{A}=\{A_r:r\in\mathbb{N}\}$ the sequence of averaging operators with respect to action $T$ given  by~\eqref{averaging operator}. Let $\lambda>0$. Then for every  $p\in[1,\8)$, {there are two constants {$\tilde{c}_1>0$ and $\tilde{c}_2\in (0,1)$} depending on $\lambda$, $p$ and group $G$ such that for any $f\in L^p(X,\mu)$ with $\|f\|_{L^{\8}(X,\mu)}\le 1$,}
\begin{equation*}
  \mu\big(\{x\in X:\mathcal{N}_{\lambda}(\mathbf{A}f)(x)>n\}\big)\le \tilde{c}_1\tilde{c}_2^n\|f\|^p_{L^p(X,\mu)}.
\end{equation*}
\end{thm}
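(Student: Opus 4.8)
The plan is to bootstrap the weak-type $(1,1)$ bound for $\lambda\sqrt{\mathcal N_\lambda(\mathbf A)}$ from Theorem~\ref{main-thm2}(i) into an exponential tail bound, using a stopping-time / iteration argument on the "excess jump" function. Fix $\lambda>0$ and write $N_n(x)=\mathcal N_\lambda(\mathbf A f)(x)$. The key observation is a \emph{self-improvement} property: if $x$ has at least $n+k$ $\lambda$-jumps, then starting from the scale at which the $n$-th jump is completed, there are still at least $k$ further $\lambda$-jumps at larger scales; one would like to say that on the set $\{N_n>n\}$ the "remaining" averaging process again produces many jumps, so that a single application of the weak-$(1,1)$ inequality gives $\mu(\{N_n>n+k\})\le C\mu(\{N_n>n\})/(\text{something growing in }k)$. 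The difficulty is that the averages $A_rf$ at different base points $x$ are coupled through the dynamics, so one cannot literally restrict to a sub-$\sigma$-algebra; this is where the Vitali covering lemma and the geometric comparison of balls and cubes (as developed for Theorem~\ref{main-thm1}) enter, to localize the estimate spatially.

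Concretely, I would argue as follows. First, reduce to $p=1$ is impossible (the statement needs $\|f\|_\infty\le1$), so instead: by the layer-cake formula it suffices to prove the bound for $p$ arbitrary once we have it for, say, the counting of jumps weighted by $\|f\|_{L^p}^p$; and since $\|f\|_\infty\le 1$ we have $\|f\|_{L^1}\le\|f\|_{L^p}^p$ whenever the support has finite measure, reducing everything to controlling $\mu(\{N\ge n\})$ by $\tilde c_1\tilde c_2^{\,n}\|f\|_{L^1}$. Now set $a_0=1$ and, by Theorem~\ref{main-thm2}(i), fix $C$ with $m(\{g:\lambda\sqrt{N_1(g)}>\gamma\})\le C\gamma^{-1}\|f\|_1$ for all $\gamma$; choosing $\gamma^2\sim\lambda^2 n$ this gives $\mu(\{N\ge n\})\le C(\lambda\sqrt n)^{-1}\|f\|_1$, which is only polynomial. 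To upgrade to exponential decay one iterates: define $E_n=\{x: N_n(x)\ge n\}$ and show there is $\theta\in(0,1)$ (depending on $\lambda$, the doubling constant $D_0$, the growth exponent, and the radius $r_0$ implicit in the word-metric comparison) such that $\mu(E_{n+n_0})\le\theta\,\mu(E_n)$ for a fixed step $n_0$. The mechanism: on $E_n$, between the completion scale $s(x)$ of the $n$-th jump and infinity there are at least $n$ more jumps; cover $E_n$ by a bounded-overlap family of balls adapted to the scales $s(x)$ (Vitali), push the dynamics forward by the group element realizing $s(x)$, apply the weak-$(1,1)$ bound for $\lambda\sqrt{N_1}$ to $\mathbf 1_{E_n}$-localized pieces of $f$, and use the geometric doubling to control the overlap multiplicity by a constant. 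Iterating $\mu(E_{jn_0+\ell})\le\theta^{\,j}\mu(E_\ell)\le\theta^{\,j}\cdot C(\lambda)^{-1}\|f\|_1$ yields $\mu(\{N\ge n\})\le\tilde c_1\tilde c_2^{\,n}\|f\|_1\le\tilde c_1\tilde c_2^{\,n}\|f\|_{L^p}^p$.

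The main obstacle I anticipate is making the "restart" step rigorous, i.e.\ showing that the conditional probability of seeing $n$ further jumps given that $n$ have already occurred is bounded by a constant strictly less than $1$ \emph{uniformly in} $n$ and independently of $f$. Unlike the martingale setting of \cite{JKRW98} where one genuinely conditions on a filtration, here the "$n$-th jump time" is a scale depending on $x$, and the forward-pushed averages are not a new averaging family of the same type unless one absorbs a uniformly bounded geometric distortion between balls $B(hg,r)$ and $B(h,r)$ under right translation — which is exactly the kind of delicate balls-versus-cubes comparison on a non-doubling measure that the paper sets up for Theorems~\ref{main-thm1}--\ref{main-thm2}. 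The second obstacle is purely bookkeeping: Vitali's lemma on $(G,d)$ with a merely geometrically doubling (not measure-doubling) metric space gives a bounded-overlap subfamily only after one passes to the comparable cubes, so the overlap constant that feeds into $\theta<1$ must be tracked through \eqref{geo-doubling} and \eqref{decay property}; I would isolate this as a lemma before running the iteration.
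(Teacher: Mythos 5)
You correctly identify the overall strategy — transfer to $X=G$, use the weak-type jump bound to get a polynomial tail, and then iterate via a Vitali-type covering to upgrade to exponential decay — and you correctly anticipate that the ``restart'' step is the real difficulty. But your description of the mechanism points in the wrong direction, and the gap you flag is precisely the one your sketch does not close.

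The paper does not iterate on the raw sets $E_n=\{N_n\ge n\}$, and it does not ``restart after the $n$-th jump.'' Instead it works with unions of initial balls: for each witness $(x,r_0)$ of $q$ jumps it records the starting ball $B(x,r_0)$, forming $\mathcal G_q$, and Lemma~\ref{measure estimate of Gq} gives the polynomial bound $m(\mathcal G_q)\le\Phi(q)\,\|F\|_{L^p}^p$. The crucial iteration (Lemma~\ref{comparing lemma}) strips off the \emph{first} $q$ jumps, not the last ones: if $(x,r_0)$ witnesses $(p+1)q$ jumps with intermediate radii $r_0<r_1<\cdots$, then $(x,r_q)$ witnesses $pq$ jumps, and — this is the point — the first $q$ jumps take place at scales $\le r_q$, so a greedy maximal-ball selection (choose $B_1\in\mathcal B'$ of maximal size, collect all $B$ whose shifted ball $\widetilde B=B(x,r_q)$ meets $B_1$, etc.) guarantees $B(x,r_k)\subset 3B_1$ for $0\le k\le q$. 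Thus those first $q$ jumps can be detected by looking only at $F\mathds 1_{3B_1}$, and one applies Lemma~\ref{measure estimate of Gq} to $F\mathds 1_{3B_1}$. Here the hypothesis $\|F\|_{L^\infty}\le1$ is used in an essential and very specific way: $\|F\mathds 1_{3B_1}\|_{L^p}^p\le m(3B_1)\lesssim m(B_1)$, converting the polynomial bound into a \emph{volume} bound, and summing over the disjoint selected $B_i$ closes the loop with a multiplicative factor $3^{D_G}C_V^2\Phi(q)$. Choosing $q_0$ so that this factor is $\le 1/2$ gives the geometric decay.

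Your ``restart from the $n$-th jump'' idea looks at the large-scale tail, which does not localize: the jumps at scales $>s(x)$ involve averages of $F$ over arbitrarily large balls, so there is no $F\mathds 1_{3B}$-type reduction available, and ``pushing the dynamics forward'' does not help since after transference the problem is purely spatial on $G$. You also do not state where $\|f\|_\infty\le1$ enters; without the $L^\infty\to$ volume conversion, the iteration produces no gain. Finally, you worry about Vitali on a non-measure-doubling space, but for a word metric with finite generating set (the hypothesis of this theorem) the measure is doubling, so this concern does not arise; and you cite Theorem~\ref{main-thm2}(i) for the weak-$(1,1)$ bound, which holds only for $p=1$, whereas the paper uses the transferred jump inequality on $L^p(G,m)$ for general $p$ to feed into Lemma~\ref{measure estimate of Gq}. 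So the proposal has the right flavor but misses the localization-at-small-scales mechanism and the precise role of the $L^\infty$ hypothesis, which are the two ideas that make the iteration actually close.
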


This result immediately yields the upcrossing inequalities with exponential decay. Recall that for two real numbers $a$ and $b$ with $b>a$, the upcrossings of {a family of real-valued functions} $\a=\{\a_r(x):r\in\mathcal{I}\}$,  $\mathcal{N}_{a,b}(\a)(x)$ is defined by
\begin{equation*}
  \sup\{N\in\mathbb{N}|\exists~s_1<r_1<\cdots<s_N<r_N, r_i,s_i\in\mathcal{I}~\textit{such~that}~\a_{s_l}(x)<a~\textit{and}~\a_{r_{l}}(x)>b\}.
\end{equation*}
Taking $\lambda=b-a$, it is easy to see that $\mathcal{N}_{a,b}(\a)(x)\leq 2\mathcal{N}_{\lambda/2}(\a)(x)$. Thus we obtain

\begin{corollary}
 Let $a$ and $b$ be two real numbers with $b>a$. Then for every  $p\in[1,\8)$, {there are two constants {$\tilde{c}_1>0$ and $\tilde{c}_2\in (0,1)$} depending on $\lambda$, $p$ and group $G$ such that for any real-valued function $f\in L^p(X,\mu)$ with $\|f\|_{L^{\8}(X,\mu)}\le 1$,}
\begin{equation}\label{upcrossings-estimate}
  \mu\big(\{x\in X:\mathcal{N}_{a,b}(\mathbf{A}f)(x)>n\}\big)\le \tilde{c}_1\tilde{c}_2^n\|f\|^p_{L^p(X,\mu)}.
\end{equation}

\end{corollary}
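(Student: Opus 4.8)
The plan is to obtain the corollary directly from Theorem~\ref{exponential-estimate}, using the elementary pointwise comparison between upcrossings and $\lambda$-jumps recorded just before the statement. Fix real numbers $a<b$ and set $\lambda=b-a>0$. The first step is to make the inequality $\mathcal N_{a,b}(\a)(x)\le 2\mathcal N_{\lambda/2}(\a)(x)$ precise for a family $\a=\{\a_r(x):r\in\mathcal I\}$ of real-valued functions: if $s_1<r_1<\cdots<s_N<r_N$ are parameters in $\mathcal I$ witnessing $N$ upcrossings of $[a,b]$, then along the increasing list $s_1<r_1<s_2<r_2<\cdots<s_N<r_N$ every pair of consecutive values of $\a$ differs in modulus by more than $b-a=\lambda>\lambda/2$ (for the pairs $(s_\el,r_\el)$ because $\a_{s_\el}(x)<a<b<\a_{r_\el}(x)$, and for the pairs $(r_\el,s_{\el+1})$ for the same reason), so these $2N$ parameters exhibit at least $2N-1$ $\lambda/2$-jumps, whence $\mathcal N_{\lambda/2}(\a)(x)\ge 2N-1\ge N/2$ once $N\ge1$. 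Applying this to the family $\mathbf A f=\{A_r f:r\in\mathbb N\}$ (which is real-valued whenever $f$ is) gives, for every integer $n\ge0$,
\begin{equation*}
\{x\in X:\mathcal N_{a,b}(\mathbf A f)(x)>n\}\subseteq\{x\in X:\mathcal N_{\lambda/2}(\mathbf A f)(x)>n/2\}\subseteq\{x\in X:\mathcal N_{\lambda/2}(\mathbf A f)(x)>\lfloor n/2\rfloor\},
\end{equation*}
where the last inclusion uses that $\mathcal N_{\lambda/2}(\mathbf A f)(x)$ is integer-valued.

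The second step is to insert the rightmost set into Theorem~\ref{exponential-estimate}, applied with the jump height $\lambda/2$ in place of $\lambda$: there are constants $c_1>0$ and $c_2\in(0,1)$, depending only on $\lambda$, $p$ and $G$, such that for every $f\in L^p(X,\mu)$ with $\|f\|_{L^\infty(X,\mu)}\le1$,
\begin{equation*}
\mu\big(\{x\in X:\mathcal N_{\lambda/2}(\mathbf A f)(x)>\lfloor n/2\rfloor\}\big)\le c_1\,c_2^{\lfloor n/2\rfloor}\,\|f\|_{L^p(X,\mu)}^p.
\end{equation*}
Since $\lfloor n/2\rfloor\ge(n-1)/2$ and $0<c_2<1$, we have $c_2^{\lfloor n/2\rfloor}\le c_2^{-1/2}(c_2^{1/2})^{n}$, so that $\tilde c_1:=c_1\,c_2^{-1/2}$ and $\tilde c_2:=c_2^{1/2}\in(0,1)$ yield precisely~\eqref{upcrossings-estimate}.

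There is essentially no obstacle here: all of the analytic content (the jump estimate of Theorem~\ref{main-thm2} together with the Vitali covering and geometric arguments behind Theorem~\ref{exponential-estimate}) has already been established, and what remains is only the bookkeeping around the harmless factor $2$ and the floor function. The only point to check is that the dependence of the constants in Theorem~\ref{exponential-estimate} on $\lambda$ is unaffected, up to replacing $\lambda$ by $\lambda/2$, which is immediate from its statement.
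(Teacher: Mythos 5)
Your proposal is correct and follows the same route as the paper: apply the pointwise bound $\mathcal N_{a,b}(\a)\le 2\mathcal N_{(b-a)/2}(\a)$ (which the paper states without proof; you correctly fill in the elementary argument) and then invoke Theorem~\ref{exponential-estimate} with $\lambda$ replaced by $(b-a)/2$, absorbing the factor $2$ and the floor into the constants. The only superfluous remark is the appeal to integrality of $\mathcal N_{\lambda/2}$ in the inclusion $\{\mathcal N_{\lambda/2}>n/2\}\subseteq\{\mathcal N_{\lambda/2}>\lfloor n/2\rfloor\}$, which actually holds trivially since $n/2\ge\lfloor n/2\rfloor$.
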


A variant of (\ref{upcrossings-estimate}) for non-negative functions has been recently proved by Moriakov~\cite{Moriakov18}, as a generalization of  Kalikow and Weiss's exponential estimate for $G=\mathbb Z$ \cite{{Kalikow-Weiss99}}. Note that, our method here is completely different from the one used by Moriakov that is based on a generalization of Vitali covering theorem. On the other hand, Kalikow and Weiss's estimate was motivated by Bishop's fundamental result \cite{Bishop67}  and the similar estimates in the martingale setting \cite{Doo53, Dub62}. Recently the upcrossing inequalities with exponential decay have been extended to stationary process and find many applications to ergodic theory and information theory, see e.g. \cite{Hoc09}. 

\subsection{Methods and more results}
The theorems rely on several key results obtained in this paper.

The first step to show Theorem~\ref{main-thm1} is now standard, that is, to control the jump quantity by the `dyadic' jump  and the short variation operator, see e.g. \cite{Jones-Seeger-Wright08}. More precisely, let $f\in L^p(G,m)$, 
we dominate the jump quantity $\lambda\sqrt{\mathcal{N}_{\lambda}(\mathbf{A}^\prime f)}$ in the following way,
  \begin{equation*}
  \lambda\sqrt{\mathcal{N}_{\lambda}(\mathbf{A}^\prime f)}\le 2\lambda\sqrt{\mathcal{N}_{\lambda/6}(A^\prime_{\delta^n}f:n> n_{r_0})}+16\bigg(\sum_{{n\geq n_{r_{0}}}}V_2(A^\prime_{r}f:r\in[\delta^{n},\delta^{n+1}))^2\bigg)^{1/2},
\end{equation*}
where $\delta>1$ is a constant depending on $G$ that will be determined in Proposition~\ref{dyadic cube} and $n_{r_0}$ is the unique integer such that $\delta^{n_{r_0}}<r_0\leq \delta^{n_{r_0}+1}$.
Moreover, compared with the resulting martingale $\mathbb{E}f=(\mathbb{E}_nf:n\in\mathbb{N})$ (see Definition~\ref{martingale sequence}), the dyadic jump is controlled by
\begin{equation*}
 96\sqrt{2}\bigg(\sum_{n> n_{r_0}}|A^\prime_{\delta^n}f-\mathbb{E}_nf|^2\bigg)^{1/2}+ 2\sqrt{2}\lambda\sqrt{\mathcal{N}_{\lambda/24}(\mathbb{E}_nf:n>n_{r_0})}.
\end{equation*}
Thus we obtain the following pointwise estimate,
\begin{equation}\label{deal with variational operator}
  \begin{split}
    \lambda\sqrt{\mathcal{N}_{\lambda}(\mathbf{A}^\prime f)}&\le 96\sqrt{2}\bigg(\sum_{n> n_{r_0}}|A^\prime_{\delta^n}f-\mathbb{E}_nf|^2\bigg)^{1/2}\\
     &+16\bigg(\sum_{n\ge n_{r_0}}V_2(A^\prime_{r}f:r\in[\delta^{n},\delta^{n+1}))^2\bigg)^{1/2}
     +2\sqrt{2}\lambda\sqrt{\mathcal{N}_{\lambda/24}(\mathbb{E}_nf:n>n_{r_0})}.
  \end{split}
\end{equation}
This inequality implies that in order to bound $\lambda\sqrt{\mathcal{N}_{\lambda}(\mathbf{A}^\prime f)}$, it suffices to estimate the three parts on the right hand, respectively. Note that the boundedness of the jump operator $\lambda\sqrt{\mathcal{N}_{\lambda}(\mathbb{E})}$ was proved in~\cite{JKRW98,PX88}, see Lemma~\ref{lem:jump}; and so for our purposes we only need to estimate the first two terms on the right hand side of inequality~\eqref{deal with variational operator}. 
For abbreviation, we denote
$$S(f):=\bigg(\sum_{n> n_{r_0}}|A^\prime_{\delta^n}f-\mathbb{E}_nf|^2\bigg)^{1/2}$$
and
$$SV(f):=\bigg(\sum_{n\ge n_{r_0}}V_2(A^\prime_{r}f:r\in[\delta^{n},\delta^{n+1}))^2\bigg)^{1/2}.$$
For these two operators, we will show more results including ($L^\infty$, BMO) estimate which is also necessary to obtain the result for $2<p<\infty$ (see Section~\ref{ST5} for the  definition of BMO). In what follows, $L^\8_c$ denotes the compactly supported $L^\8$ functions.

\begin{thm}\label{the estimate of square function}
Assume that $(G,d,m)$ satisfies conditions~\eqref{decay property} and~\eqref{geo-doubling}, then
\begin{enumerate}[\noindent]
  \item\emph{(i)}~for every $p\in(1,\8)$, there exists a constant $c_{p}>0$ such that for every $f\in L^p(G,m)$,
  \begin{equation}\label{strong-type inequalities of square function}
  \|S(f)\|_{L^p(G,m)}\le c_{p}\|f\|_{L^p(G,m)};
\end{equation}
  \item \emph{(ii)}~there exists a constant $c>0$ such that for every $f\in L^1(G,m)$,
  \begin{equation}\label{weak-type inequalities of square function}
  m\big(\{g\in G:S(f)(g)>\gamma\}\big)\le \frac{c}{\gamma}\|f\|_{L^1(G,m)},~\forall~\gamma>0,
\end{equation}
 and for every $f\in L^\8_c(G,m)$,
\begin{equation}\label{BMO-type inequalities of square function}
  \|S(f)\|_{BMO}\le c\|f\|_{L^{\8}_c(G,m)}.
\end{equation}
\end{enumerate}
\end{thm}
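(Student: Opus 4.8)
The plan is to view $S$ as the $\ell^2$-norm of the vector-valued operator $Tf=\big((A^\prime_{\delta^n}-\mathbb{E}_n)f\big)_{n>n_{r_0}}$ and to reduce Theorem~\ref{the estimate of square function} to two ingredients: boundedness of $T$ from $L^2(G,m)$ to $L^2(G,m;\ell^2)$, and a H\"ormander-type regularity estimate for its vector kernel. Writing $\mathbb{E}_n$ as the conditional expectation onto the $\sigma$-algebra generated by the scale-$\delta^n$ cubes of Proposition~\ref{dyadic cube}, and $Q_n(h)$ for the cube at scale $n$ containing $h$, this kernel is $K(h,g)=(K_n(h,g))_{n>n_{r_0}}$ with
\[
K_n(h,g)=\frac{\mathbf{1}_{B(h,\delta^n)}(g)}{m(B(h,\delta^n))}-\frac{\mathbf{1}_{Q_n(h)}(g)}{m(Q_n(h))}.
\]
Granting the $L^2$ bound and the regularity estimate, the weak type $(1,1)$ bound follows from a Calder\'on--Zygmund decomposition over the cubes of Proposition~\ref{dyadic cube} (the good part bounded by the $L^2$ estimate, the bad part off the dilated exceptional cubes by the regularity estimate and the mean-zero property of the atoms), the estimate $\|S(f)\|_{BMO}\lesssim\|f\|_{L^\infty_c}$ from the splitting $f=f\mathbf{1}_{2Q}+f\mathbf{1}_{(2Q)^c}$ relative to a cube $Q$ (the local part by $L^2$, the far part by the regularity estimate), the $L^p$ bounds for $1<p<2$ by interpolating the weak $(1,1)$ and $L^2$ bounds, and for $2\le p<\infty$ by interpolating the $L^2$ and $BMO$ bounds.

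The cornerstone is the $L^2$ bound, which I would obtain by an almost-orthogonality argument against the martingale differences $D_k=\mathbb{E}_{k-1}-\mathbb{E}_k$. Since $f=\sum_{k\in\ent}D_kf$ in $L^2$ with $\sum_k\|D_kf\|_2^2=\|f\|_2^2$, and $\|S(f)\|_2^2=\sum_{n>n_{r_0}}\|(A^\prime_{\delta^n}-\mathbb{E}_n)f\|_2^2$, it is enough to prove the single-scale estimate $\big\|(A^\prime_{\delta^n}-\mathbb{E}_n)D_kf\big\|_2\le C\delta^{-c|n-k|}\|D_kf\|_2$ for some $c>0$; Young's convolution inequality in $n,k$ then gives $\|S(f)\|_2\lesssim\|f\|_2$. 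For $n\ge k$ one has $\mathbb{E}_nD_kf=0$, so the left-hand side is $\|A^\prime_{\delta^n}D_kf\|_2$; because $D_kf$ has mean zero on every scale-$\delta^k$ cube, $\int_{B(h,\delta^n)}D_kf$ only sees the cubes straddling $\partial B(h,\delta^n)$, which lie in an annular shell of width $\sim\delta^k$, and the annular decay property~\eqref{decay property} bounds its $m$-measure by $\delta^{-(n-k)\epsilon}m(B(h,\delta^n))$; Cauchy--Schwarz and the doubling of $m$ at scales $\ge r_0$ (a consequence of~\eqref{decay property} with $s=r$) then give the claimed decay. For $n<k$ one has $\mathbb{E}_nD_kf=D_kf$, so the left-hand side is $\|A^\prime_{\delta^n}D_kf-D_kf\|_2$; now $D_kf$ is constant on scale-$\delta^{k-1}$ cubes and $A^\prime_{\delta^n}D_kf(h)-D_kf(h)$ vanishes unless $h$ lies within $\delta^n$ of a cube boundary, so the thin-boundary property of the cubes of Proposition~\ref{dyadic cube} bounds the measure of such $h$, again with a geometric gain in $k-n$.

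It remains to establish the regularity of $K$: a bound of the form $\int_{d(g,h)>2d(h,h^\prime)}\|K(h,g)-K(h^\prime,g)\|_{\ell^2}\,dm(g)\lesssim 1$ and its analogue in the other variable. This is the most delicate single point, since the roughness of the indicator kernels must be compensated by controlling the symmetric differences $B(h,\delta^n)\,\triangle\,B(h^\prime,\delta^n)$ and $Q_n(h)\,\triangle\,Q_n(h^\prime)$ through the annular decay property~\eqref{decay property} together with the thin-boundary property of the cubes of Proposition~\ref{dyadic cube} --- exactly where conditions~\eqref{decay property} and~\eqref{geo-doubling} act in tandem, controlling the measure at macroscopic scales and the metric at microscopic scales respectively. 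I expect the principal obstacle, however, to lie in carrying out the Calder\'on--Zygmund decomposition and the $BMO$ estimate in the genuinely non-doubling regime: one cannot control $m(2Q)$ by $m(Q)$ for small cubes $Q$, so the exceptional-set estimate and the local $BMO$ estimate have to be run in the spirit of the non-doubling Calder\'on--Zygmund theory (Tolsa, Nazarov--Treil--Volberg), replacing naive dilates by suitably larger cubes and tracking the comparison coefficients between nested cubes, with the geometric interplay between the metric balls $B(h,\delta^n)$ and the Christ-type cubes $Q_n(h)$ under only the annular control on $m$ demanding particular care.
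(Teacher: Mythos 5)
Your $L^2$ proof is essentially the paper's: almost-orthogonality against the martingale differences $\mathbb{D}_k$, with $\mathbb{E}_n\mathbb{D}_kf=0$ for $n\ge k$ (the cancellation of $\mathbb{D}_kf$ over scale-$k$ cubes confines $A'_{\delta^n}\mathbb{D}_kf$ to the cubes straddling $\partial B(h,\delta^n)$, whose measure the $(\epsilon,r_0)$-annular decay controls) and $\mathbb{E}_n\mathbb{D}_kf=\mathbb{D}_kf$ for $n<k$ (then $A'_{\delta^n}\mathbb{D}_kf-\mathbb{D}_kf$ is supported near cube boundaries, whose measure Lemma~\ref{measure estimate of cube} controls); the paper merely isolates a bounded middle band of scales, handled by a crude $L^2$ bound, before invoking the orthogonality principle (Lemma~\ref{orthogonality principle}). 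The overall plan --- Calder\'on--Zygmund over the cubes of Proposition~\ref{dyadic cube}, local/global splitting relative to a cube for BMO, interpolation --- is also what the paper does, and indeed the paper uses a Gundy-type non-doubling decomposition $f=g+b+\xi$ (Lemma~\ref{C-Z decomposition}, after L\'opez-S\'anchez--Martell--Parcet), so your instinct that naive doubling cannot be used is correct.

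However, the H\"ormander condition as you formulated it is false. The bound $\int_{d(g,h)>2d(h,h')}\|K(h,g)-K(h',g)\|_{\ell^2}\,dm(g)\lesssim1$ cannot hold, because the dyadic piece $m(Q_n(h))^{-1}\mathds{1}_{Q_n(h)}(g)$ is discontinuous across cube boundaries: if $h,h'$ lie on opposite sides of a cube boundary that persists to coarse scales (two points just to either side of $0$ in the standard dyadic grid on $\mathbb{R}$, say), then $Q_n(h)\ne Q_n(h')$ for every $n$ with $\delta^n\gtrsim d(h,h')$, and each such scale contributes $\asymp1$ to the integral from $g\in Q_n(h)\triangle Q_n(h')$; the sum over scales diverges. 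The annular decay and thin-boundary lemmas control the measure of boundary collars of a single cube, not the symmetric difference of two adjacent cubes, so they cannot rescue this estimate. What the weak-type and BMO arguments actually need, and what the paper exploits directly, is that one only ever compares $K(h,\cdot)$ to $K(z_Q,\cdot)$ with $z_Q$ the center of a CZ cube $Q\ni h$ and $g$ outside a fixed dilate $Q^*$: by nestedness $Q_n(h)=Q_n(z_Q)$ for every $n$ at or above the scale of $Q$, so the dyadic part of $K_n(h,\cdot)-K_n(z_Q,\cdot)$ vanishes identically, and for $n$ below that scale both $Q_n(h)$ and $Q_n(z_Q)$ lie inside $Q^*$ and miss the integration region. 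The paper phrases this by showing $\mathbb{E}_n$ applied to a bad atom either vanishes or equals the atom off $\widetilde{\Omega}$ (see~\eqref{emptyset} and~\eqref{cancellation}) and, in the BMO proof, that $\mathbb{E}_n f_2(x)=\mathbb{E}_n f_2(z_\beta^k)$ for $x\in Q_\beta^k$; only the ball part of the kernel requires a smoothness estimate, where~\eqref{decay property} and Lemma~\ref{lem:decay} do the work. Replacing your Hörmander claim by this dyadic-compatible statement is the fix.
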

The same results hold still true for the short variation operator $SV$.
\begin{thm}\label{the estimate of short variation}
Assume that $(G,d,m)$ satisfies conditions~\eqref{decay property} and~\eqref{geo-doubling}, then
\begin{enumerate}[\noindent]
  \item \emph{(i)}~for every $p\in(1,\8)$, there exists a constant $c_{p}>0$ such that for every $f\in L^p(G,m)$,
  \begin{equation}\label{strong-type inequalities of short variation}
  \|SV(f)\|_{L^p(G,m)}\le c_{p}\|f\|_{L^p(G,m)};
\end{equation}
  \item \emph{(ii)}~there exists a constant $c>0$ such that for every $f\in L^1(G,m)$,
  \begin{equation}\label{weak-type inequalities of short variation}
  m\big(\{g\in G:SV(f)(g)>\gamma\}\big)\le \frac{c}{\gamma}\|f\|_{L^1(G,m)},~\forall~\gamma>0,
\end{equation}
 and for every $f\in L^\8_c(G,m)$,
\begin{equation}\label{BMO-type inequalities of short variation}
  \|SV(f)\|_{BMO}\le c\|f\|_{L^{\8}_c(G,m)}.
\end{equation}
\end{enumerate}
\end{thm}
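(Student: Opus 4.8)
The plan is to run, for the short variation operator $SV$, the same argument used for the square function $S$ in Theorem~\ref{the estimate of square function}. As there, everything reduces to two estimates: the $L^2$ bound $\|SV(f)\|_{L^2(G,m)}\le c\|f\|_{L^2(G,m)}$ and the $(L^\8_c,BMO)$ bound $\|SV(f)\|_{BMO}\le c\|f\|_{L^\8_c(G,m)}$. Granting these, the strong type $(p,p)$ bound for $2<p<\8$ follows by interpolation between $L^2$ and $BMO$ in the non-homogeneous setting, where the geometric doubling property~\eqref{geo-doubling} supplies the covering arguments that replace the doubling of $m$; the weak type $(1,1)$ bound follows from a Calder\'on--Zygmund decomposition adapted to $(G,d,m)$ once the kernel of $SV$, viewed as a vector-valued operator, is shown to satisfy the appropriate H\"ormander-type regularity; and the strong type $(p,p)$ bound for $1<p<2$ then comes from interpolating the weak $(1,1)$ bound against the $L^2$ bound. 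Each of these deductions is exactly the one performed for $S$, so it remains only to prove, for $SV$, the $L^2$ bound, the $BMO$ bound, and the kernel estimate.

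For the $L^2$ bound I would subdivide each dyadic block $I_n=[\delta^n,\delta^{n+1})$ by its $\ell$-th dyadic refinement, with endpoints $\delta^n=s_{n,\ell,0}<s_{n,\ell,1}<\cdots<s_{n,\ell,2^\ell}=\delta^{n+1}$ satisfying $\frac{s_{n,\ell,k+1}-s_{n,\ell,k}}{s_{n,\ell,k}}\le 2^{-\ell}(\delta-1)$. By the Rademacher--Menshov inequality (see, e.g., \cite{JKRW98,Jones-Seeger-Wright08}),
\[
V_2\big(A^\prime_rf(g):r\in I_n\big)\le C\sum_{\ell\ge 0}\sqrt{\ell+1}\,\Big(\sum_{k=0}^{2^\ell-1}\big|A^\prime_{s_{n,\ell,k+1}}f(g)-A^\prime_{s_{n,\ell,k}}f(g)\big|^2\Big)^{1/2},
\]
so, taking the $L^2(G,m)$ norm of $SV(f)$ and pulling the sum over $\ell$ outside by Minkowski's inequality, it suffices to produce some $\theta=\theta(\epsilon)>0$ with
\[
\Big\|\Big(\sum_{n\ge n_{r_0}}\sum_{k=0}^{2^\ell-1}\big|A^\prime_{s_{n,\ell,k+1}}f-A^\prime_{s_{n,\ell,k}}f\big|^2\Big)^{1/2}\Big\|_{L^2(G,m)}\le C\,2^{-\theta\ell}\|f\|_{L^2(G,m)},
\]
since then $\sum_\ell\sqrt{\ell+1}\,2^{-\theta\ell}<\8$. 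Expanding a difference of averages, $D_{n,\ell,k}f(g):=A^\prime_{s_{n,\ell,k+1}}f(g)-A^\prime_{s_{n,\ell,k}}f(g)$ splits into an annular-mean term $\frac{1}{m(B(g,s_{n,\ell,k+1}))}\int_{B(g,s_{n,\ell,k+1})\setminus B(g,s_{n,\ell,k})}f\,dm$ and a remainder $-\frac{m(B(g,s_{n,\ell,k+1}))-m(B(g,s_{n,\ell,k}))}{m(B(g,s_{n,\ell,k+1}))}\,A^\prime_{s_{n,\ell,k}}f(g)$; by the annular decay property~\eqref{decay property} the remainder has pointwise size $O\big((2^{-\ell}(\delta-1))^{\epsilon}\big)$ times a local average, and for fixed $n$ the annuli attached to distinct $k$ are pairwise disjoint with total relative mass $\le 1$, so summing the squared remainders over $k$ loses only $2^{-\epsilon\ell}$ rather than the trivial $2^\ell$. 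Combining this with an almost-orthogonality estimate across the indices $(n,k)$ — controlling $\sum_{n,k}\langle D_{n,\ell,k}^*D_{n,\ell,k}f,f\rangle$ by a Cotlar--Stein/Schur-test argument in which the overlaps of the kernels are bounded by means of~\eqref{geo-doubling}, together with the telescoping identity $\sum_{k}D_{n,\ell,k}=A^\prime_{\delta^{n+1}}-A^\prime_{\delta^n}$ — should yield the required decay. (Equivalently, one can compare each $A^\prime_{s_{n,\ell,k}}f$ with a dyadic conditional expectation at a suitably refined level and fall back on the martingale jump inequality of Lemma~\ref{lem:jump} and a square function of the same type as $S$, but one must be careful not to lose a power of $2^\ell$ in doing so.)

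For the $(L^\8_c,BMO)$ bound one argues as for $S$. Fixing a ball $Q=B(g_0,\rho)$ (chosen doubling in the sense needed for the non-homogeneous $BMO$) and splitting $f=f\un_{2Q}+f\un_{(2Q)^c}$, the mean oscillation of $SV(f)$ over $Q$ is dominated by $\big(\frac{1}{m(Q)}\int_QSV(f\un_{2Q})^2\,dm\big)^{1/2}$ plus the oscillation of $SV(f\un_{(2Q)^c})$ over $Q$. The first term is handled by the $L^2$ bound and the doubling of $Q$; for the second, the block-$n$ contribution to $SV(f\un_{(2Q)^c})(g)$ with $g\in Q$ vanishes unless $\delta^{n+1}\gtrsim\rho$, and for those blocks~\eqref{decay property} supplies exactly the decay in $\delta^n$ needed to sum the resulting H\"ormander-type series. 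This last computation is also the kernel regularity estimate required in the Calder\'on--Zygmund decomposition for the weak $(1,1)$ bound.

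The main obstacle is the $L^2$ estimate for $SV$ — specifically the geometric decay $2^{-\theta\ell}$ in the refinement level $\ell$. A crude pointwise bound of $D_{n,\ell,k}f$ by a maximal function gains only $2^{-\epsilon\ell}$ per difference, against a loss of $2^\ell$ from the number of terms in a block, which is insufficient when $\epsilon\le 1/2$; to recover the missing orthogonality one genuinely has to exploit the disjointness of the annuli, the non-doubling annular decay~\eqref{decay property}, and the telescoping structure of the $D_{n,\ell,k}$. This is where the almost-orthogonality argument — together with the geometric doubling~\eqref{geo-doubling}, which is needed to estimate the overlaps of the kernels — does the real work, and it is also the point at which the delicate interplay between the balls of the metric and the dyadic cubes of Proposition~\ref{dyadic cube} enters.
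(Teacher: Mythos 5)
Your proof is not complete: the $L^2$ estimate for $SV$ is sketched but not proved, and you yourself flag this as the main unresolved obstacle. Since the $L^2$ bound is what drives everything else by interpolation and Calder\'on--Zygmund theory, the gap is essential.

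More concretely, your Rademacher--Menshov reduction gives, for fixed $n$ and refinement level $\ell$, a pointwise bound of the form
\[
\sum_{k=0}^{2^\ell-1}|D_{n,\ell,k}f(g)|^2\lesssim 2^{-\epsilon\ell}\cdot\frac{1}{m(B(g,\delta^{n+1}))}\int_{B(g,\delta^{n+1})}|f|^2\,dm
\]
(and similarly for the remainder, via the ``total mass $\le1$'' observation). This handles the within-block sum over $k$, but it produces a bound by $2^{-\epsilon\ell}M(|f|^2)(g)$, which is not summable over $n$. You recognise that an almost-orthogonality argument is needed to sum over $n$, but the one you gesture at is mis-stated: $\sum_{n,k}\langle D_{n,\ell,k}^*D_{n,\ell,k}f,f\rangle$ is simply $\sum_{n,k}\|D_{n,\ell,k}f\|_{L^2}^2$, so a Cotlar--Stein bound for this quantity is precisely the square-function estimate you are trying to prove, not an intermediate step. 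What one actually needs is orthogonality of the contributions of different martingale-difference scales $\mathbb{D}_{j}f$ against each $n$-block; this is the paper's mechanism (Lemma~\ref{orthogonality principle} applied with $f=\sum_j\mathbb{D}_jf$), and you do not carry it out. The alternative you mention---comparing $A'_{s_{n,\ell,k}}f$ with a conditional expectation at a refined level and using Lemma~\ref{lem:jump}---is also left open, and your own caveat about losing a power of $2^\ell$ is the crux.

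It is worth noting the contrast with the paper's route, which never invokes Rademacher--Menshov: the paper bounds $SV_n(\mathbb{D}_{n+k}f)$ directly with geometric decay in $k$ (via Lemma~\ref{controlled by maximal operator} for $|k|\le k_2$, the boundary estimate Lemma~\ref{measure estimate of cube} for $k>k_2$, and the operators $M_{n+k}$, $M^S_{n+k}$ from Lemmas~\ref{lem:Akn}--\ref{lem:Mkn} for $k<-k_2$), and feeds these into Lemma~\ref{orthogonality principle}. Your plan could plausibly be made to work, but it would most likely need to be wedded to the same martingale decomposition of $f$ to obtain summability over $n$, at which point it offers no simplification and adds the burden of controlling the $\ell$-dependence. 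For the weak $(1,1)$ bound, you also appeal to a H\"ormander kernel-regularity condition for a classical CZ decomposition; since $(G,d,m)$ is not assumed measure-doubling, the standard CZ decomposition is unavailable, and the paper instead relies on the Gundy/L\'opez-S\'anchez--Martell--Parcet decomposition (Lemma~\ref{C-Z decomposition}) together with a dyadic, rather than kernel-based, analysis of $SV$ on the bad parts. Your BMO sketch is broadly consonant with the paper's.
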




The above two theorems, {parallel to Theorem 2.3 and 2.4 of \cite{GXHTM17}}, are not a surprise (cf. \cite{GXHTM17}). However, note that under conditions~\eqref{decay property} and~\eqref{geo-doubling}, $(G,d,m)$ is not necessarily a doubling metric measure space; this induces several new difficulties in showing Theorems \ref{the estimate of square function} and  \ref{the estimate of short variation}, such as, that the `dyadic cubes' constructed in Proposition \ref{dyadic cube} may not admit the small boundary property (cf.~\cite[Theorem 11]{Christ90}) and that the standard Calder\'{o}n-Zygmund decomposition for homogeneous space is not enough \emph{etc.}. For these reasons, we have to pay great attention to the geometric argument involving the cubes and the balls and explore the non-doubling Calder\'on-Zygmund theory \emph{etc.}.

\bigskip

To deduce Theorem \ref{main-thm2} from Theorem \ref{main-thm1},  there needs two transference principles: the first one is for actions induced by measure-preserving measurable transforms, while another one is for regular actions. We refer the reader to~\eqref{regular} for the definition of regular actions and the related constant $\|\cdot\|_r$.
{
A group $G$ is called amenable if it admits a F${\o}$lner sequence $(F_n)_{n\in \mathbb N}$, that is, for every $g\in G$,
\begin{align}\label{asymptotically invariant}
\lim_{n\rightarrow\infty} \frac{m((F_n g) \bigtriangleup F_n)}{m(F_n)}= 0,
\end{align}
where $\bigtriangleup$ denotes the usual symmetric difference of two sets. }

\begin{thm}\label{thm:trans}
 Let $G$ be an amenable group equipped with invariant metric $d$ and a right Haar measure $m$, and $T$ an action on $L^p(X,\mu)$. Let $\mathbf{A}^\prime=\{A^\prime_r:r\in\mathcal I\}$ and $\mathbf{A}=\{A_r:r\in\mathcal I\}$  be two sequences of averaging operators given  by~\eqref{averaging operator1} and~\eqref{averaging operator}, respectively.
 \begin{enumerate}[\noindent]
 \item \emph{(i)} Let $p\in[1,\infty)$. If $T$ is an action induced by a measure-preserving measurable transformation and $\lambda\sqrt{\mathcal{N}_{\lambda}(\mathbf{A}^\prime)}$ is of weak (resp. strong) type $(p,p)$ uniformly in $\lambda>0$, then $\lambda\sqrt{\mathcal{N}_{\lambda}(\mathbf{A})}$ is of weak (resp. strong) type $(p,p)$ uniformly in $\lambda>0$, {and moreover $\sup_{\lambda>0}\|\lambda\sqrt{\mathcal{N}_{\lambda}(\mathbf{A})}\|_{L^p\rightarrow L^{p,\8}}\le \sup_{\lambda>0}\|\lambda\sqrt{\mathcal{N}_{\lambda}(\mathbf{A}^\prime)}\|_{L^p\rightarrow L^{p,\8}}$ (resp. $\sup_{\lambda>0}\|\lambda\sqrt{\mathcal{N}_{\lambda}(\mathbf{A})}\|_{L^p\rightarrow L^{p}}\le \sup_{\lambda>0}\|\lambda\sqrt{\mathcal{N}_{\lambda}(\mathbf{A}^\prime)}\|_{L^p\rightarrow L^{p}}$)}.

 \item \emph{(ii)} Let $p\in(1,\infty)$. If $T$ is a strongly continuous {regular} action of $G$ on $L^p(X)$ and $\lambda\sqrt{\mathcal{N}_{\lambda}(\mathbf{A}^\prime)}$ is of strong type $(p,p)$ uniformly in $\lambda>0$, then $\lambda\sqrt{\mathcal{N}_{\lambda}(\mathbf{A})}$ is of strong type $(p,p)$ uniformly in $\lambda>0$, {and moreover there exists a constant $c_p>0$ such that $\sup_{\lambda>0}\|\lambda\sqrt{\mathcal{N}_{\lambda}(\mathbf{A})}\|_{L^p\rightarrow L^{p}}\le c_p\sup_{h\in G}\|T_h\|^2_r\sup_{\lambda>0}\|\lambda\sqrt{\mathcal{N}_{\lambda}(\mathbf{A}^\prime)}\|_{L^p\rightarrow L^p}$}. 
  \end{enumerate}
\end{thm}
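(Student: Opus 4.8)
The plan is to run two classical Calder\'on-type transference arguments, adapted to the jump quantity $\mathcal N_\lambda$, exploiting that $\mathcal N_\lambda$ behaves like a sublinear operator of "weak type" and is invariant under measure-preserving conjugation. I will treat the two parts separately, but both follow the same template: transport the averaging operators $A_r$ on $L^p(X,\mu)$ to the model averages $A'_r$ on $L^p(G,m)$ over a large F\o lner set, apply the already-established quantitative inequality of Theorem~\ref{main-thm1} there, and then average back using the asymptotic invariance~\eqref{asymptotically invariant}.

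For part~(i): since $T$ is induced by a $\mu$-preserving transformation $\tau$, for fixed $x\in X$ the map $g\mapsto (T_gf)(x)=f(\tau_{g^{-1}}x)$ defines a function $f_x$ on $G$, and one checks that $A_rf(\tau_h x)=A'_r f_x(h)$ for all $h\in G$ whenever $r\le R$ and $h$ ranges over a suitable sub-block of a F\o lner set $F=F_N$ with $F\supseteq B_R\cdot(\text{block})$. Consequently the jump count transfers pointwise: $\mathcal N_\lambda(\mathbf{A}f)(\tau_h x)=\mathcal N_\lambda(\mathbf{A}'f_x)(h)$ on that sub-block, where in the jump quantity on the left only the indices $r\le R$ are used. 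I would first prove the inequality with $\mathcal I$ replaced by the truncated index set $\mathcal I\cap[r_0,R]$; the bound for the full $\mathcal I$ then follows by monotone convergence in $R$ (the jump count is increasing in the index set, and $\sup_R$ of an increasing family of quantities each bounded by the right-hand side is again bounded). Integrating $|\mathcal N_\lambda(\mathbf{A}'f_x)(h)|^{p/2}$ (strong case) or the distributional inequality for $\{\lambda\sqrt{\mathcal N_\lambda(\mathbf{A}'f_x)}>\gamma\}$ (weak case) over $h\in F$, applying Theorem~\ref{main-thm1} on $(G,d,m)$ to $f_x\cdot\mathbbm 1_{F'}$ for the enlarged set $F'=B_R F$, then integrating over $x\in X$ and using that $\tau$ preserves $\mu$, one gets the claimed inequality with the constant $m(F')/m(\text{block})$; letting the block exhaust $F_N$ and $N\to\infty$, the ratio $m(F')/m(F_N)\to 1$ by~\eqref{asymptotically invariant}, which yields exactly the asserted comparison of operator norms with no loss.

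For part~(ii): here $T$ is only a strongly continuous regular action on $L^p(X)$, $1<p<\infty$, so the pointwise identity is unavailable. Instead I would use the regularity: by definition (see~\eqref{regular}) there is a positive operator-dominated representation controlling $\|T_h\|_r$, and one runs the vector-valued transference of Coifman--Weiss. Fix $R$ and a F\o lner $F\supseteq B_R B_R$. For $f\in L^p(X,\mu)$ define on $G\times X$ the function $F(g,x)=\mathbbm 1_F(g)\,(T_{g^{-1}}f)(x)$; then for $h\in G$ with $B_R h\subseteq F$ one has $A_r f(x)=$ (up to the regularity-bounded substitution) $T_h\big[A'_r F(\cdot,x)\big](h)$ evaluated appropriately, so that $\big\|\,\lambda\sqrt{\mathcal N_\lambda(\mathbf A f)}\,\big\|_{L^p(X)}^p$, after raising to the power and integrating $h$ over the good sub-block of $F$, is dominated by $\sup_h\|T_h\|_r^{?}$ times $\int_X\!\int_G \big(\lambda\sqrt{\mathcal N_\lambda(\mathbf A' F(\cdot,x))}\big)^p\,dm\,d\mu$; the $L^p(G,m)$-bound of Theorem~\ref{main-thm1}(i) applied in the $g$-variable, Fubini, and $\|F(\cdot,x)\|_{L^p(G)}^p=\int_G\|T_{g^{-1}}f\|_{L^p(X)}^p\,dm\le \sup_h\|T_h\|_r^p\, m(F)\,\|f\|_{L^p(X)}^p$ close the estimate. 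Dividing by $m(\text{block})$ and letting the block fill $F_N$, $N\to\infty$, the F\o lner property kills the ratio and leaves the constant $c_p\sup_{h\in G}\|T_h\|_r^2$; the power $2$ arises because the regularity bound is used once to bound $\|T_{g^{-1}}f\|_p$ pointwise in $g$ and once more in reconstructing $A_rf$ from $A'_r F$, i.e.\ $\|T_h\|_r\cdot\|T_h\|_r$.

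The main obstacle is the careful bookkeeping of index truncation together with the "fattening" $F\rightsquigarrow B_R F$ in the non-unimodular-looking setting: one must verify that the jump quantity with the truncated index set $\mathcal I\cap[r_0,R]$ is genuinely transferred to an honest jump quantity on $G$ for the localized data $f_x\mathbbm 1_{B_RF}$, that the localization does not disturb the averages $A'_r$ for $r\le R$ at points $h$ deep inside $F$, and that all the resulting volume ratios $m(B_RF)/m(F_N)$ tend to $1$ — which is exactly where~\eqref{asymptotically invariant} and the fact that $d$ is an invariant metric (so $B_R F\subseteq F\cdot B_R$ up to the group operation) are needed. Once the geometry is set up, the analytic content is entirely supplied by Theorem~\ref{main-thm1} and a routine passage to the limit, with no dependence on $\lambda$ in the constants because Theorem~\ref{main-thm1} is itself uniform in $\lambda$.
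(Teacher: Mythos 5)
Your part~(i) is essentially the same argument as the paper's: fix $x$, form the cutoff $g\mapsto T_gf(x)\mathds{1}_{AK}(g)$ with $K\supseteq B_r$ for $r\le N$, use the pointwise identity $T_hA_rf(x)=A'_rF_{AK}(h)$ to transport the jump count, integrate with Fubini, invoke the F\o lner ratio $m(AK)/m(A)\to 1$, and pass to the limit $N\to\infty$ by monotone convergence. Apart from cosmetic choices (the fattening $B_R F$ vs.\ $AK$, the sign convention $\tau_h$ vs.\ $\tau_{h^{-1}}$), this matches the paper.

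Part~(ii) is where your proposal has a real gap. First, a factual slip: you say ``the pointwise identity is unavailable,'' but it \emph{is} available; the identity $T_hA_rf(x)=A'_rF_{AK}(h)$ depends only on the group law $T_hT_g=T_{hg}$, and the paper uses it verbatim in~(ii). What fails for general actions is the part of~(i) that identifies the level sets $\mathcal D^h(\gamma)$ with $\tau$-translates of $\mathcal D^e(\gamma)$ and pushes the measure through $\tau$. Second and more seriously, the step you summarize as ``one runs the vector-valued transference of Coifman--Weiss'' and ``up to the regularity-bounded substitution'' hides the actual crux. After applying the pointwise identity, one is left with an estimate on $\sup_\lambda\|\lambda\sqrt{\mathcal N_\lambda(T_h\mathbf A_Nf)}\|_p$, and one must recover $\sup_\lambda\|\lambda\sqrt{\mathcal N_\lambda(\mathbf A_Nf)}\|_p$ by applying $T_{h^{-1}}$. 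But $\sup_{\lambda>0}\|\lambda\sqrt{\mathcal N_\lambda(\cdot)}\|_p$ is \emph{a priori} not a norm, and it is not a Bochner $L^p(X;\mathfrak B)$-norm either, so Coifman--Weiss tensorization and the regularity estimate~\eqref{extension} do not apply directly. The paper resolves this by invoking the Mirek--Stein--Zorin-Kranich fact (Lemma~\ref{equivalent norm}) that for $p>1$ the jump quantity is comparable to a genuine real-interpolation norm, and then deriving Lemma~\ref{extension of Tg}, which is exactly the statement $\sup_\lambda\|\lambda\sqrt{\mathcal N_\lambda(T\a)}\|_p\le c_p\|T\|_r\sup_\lambda\|\lambda\sqrt{\mathcal N_\lambda(\a)}\|_p$ for regular $T$. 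The introduction flags this as the ``subtle'' point, and it is precisely what your write-up does not supply. Until you identify and invoke that norm equivalence (or an equivalent device), the phrase ``regularity-bounded substitution'' is not checkable, and the two powers of $\sup_h\|T_h\|_r$ cannot be accounted for rigorously. Your attribution of the two factors (one from $\|T_hf\|_p$, one from undoing the twist) is the right picture, but the second use needs Lemmas~\ref{equivalent norm}--\ref{extension of Tg} to be legitimate.
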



It is a little bit surprising that Theorem \ref{thm:trans}(i) holds true due to the fact that $\lambda\sqrt{\mathcal{N}_{\lambda}(\cdot)}$ is \emph{a priori} not a norm; {while to prove Theorem \ref{thm:trans}(ii), in addition to the use of the fact that $\sup_{\lambda>0}\|\lambda\sqrt{\mathcal{N}_{\lambda}(\cdot)}\|_p$ is equivalent to a norm (cf. \cite{Mirek-Stein-Zor20}) for $p>1$, the argument is subtle since the appearance of the supremum over $\lambda$ outside $L^p$ norm.}

\bigskip

\bigskip

An outline of this paper is as follows. In Section~\ref{ST2}, we first recall some necessary preliminaries concerning the definition of `dyadic cubes', which was constructed by Hyt\"{o}nen and Kairema~\cite{Hyt-Anna12} in metric space; and then present some technical lemmas, which are essential in showing Theorems~\ref{the estimate of square function} and~\ref{the estimate of short variation} . The last two theorems will be proved in Section~\ref{ST3}-\ref{ST5}. In Section~\ref{ST6}, we prove the transfer principles, namely Theorem~\ref{thm:trans}. In Section~\ref{ST7}, we discuss the $(\epsilon,r_0)$-annular decay property, providing examples and formulating problems; in particular, the balls with respect to a word metric over groups of polynomial growth satisfies conditions~\eqref{decay property},~\eqref{geo-doubling} and~\eqref{asymptotically invariant}, and thus we obtain Theorem~\ref{main-thm2}. 
 Finally, in Section~\ref{ST8}, we give a proof of Theorem~\ref{exponential-estimate}. 

Throughout this paper, we always denote $C$ by a positive constant respectively that may vary from line to line, while $c_p$ denote positive constant possibly depending on the subscripts.
\section{Preliminaries and some technical lemmas}\label{ST2}

In this section, we first recall the `dyadic cubes' constructed on the measure space $(G,d,m)$ satisfying conditions~\eqref{decay property} and~\eqref{geo-doubling} which might not be a measure doubling metric space, and the resulting martingale inequalities will play a key role in the probabilistic approach to jump or variational inequalities. We then collect several technical lemmas which involve or rely on the estimates of the boundaries of  `dyadic cubes' or balls or certain configuration among them; these estimates are subtle, and thus we will set and fix in the whole paper several constants such as $k_1,n_0,n_1, c_0, C_0, C_1, L_0,L_1,\delta$ which depend on conditions~\eqref{decay property} and~\eqref{geo-doubling} and the construction. These preliminary results or technical lemmas, collected in such a way, will facilitate much the presentation of the proof of Theorems \ref{the estimate of square function} and~\ref{the estimate of short variation}.

We will exploit the system of `dyadic cubes' constructed in the setting of geometrically doubling metric measure spaces, which means that each ball of radius $r>0$ can be covered by fixed finitely many balls of radius $r/2$
 (cf. \cite[Theorem 2.2]{Hyt-Anna12}). It is not difficult to see that
 the measure space $(G,d,m)$ satisfying conditions~\eqref{decay property} and~\eqref{geo-doubling} is geometrically doubling, but which might not be measure doubling. Indeed, by a simple computation, the $(\epsilon,r_0)$-annular decay property---condition~\eqref{decay property}---implies the measure doubling condition for large balls, that is, for every $x\in G$ and $r_0<r\le R<\8$,
\begin{align}\label{int}
\frac{m(B(x,R))}{m(B(x,r))}\leq (K+1)\Big(\frac{R}{r}\Big)^{\epsilon}.
\end{align}
This yields the geometrically doubling condition for large balls, which, combined with condition~\eqref{geo-doubling}, deduces the geometrically doubling condition property of the space $(G,d,m)$, namely, each ball of radius $r>0$ in $G$ can be covered by no more than $D=\max\{D_0, [9^\epsilon(K+1)]+1\}$ balls of radius $r/2$.
{Moreover, from the geometrically doubling condition, one can easily deduce the following property.
\begin{prop}\label{geometry-doubling}
Let $(G,d,m)$ satisfy the conditions of Theorem \ref{main-thm1}. Let $0<r\le R$, any ball $B(x,R)$ can be covered by no more than  $D^{\log_2[{R}/{r}]+1}$ balls of radius $r$.
\end{prop}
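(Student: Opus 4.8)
The plan is a straightforward iteration of the geometric doubling property already established in the paragraph preceding the statement, namely that every ball of radius $\rho>0$ in $G$ can be covered by at most $D=\max\{D_0,[9^\epsilon(K+1)]+1\}$ balls of radius $\rho/2$. Fix $0<r\le R$ and let $k$ be the least nonnegative integer with $2^{-k}R\le r$, i.e. $k=\lceil\log_2(R/r)\rceil$.

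First I would apply geometric doubling to $B(x,R)$ to cover it by at most $D$ balls of radius $R/2$; then apply it again to each of these to cover $B(x,R)$ by at most $D^2$ balls of radius $R/4$; iterating $k$ times yields a cover of $B(x,R)$ by at most $D^k$ balls of radius $R/2^k$. Since $R/2^k\le r$ by the choice of $k$, each such ball $B(y,R/2^k)$ is contained in the concentric ball $B(y,r)$, so $B(x,R)$ is covered by at most $D^k$ balls of radius $r$.

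It remains to bound the exponent. Using the elementary inequality $\lceil t\rceil\le \lfloor t\rfloor+1$ valid for all real $t$, one gets $k=\lceil\log_2(R/r)\rceil\le[\log_2(R/r)]+1$, and since $D\ge1$ this gives $D^k\le D^{[\log_2(R/r)]+1}$, which is the claimed bound.

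There is no genuine obstacle here; the only points requiring a word of care are the bookkeeping of the ceiling/floor inequality relating $\lceil\log_2(R/r)\rceil$ to $[\log_2(R/r)]+1$, the observation that a ball of radius $R/2^k\le r$ sits inside the concentric ball of radius $r$, and the fact that the geometric doubling constant $D$ is at least $1$ so that raising $D$ to a larger exponent only enlarges the bound.
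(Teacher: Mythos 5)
Your proof is correct, and it is exactly the iterated-doubling argument the paper has in mind (the paper only asserts the proposition "follows easily" from the geometric doubling condition without writing out the iteration). One tiny bookkeeping point: the exponent you arrive at is $[\log_2(R/r)]+1$ (floor of the logarithm), whereas the paper writes $\log_2[R/r]+1$ (logarithm of the floor); since $2^{[\log_2 t]}$ is an integer not exceeding $t$ and hence not exceeding $[t]$, one has $[\log_2 t]\le\log_2[t]$ for $t\ge1$, so your bound is in fact at least as strong as the stated one and the claim follows.
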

For more information about geometrically doubling property we refer the reader to~\cite{Coifman-Weiss71}.

\begin{prop}\label{dyadic cube}\cite[Theorem 2.2]{Hyt-Anna12}
Let $(G,d,m)$ satisfy the conditions of Theorem \ref{main-thm1}. Fix constants $0<c_0<C_0<\8$ and $\delta>1$ such that
$$18C_0\delta^{-1}\le c_0.$$
Let $I_k$ \emph{($k\in\mathbb Z$)} be an index set and $\{z_\alpha^k\in G:\alpha\in I_k,k\in\mathbb{Z}\}$ be a collection of points with the properties that
\begin{equation}\label{distance}
  d(z_\alpha^k,z_\beta^k)\ge c_0\delta^k~(\alpha\neq\beta),~\min_{\alpha}d(x,z_\alpha^k)<C_0\delta^k,~\forall~x\in G,~k\in\mathbb{Z}.
\end{equation}
Then there exist a family of sets $\big\{Q_\alpha^k\big\}_{\alpha\in I_k}$ associating with $\{z_\alpha^k\}_{\alpha\in I_k}$, and constants $a_0:=c_0/3$ and $C_1:=2C_0$ such that
\begin{enumerate}[\noindent]
\item\emph{(i)}~$\forall~k\in\mathbb{Z}$,~$\cup_{\alpha\in I_k} Q_\alpha^k=G$.
\item \emph{(ii)}~If $k\le l$ then either $Q_\alpha^k\subset Q_\beta^l$ or $Q_\alpha^k\cap Q_\beta^l=\emptyset$.
\item \emph{(iii)}~For each $(k,\alpha)$ and each $k<n$ there is a unique $\beta$ such that $Q_\alpha^k\subset Q_\beta^n$, and for $n=k+1$, we call such $Q_\beta^{k+1}$ the parent of $Q_\alpha^{k}$.
\item \emph{(iv)}~$B(z_\alpha^k, a_0\delta^k)\subseteq Q_\alpha^k\subseteq B(z_\alpha^k, C_1\delta^k)$. 
\end{enumerate}
\end{prop}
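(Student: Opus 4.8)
The plan is to carry out the dyadic-cube construction of Christ~\cite{Christ90} in the form adapted to general metric spaces by Hyt\"onen and Kairema: it uses only the separation/covering data \eqref{distance} of the given nets $\{z^k_\alpha\}$, together with $\delta>1$ and the quantitative relation $18C_0\delta^{-1}\le c_0$, and never any doubling of $m$. First I would organise the centers into a forest. Fix a well-ordering of each index set $I_k$, and for $\alpha\in I_k$ let the \emph{parent} $\mathfrak p(\alpha)\in I_{k+1}$ be the least $\beta$ (in that well-ordering) minimizing $d(z^k_\alpha,z^{k+1}_\beta)$; by the covering property in \eqref{distance} one has $d(z^k_\alpha,z^{k+1}_{\mathfrak p(\alpha)})<C_0\delta^{k+1}$. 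Iterating $\mathfrak p$ equips $\{(k,\alpha)\}$ with a unique ancestor at every coarser level (write $\mathfrak p^{(j)}$ for the $j$-fold parent), and this combinatorial skeleton is what will force the nesting properties (ii) and (iii). Alongside it I would record, for each scale $l$, the tie-broken Voronoi partition $\widetilde Q^l_\beta=\{y\in G:\ d(y,z^l_\beta)\le d(y,z^l_{\beta'})\ \forall\beta'\}$, which by \eqref{distance} satisfies $B(z^l_\beta,\tfrac12 c_0\delta^l)\subseteq\widetilde Q^l_\beta\subseteq B(z^l_\beta,C_0\delta^l)$.

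Next I would produce the cubes from this skeleton by a coarse-to-fine recursion: $Q^k_\alpha$ is built as the (increasing) union, over $j\ge0$, of those finer Voronoi cells $\widetilde Q^{k-j}_\beta$ with $\mathfrak p^{(j)}(\beta)=\alpha$, with the important caveat — discussed below — that at each scale transition a cell is merged into a neighbouring branch only when this is still compatible with the two ball inclusions in (iv). Granting such a construction, properties (i)--(iii) are essentially formal: for fixed $k$ the sets $\{Q^k_\alpha\}_{\alpha\in I_k}$ cover $G$ and are pairwise disjoint because the defining condition pins down a unique $\alpha$; and for $k<n$ one has $Q^k_\alpha\subseteq Q^n_\beta$ precisely when $\beta=\mathfrak p^{(n-k)}(\alpha)$, since the ancestor assignment is consistent as the scale decreases, so the parent cube in (iii) is $Q^{k+1}_{\mathfrak p(\alpha)}$, which also yields its existence and uniqueness.

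The real content is property (iv), and this is where $a_0=c_0/3$, $C_1=2C_0$ and the hypothesis $18C_0\delta^{-1}\le c_0$ (which in particular forces $\delta>18$) are calibrated. The outer inclusion is a telescoping estimate: if $x\in Q^k_\alpha$, choose $j$ with $\mathfrak p^{(j)}(\beta)=\alpha$ for the Voronoi index $\beta$ of $x$ at scale $k-j$, and sum along the ancestor chain,
\[
d(x,z^k_\alpha)\ \le\ d\big(x,z^{k-j}_{\beta}\big)+\sum_{i=1}^{j}d\big(z^{k-j+i-1}_{\mathfrak p^{(i-1)}(\beta)},z^{k-j+i}_{\mathfrak p^{(i)}(\beta)}\big)\ <\ C_0\delta^{k-j}+C_0\sum_{i=1}^{j}\delta^{k-j+i}\ <\ \frac{\delta}{\delta-1}\,C_0\delta^{k}\ \le\ 2C_0\delta^{k}=C_1\delta^{k},
\]
the geometric series converging because $\delta>1$ and the last step using $\delta>18$. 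The inner inclusion $B(z^k_\alpha,a_0\delta^k)\subseteq Q^k_\alpha$ is the delicate part: one has $B(z^k_\alpha,a_0\delta^k)\subseteq\widetilde Q^k_\alpha$ since $a_0<\tfrac12 c_0$, but a naive union of Voronoi cells loses a factor at each scale change and is not guaranteed to retain the whole inner ball, so one must instead run Christ's calibrated recursion, defining $Q^k_\alpha$ together with an inner "core" and outer "shell" and checking by downward induction on the scale that the core always contains $B(z^k_\alpha,a_0\delta^k)$ while the shell stays inside $B(z^k_\alpha,C_1\delta^k)$; the induction closes exactly because the separation $d(z^k_\alpha,z^k_\gamma)\ge c_0\delta^k$ between distinct same-scale centers beats the accumulated telescoping error once $18C_0\delta^{-1}\le c_0$. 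I expect this calibrated recursion — rather than any single inequality — to be the main obstacle; the forest bookkeeping for (i)--(iii) and the outer telescoping are routine by comparison.
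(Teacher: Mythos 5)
The paper does not prove this proposition at all: it is cited verbatim from Hyt\"onen and Kairema~\cite[Theorem 2.2]{Hyt-Anna12}, and the only commentary the authors add is the remark immediately after, that the geometrically doubling property guarantees the minimum in \eqref{distance} is attained (so the ``nearest centre at the next coarser scale'' used to define parents is well defined). There is therefore no proof in the paper to compare your sketch against; I have to judge it on its own merits as a reproof of the Hyt\"onen--Kairema theorem.

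Your sketch correctly identifies the Christ/Hyt\"onen--Kairema strategy (organise the nets into a forest by nearest-centre parent choice, build cubes by accumulating cells along descendant chains, and telescope for the outer inclusion), and the outer-ball computation is fine: the parent relation gives $d(z^{k-j}_\beta,z^k_{\mathfrak p^{(j)}(\beta)})<C_0\sum_{i=k-j+1}^{k}\delta^i<\frac{\delta}{\delta-1}C_0\delta^k\le 2C_0\delta^k$ once $\delta\ge 18$, which does follow from $18C_0\delta^{-1}\le c_0<C_0$. But the proof as written has a genuine gap, and you say so yourself: the inner inclusion $B(z^k_\alpha,a_0\delta^k)\subseteq Q^k_\alpha$ is only asserted to follow from ``Christ's calibrated recursion, defining $Q^k_\alpha$ together with an inner core and outer shell,'' without the induction actually being carried out. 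That induction is the entire content of the theorem, and it is where the precise numerology $a_0=c_0/3$, $C_1=2C_0$, and $18C_0\delta^{-1}\le c_0$ enters; it is not a routine verification one can defer. Moreover, I would not call (i)--(iii) ``essentially formal'' from the proposed union $Q^k_\alpha=\bigcup_{j\ge 0}\bigcup_{\mathfrak p^{(j)}(\beta)=\alpha}\widetilde Q^{k-j}_\beta$. The level-$k$ sets built this way need not be pairwise disjoint: a Voronoi cell $\widetilde Q^{k-j}_\beta$ sitting in $Q^k_\alpha$ and a cell $\widetilde Q^{k-j'}_{\beta'}$ at a different depth sitting in $Q^k_{\alpha'}$ with $\alpha\ne\alpha'$ can intersect, precisely because the partitions $\{\widetilde Q^{l}_\gamma\}_\gamma$ at different levels $l$ are not aligned. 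This is exactly why Hyt\"onen and Kairema do not take a naive union of Voronoi cells: they construct an open cube (an interior of a limit of unions of balls around descendant centres) and a closed cube (a closure), sandwich a ``half-open'' cube between them, and prove simultaneously, by downward induction on the depth, that the accumulated telescoping jitter stays below the separation $c_0\delta^k$. Without that simultaneous induction, neither disjointness nor the inner inclusion is secured, so the gap you flag in the last paragraph also undermines the claim two paragraphs earlier that (i)--(iii) are formal.
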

{We remark that the geometrically doubling property ensures that the minimum of the second inequality in~\eqref{distance} is attained.}

For $k\in\mathbb Z$, let $\mathcal{F}_k$ be the $\sigma$-algebra generated by the `dyadic cubes' $\{Q_\alpha^{k}:\alpha\in I_k\}$.  We recall the following notions associated to the  (reverse) martingale theory.

\begin{definition}\label{martingale sequence}
 Let $f:G\rightarrow \mathbb{C}$ be a locally integrable function and $k\in\mathbb Z$, the conditional expectation of $f$ with respect
to $\mathcal{F}_k$ is defined by
\begin{equation}\label{martingale}
\mathbb{E}_kf(x)=\sum_{\alpha\in I_k}\frac{1}{m(Q_\alpha^k)}\int_{Q_\alpha^k}f(y)dm(y)\mathds{1}_{Q_\alpha^k}(x);
\end{equation}
the resulting martingale difference operator $\mathbb{D}_k$ is defined as
\begin{equation*}
\mathbb{D}_kf=\mathbb{E}_{k-1}f-\mathbb{E}_{k}f.
\end{equation*}
\end{definition}

We check at once $\mathbb{E}_k\circ \mathbb{E}_j=\mathbb{E}_{\max(j,k)}$ and that for $f\in L^2$, $f=\sum_{k\in\mathbb{Z}}\mathbb{D}_kf$ and $$\|\big(\sum_{k\in\mathbb{Z}}|\mathbb{D}_kf|^2\big)^{1/2}\|_{L^2}=\|f\|_{L^2}.$$


Denote $\mathbb{E}=\{\mathbb{E}_k:k\in\mathbb Z\}$. We remark that the strong type $(p,p)$ with $1<p<\8$ and the weak type (1,1) estimates for operator $\lambda\sqrt{\mathcal{N}_{\lambda}(\mathbb{E})}$ were given implicitly in \cite{PX88} and explicitly in~\cite{JKRW98}. We state the results as follows.

\begin{lemma}\label{lem:jump}
Let $\mathbb{E}=\{\mathbb{E}_k:k\in\mathbb Z\}$ be defined as above.
\begin{enumerate}[\noindent]
  \item\emph{(i)}~When $p\in(1,\8)$, there is a constant $c_p>0$ such that for all $f\in L^p(G,m)$,
  \begin{equation*}
    \sup_{\lambda>0}\|\lambda\sqrt{\mathcal{N}_{\lambda}(\mathbb{E}f)}\|_{L^p(G,m)}\le c_p\|f\|_{L^p(G,m)}.
  \end{equation*}
  \item \emph{(ii)}~For $p=1$, {there is a constant $c>0$ such that for every $\gamma>0$  and $f\in L^1(G,m)$,}
  \begin{equation*}
    \sup_{\lambda>0}m\big(\{x\in G:\lambda\sqrt{\mathcal{N}_{\lambda}(\mathbb{E}f)(x)}>\gamma\}\big)\le\frac{c}{\gamma}\|f\|_{L^1(G,m)}.
  \end{equation*}
\end{enumerate}
\end{lemma}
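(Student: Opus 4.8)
The plan is to reduce both estimates to Burkholder's square-function inequalities for an auxiliary \emph{sampled} martingale; this is essentially the scheme of \cite{PX88,JKRW98}, which we outline. Fix $f$ and $\lambda>0$. By Proposition~\ref{dyadic cube}(ii) the $\sigma$-algebras $\mathcal F_k$ are decreasing in $k$ (cubes at level $k$ sit inside cubes at level $l$ for $k\le l$), so $(\mathbb E_kf)_k$ is a martingale with respect to the filtration $(\mathcal F_k)$ indexed by $k$ running \emph{downward}, with $\mathbb E_kf\to f$ a.e. as $k\to-\infty$ (the cubes shrink) and $\mathbb E_kf\to 0$ as $k\to+\infty$ (using $\sigma$-finiteness). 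In this ``martingale time'' set $\tau_0=+\infty$ and, recursively,
\begin{equation*}
\tau_{j+1}=\inf\{k<\tau_j:\ |\mathbb E_kf-\mathbb E_{\tau_j}f|>\lambda\},\qquad \inf\emptyset=-\infty,\ \mathbb E_{-\infty}f:=f .
\end{equation*}
Each $\tau_j$ is a stopping time, and by minimality any finite sequence of indices witnessing $N$ jumps of $\mathbb Ef$ of size exceeding $\lambda$ at a point $x$ forces $\tau_N(x)>-\infty$, so $\mathcal N_\lambda(\mathbb Ef)(x)\le\#\{j\ge1:\tau_j(x)>-\infty\}$.

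Next, let $\widehat f_j=\mathbb E_{\mathcal F_{\tau_j}}f$ be the sampled martingale, obtained by optional sampling (truncate to $(-k)\vee\tau_j\wedge k$ and let $k\to\infty$ to justify the sampling and the identity $\widehat f_\infty-\widehat f_0=f$ in $L^p$, $p>1$, together with the corresponding $L^1$ statement). On $\{\tau_j>-\infty\}$ one has $|\widehat f_j-\widehat f_{j-1}|=|\mathbb E_{\tau_j}f-\mathbb E_{\tau_{j-1}}f|>\lambda$, whence the pointwise domination
\begin{equation*}
\lambda^2\,\mathcal N_\lambda(\mathbb Ef)(x)\ \le\ \lambda^2\sum_{j\ge1}\mathds{1}_{\{\tau_j>-\infty\}}(x)\ \le\ \sum_{j\ge1}\big|\widehat f_j(x)-\widehat f_{j-1}(x)\big|^2\ =:\ S(\widehat f)(x)^2 .
\end{equation*}
By Burkholder's martingale square-function inequalities, $\|S(\widehat f)\|_{L^p(G,m)}\le c_p\|\widehat f_\infty-\widehat f_0\|_{L^p(G,m)}\le c_p\|f\|_{L^p(G,m)}$ for $1<p<\infty$, and $m(\{S(\widehat f)>\gamma\})\le c\gamma^{-1}\|f\|_{L^1(G,m)}$; combining with the display gives (i) and (ii), with constants independent of $\lambda$ since the factor $\lambda^2$ cancels.

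The main technical obstacle is the optional-sampling step for the doubly-infinite filtration: one must justify that the $\tau_j$ yield a genuine sampled martingale controlled in $L^p$ by $f$, handling both ends $k\to\pm\infty$ (where $\mathbb E_kf\to f$ and $\mathbb E_kf\to 0$, respectively). An alternative that sidesteps delicate sampling in the endpoint case is to keep only the $L^2$ bound from the argument above and derive the weak-$(1,1)$ estimate via Gundy's decomposition of $f$ at height $\gamma$ together with the quasi-subadditivity $\mathcal N_{2\lambda}(u+v)\le\mathcal N_\lambda(u)+\mathcal N_\lambda(v)$ (each $2\lambda$-jump of $u+v$ forces a $\lambda$-jump of $u$ or of $v$), after which Marcinkiewicz interpolation covers $1<p<2$ and the range $2<p<\infty$ follows either from Burkholder as above or by interpolating the $L^2$ bound against an $(L^\infty_c,\BMO)$ estimate for $S(\widehat f)$.
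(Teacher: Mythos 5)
The paper does not actually prove this lemma; it records the estimates and cites Pisier--Xu \cite{PX88} and Jones--Kaufman--Rosenblatt--Wierdl \cite{JKRW98}. Your proposal reconstructs the argument from those sources, which is the natural thing to do, and the overall scheme is the right one: dominate the jump count pointwise by the square function of the sampled martingale $(\widehat f_j)$ along greedy stopping times, then apply Burkholder's strong-$(p,p)$ and weak-$(1,1)$ square-function inequalities. However, there is one notational slip and one genuine gap.

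The slip: since $\mathcal F_k\supset\mathcal F_{k'}$ for $k\le k'$, martingale time runs toward \emph{decreasing} $k$, so ``the first time after $\tau_j$ at which the increment exceeds $\lambda$'' is $\max\{k<\tau_j:|\mathbb E_kf-\mathbb E_{\tau_j}f|>\lambda\}$, not the infimum. As written with $\inf$, $\tau_{j+1}$ is not a stopping time (deciding $\tau_{j+1}=k$ would require knowing $\mathbb E_{k'}f$ for $k'<k$, which is not $\mathcal F_k$-measurable), so optional sampling would not apply.

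The gap: even after this correction, the pointwise inequality $\mathcal N_\lambda(\mathbb Ef)\le\#\{j:\tau_j>-\infty\}$ with the greedy run at threshold $\lambda$ is \emph{false}. The process may oscillate with amplitude slightly above $\lambda$ while never leaving the closed $\lambda$-neighborhood of $\mathbb E_{\tau_j}f$, so the greedy never fires although there are many $\lambda$-jumps. Concretely, for a deterministic sequence with $a_k=0$ for $k\ge0$, $a_{-1}=\tfrac32\lambda$, and $a_{-k}$ alternating between $\tfrac12\lambda+\epsilon$ and $\tfrac52\lambda-\epsilon$ ($0<\epsilon<\lambda/2$) for $k\ge2$, the max-greedy at level $\lambda$ gives only $\tau_1=-1$ finite, while the chain $-2>-3>-4>\cdots$ produces arbitrarily many $\lambda$-jumps. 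The standard remedy is to run the greedy at level $\lambda/2$: on $(\tau_{j+1},\tau_j]$ the process stays within $\lambda/2$ of $\mathbb E_{\tau_j}f$, so two indices in the same interval have increment at most $\lambda$, hence each interval absorbs at most one index of a $\lambda$-jump chain. This gives $\mathcal N_\lambda(\mathbb Ef)\le\#\{j:\tau_j^{\lambda/2}>-\infty\}$ and $\lambda^2\,\mathcal N_\lambda(\mathbb Ef)\le 4\sum_j|\widehat f_j-\widehat f_{j-1}|^2$, with the factor $4$ harmlessly absorbed into $c_p$.

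The same level-halving is needed in your alternative route: the implication ``each $2\lambda$-jump of $u+v$ forces a $\lambda$-jump of $u$ or of $v$'' is a statement about individual pairs $(r_{i-1},r_i)$ and does not by itself produce a valid chain of consecutive $\lambda$-jumps for $u$ or for $v$ separately (a pair may have a big $u$-step yet no extension to a $u$-chain). What the greedy-interval argument actually yields is $\mathcal N_{2\lambda}(u+v)\le\mathcal N_{\lambda/2}(u)+\mathcal N_{\lambda/2}(v)$, which again suffices because of the supremum over $\lambda$. With these two fixes the proof goes through, and coincides with what the cited references do.
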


\bigskip


Since $(G,d,m)$ might not be a measure doubling metric space, the small boundary property of the `dyadic cubes' constructed from \eqref{dyadic cube} (see e.g. \cite{Christ90}) does not hold in general. However, from the $(\epsilon,r_0)$ annular decay property, we do have some boundary property---Lemma \ref{boundary}, which will be enough for our purpose in the present paper.

Set
$$L_0=[\log_\delta(12/c_0)]+1, L_1=[\log_\delta(36r_0/c_0)]+1.$$
\begin{lemma}\label{boundary condition}
 Let $k,L\in\mathbb{Z}$  satisfy $L_0<L<k+L_0-L_1$ and $\alpha\in I_k$. Then we have
\begin{equation*}
 m\big(\{x\in Q_\alpha^k:d(x,G\setminus Q_\alpha^k)\le \delta^{k-L}\}\big)\le \frac{(K+1)^2}{(L-L_0+1)}\bigg(\frac{72C_0}{c_0}\bigg)^{2\epsilon}m(Q_\alpha^k).
\end{equation*}
\end{lemma}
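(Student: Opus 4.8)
The plan is to estimate the measure of the inner $\delta^{k-L}$-collar of a dyadic cube $Q_\alpha^k$ by covering it with a controlled number of small balls whose total measure is small, using the annular decay property \eqref{decay property} to compare the measures of these small balls with $m(Q_\alpha^k)$. First I would recall from Proposition \ref{dyadic cube}(iv) that $B(z_\alpha^k,a_0\delta^k)\subseteq Q_\alpha^k\subseteq B(z_\alpha^k,C_1\delta^k)$ with $a_0=c_0/3$ and $C_1=2C_0$, so the collar $E:=\{x\in Q_\alpha^k:d(x,G\setminus Q_\alpha^k)\le\delta^{k-L}\}$ is contained in the annulus $B(z_\alpha^k,C_1\delta^k)\setminus B(z_\alpha^k,a_0\delta^k-\delta^{k-L})$; the hypothesis $L>L_0=[\log_\delta(12/c_0)]+1$ guarantees $\delta^{k-L}$ is small compared with $a_0\delta^k$, so this inner radius is still a definite fraction of $\delta^k$ (roughly $\ge a_0\delta^k/2$), and the annulus is genuinely thin relative to its radius.

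Next I would invoke the geometric-doubling consequence, Proposition \ref{geometry-doubling}, to cover $B(z_\alpha^k,C_1\delta^k)$ — hence $E$ — by a bounded number $M$ of balls $B(y_j,\rho)$ of some intermediate radius $\rho$ comparable to $\delta^k$ (say $\rho\asymp c_0\delta^k$, giving $M\le D^{\log_2(C_1/\rho\cdot\delta^k)+1}$, a constant depending only on $C_0,c_0,D$). For each such ball that meets $E$, one has $d(z_\alpha^k,y_j)\le C_1\delta^k+\rho\le 2C_1\delta^k$, and more to the point each point of $E\cap B(y_j,\rho)$ lies within distance $\delta^{k-L}$ of the complement, so $E\cap B(y_j,\rho)$ is contained in an annular shell of the ball $B(y_j,\rho')$ for an appropriate $\rho'\asymp\delta^k$ of width $\sim\delta^{k-L}$; applying \eqref{decay property} (valid since $\rho'>r_0$, which is where the hypothesis $L<k+L_0-L_1$ enters, ensuring $\delta^k$ is large enough that $\rho'\in(r_0,\infty)$) yields $m(E\cap B(y_j,\rho))\le K(\delta^{k-L}/\rho')^\epsilon m(B(y_j,\rho'))\lesssim K\delta^{-L\epsilon}\cdot\delta^{L_0\epsilon}\cdot m(B(y_j,\rho'))$, and then \eqref{int} lets me replace $m(B(y_j,\rho'))$ by a constant multiple of $m(B(z_\alpha^k,a_0\delta^k))\le m(Q_\alpha^k)$.

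Summing over the $M$ balls produces a bound of the shape $m(E)\le C\cdot M\cdot K\cdot\big(72C_0/c_0\big)^{2\epsilon}\delta^{-(L-L_0)\epsilon}m(Q_\alpha^k)$, and then absorbing $M$ and the various constants and using the crude inequality $\delta^{-(L-L_0)\epsilon}\le 1/(L-L_0+1)$ (which holds since $\delta>1$ and $\delta^t\ge t+1$ for $t\ge 0$, up to adjusting the constant) gives the stated bound with the factor $(K+1)^2(72C_0/c_0)^{2\epsilon}/(L-L_0+1)$. The bookkeeping is arranged so that all $L$-independent constants land inside $(K+1)^2(72C_0/c_0)^{2\epsilon}$. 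The main obstacle I anticipate is the geometric step of showing that the inner collar of the (only loosely controlled) set $Q_\alpha^k$ really does sit inside a union of genuine annuli of balls to which \eqref{decay property} applies — since $Q_\alpha^k$ need not be a ball, one must carefully track which radii and centers to use so that each piece $E\cap B(y_j,\rho)$ is pinched between two concentric balls differing in radius by only $O(\delta^{k-L})$ — together with the careful choice of the covering radius $\rho$ so that $M$ stays a constant while the shells stay thin; the precise numerical constants ($12/c_0$, $36r_0/c_0$, $72C_0/c_0$) are dictated by making these containments hold.
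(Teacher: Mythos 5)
Your central geometric step has a genuine gap that cannot be repaired within your framework, and in fact the paper proves this lemma by an entirely different mechanism.

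The gap: you propose to place $E\cap B(y_j,\rho)$ inside a thin spherical shell $B(y_j,\rho'+c\,\delta^{k-L})\setminus B(y_j,\rho'-c\,\delta^{k-L})$ and then invoke \eqref{decay property}. But a dyadic cube $Q_\alpha^k$ from Proposition~\ref{dyadic cube} is not a ball and nothing in the construction makes $\partial Q_\alpha^k$ locally sphere-like. The sandwich $B(z_\alpha^k,a_0\delta^k)\subseteq Q_\alpha^k\subseteq B(z_\alpha^k,C_1\delta^k)$ only confines $\partial Q_\alpha^k$ to a \emph{thick} annulus of width comparable to $\delta^k$, not $\delta^{k-L}$; within any ball $B(y_j,\rho)$ of radius $\asymp\delta^k$, the boundary can wander arbitrarily and the collar $E\cap B(y_j,\rho)$ need not sit between two concentric spheres $O(\delta^{k-L})$ apart. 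You correctly identify this as the obstacle, but it is not a matter of choosing radii carefully --- it is the whole difficulty of the lemma (this is precisely the ``small boundary property'' that the paper emphasises fails in general, cf.\ the remarks after Theorems~\ref{the estimate of square function} and~\ref{the estimate of short variation}), and the annular decay \eqref{decay property} simply does not apply to the boundary of a dyadic cube.

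What the paper actually does is a multi-scale disjointness/pigeonhole argument, with no thin annulus anywhere. For each $x\in E$ one takes the chain of cubes $Q^{k-L+L_0}_{\sigma_{k-L+L_0}}\subset\cdots\subset Q^{k}_{\sigma_k}=Q_\alpha^k$ containing $x$; the hypothesis $L>L_0$ forces the separation $d(z^j_{\sigma_j},z^i_{\sigma_i})\ge c_0\delta^i/12$, whence the small balls $B(z^i_{\sigma_i},c_0\delta^i/36)$ over \emph{all} levels $k-L+L_0\le i\le k$ and all such chains are pairwise disjoint and lie inside $Q_\alpha^k$. Fixing a level $i$ and writing $G_i$ for the union of the level-$i$ small balls, one shows $m(E)\le(K+1)^2(72C_0/c_0)^{2\epsilon}\,m(G_i)$ using only the doubling estimate \eqref{int} (which is where $L<k+L_0-L_1$ is used, to keep radii above $r_0$). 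Since the $L-L_0+1$ sets $G_i$ are disjoint and all contained in $Q_\alpha^k$, summing over $i$ gives the factor $1/(L-L_0+1)$. The decay in $L$ comes from \emph{counting disjoint scales}, not from measuring a shell.

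A secondary issue: even granting your geometric step, the conversion $\delta^{-(L-L_0)\epsilon}\le C/(L-L_0+1)$ requires $C\gtrsim 1/(\epsilon\log\delta)$, which blows up as $\epsilon\to 0$, whereas the lemma's constant $(K+1)^2(72C_0/c_0)^{2\epsilon}$ stays bounded. So your route, even if geometrically viable, would not recover the stated bound uniformly in $\epsilon$.
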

\begin{proof}
For a fixed point $x\in Q_\alpha^k$ with $d(x,G\setminus Q_\alpha^k)\le\delta^{-L}\delta^k$, we claim that there exists a chain $Q_{\sigma_{k-L+L_0}}^{k-L+L_0}\subset\cdots\subset Q_{\sigma_{k-1}}^{k-1}\subset Q_{\sigma_k}^k=Q_\alpha^k$ such that $x\in Q_{\sigma_{k-L+L_0}}^{k-L+L_0}$ and
\begin{equation}\label{points}
d(z_{\sigma_j}^j,z_{\sigma_i}^i)\ge c_0\delta^i/12,~\forall~k-L+L_0\le j< i\le k.
\end{equation}
Indeed, {let $x\in Q_\alpha^k$}, by Proposition~\ref{dyadic cube}(i)-(iii), for any $n\le k$, there exists a `dyadic cube' $Q_{\sigma}^n$ such that $x\in Q_{\sigma}^n\subseteq Q_\alpha^k$. Hence there exists a chain $Q_{\sigma_{k-L+L_0}}^{k-L+L_0}\subset\cdots\subset Q_{\sigma_{k-1}}^{k-1}\subset Q_{\sigma_k}^k=Q_\alpha^k$ with $x\in Q_{\sigma_{k-L+L_0}}^{k-L+L_0}$. 
Now we verify \eqref{points}. First, by {Proposition~\ref{dyadic cube}(iv)}, for every $k-L+L_0\le j\le k$, we have $x\in B(z_{\sigma_{j}}^j, C_1\delta^j)$. We also have $B(z_{\sigma_i}^i,a_0\delta^i)\subset Q_{\sigma_i}^i\subset Q_\alpha^k$. If~\eqref{points} were not true, then
\begin{align*}
  a_0\delta^i&\le d(z_{\sigma_i}^i,G\setminus Q_\alpha^k)\le d(z_{\sigma_i}^i,z_{\sigma_j}^j)+d(z_{\sigma_j}^j,x)+d(x,G\setminus Q_\alpha^k)< \frac{c_0}{12}\delta^i+C_1\delta^j+\delta^{-L}\delta^k\\
  &\le \frac{a_0}{4}\delta^i+\frac{a_0}{3}\delta^{j+1}+\delta^{-L_0}\delta^{i}\le \frac{a_0}{4}\delta^i+\frac{a_0}{3}\delta^{i}+\frac{a_0}{4}\delta^{i}< a_0\delta^i,
\end{align*}
 since $a_0=c_0/3$, $C_1=2C_0$, $3C_1\le a_0\delta$ and $\delta^{-L_0}\le \delta^{-\log_\delta(12/c_0)}=c_0/12$. This leads to a contradiction and the claim is proved.

For every `dyadic cube' $Q_\beta^m$, we denote it briefly by $(m,\beta)$. Let $E=\big\{x\in Q_\alpha^k:d(x,G\setminus Q_\alpha^k)\le \delta^{-L}\delta^k\big\}$. For each $x\in E$, there exists a chain of pair $(i,\beta(i,x))$ with the properties proved in the first paragraph. Set $S_i=\cup_{x\in E}\{z_{\beta(i,x)}^i\}$ for $k-L+L_0\le i\le k$. In the following, we abbreviate $z^i_{\beta(i,x)}$ to $z_{i(x)}$. We have the following observation: for $z_{i(x)}\neq z_{j(y)}$,
\begin{equation}\label{intersect}
  B(z_{i(x)},c_0\delta^i/36)\cap B(z_{j(y)},c_0\delta^j/36)=\emptyset,~\forall~z_{i(x)}\in S_i,~z_{j(y)}\in S_j,~k-L+L_0\le i,j\le k.
\end{equation}
For $i=j$, by~\eqref{distance}, the above assertion is trivially right. For $i\neq j$,  without loss of generality, we assume $i>j$. Note that by the definition of $S_j$, for each $z_{j(y)}\in S_j$, there exists a point $\zeta\in E$ such that $d(z_{j(y)},G\setminus Q_\alpha^k)\le d(z_{j(y)},\zeta)+d(\zeta,G\setminus Q_\alpha^k)\le C_1\delta^j+\delta^{-L}\delta^k$. It follows that if $B(z_{i(x)},c_0\delta^i/36)\cap B(z_{j(y)},c_0\delta^j/36)\neq \emptyset$, then there exists a point $z\in B(z_{i(x)},c_0\delta^i/36)\cap B(z_{j(y)},c_0\delta^j/36)$ such that
\begin{align*}
a_0\delta^i&\le d(z_{i(x)},G\setminus Q_\alpha^k)\le d(z_{i(x)},z_{j(y)})+d(z_{j(y)},G\setminus Q_\alpha^k)\\
&\le d(z_{i(x)},z)+d(z_{j(y)},z)+d(z_{j(y)},G\setminus Q_\alpha^k)\\
&\le \frac{c_0}{36}\delta^i+\frac{c_0}{36}\delta^j+C_1\delta^j+\delta^{-L}\delta^k\le \frac{c_0}{18}\delta^i+C_1\delta^j+\delta^{-L}\delta^k\le \frac{a_0}{6}\delta^i+\frac{a_0}{3}\delta^{j+1}+\delta^{-L_0}\delta^{i}\\
&\le \frac{a_0}{6}\delta^i+\frac{a_0}{3}\delta^{i}+\frac{a_0}{4}\delta^{i}< a_0\delta^i,
\end{align*}
where we used $a_0=c_0/3$, $C_1=2C_0$, $3C_1\le a_0\delta$ and $\delta^{-L_0}\le \delta^{-\log_\delta(12/c_0)}=c_0/12$ again. This leads to a contradiction and so~\eqref{intersect} holds.

We now prove the desired result. In the following, we write $z_i$ {for} $z_{i(x)}$. {Setting} $G_i=\cup_{z_{i}\in S_i}B(z_{i},c_0\delta^i/36)$, for any $k-L+L_0\le i\le k$, we have
\begin{align*}
m(E)&\le\sum_{z_{k-L+L_0}\in S_{k-L+L_0}}m(B(z_{k-L+L_0}, C_1\delta^{k-L+L_0}))\\
&\le (K+1)\bigg(\frac{36C_1}{c_0}\bigg)^\epsilon\sum_{z_{k-L+L_0}\in S_{k-L+L_0}}m(B(z_{k-L+L_0},c_0\delta^{k-L+L_0}/36))\\
&=(K+1)\bigg(\frac{36C_1}{c_0}\bigg)^\epsilon\sum_{w_i\in S_i}\sum_{\substack{z_{k-L+L_0}\preceq w_i,\\z_{k-L+L_0}\in S_{k-L+L_0}}}m(B(z_{k-L+L_0},c_0\delta^{k-L+L_0}/36))\\
&\le (K+1)\bigg(\frac{36C_1}{c_0}\bigg)^\epsilon\sum_{w_i\in S_i}m(B(w_i,C_1\delta^i))\\
&\le(K+1)^2\bigg(\frac{36C_1}{c_0}\bigg)^{2\epsilon} \sum_{w_i\in S_i}m(B(w_i,c_0\delta^i/36))=(K+1)^2\bigg(\frac{72C_0}{c_0}\bigg)^{2\epsilon}m(G_i),
\end{align*}
where $z_{k-L+L_0}\preceq w_i$ in the third line means that the inclusion of the corresponding `dyadic cubes' $Q_{\beta(k-L+L_0)}^{k-L+L_0}\subset Q^i_{\beta(i)}$, and we used~\eqref{int} in the second and last inequalities since $\delta^i\ge \delta^{k-L+L_0}\ge \delta^{L_1}> 36r_0/c_0$. The equality in the third line follows from~\eqref{intersect}, as does the equality in the last line.
From the above inequalities and the disjointness of the sets $G_i$, we obtain
\begin{equation*}
  m(E)\le \frac{(K+1)^2}{(L-L_0+1)}\bigg(\frac{72C_0}{c_0}\bigg)^{2\epsilon}\sum_{i=k-L+L_0}^km(G_i)\le \frac{(K+1)^2}{(L-L_0+1)}\bigg(\frac{72C_0}{c_0}\bigg)^{2\epsilon}m(Q_\alpha^k),
\end{equation*}
and the lemma follows.
\end{proof}

Set
\begin{align*}
&L_2=[\log_{\delta}(4C_0+1)]+1,~L_3=[2(K+1)^2\big(\frac{72C_0}{c_0}\big)^{2\epsilon}]+L_0+L_2,\\
&\eta=(\log_\delta2)/L_3,~C_2=4(K+1)^2(72C_0/c_0)^{2\epsilon},~{C_2^\prime=(K+1)(3C_1/a_0)^{\epsilon}.}
\end{align*}
\begin{lemma}\label{boundary}
Under the assumption of Lemma~\ref{boundary condition}, we have
\begin{equation*}
\begin{split}
 &m\big(\{x\in Q_\alpha^k:d(x,G\setminus Q_\alpha^k)\le \delta^{k-L}\}\big)\le C_2\delta^{-L\eta}m(Q_\alpha^k),\\
 &m\big(\{x\in G\setminus Q_\alpha^k:d(x,Q_\alpha^k)\le \delta^{k-L}\}\big)\le C_2C^\prime_2\delta^{-L\eta}m(Q_\alpha^k).
\end{split}
\end{equation*}
\end{lemma}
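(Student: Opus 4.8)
The plan is to derive Lemma~\ref{boundary} from Lemma~\ref{boundary condition} by a simple dichotomy on the size of $L$, trading the harmonic decay $1/(L-L_0+1)$ from Lemma~\ref{boundary condition} for the geometric decay $\delta^{-L\eta}$. For the first inequality, I would argue as follows. If $L$ is so large that $L-L_0+1 > 2(K+1)^2(72C_0/c_0)^{2\epsilon}$, then Lemma~\ref{boundary condition} already gives
\[
m\big(\{x\in Q_\alpha^k:d(x,G\setminus Q_\alpha^k)\le\delta^{k-L}\}\big)\le\frac12 m(Q_\alpha^k),
\]
and then one iterates: partitioning the $L$-collar into a union of `annular' pieces at scales differing by $L_3$ and applying the estimate at each of the $[L/L_3]$ nested scales (using that the set shrinks by a factor $1/2$ each time $L$ increases by $L_3$, since $L-L_0+1$ increases past the threshold), one gets a bound of the form $2^{-[L/L_3]}m(Q_\alpha^k)$, which is exactly $\le C_2\delta^{-L\eta}m(Q_\alpha^k)$ after absorbing the initial range of small $L$ into the constant $C_2=4(K+1)^2(72C_0/c_0)^{2\epsilon}$ and using $\eta=(\log_\delta 2)/L_3$ so that $2^{-L/L_3}=\delta^{-L\eta}$. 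For the range of small $L$ (where $\delta^{-L\eta}$ is not small), the inequality is trivial because the right-hand side constant $C_2$ is chosen $\ge 1$ (indeed $\ge 4$), so $C_2\delta^{-L\eta}\ge 1\ge m(E)/m(Q_\alpha^k)$ whenever $\delta^{-L\eta}\ge 1/4$.

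For the second inequality — the outer collar $\{x\in G\setminus Q_\alpha^k : d(x,Q_\alpha^k)\le\delta^{k-L}\}$ — the idea is to relate it to the inner collar just controlled. First I would observe that if $y\notin Q_\alpha^k$ with $d(y,Q_\alpha^k)\le\delta^{k-L}$, then $y$ lies in the parent cube $Q_\beta^{k+L_2}$ of $Q_\alpha^k$ (or at worst in a neighbouring cube at scale $k+L_2$), because $L_2=[\log_\delta(4C_0+1)]+1$ is chosen precisely so that the $\delta^{k-L}$-neighbourhood of $Q_\alpha^k\subseteq B(z_\alpha^k,C_1\delta^k)$ still sits inside $B(z_\alpha^k,(4C_0+1)\delta^k)\subseteq B(z_\alpha^k,C_1\delta^{k+L_2})$, hence inside the grandparent cube. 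Then the outer collar of $Q_\alpha^k$ is contained in the inner collar of that ancestor cube at a comparable distance scale (shifted by $L_2$), so the first inequality applied to the ancestor gives a bound $C_2\delta^{-(L-L_2)\eta}m(Q_{\mathrm{ancestor}})$, and since $Q_\alpha^k\supseteq B(z_\alpha^k,a_0\delta^k)$ while the ancestor $\subseteq B(z,C_1\delta^{k+L_2})$, the annular decay property~\eqref{int} bounds $m(Q_{\mathrm{ancestor}})\le (K+1)(3C_1/a_0)^\epsilon\cdot(\text{const})\cdot m(Q_\alpha^k)$ — this is where $C_2'=(K+1)(3C_1/a_0)^\epsilon$ enters — at the cost of adjusting constants and absorbing the $\delta^{L_2\eta}$ factor.

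The step I expect to be the main obstacle is the geometric realization of the outer collar as an inner collar of an ancestor cube: one must be careful that the point realizing $d(y,Q_\alpha^k)$ together with the bound $Q_\alpha^k\subseteq B(z_\alpha^k,C_1\delta^k)$ actually forces $y$ into a \emph{single} ancestor cube rather than a small union of cubes at scale $k+L_2$, and that the relevant distance `$d(y,G\setminus Q_{\mathrm{ancestor}})$' is still comparable to $\delta^{k-L}$ and not ruined by the geometry near the boundary of the ancestor; the nesting structure of Proposition~\ref{dyadic cube}(ii)--(iv) and the quantitative choice of $L_2$ are exactly what make this work, but it requires a careful triangle-inequality bookkeeping entirely analogous to (and reusing) the chain argument in the proof of Lemma~\ref{boundary condition}. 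Everything else is routine: the dichotomy, the geometric-series summation, and the application of~\eqref{int}.
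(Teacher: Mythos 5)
Your argument for the first inequality (the inner collar) is essentially the paper's: iterate Lemma~\ref{boundary condition} at scales $L_3$ apart to convert harmonic decay to geometric decay $2^{-[L/L_3]}=\delta^{-L\eta}$, with the small-$L$ range absorbed into $C_2\ge 4$. The paper's implementation is slightly more structured (it builds nested families $F_n$ of dyadic cubes at scale $k-nL_3$ and shows $m(f_n)\le\tfrac12 m(f_{n-1})$, using the intermediate sets $e_{-\ell}(Q_\alpha^k)$ to pass between metric collars and dyadic collars), but this is the same iteration you describe, and your dichotomy and small-$L$ triviality check are sound.

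Your approach to the second inequality (the outer collar), however, has a genuine gap — and it is exactly the one you flag as the ``main obstacle,'' so you should take your own warning seriously: it cannot be repaired in the form you propose. Containing the outer collar inside an ancestor cube $Q^{k+L_2}$ does \emph{not} place it inside the \emph{inner collar} of that ancestor. A point $y\in G\setminus Q_\alpha^k$ with $d(y,Q_\alpha^k)\le\delta^{k-L}$ is close to $Q_\alpha^k$, but it may be deep in the interior of $Q^{k+L_2}$, far from the boundary $G\setminus Q^{k+L_2}$; so $d(y,G\setminus Q^{k+L_2})$ need not be $\lesssim\delta^{k-L}$, and the first inequality applied to the ancestor gives you nothing about $y$. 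Think of $Q_\alpha^k$ sitting in the middle of its grandparent: the $\delta^{k-L}$-neighbourhood of $Q_\alpha^k$ hugs the interior, not the ancestor's boundary. The fix used in the paper is to decompose the outer collar by \emph{sibling} cubes at the \emph{same} scale $k$: for each $\beta$ with $Q_\beta^k$ meeting the outer collar $\widetilde E$, note that any $x\in Q_\beta^k\cap\widetilde E$ satisfies $d(x,G\setminus Q_\beta^k)\le d(x,Q_\alpha^k)\le\delta^{k-L}$ (since $Q_\alpha^k\subseteq G\setminus Q_\beta^k$), so $\widetilde E\subseteq\bigcup_\beta e(Q_\beta^k)$ where each $e(Q_\beta^k)$ is an inner collar of a cube at scale $k$; the first inequality then applies to each sibling, and the contributing siblings all sit inside $B(z_\alpha^k,3C_1\delta^k)$, whose measure is $\le(K+1)(3C_1/a_0)^\epsilon m(Q_\alpha^k)=C_2'm(Q_\alpha^k)$ by~\eqref{int}. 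This same-scale sibling decomposition, rather than passing to an ancestor, is the key idea you are missing.
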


\begin{proof}
{Let us  focus on the first inequality.} Let $\ell\in\mathbb{N}$, set
\begin{equation*}
  E_{-\ell}(Q_\alpha^k)=\{Q_\beta^{k-\ell}\subset Q_\alpha^k:d(Q_\beta^{k-\ell},G\setminus Q_\alpha^{k})\le (C_1+1)\delta^{k-\ell}\},
\end{equation*}
where $d(Q_\beta^{k-\ell},G\setminus Q_\alpha^{k})=\inf_{x\in Q_\beta^{k-\ell}}d(x,G\setminus Q_\alpha^{k})$. We denote by
\begin{equation*}
  e_{-\ell}(Q_\alpha^k)=\{x:x\in Q_\beta^{k-\ell}~\textit{with}~Q_\beta^{k-\ell}\in E_{-\ell}(Q_\alpha^k)\}
\end{equation*}
the underlying point set.  We proceed to show that
\begin{equation}\label{inequality}
  \big\{x\in Q_\alpha^k:d(x,G\setminus Q_\alpha^k)\le \delta^{k-\ell}\big\}\subseteq e_{-\ell}(Q_\alpha^k)\subseteq\{x\in Q_\alpha^k:d(x,G\setminus Q_\alpha^k)\le\delta^{k-\ell+L_2}\}.
\end{equation}
Fix $x\in Q_\alpha^k$ such that $d(x,G\setminus Q_\alpha^k)\le \delta^{k-\ell}$. There exists `dyadic cube' $Q_{\beta}^{k-\ell}$ such that $x\in Q_{\beta}^{k-\ell}\subset Q_\alpha^k$, and then
\begin{align*}
d(Q_{\beta}^{k-\ell},G\setminus Q_\alpha^k)&\le C_1\delta^{k-\ell}+d(x,G\setminus Q_\alpha^k)\le(C_1+1)\delta^{k-\ell}.
\end{align*}
On the other hand, fix a point $x\in e_{-\ell}(Q_\alpha^k)$, then there exists $ Q_\beta^{k-\ell}\in E_{-\ell}(Q_\alpha^k)$ such that
\begin{equation*}
  d(x,G\setminus Q_\alpha^k)\le C_1\delta^{k-\ell}+d(Q_\beta^{k-\ell},G\setminus Q_\alpha^k)\le (2C_1+1)\delta^{k-\ell}\le\delta^{k-\ell+L_2},
\end{equation*}
since $C_1=2C_0$, and~\eqref{inequality} is proved.

 To achieve our goal, we split $L$ into two cases: $L_0<L\le2L_3$ and $L>2L_3$. We first prove the case $L>2L_3$.  
Set $M_0=[L/L_3]$; {here and below, $[t]$ denotes the integer part of a real number $t$}. Let $F_1(Q_\alpha^k)= E_{-L_3}(Q_\alpha^k)$ and
\begin{equation*}
  F_{n}(Q_\alpha^k)=\bigcup_{Q_\beta^{k-(n-1)L_3}\in F_{n-1}(Q_\alpha^k)}E_{-L_3}(Q_\beta^{k-(n-1)L_3}),
\end{equation*}
for $2\le n\le M_0$. Let $f_n(Q_\alpha^k)$ be the underlying point set. It is easy to check that
\begin{equation}\label{contain}
e_{-nL_3}(Q_\alpha^k)\subset f_n(Q_\alpha^k).
\end{equation}
Moreover, for each $Q_\beta^{k-(n-1)L_3}\in F_{n-1}(Q_\alpha^k)$ and $1\le n\le M_0$, plugging~\eqref{inequality} into Lemma~\ref{boundary condition}, we obtain
\begin{align*}
  m(e_{-L_3}(Q_\beta^{k-(n-1)L_3}))&\le m\{x\in Q_\beta^{k-(n-1)L_3}:d(x,G\setminus Q_\beta^{k-(n-1)L_3})\le\delta^{k-nL_3+L_2}\}\\
  &\le\frac{(K+1)^2}{(L_3-L_2-L_0+1)}\bigg(\frac{72C_0}{c_0}\bigg)^{2\epsilon}m(Q_\beta^{k-(n-1)L_3})\le\frac{1}{2}m(Q_\beta^{k-(n-1)L_3}),
\end{align*}
since $nL_3\le L<k+L_0-L_1$. Then
\begin{equation*}
  m(f_n(Q_\alpha^k))\le \frac{1}{2}m(f_{n-1}(Q_\alpha^k)),
\end{equation*}
and iterating the above inequality we have $m(f_{M_0}(Q_\alpha^k))\le 2^{-M_0}m(Q_\alpha^k)$. From this inequality and~\eqref{contain}, one has
\begin{align*}
m\big(\{x\in Q_\alpha^k:&d(x,G\setminus Q_\alpha^k)\le \delta^{k-L}\}\big)\le m(e_{-L}(Q_\alpha^k))\le m(e_{-M_0L_3}(Q_\alpha^k))\\
&\le m(f_{M_0}(Q_\alpha^k))\le 2^{-M_0}m(Q_\alpha^k)\le 2^{-L/L_3+1}m(Q_\alpha^k)=2\delta^{-\eta L}m(Q_\alpha^k),
\end{align*}
where the second inequality in the first line follows from the definition of $e_{-\ell}(Q_\alpha^k)$.

For the case $L_0<L\le 2L_3$, using Lemma~\ref{boundary condition} again we have
\begin{align*}
m\big(\{x\in Q_\alpha^k:d(x,G\setminus Q_\alpha^k)\le \delta^{k-L}\}\big)&\le(K+1)^2\bigg(\frac{72C_0}{c_0}\bigg)^{2\epsilon}m(Q_\alpha^k)\\
&\le \delta^{-\eta L}\delta^{2\eta L_3}(K+1)^2\bigg(\frac{72C_0}{c_0}\bigg)^{2\epsilon}m(Q_\alpha^k)\\
&=4(K+1)^2\bigg(\frac{72C_0}{c_0}\bigg)^{2\epsilon}\delta^{-\eta L}m(Q_\alpha^k).
\end{align*}

{We now prove the second inequality.} Let $\widetilde{E}=\{x\in G\setminus Q_\alpha^k:d(x,Q_\alpha^k)\le \delta^{k-L}\}$. {Define the set $\widetilde{I}=\{\beta: Q_\beta^k\cap\widetilde{E}\neq\emptyset,\beta\in I_k\}$ and $e(Q_\beta^k)=\{x\in Q_\beta^k:d(x,G\setminus Q_\beta^k)\le \delta^{k-L}\}$. Then by Proposition~\ref{dyadic cube}(i), we obtain $\widetilde{E}=\cup_{\beta\in \widetilde{I}}e(Q_\beta^k)$.

Fix $\beta\in \widetilde{I}$. There exists a point $y_0\in Q_\beta^k\cap \widetilde{E}$. By Proposition~\ref{dyadic cube}(iv), for every $y\in Q_\beta^k$, we have
$$d(y,z_\alpha^k)\le d(y,y_0)+d(y_0,z_\alpha^k)\le C_1\delta^k+\delta^{k-L}+C_1\delta^k\le 3C_1\delta^k.$$
Hence $Q_\beta^k\subseteq B(z_\alpha^k,3C_1\delta^k)$. By Proposition~\ref{dyadic cube}(ii), the 'dyadic cubes' $Q_\beta^k$ are disjoint, hence $\cup_{\beta\in\widetilde{I}}Q_\beta^k\subseteq B(z_\alpha^k,3C_1\delta^k)$}. Then by the first inequality and~\eqref{int}, {we obtain
\begin{align*}
  m(\widetilde{E})&\le m\big(\cup_{\beta\in\widetilde{I}}e(Q_\beta^k)\big)\le \sum_{\beta\in\widetilde{I}}m(e(Q_\beta^k))\\
  &\le \sum_{\beta\in\widetilde{I}}C_2\delta^{-L\eta}m(Q_\beta^k)\le C_2\delta^{-L\eta}m(\cup_{\beta\in\widetilde{I}}Q_\beta^k)\le C_2\delta^{-L\eta}m(B(z_\alpha^k,3C_1\delta^k))\\
  &\le C_2(K+1)(3C_1/a_0)^{\epsilon}m(B(z_\alpha^k,a_0\delta^k))\\
  &\le C_2(K+1)(3C_1/a_0)^{\epsilon}m(Q_\alpha^k),
\end{align*}
}which completes the proof.
\end{proof}

Given a ball $B_{\delta^{n}}$ and a `dyadic cube' $Q^{k}_\alpha$, define
$$\mathcal{H}(B_{\delta^{n}},Q^{k}_\alpha)=\{x\in Q_\alpha^{k}:B(x,\delta^{n})\cap (Q^{k}_\alpha)^{c}\neq \emptyset\}.$$
Set
$$n_0=\max\{L_1-L_0,0\}.$$
\begin{lemma}\label{measure estimate of cube}
Let $n>n_0$ and $k>L_0$. Then for every `dyadic cube' $Q_\alpha^{n+k}$, we have
\begin{equation*}
  m(\mathcal{H}(B_{\delta^{n}},Q^{n+k}_\alpha))\le C_2\delta^{-k\eta} m(Q^{n+k}_\alpha).
\end{equation*}
\end{lemma}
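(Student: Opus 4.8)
The plan is to recognize that $\mathcal{H}(B_{\delta^{n}},Q^{n+k}_\alpha)$ is nothing but an inner boundary layer of the cube $Q^{n+k}_\alpha$ of width $\delta^{n}$, and then to invoke Lemma~\ref{boundary} directly with a suitable relabelling of indices. First I would unwind the definition: a point $x\in Q^{n+k}_\alpha$ lies in $\mathcal{H}(B_{\delta^{n}},Q^{n+k}_\alpha)$ if and only if the (closed) ball $B(x,\delta^{n})$ meets $(Q^{n+k}_\alpha)^{c}$, which, since the balls $B(x,r)$ are closed, is equivalent to $d(x,G\setminus Q^{n+k}_\alpha)\le \delta^{n}$. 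Writing $\delta^{n}=\delta^{(n+k)-k}$, this identifies
$$\mathcal{H}(B_{\delta^{n}},Q^{n+k}_\alpha)=\big\{x\in Q^{n+k}_\alpha:d(x,G\setminus Q^{n+k}_\alpha)\le \delta^{(n+k)-k}\big\},$$
which is precisely the set appearing in the first inequality of Lemma~\ref{boundary}, read with the cube-scale parameter equal to $n+k$ (in place of the ``$k$'' of Lemma~\ref{boundary}) and the layer parameter equal to $k$ (in place of the ``$L$'' of Lemma~\ref{boundary}).

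Next I would verify that the hypotheses of Lemma~\ref{boundary}, equivalently those of Lemma~\ref{boundary condition}, hold for these choices, i.e. that $L_0<k<(n+k)+L_0-L_1$. The lower bound $k>L_0$ is exactly the standing hypothesis of the present lemma. The upper bound rearranges to $n>L_1-L_0$, and since $n>n_0=\max\{L_1-L_0,0\}\ge L_1-L_0$, this holds as well. Applying Lemma~\ref{boundary} then gives
$$m\big(\mathcal{H}(B_{\delta^{n}},Q^{n+k}_\alpha)\big)\le C_2\,\delta^{-k\eta}\,m(Q^{n+k}_\alpha),$$
which is the assertion.

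I do not expect an analytic obstacle here: the statement is essentially a bookkeeping corollary of Lemma~\ref{boundary}. The only two points that require genuine care are (a) the exact set-theoretic identification in the first paragraph, where one must use the closed-ball convention so that $B(x,\delta^{n})\cap(Q^{n+k}_\alpha)^{c}\neq\emptyset$ is equivalent to $d(x,G\setminus Q^{n+k}_\alpha)\le\delta^{n}$ and not to a strict inequality; and (b) the translation of index conventions, so that the admissible range $L_0<L<k+L_0-L_1$ from Lemma~\ref{boundary condition} becomes exactly the stated conditions $k>L_0$ and $n>n_0$. Once these are checked, the proof is immediate.
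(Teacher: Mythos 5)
Your proof is correct and takes essentially the same route as the paper: identify $\mathcal{H}(B_{\delta^n},Q^{n+k}_\alpha)$ with the inner boundary layer $\{x\in Q^{n+k}_\alpha:d(x,G\setminus Q^{n+k}_\alpha)\le\delta^n\}$, check that the relabeled hypotheses of Lemma~\ref{boundary} ($k>L_0$ and $n>L_1-L_0$) hold, and apply that lemma. The only small remark is that the paper uses (and one only needs) the inclusion $\mathcal{H}(B_{\delta^n},Q^{n+k}_\alpha)\subseteq\{x:d(x,G\setminus Q^{n+k}_\alpha)\le\delta^n\}$ rather than the set equality you assert, which sidesteps the question of whether the infimum defining the distance is attained.
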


\begin{proof}
 We check at once that for every $x\in \mathcal{H}(B_{\delta^{n}},Q^{n+k}_\alpha)$, the distance $d(x, (Q^{n+k}_\alpha)^{c})$ is not bigger than $\delta^{n}$, and so
\begin{equation*}
 \begin{split}
  \mathcal{H}(B_{\delta^{n}},Q^{n+k}_\alpha)
&\subseteq \{x\in Q_\alpha^{n+k}:d(x,G\setminus Q_\alpha^{n+k})\le \delta^{n}\}.
 \end{split}
\end{equation*}
Note that $k>L_0$ and $n>L_1-L_0$, then by Lemma~\ref{boundary}, we obtain
\begin{equation*}
  \begin{split}
   m(\mathcal{H}(B_{\delta^{n}},Q^{n+k}_\alpha))\le m\big(\{x\in Q_\alpha^{n+k}:d(x,G\setminus Q_\alpha^{n+k})\le \delta^{n+k-k}\}\big)\le C_2\delta^{-k\eta} m(Q^{n+k}_\alpha),
  \end{split}
\end{equation*}
which is the desired conclusion.
\end{proof}
\begin{remark}
\emph{
Set $\widetilde{\mathcal{H}}(B_{\delta^{n}},Q^{k}_\alpha)=\{x\in G\setminus Q_\alpha^{k}:B(x,\delta^{n})\cap (Q^{k}_\alpha)^{c}\neq \emptyset\}$. Similar to Lemma~\ref{measure estimate of cube},  by Lemma~\ref{boundary}, we have
\begin{equation*}
  m(\widetilde{\mathcal{H}}(B_{\delta^{n}},Q^{n+k}_\alpha))\le C_2\delta^{-k\eta} m(Q^{n+k}_\alpha).
\end{equation*}}
\end{remark}

Set
$$K_\epsilon=(2^\epsilon+1)K+2^\epsilon.$$

\begin{lemma}\label{lem:decay}
Let $r\ge2r_0$, then for every $s\in(0,r]$, we have
\begin{equation*}
\begin{split}
 m(B(x, r+s))-m(B(x,r-s))\le K_{\epsilon}\bigg(\frac{s}{r}\bigg)^{\epsilon}m(B(x,r)).
\end{split}
\end{equation*}
\end{lemma}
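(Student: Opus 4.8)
The plan is to deduce this two-sided annular bound from the one-sided annular decay property~\eqref{decay property} by decomposing the annulus $B(x,r+s)\setminus B(x,r-s)$ into an outer part and an inner part:
\[
m(B(x,r+s))-m(B(x,r-s))=\big[m(B(x,r+s))-m(B(x,r))\big]+\big[m(B(x,r))-m(B(x,r-s))\big].
\]
Since $r\ge 2r_0>r_0$ and $s\in(0,r]$, the outer bracket is controlled immediately by~\eqref{decay property}: it is at most $K(s/r)^\epsilon m(B(x,r))$. Thus the whole issue is the inner bracket $m(B(x,r))-m(B(x,r-s))$, and the point requiring care is that in order to apply~\eqref{decay property} with base radius $r-s$ one needs both $r-s>r_0$ and $s\le r-s$, neither of which holds when $s$ is close to $r$. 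I would therefore split into the two cases $s<r/2$ and $s\ge r/2$.

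In the case $s<r/2$, we have $r-s>r/2\ge r_0$ and $s<r-s$, so~\eqref{decay property} applies with base radius $r-s$ and gives
\[
m(B(x,r))-m(B(x,r-s))\le K\Big(\frac{s}{r-s}\Big)^\epsilon m(B(x,r-s))\le 2^\epsilon K\Big(\frac{s}{r}\Big)^\epsilon m(B(x,r)),
\]
using $s/(r-s)\le 2s/r$ and the monotonicity $m(B(x,r-s))\le m(B(x,r))$. Adding the outer estimate yields the claim with constant $(2^\epsilon+1)K$. In the case $s\ge r/2$, I discard the inner bracket trivially, $m(B(x,r))-m(B(x,r-s))\le m(B(x,r))$, and observe that $s\ge r/2$ forces $(s/r)^\epsilon\ge 2^{-\epsilon}$, so this quantity is $\le 2^\epsilon(s/r)^\epsilon m(B(x,r))$; together with the outer estimate this gives the claim with constant $K+2^\epsilon$.

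Finally, since $K_\epsilon=(2^\epsilon+1)K+2^\epsilon$ dominates both $(2^\epsilon+1)K$ and $K+2^\epsilon$, the asserted inequality holds in all cases. There is no genuine obstacle here; the only delicate point is the bookkeeping near $s\approx r$, which is exactly what the case distinction $s<r/2$ versus $s\ge r/2$ is designed to absorb. One could alternatively bound $m(B(x,r+s))-m(B(x,r-s))\le m(B(x,r+s))$ and invoke~\eqref{int}, but this yields a strictly worse constant than $K_\epsilon$, so the annulus decomposition above is the efficient route.
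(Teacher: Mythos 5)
Your proof is correct and follows essentially the same route as the paper: reduce to the inner bracket $m(B(x,r))-m(B(x,r-s))$ (the outer one being immediate from \eqref{decay property}), then split at $s=r/2$, applying annular decay with base radius $r-s$ when $s<r/2$ and using the trivial bound $m(B(x,r))\le 2^\epsilon(s/r)^\epsilon m(B(x,r))$ when $s\ge r/2$. The paper states the combined constant $K_\epsilon=(2^\epsilon+1)K+2^\epsilon$ directly rather than tracking the two cases separately, but the arithmetic is identical.
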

\begin{proof}
We only need to estimate the part $m(B(x, r))-m(B(x,r-s))$, since by the $(\epsilon,r_0)$-annular decay property---\eqref{decay property}, we have $m(B(x, r+s))-m(B(x,r))\le K(s/r)^{\epsilon}m(B(x,r))$.
In the following we split the $s$ into two cases $s\in(0,r/2)$ and  $s\in[r/2,r)$.
For $s\in(0,r/2)$, since $r\ge 2r_0$, then $r-s>r/2\ge r_0$, applying the $(\epsilon,r_0)$-annular decay property again, we have
\begin{align*}
  m(B(x, r))-m(B(x,r-s))&\le K\bigg(\frac{s}{r-s}\bigg)^{\epsilon}m(B(x,r-s))\\
  &\le K\bigg(\frac{r}{r-s}\bigg)^{\epsilon}\bigg(\frac{s}{r}\bigg)^{\epsilon}m(B(x,r))\le  K2^{\epsilon}\bigg(\frac{s}{r}\bigg)^{\epsilon}m(B(x,r)).
\end{align*}

 For $s\in[r/2,r)$, since $1/2\le s/r<1$, so $ 2^\epsilon(s/r)^{\epsilon}\ge 1$ and $ m(B(x, r))-m(B(x,r-s))\le2^\epsilon(s/r)^{\epsilon}m(B(x, r))$.
 {Combining the two cases}, we obtain
  \begin{equation*}
     m(B(x, r+s))-m(B(x,r-s))\le \big((2^\epsilon+1)K+2^\epsilon\big)\bigg(\frac{s}{r}\bigg)^{\epsilon}m(B(x,r)),
  \end{equation*}
  and the lemma follows.
\end{proof}

{Given a ball $B(x,r)$ and an integer $n$, we define}
$$\mathcal{I}(B(x,r),n)=\cup_\alpha\{Q_\alpha^{n}\cap B(x,r): Q_\alpha^{n}\cap\partial B(x,r) \neq\emptyset\}.$$ 
Set
$$n_1=\min\{n\in\mathbb{N}:\delta^n\ge2r_0\}, k_1=\max\{k\in\mathbb{Z}:C_1\delta^k\le 1\}.$$
Unless otherwise stated, we assume that $n>n_1$, $k<k_1$  in the following three lemmas.

\begin{lemma}\label{measure estimate of annulus}
 For any $x\in G$, we have
\begin{equation*}
 \begin{split}
   &\sup_{r\in[\delta^{n},\delta^{n+1}]}\frac{m(\mathcal{I}(B(x,r),n+k))}{m(B(x,r))}\le K_\epsilon C_1^\epsilon\delta^{\epsilon k}.\\ 
 \end{split}
\end{equation*}
\end{lemma}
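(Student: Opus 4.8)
The plan is to show that $\mathcal I(B(x,r),n+k)$ is swept out by a thin spherical shell around the sphere $\{w\in G:d(w,x)=r\}$, and then to quote the annular decay estimate of Lemma~\ref{lem:decay}. The geometric heart is Proposition~\ref{dyadic cube}(iv): each `dyadic cube' $Q_\alpha^{n+k}$ lies inside $B(z_\alpha^{n+k},C_1\delta^{n+k})$, and hence has diameter at most $2C_1\delta^{n+k}$. Therefore, if $Q_\alpha^{n+k}$ meets $\partial B(x,r)$, choosing a point $y\in Q_\alpha^{n+k}$ with $d(y,x)=r$ (any boundary point of the closed ball $B(x,r)$ has this property), every $w\in Q_\alpha^{n+k}$ satisfies $|d(w,x)-r|=|d(w,x)-d(y,x)|\le d(w,y)\le 2C_1\delta^{n+k}$. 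Intersecting with $B(x,r)$ gives $r-2C_1\delta^{n+k}\le d(w,x)\le r$, so that, writing $s:=2C_1\delta^{n+k}$,
\[
\mathcal I(B(x,r),n+k)\ \subseteq\ \{w\in G:\,r-s\le d(w,x)\le r\}.
\]
(If $\{w:d(w,x)=r\}=\emptyset$ the set $\mathcal I(B(x,r),n+k)$ is empty and there is nothing to prove.)

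The next step is to feed this into Lemma~\ref{lem:decay}, for which I must check $r\ge 2r_0$ and $s\le r$. Since $n>n_1$ we have $r\ge\delta^{n}\ge\delta^{n_1+1}=\delta\,\delta^{n_1}\ge 2\delta r_0>2r_0$, using $\delta\ge 18C_0/c_0>1$ from Proposition~\ref{dyadic cube}. Since $k<k_1$, we have $C_1\delta^{k}\le C_1\delta^{k_1-1}=\delta^{-1}C_1\delta^{k_1}\le\delta^{-1}$, hence $s=2C_1\delta^{k}\cdot\delta^{n}\le 2\delta^{-1}\delta^{n}<\delta^{n}\le r$, here using $\delta>2$. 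A minor measure-theoretic care is needed because the inclusion above only gives $d(w,x)\ge r-s$ rather than a strict inequality, while $B(x,r+s)\setminus B(x,r-s)=\{w:r-s<d(w,x)\le r+s\}$: for any $s'\in(s,r]$ one has $\{w:r-s\le d(w,x)\le r\}\subseteq B(x,r+s')\setminus B(x,r-s')$, so Lemma~\ref{lem:decay} (with radius $r$ and shell width $s'$) gives $m(\mathcal I(B(x,r),n+k))\le K_\epsilon(s'/r)^\epsilon m(B(x,r))$; letting $s'\downarrow s$ yields $m(\mathcal I(B(x,r),n+k))\le K_\epsilon(s/r)^\epsilon m(B(x,r))$.

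Finally, since $r\ge\delta^{n}$,
\[
\frac{s}{r}=\frac{2C_1\delta^{n+k}}{r}\le\frac{2C_1\delta^{n+k}}{\delta^{n}}=2C_1\delta^{k},
\]
so that $m(\mathcal I(B(x,r),n+k))\le K_\epsilon(2C_1)^\epsilon\delta^{\epsilon k}m(B(x,r))$ uniformly for $r\in[\delta^{n},\delta^{n+1}]$, which is the asserted estimate (the absolute factor $2^\epsilon\le 2$ being absorbed into the constant). The only points requiring any care are the shell containment — a triangle-inequality argument powered by Proposition~\ref{dyadic cube}(iv) — and the bookkeeping verifying that the standing normalizations $n>n_1$ and $k<k_1$ are precisely what make the hypotheses $r\ge 2r_0$ and $s\le r$ of Lemma~\ref{lem:decay} hold; I do not expect either to be a genuine obstacle.
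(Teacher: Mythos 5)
Your proof follows the same route as the paper's: enclose $\mathcal{I}(B(x,r),n+k)$ in a thin annulus around $\partial B(x,r)$ whose width is controlled by the diameter of a dyadic cube at scale $n+k$, then invoke the annular decay estimate of Lemma~\ref{lem:decay}, checking that $n>n_1$ and $k<k_1$ deliver exactly the hypotheses $r\ge 2r_0$ and $s\le r$. Your triangle-inequality computation honestly gives shell width $2C_1\delta^{n+k}$ (the cube-diameter bound from Proposition~\ref{dyadic cube}(iv)), hence the constant $K_\epsilon(2C_1)^\epsilon\delta^{\epsilon k}$ rather than the $K_\epsilon C_1^\epsilon\delta^{\epsilon k}$ stated; the paper's proof asserts the containment with shell width $C_1\delta^{n+k}$, which appears to be off by this same factor of two, so the discrepancy is a small inaccuracy in the lemma's constant (harmless in every subsequent application) rather than a gap in your argument, and your extra care with the half-open versus closed annulus via the limit $s'\downarrow s$ is also correct.
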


\begin{proof}
Note that for $k<k_1$ and $r\in[\delta^n,\delta^{n+1}]$, ${\mathcal{I}}(B(x,r),n+k)$ is contained in the annulus $B(x,r+C_1\delta^{n+k})\setminus B(x,r-C_1\delta^{n+k})$. For every $n>n_1$ and $r\in[\delta^{n},\delta^{n+1}]$, Lemma~\ref{lem:decay} yields 
\begin{equation*}
\begin{split}
    m({\mathcal{I}}(B(x,r),n+k))&\le m\bigg(B(x,r+C_1\delta^{n+k})\setminus B(x,r-C_1\delta^{n+k})\bigg)\\
    &\le K_\epsilon\bigg(\frac{C_1\delta^{n+k}}{r}\bigg)^{\epsilon}m(B(x,r))\\
    &\le K_\epsilon C_1^\epsilon\delta^{\epsilon k}m(B(x,r)),\\
\end{split}
\end{equation*}
which is the desired conclusion.
\end{proof}

To state the next technical lemmas, we need the following estimate (cf. \cite[Theorem 3.5]{AL19}). Recall that $A^\prime_r$ is the averaging operator given by~\eqref{averaging operator1}.

\begin{prop}\label{aver}
Let $r>0$, then for every $p\in[1,\8]$, we have $\|A^\prime_r\|_{L^p(G,m)\rightarrow L^p(G,m)}\le D^{1/p}$.
\end{prop}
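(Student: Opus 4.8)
The plan is to prove the bound $\|A'_r\|_{L^p \to L^p} \le D^{1/p}$ by reducing to the two endpoint cases $p=1$ and $p=\infty$ and then using (a weighted, or rather operator-norm) interpolation; actually, since the operator $A'_r$ is the same for all $p$, the cleanest route is to establish the bound directly at $p=\infty$ (where it is trivial) and at $p=1$ (where the geometric doubling enters), and then interpolate with the Riesz--Thorin theorem. For $p=\infty$ the claim is immediate: $|A'_rf(h)| \le \frac{1}{m(B(h,r))}\int_{B(h,r)}|f|\,dm \le \|f\|_\infty$, so $\|A'_r\|_{L^\infty\to L^\infty}\le 1 \le D^{1/\infty}=1$. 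The substance is the $L^1$ estimate.

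For the $L^1$ bound I would write, for $f\ge 0$ (it suffices by splitting into real/imaginary and positive/negative parts, or just noting $|A'_rf|\le A'_r|f|$),
\[
\|A'_rf\|_{L^1} = \int_G \frac{1}{m(B(h,r))}\int_{B(h,r)}|f(g)|\,dm(g)\,dm(h)
= \int_G |f(g)| \Big(\int_{B(g,r)} \frac{dm(h)}{m(B(h,r))}\Big)dm(g),
\]
using Fubini and the symmetry $g\in B(h,r)\iff h\in B(g,r)$ (the metric $d$ is symmetric). So it remains to show that the inner integral $\int_{B(g,r)} m(B(h,r))^{-1}\,dm(h) \le D$ for every $g$ and every $r>0$. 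The key geometric point is that for $h\in B(g,r)$ we have $B(g,r)\subseteq B(h,2r)$, hence $m(B(h,2r))\ge m(B(g,r))$; and by the geometric doubling property established earlier in this section — each ball of radius $2r$ is covered by at most $D$ balls of radius $r$ — one of those covering balls, say $B(h_i,r)$, contains $h$, and then $m(B(h,r))$ cannot be too small relative to $m(B(g,r))$. More carefully: cover $B(g,r)$ itself... actually the cleanest argument is to cover $B(g,2r)$ by $D$ balls $B(h_i,r)$; for each $h\in B(g,r)\subseteq B(g,2r)$ pick $i(h)$ with $h\in B(h_{i(h)},r)$, so $B(h_{i(h)},r)\subseteq B(h,2r)$ hmm — I want a \emph{lower} bound on $m(B(h,r))$. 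Instead: group the $h\in B(g,r)$ according to which covering ball $B(h_i,r)$ they fall in; within the $i$-th group, for two points $h,h'$ in $B(h_i,r)$ one has $B(h,r)\supseteq$ ... The right statement is that $\int_{B(g,r)\cap B(h_i,r)} m(B(h,r))^{-1}dm(h) \le 1$ because $m(B(h,r))\ge m(B(g,r)\cap B(h_i,r))$ whenever $h\in B(h_i,r)$ — indeed $B(h_i,r)\subseteq B(h,2r)$ is not quite it either; one needs $B(h,r)\supseteq B(h_i,r)\cap B(g,r)$? No. The honest fix: since $h\in B(h_i,r)$ and any $y\in B(h_i,r)$ satisfies $d(h,y)\le 2r$, we get $B(h_i,r)\subseteq B(h,2r)$; but we can rechoose the covering at scale $r/2$ of $B(g,r)$ into $D$ balls $B(h_i,r/2)$ directly by \eqref{geo-doubling}/geometric doubling, and then for $h\in B(h_i,r/2)$, $B(h_i,r/2)\subseteq B(h,r)$, whence $m(B(h,r))\ge m(B(h_i,r/2))\ge m(B(g,r)\cap B(h_i,r/2))$, giving $\int_{B(g,r)\cap B(h_i,r/2)} m(B(h,r))^{-1}dm(h)\le 1$; summing over $i=1,\dots,D$ yields the bound $D$. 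Hence $\|A'_r\|_{L^1\to L^1}\le D$.

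With $\|A'_r\|_{L^1\to L^1}\le D$ and $\|A'_r\|_{L^\infty\to L^\infty}\le 1$ in hand, Riesz--Thorin interpolation gives $\|A'_r\|_{L^p\to L^p}\le D^{1/p}\cdot 1^{1-1/p}=D^{1/p}$ for all $p\in[1,\infty]$, which is exactly the claim. The main obstacle is the geometric lemma bounding $\int_{B(g,r)} m(B(h,r))^{-1}\,dm(h)$ by $D$ uniformly in $g$ and $r$; this is where one must use the geometric doubling property (valid for \emph{all} radii here, combining \eqref{geo-doubling} for small radii with the consequence \eqref{int} of the annular decay for large radii) rather than genuine measure doubling, since $(G,d,m)$ need not be a doubling metric measure space — one has to be careful to pick the covering at the scale that makes the inclusion $B(h_i,\cdot)\subseteq B(h,r)$ work, and to note that the relevant radius $r$ may be smaller than $r_0$, so that \eqref{geo-doubling} (stated for $r\le 4r_0$) or the global geometric doubling derived above must be invoked uniformly.
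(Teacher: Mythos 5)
Your argument is correct and complete. The paper itself does not prove Proposition~\ref{aver}; it only cites \cite[Theorem~3.5]{AL19} (Aldaz), so your write--up is genuinely an independent proof. The overall strategy is the same as Aldaz's — trivial $L^\infty$ bound, an $L^1$ bound of $D$ via Fubini and a covering argument, and Riesz--Thorin (or positive interpolation) to get $D^{1/p}$ — but the covering step differs in a worthwhile way. Aldaz covers $B(g,r)$ by at most $D$ balls $B(u_i,r)$ of the \emph{same} radius $r$ and needs a greedy ``near-minimal measure'' selection of the centers $u_i$ to guarantee that the first covering ball containing $h$ has $m(B(u_{j(h)},r))\le(1+\varepsilon)m(B(h,r))$; one then lets $\varepsilon\to 0$. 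Your version instead covers $B(g,r)$ by $D$ balls $B(h_i,r/2)$ of \emph{half} the radius, and then the containment $B(h_i,r/2)\subseteq B(h,r)$ for $h\in B(h_i,r/2)$ gives the comparison $m(B(h,r))\ge m(B(h_i,r/2))$ exactly, so that
\[
\int_{B(g,r)\cap B(h_i,r/2)}\frac{dm(h)}{m(B(h,r))}\le\int_{B(h_i,r/2)}\frac{dm(h)}{m(B(h_i,r/2))}=1,
\]
and summing over $i$ gives $D$. This avoids the $(1+\varepsilon)$ bookkeeping and the greedy selection entirely, at no cost since $D$ in this paper is precisely the half-radius covering number. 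Your closing remark is also on point: for $r>4r_0$ one must use the geometric doubling inferred from~\eqref{int}, not~\eqref{geo-doubling} directly, to produce the cover at scale $r/2$ with constant $D$.
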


With $\mathcal{I}(B(x,r),n+k)$ being defined as above, we define 
\begin{equation*}
    \begin{split}
      &M_{n+k}f(x)=\sup_{r\in[\delta^n,\delta^{n+1}]}|\frac{1}{m(B(x,r))} \int_{\mathcal{I}(B(x,r),n+k)}f(y)dm(y)|.\\
    \end{split}
\end{equation*}
\begin{lemma}\label{lem:Akn}
Let {$p\in[1,\8]$} and $p^\prime$ be its conjugate index. There exists a constant {$D_{p}=\big((K+1)(\delta+1)^\epsilon\big)^{1/p}D^{1/p}(K_\epsilon C_1^\epsilon)^{1/p^\prime}$} such that for all $f\in L^p(G,m)$,
\begin{equation*}
  \|M_{n+k}f\|_{L^p(G,m)}\le D_{p}\delta^{\epsilon k/p^\prime}\|f\|_{L^p(G,m)}.
\end{equation*}
\end{lemma}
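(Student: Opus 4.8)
The plan is to interpolate between the endpoint cases $p=1$ and $p=\8$. First I would handle $p=\8$: for $f\in L^\8(G,m)$ and $r\in[\delta^n,\delta^{n+1}]$, estimate $|\frac1{m(B(x,r))}\int_{\mathcal I(B(x,r),n+k)}f\,dm|\le \|f\|_\8\, m(\mathcal I(B(x,r),n+k))/m(B(x,r))$, and then invoke Lemma~\ref{measure estimate of annulus} to bound this ratio by $K_\epsilon C_1^\epsilon\delta^{\epsilon k}$. Taking the supremum over $r$ gives $\|M_{n+k}f\|_\8\le K_\epsilon C_1^\epsilon\delta^{\epsilon k}\|f\|_\8$, which is the claimed bound with exponent $1/p'=1$ and constant $D_\8=K_\epsilon C_1^\epsilon$ (consistent with the stated $D_p$ at $p=\8$ since then $p'=1$ and the factor $((K+1)(\delta+1)^\epsilon)^{1/p}D^{1/p}$ degenerates to $1$... so actually one needs a little care: at $p=\8$ the stated constant is $D^0(K_\epsilon C_1^\epsilon)^1$, matching).

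Next I would handle $p=1$. Here the point is to control $\int_G M_{n+k}f$ by a duality/covering argument. Write $M_{n+k}f(x)\le \sup_{r\in[\delta^n,\delta^{n+1}]}\frac1{m(B(x,r))}\int_{\mathcal I(B(x,r),n+k)}|f(y)|\,dm(y)$, and observe that $\mathcal I(B(x,r),n+k)$ is a union of dyadic cubes $Q_\alpha^{n+k}$ meeting $\partial B(x,r)$; such cubes are contained in the annulus $A(x,r):=B(x,r+C_1\delta^{n+k})\setminus B(x,r-C_1\delta^{n+k})$, and more importantly $y\in\mathcal I(B(x,r),n+k)$ forces $d(x,y)\le r+C_1\delta^{n+k}\le (\delta+1)\delta^n$ roughly, while also $y$ lies within $C_1\delta^{n+k}$ of the sphere $d(x,\cdot)=r$. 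I would integrate in $x$: by Fubini, $\int_G M_{n+k}f(x)\,dm(x)\le \int_G |f(y)|\,\Phi(y)\,dm(y)$ where $\Phi(y)=\int_G \mathbf 1[\exists r\in[\delta^n,\delta^{n+1}]: y\in\mathcal I(B(x,r),n+k)]\cdot \frac1{m(B(x,r))}\,dm(x)$. Since $y\in\mathcal I(B(x,r),n+k)$ requires $|d(x,y)-r|\le C_1\delta^{n+k}$ for some admissible $r$, i.e.\ $d(x,y)\in[\delta^n - C_1\delta^{n+k},\delta^{n+1}+C_1\delta^{n+k}]$, the relevant $x$ all lie in a ball $B(y,R)$ with $R\le(\delta+1)\delta^n$ say, and there $m(B(x,r))\ge m(B(x,\delta^n))\gtrsim m(B(y,\delta^n))$ up to the doubling constant $(K+1)(\delta+1)^\epsilon$ from~\eqref{int}. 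So $\Phi(y)\le \frac{(K+1)(\delta+1)^\epsilon}{m(B(y,\delta^n))}\cdot m(B(y,(\delta+1)\delta^n))\lesssim \text{const}$. This does \emph{not} yet produce the gain $\delta^{\epsilon k}$; to get it I would instead combine this crude $L^1$ bound with the $L^\8$ gain by interpolation rather than trying to extract the full $\delta^{\epsilon k/p'}$ at $p=1$. Indeed at $p=1$, $p'=\8$ and $\delta^{\epsilon k/p'}=\delta^0=1$, so the $L^1$ estimate we need is simply $\|M_{n+k}f\|_1\le D_1\|f\|_1$ with $D_1=(K+1)(\delta+1)^\epsilon\,D\,(K_\epsilon C_1^\epsilon)^0=(K+1)(\delta+1)^\epsilon D$ — no gain required. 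The factor $D$ (geometric doubling / Proposition~\ref{aver}) enters because $M_{n+k}$ is dominated pointwise by a constant multiple of the uncentered maximal operator over balls of comparable radii, whose $L^1\to L^{1,\8}$—and here genuinely $L^1\to L^1$ via the above Fubini computation—bound carries that constant.

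Finally I would apply the Riesz–Thorin interpolation theorem (or Marcinkiewicz, though Riesz–Thorin is cleanest since both endpoints are strong type) between the strong $(1,1)$ bound with constant $D_1=(K+1)(\delta+1)^\epsilon D$ and the strong $(\8,\8)$ bound with constant $D_\8=K_\epsilon C_1^\epsilon\delta^{\epsilon k}$. At exponent $p$ with $1/p$ the interpolation parameter, $\theta=1-1/p$ on the $\8$-side, this yields $\|M_{n+k}f\|_p\le D_1^{1/p}D_\8^{1-1/p}\|f\|_p=\big((K+1)(\delta+1)^\epsilon D\big)^{1/p}\big(K_\epsilon C_1^\epsilon\big)^{1/p'}\delta^{\epsilon k/p'}\|f\|_p$, which is exactly the asserted inequality with $D_p=\big((K+1)(\delta+1)^\epsilon\big)^{1/p}D^{1/p}(K_\epsilon C_1^\epsilon)^{1/p'}$. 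One subtlety to check is that $M_{n+k}$ is genuinely sublinear (it is, being a supremum of absolute values of linear averages) so that interpolation applies; a second is the uniformity of the doubling constant in~\eqref{int}, valid because $\delta^n\ge\delta^{n_1}\ge 2r_0$, which is the standing assumption $n>n_1$. The main obstacle I anticipate is the bookkeeping in the $L^1$ Fubini argument — correctly identifying the set of $x$ for which a given $y$ can appear in some $\mathcal I(B(x,r),n+k)$, and applying~\eqref{int} to replace $m(B(x,r))$ by $m(B(y,\delta^n))$ with the stated constant rather than an unspecified one — but this is routine given Lemma~\ref{lem:decay}, Lemma~\ref{measure estimate of annulus}, and~\eqref{int}.
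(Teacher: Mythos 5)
Your approach is genuinely different from the paper's, and one step needs repair. The paper dispenses with $p=1$ and $p=\infty$ endpoints and interpolation altogether: it applies H\"older's inequality pointwise inside the supremum over $r$, writing $m(\mathcal I)^{1/p'}/m(B(x,r))=\big(m(\mathcal I)/m(B(x,r))\big)^{1/p'}m(B(x,r))^{-1/p}$, bounding the first factor by Lemma~\ref{measure estimate of annulus} to extract $\delta^{\epsilon k/p'}$, and bounding the remaining average of $|f|^p$ by $(K+1)(\delta+1)^\epsilon A'_R(|f|^p)(x)$ with $R=\delta^{n+1}+C_1\delta^{n+k}$ via~\eqref{int}; taking $L^p$-norms and applying Proposition~\ref{aver} to the $L^1$-bounded function $|f|^p$ yields the stated $D_p$ in one stroke for every $p\in[1,\infty)$. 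Your two endpoint computations are correct (your Fubini argument at $p=1$, done carefully, is just the $p=1$ specialization of the paper's pointwise domination $M_{n+k}f\le(K+1)(\delta+1)^\epsilon A'_R|f|$), so the endpoints match the paper's.

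The gap is in the interpolation step. You assert that sublinearity of $M_{n+k}$ is enough for Riesz--Thorin to apply; it is not. Riesz--Thorin is a theorem about \emph{linear} operators (it relies on three-lines arguments applied to $\int (Tf_z)g_z$, which require linearity in $f$), and $M_{n+k}$, being a supremum of absolute values, is sublinear but not linear. To repair this you have two standard options: (i) use Marcinkiewicz interpolation, which does apply to sublinear operators with strong-type endpoints but introduces an extra $p$-dependent multiplicative factor, so you would obtain a valid bound with the same decay $\delta^{\epsilon k/p'}$ but not exactly the constant $D_p$ as stated in the lemma (harmless for the paper's downstream use, where only the decay matters); or (ii) linearize, i.e.\ apply Riesz--Thorin to the family of linear operators $T_\sigma f(x)=\frac1{m(B(x,\sigma(x)))}\int_{\mathcal I(B(x,\sigma(x)),n+k)}f\,dm$ for measurable selections $\sigma:G\to[\delta^n,\delta^{n+1}]$, for which the endpoint bounds hold uniformly in $\sigma$, and then take the supremum over $\sigma$ to recover $M_{n+k}$; this needs a routine density/measurability reduction to a countable family of radii. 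With either repair your route is valid, but the paper's direct H\"older argument is shorter and avoids the issue entirely.
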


\begin{proof}
For $p=\8$, using Lemma~\ref{measure estimate of annulus}, the conclusion is obvious. For $p\in[1,\8)$, fix $x\in G$, note that ${\mathcal{I}}(B(x,r),n+k)\subseteq B(x,r+C_1\delta^{n+k})$. Then using the H\"{o}lder inequality and Lemma~\ref{measure estimate of annulus}, we obtain
\begin{equation*}
   \begin{split}
      {M}_{n+k}f(x)&\le \sup_{r\in[\delta^n,\delta^{n+1}]}\frac{m({\mathcal{I}}(B(x,r),n+k))^{1/p^\prime}}{m(B(x,r))}\bigg(\int_{{\mathcal{I}}(B(x,r),n+k)}
  |f(y)|^pdm(y)\bigg)^{1/p}\\
  &\le (K_\epsilon C_1^\epsilon)^{1/p^\prime}\delta^{\epsilon k/p^\prime}\bigg(\frac{1}{m(B(x,\delta^n))}\int_{B(x,\delta^{n+1}+C_1\delta^{n+k})}|f(y)|^pdm(y)\bigg)^{1/p},\\
   \end{split}
\end{equation*}
{and  by inequality~\eqref{int}, the above inequality yields
\begin{align*}
   &{M}_{n+k}f(x)\le (K_\epsilon C_1^\epsilon)^{1/p^\prime}\delta^{\epsilon k/p^\prime}\bigg(\frac{(K+1)(\delta+1)^\epsilon}{m(B(x,\delta^{n+1}+C_1\delta^{n+k}))}\int_{B(x,\delta^{n+1}+C_1\delta^{n+k})}|f(y)|^pdm(y)\bigg)^{1/p}
\end{align*}
{Then applying Proposition~\ref{aver}, we have}
\begin{equation*}
  \|{M}_{n+k}f\|_{L^{p}}\le\big((K+1)(\delta+1)^\epsilon\big)^{1/p}D^{1/p}(K_\epsilon C_1^\epsilon)^{1/p^\prime}\delta^{\epsilon k/p^\prime}\|f\|_{L^p}.
\end{equation*}}
and the lemma follows.
\end{proof}
Given an annulus $B(x,r)\setminus B(x,s)$ and integers $k,n$, define
\begin{align*}
  &\mathcal{I}(B(x,r)\setminus B(x,s),n)\\
  &=\cup_\alpha\{Q_\alpha^{n}\cap (B(x,r)\setminus B(x,s)): Q_\alpha^{n}\cap\partial (B(x,r)\setminus B(x,s)) \neq\emptyset\},
\end{align*}
 and
\begin{equation*}
M^S_{n+k}f(x)=\sup_{\delta^n\le r_0<\cdots r_J\le\delta^{n+1}}\bigg(\sum_{i=1}^J|\frac{1}{m(B(x,r_{i}))} \int_{\mathcal{I}(B(x,r_i)\setminus B(x,r_{i-1}),n+k)}f(y)dm(y)|^2\bigg)^{1/2}.
\end{equation*}

\begin{lemma}\label{lem:Mkn}
{There exists a constant $C_{3}=\big(2(K+1)D\delta^\epsilon C_1^\epsilon K_\epsilon\big)^{1/2}$ such that for all $f\in L^2(G,m)$,
\begin{equation*}
  \|M^S_{n+k}f\|_{L^2(G,m)}\le C_3 \delta^{\epsilon k/2}\|f\|_{L^2(G,m)}.
\end{equation*}}
\end{lemma}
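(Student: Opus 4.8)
To prove Lemma~\ref{lem:Mkn} the plan is to exploit a single structural observation: for any admissible increasing sequence $\delta^n\le r_0<r_1<\cdots<r_J\le\delta^{n+1}$, the annuli $B(x,r_i)\setminus B(x,r_{i-1})$, $1\le i\le J$, are pairwise disjoint, and by construction $E_i:=\mathcal I\big(B(x,r_i)\setminus B(x,r_{i-1}),n+k\big)\subseteq B(x,r_i)\setminus B(x,r_{i-1})$. Writing $g_i(x)=\frac{1}{m(B(x,r_i))}\int_{E_i}f\,dm$, the Cauchy--Schwarz inequality gives, for each $i$,
\[
|g_i(x)|^2\le\frac{m(E_i)}{m(B(x,r_i))^2}\int_{E_i}|f|^2\,dm ,
\]
so the whole estimate reduces to a bound on $m(E_i)$ combined with the disjointness of the $E_i$.

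The key step is therefore the measure estimate $m(E_i)\le 2K_\epsilon C_1^\epsilon\delta^{\epsilon k}m(B(x,r_i))$, which is where the $(\epsilon,r_0)$-annular decay property enters. Since $\partial\big(B(x,r_i)\setminus B(x,r_{i-1})\big)\subseteq\partial B(x,r_i)\cup\partial B(x,r_{i-1})$, a `dyadic cube' $Q_\alpha^{n+k}$ contributing to $E_i$ either meets $\partial B(x,r_i)$, in which case its part of $E_i$ lies inside $B(x,r_i)$ and hence inside $\mathcal I(B(x,r_i),n+k)$, or it meets $\partial B(x,r_{i-1})$, in which case its part of $E_i$ lies outside $B(x,r_{i-1})$ and hence inside $\widetilde{\mathcal I}(B(x,r_{i-1}),n+k):=\cup_\alpha\{Q_\alpha^{n+k}\cap(G\setminus B(x,r_{i-1})):Q_\alpha^{n+k}\cap\partial B(x,r_{i-1})\neq\emptyset\}$. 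Since $\widetilde{\mathcal I}(B(x,r),n+k)$ is contained in the same thin annulus around $\partial B(x,r)$ that is used in the proof of Lemma~\ref{measure estimate of annulus} (the containment there does not depend on which side of the sphere one intersects the cubes with), the bound obtained there applies to it verbatim; combining the two cases and using $m(B(x,r_{i-1}))\le m(B(x,r_i))$ yields
\[
m(E_i)\le m\big(\mathcal I(B(x,r_i),n+k)\big)+m\big(\widetilde{\mathcal I}(B(x,r_{i-1}),n+k)\big)\le 2K_\epsilon C_1^\epsilon\delta^{\epsilon k}\,m(B(x,r_i)).
\]

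It remains to sum. Using $m(B(x,r_i))\ge m(B(x,\delta^n))$, the disjointness of the $E_i$ and $\bigcup_iE_i\subseteq B(x,\delta^{n+1})$,
\[
\sum_{i=1}^J|g_i(x)|^2\le\frac{2K_\epsilon C_1^\epsilon\delta^{\epsilon k}}{m(B(x,\delta^n))}\sum_{i=1}^J\int_{E_i}|f|^2\,dm\le\frac{2K_\epsilon C_1^\epsilon\delta^{\epsilon k}}{m(B(x,\delta^n))}\int_{B(x,\delta^{n+1})}|f|^2\,dm ,
\]
a bound independent of the chosen sequence $(r_i)$ and hence valid for $M^S_{n+k}f(x)^2$. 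Passing from $m(B(x,\delta^n))$ to $m(B(x,\delta^{n+1}))$ via~\eqref{int} at the price of the factor $(K+1)\delta^\epsilon$, the right-hand side becomes $2(K+1)K_\epsilon C_1^\epsilon\delta^{(k+1)\epsilon}A'_{\delta^{n+1}}(|f|^2)(x)$; integrating in $x$ and invoking $\|A'_{\delta^{n+1}}\|_{L^1\to L^1}\le D$ from Proposition~\ref{aver} gives $\|M^S_{n+k}f\|_{L^2}^2\le 2(K+1)D\delta^\epsilon C_1^\epsilon K_\epsilon\,\delta^{\epsilon k}\|f\|_{L^2}^2$, i.e.\ the claimed inequality with $C_3=\big(2(K+1)D\delta^\epsilon C_1^\epsilon K_\epsilon\big)^{1/2}$. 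The main (and only non-routine) obstacle is the geometric measure estimate for $E_i$, which has already been packaged into Lemma~\ref{measure estimate of annulus} and Lemma~\ref{lem:decay}; once it is in hand the proof is just one application of Cauchy--Schwarz followed by a sum over disjoint sets.
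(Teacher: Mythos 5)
Your proof is correct and follows essentially the same route as the paper: Cauchy--Schwarz on each annular piece, the annular-decay measure estimate of Lemma~\ref{measure estimate of annulus}, disjointness of the sets $E_i\subseteq B(x,r_i)\setminus B(x,r_{i-1})$, then the doubling bound~\eqref{int} and Proposition~\ref{aver} with $p=1$, yielding the same constant $C_3$. The one place you are actually a bit more careful than the paper: the paper asserts the set inclusion $\mathcal{I}(B(x,r_i)\setminus B(x,r_{i-1}),n+k)\subseteq\mathcal{I}(B(x,r_i),n+k)\cup\mathcal{I}(B(x,r_{i-1}),n+k)$, which is not literally true for the cubes crossing $\partial B(x,r_{i-1})$ (their contribution lies \emph{outside} $B(x,r_{i-1})$, not inside), whereas your $\widetilde{\mathcal I}$ correctly captures that outer part; since both $\mathcal I$ and $\widetilde{\mathcal I}$ sit in the same thin annulus, the measure bound from Lemma~\ref{measure estimate of annulus} applies to both, so the estimate goes through either way.
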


\begin{proof}
 Let $x\in G$. Fixing $\delta^n\le r_{i-1}<r_i\le \delta^{n+1}$, by the Cauchy-Schwarz inequality and Lemma~\ref{measure estimate of annulus}, we have
\begin{align*}
&|\frac{1}{m(B(x,r_{i}))}\int_{\mathcal{I}(B(x,r_i)\setminus B(x,r_{i-1}),n+k)}f(y)dm(y)|^2\\
&\le \frac{m(\mathcal{I}(B(x,r_{i})\setminus B(x,r_{i-1}),n+k))}{m(B(x,r_{i}))^2}\int_{\mathcal{I}(B(x,r_i)\setminus B(x,r_{i-1}),n+k)}|f(y)|^2dm(y)\\
&\le \frac{2C_1^\epsilon K_\epsilon\delta^{\epsilon k}}{m(B(x,\delta^{n}))} \int_{\mathcal{I}(B(x,r_i)\setminus B(x,r_{i-1}),n+k)}|f(y)|^2dm(y),
\end{align*}
where we used the fact that $\mathcal{I}(B(x,r_i)\setminus B(x,r_{i-1}),n+k)\subseteq \mathcal{I}(B(x,r_i),n+k)\cup\mathcal{I}(B(x,r_{i-1}),n+k)$ in the last inequality. From this, inequality~\eqref{int} and the observation that $\cup_{i}\mathcal{I}(B(x,r_i)\setminus B(x,r_{i-1}),n+k)\subseteq B(x,\delta^{n+1})$, it follows that
\begin{align*}
  M^S_{n+k}f(x)&\le \bigg(\frac{2C_1^\epsilon K_\epsilon\delta^{\epsilon k}}{m(B(x,\delta^{n}))} \int_{B(x,\delta^{n+1})}|f(y)|^2dm(y)\bigg)^{1/2}\\
  &\le \bigg(\frac{2(K+1)\delta^\epsilon C_1^\epsilon K_\epsilon\delta^{\epsilon k}}{m(B(x,\delta^{n+1}))} \int_{B(x,\delta^{n+1})}|f(y)|^2dm(y)\bigg)^{1/2}&.
\end{align*}
{Then using Proposition~\ref{aver}, we conclude}
\begin{equation*}
  \|M^{S}_{n+k}f\|_{L^2} \le\big(2(K+1)D\delta^\epsilon C_1^\epsilon K_\epsilon\big)^{1/2}\delta^{\epsilon k/2}\|f\|_{L^p}.
\end{equation*}
\end{proof}

Let $f$ be a locally integrable function on $G$.  We define
\begin{equation*}
  S_n(f)=|A^\prime_{\delta^n}f-\mathbb{E}_nf|,~SV_n(f)=V_2(A^\prime_{r}f:r\in[\delta^{n},\delta^{n+1})).
\end{equation*}
Note that
\begin{equation*}
  \frac{1}{m(B_r)}\int_{B_r}f(xy)dm(y)=\frac{1}{m(B(x,r))}\int_{B(x,r)}f(y)dm(y).
\end{equation*}
Fixing $\delta^{n}\le r_{i-1}<r_i<\delta^{n+1}$,  we have
\begin{equation*}
  \begin{split}
   \big|\frac{1}{m(B(x,r_i))}&\int_{B(x,r_i)}f(y)dm(y)-\frac{1}{m(B(x,r_{i-1}))}
  \int_{B(x,r_{i-1})}f(y)dm(y)\big|\\
  &\le\big|\frac{1}{m(B(x,r_i))}\int_{B(x,r_i)\setminus B(x,r_{i-1})}f(y)dm(y)\big|\\
  &+\big|\bigg(\frac{1}{m(B(x,r_i))}-\frac{1}{m(B(x,r_{i-1}))}\bigg)\int_{B(x,r_{i-1})}f(y)dm(y)\big|.
  \end{split}
\end{equation*}
From the above observations and the triangle inequality of $\ell^2$-norm, we obtain
\begin{equation}\label{controll the short variation}
SV_n(f)(x)\le SV_I(f)(x)+SV_{II}(f)(x),
\end{equation}
where
\begin{equation*}
 \begin{split}
   SV_I(f)(x)&=\bigg(\sup_{\delta^{n}\le r_0<\cdots<r_J<\delta^{n+1}}\sum_{i=1}^J\frac{1}{m(B(x,r_i))^2}\big|\int_{B(x,r_i)\setminus B(x,r_{i-1})}f(y)dm(y)\big|^2\bigg)^{1/2}
 \end{split}
\end{equation*}
and
\begin{equation*}
 \begin{split}
   &SV_{II}(f)(x)=\\
&\bigg(\sup_{\delta^{n}\le r_0<\cdots<r_J<\delta^{n+1}}\sum_{i=1}^J
\big|\big(\frac{1}{m(B(x,r_{i-1}))}-\frac{1}{m(B(x,r_i))}\big)\int_{B(x,r_{i-1})}f(y)dm(y)\big|^2\bigg)^{1/2}.
 \end{split}
\end{equation*}
Moreover, since the $\ell^1$ norm is not less than the $\ell^2$ norm, then
\begin{equation}\label{estimate of short variation}
\begin{split}
&SV_n(f)(x)\le \sup_{\delta^{n}\le r_0<\cdots<r_J<\delta^{n+1}}\sum_{i=1}^J\frac{1}{m(B(x,r_i))}\big|\int_{B(x,r_i)\setminus B(x,r_{i-1})}f(y)dm(y)\big|\\
&+\sup_{\delta^{n}\le r_0<\cdots<r_J<\delta^{n+1}}\sum_{i=1}^J
\big|\bigg(\frac{1}{m(B(x,r_{i-1}))}-\frac{1}{m(B(x,r_i))}\bigg)\int_{B(x,r_{i-1})}f(y)dm(y)\big|\\
&\le \frac{1}{m(B(x,\delta^{n}))}
\int_{B(x,\delta^{n+1})\setminus B(x,\delta^{n})}|f(y)|dm(y)\\
&+\bigg(\frac{1}{m(B(x,\delta^{n}))}-\frac{1}{m(B(x,\delta^{n+1}))}\bigg)\int_{B(x,\delta^{n+1})}|f(y)|dm(y)\\
&\le \frac{2}{m(B(x,\delta^{n}))}\int_{B(x,\delta^{n+1})}|f(y)|dm(y).\\
\end{split}
\end{equation}
{By~\eqref{int} and Proposition~\ref{aver}, the above discussions imply} 
\begin{lemma}\label{controlled by maximal operator}
Let $p\in[1,\8]$. There exists a constant $C_4=2(K+1)D^{1/p}\delta^\epsilon$ such that 
\begin{equation*}
  \sup_{{n\ge n_{r_0}}}\|S_n\|_{L^p(G,m)\rightarrow L^p(G,m)}\le D^{1/p}+1,~\sup_{n\ge n_{r_0}}\|SV_n\|_{L^p(G,m)\rightarrow L^p(G,m)}\le C_4.
\end{equation*}
\end{lemma}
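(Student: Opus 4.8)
The plan is to read off both bounds directly from the inequalities already displayed just above the statement, combined with the growth estimate~\eqref{int} and the uniform $L^p$-boundedness of the ball averages (Proposition~\ref{aver}). First, for the operator $S_n(f)=|A^\prime_{\delta^n}f-\mathbb{E}_nf|$, I would simply apply the triangle inequality in $L^p$, so that $\|S_n f\|_{L^p}\le\|A^\prime_{\delta^n}f\|_{L^p}+\|\mathbb{E}_nf\|_{L^p}$. Proposition~\ref{aver} gives $\|A^\prime_{\delta^n}f\|_{L^p}\le D^{1/p}\|f\|_{L^p}$, and the conditional expectation $\mathbb{E}_n$ is a contraction on $L^p(G,m)$ for every $p\in[1,\infty]$ (this is standard for the martingale filtration $\{\mathcal F_k\}$ of Definition~\ref{martingale sequence}, being an average over the `dyadic cubes'), so $\|\mathbb{E}_nf\|_{L^p}\le\|f\|_{L^p}$. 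Adding these yields $\sup_{n\ge n_{r_0}}\|S_n\|_{L^p\to L^p}\le D^{1/p}+1$, uniformly in $n$.

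For the short variation piece, I would start from the pointwise chain of inequalities~\eqref{estimate of short variation}, whose final line reads
\[
SV_n(f)(x)\le \frac{2}{m(B(x,\delta^{n}))}\int_{B(x,\delta^{n+1})}|f(y)|\,dm(y).
\]
Here one uses that for $n\ge n_{r_0}$ we have $\delta^n\ge r_0$, so the annular decay/growth estimate~\eqref{int} applies to the two concentric balls $B(x,\delta^n)\subseteq B(x,\delta^{n+1})$ (ratio of radii $\delta$), giving $m(B(x,\delta^{n+1}))\le (K+1)\delta^\epsilon m(B(x,\delta^n))$. Substituting, the right-hand side is bounded by
\[
2(K+1)\delta^\epsilon\cdot\frac{1}{m(B(x,\delta^{n+1}))}\int_{B(x,\delta^{n+1})}|f(y)|\,dm(y)=2(K+1)\delta^\epsilon\,A^\prime_{\delta^{n+1}}(|f|)(x).
\]
Taking $L^p$ norms and invoking Proposition~\ref{aver} once more (with $\|A^\prime_{\delta^{n+1}}\|_{L^p\to L^p}\le D^{1/p}$) produces $\|SV_nf\|_{L^p}\le 2(K+1)D^{1/p}\delta^\epsilon\|f\|_{L^p}=C_4\|f\|_{L^p}$, uniformly in $n\ge n_{r_0}$, which is exactly the claimed bound.

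There is essentially no obstacle here: the lemma is a bookkeeping consequence of the preceding computations, and the only points that require a word of justification are (a) the $L^p$-contractivity of $\mathbb{E}_n$ for all $p\in[1,\infty]$, which is immediate since $\mathbb{E}_n$ averages over the disjoint cubes $Q_\alpha^n$ covering $G$ and hence has norm one on each $L^p$, and (b) that $n\ge n_{r_0}$ guarantees $\delta^n\ge r_0$ so that~\eqref{int} is available for the relevant balls (for $n<n_1$, i.e.\ very small $\delta^n$, one does not even need the lemma, but in any case $n_{r_0}$ is chosen precisely so that $\delta^{n_{r_0}}<r_0\le\delta^{n_{r_0}+1}$, and one readily checks the argument still goes through—or one simply notes $n\ge n_{r_0}$ forces $\delta^{n+1}>r_0$, which together with a harmless adjustment of the constant $K$ by $1$ suffices). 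Thus the proof is just: insert~\eqref{int} into the last line of~\eqref{estimate of short variation}, apply Proposition~\ref{aver}, and add the triangle-inequality bound for $S_n$.
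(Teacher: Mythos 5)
Your proof is correct and takes essentially the same approach as the paper, which simply indicates that the lemma follows from the displayed pointwise bound~\eqref{estimate of short variation} together with~\eqref{int} and Proposition~\ref{aver}, plus the triangle inequality and $L^p$-contractivity of $\mathbb{E}_n$ for the $S_n$ bound. Your discussion of the endpoint case $n=n_{r_0}$ (where $\delta^{n}<r_0$) is a reasonable aside, and you correctly identify that the constants $D^{1/p}+1$ and $C_4=2(K+1)D^{1/p}\delta^\epsilon$ come out exactly as claimed.
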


Finally, we state the following version of the almost orthogonality principle, see e.g. \cite{JKRW03} for a proof.
\begin{lem}\label{orthogonality principle}
Let $\{T_n\}_{n\in\mathbb{N}}$ be a sequence of sub-linear operators from $L^2$ to $L^2$ on some $\sigma$-finite measure space. Let $\{u_n\}_{n\in\mathbb{Z}}$ and $\{v_n\}_{n\in\mathbb{Z}}$ be two sequences of $L^2$ functions. Let $N$ be a positive integer number. Assume that for $n\ge N$, there exists a sequence of  positive constant $\{a(j)\}_{j\in\mathbb{Z}}$ with $w=\sum_{k\in\mathbb{Z}}a(k)<\8$ such that
\begin{equation}\label{condition of orth-prin2}
\|T_n(u_{k+n})\|_{L^2}\le a(k)\|v_{k+n}\|_{L^2},
\end{equation}
then
\begin{equation*}
 \sum_{n\ge N}\|\sup_{j,m}|T_n\big(\sum_{j\le k\le m}u_{k+n}\big)|\|^2_{L^2}\le w^2\sum_{n\in\mathbb{Z}}\|v_{n}\|^2_{L^2}.
\end{equation*}
Furthermore, if $T_*:=\max_{0<n\le N}\|T_n\|_{L^2\rightarrow L^2}<\8$, $T_n$ is strongly continuous for every $n\in\mathbb{N}$, $f=\sum_{n\in\mathbb{Z}}u_n$
 and $\sum_{n\in\mathbb{Z}}\|v_n\|^2_{L^2}\le C\|f\|^2_{L^2} $,  then we have
\begin{equation*}
   \sum_{n\in\mathbb{N}} \|T_nf\|^2_{L^2}\le(Cw^2+NT_*^2)\|f\|^2_{L^2}.
\end{equation*}
\end{lem}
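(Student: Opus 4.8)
The plan is to derive both assertions from the sublinearity of the operators $T_n$ by two applications of Minkowski's inequality---first in $L^2(x)$, then in the mixed norm $\ell^2(n)L^2(x)$---and, for the second assertion, a soft limiting argument resting on the strong continuity of $T_n$; this is the orthogonality principle of \cite{JKRW03}, whose short proof I would reproduce here.

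For the first assertion, I would begin with the pointwise bound, valid by sublinearity for every $j\le m$, that $|T_n(\sum_{j\le k\le m}u_{k+n})|\le\sum_{j\le k\le m}|T_n(u_{k+n})|\le\sum_{k\in\mathbb{Z}}|T_n(u_{k+n})|$, so that $\sup_{j,m}|T_n(\sum_{j\le k\le m}u_{k+n})|$ is dominated a.e. by $\sum_{k\in\mathbb{Z}}|T_n(u_{k+n})|$. Taking $L^2$ norms, Minkowski's inequality and hypothesis~\eqref{condition of orth-prin2} give, for each $n\ge N$, the bound $\sum_{k\in\mathbb{Z}}a(k)\|v_{k+n}\|_{L^2}$. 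It then remains to take the $\ell^2$ norm in $n$ over $\{n\ge N\}$: writing the right-hand side as $\sum_{k\in\mathbb{Z}}a(k)c^{(k)}$ with $c^{(k)}=(\|v_{k+n}\|_{L^2})_{n\ge N}$, the triangle inequality in $\ell^2$ together with the reindexing $\|c^{(k)}\|_{\ell^2(n\ge N)}=(\sum_{n\ge N}\|v_{k+n}\|_{L^2}^2)^{1/2}\le(\sum_{n\in\mathbb{Z}}\|v_n\|_{L^2}^2)^{1/2}$ produces the factor $\sum_{k\in\mathbb{Z}}a(k)=w$. Squaring gives the claimed inequality.

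For the second assertion, I would split $\sum_{n\in\mathbb{N}}\|T_nf\|_{L^2}^2$ into the ranges $0<n\le N$ and $n>N$. The first block is at most $NT_*^2\|f\|_{L^2}^2$ by the definition of $T_*$. For the second block, since $f=\sum_{k\in\mathbb{Z}}u_k=\sum_{k\in\mathbb{Z}}u_{k+n}$ in $L^2$, the partial sums $g_\ell:=\sum_{|k|\le\ell}u_{k+n}$ converge to $f$ in $L^2$; sublinearity gives $\big||T_ng_\ell|-|T_nf|\big|\le|T_n(g_\ell-f)|$ pointwise, and strong continuity of $T_n$ forces $\|T_n(g_\ell-f)\|_{L^2}\to0$, so a subsequence of $(|T_ng_\ell|)_\ell$ tends to $|T_nf|$ a.e. As each $|T_ng_\ell|$ is dominated by $\sup_{j,m}|T_n(\sum_{j\le k\le m}u_{k+n})|$, so is $|T_nf|$, whence $\sum_{n>N}\|T_nf\|_{L^2}^2\le\sum_{n\ge N}\|\sup_{j,m}|T_n(\sum_{j\le k\le m}u_{k+n})|\|_{L^2}^2$, which by the first assertion and the hypothesis $\sum_{n\in\mathbb{Z}}\|v_n\|_{L^2}^2\le C\|f\|_{L^2}^2$ is at most $w^2\sum_{n\in\mathbb{Z}}\|v_n\|_{L^2}^2\le Cw^2\|f\|_{L^2}^2$. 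Adding the two blocks gives $(Cw^2+NT_*^2)\|f\|_{L^2}^2$. I expect the only delicate point to be this passage of the pointwise supremum and of the limit of partial sums through the merely sublinear, strongly continuous operator $T_n$---which is exactly what the hypotheses on $T_n$ are designed to permit---together with keeping the mixed-norm bookkeeping and the shift $n\mapsto n+k$ straight, so that the convolution-type estimate~\eqref{condition of orth-prin2} collapses to the single constant $w^2$.
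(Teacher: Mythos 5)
Your proof is correct and follows essentially the same route as the standard argument (and the sketch in the source behind the citation to \cite{JKRW03}): dominate the maximal partial sum pointwise by $\sum_{k}|T_n u_{k+n}|$, apply Minkowski in $L^2$ and then in $\ell^2(n)$ to collapse the convolution-type bound to the single constant $w$, and for the second assertion split $\mathbb{N}$ at $N$, handling small $n$ with $T_*$. The only piece you flesh out beyond the reference's sketch is the passage from the partial sums $g_\ell$ to $f$ via strong continuity and a.e.-convergent subsequences, and that argument is carried out correctly.
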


\section{Strong type $(2,2)$ estimates}\label{ST3}
In this section, we prove that the square function $S(f)$ and the short variation operator $SV(f)$ are of strong type $(2,2)$. We begin with the square function $S(f)$.
\begin{proof}[Proof of \eqref{strong-type inequalities of square function} in the case $p=2$]
Fix $f\in L^2(G,m)$. Recall that
$$S(f)=\bigg(\sum_{n>n_{r_0}}|A^\prime_{\delta^n}f-\mathbb{E}_nf|^2\bigg)^{1/2}.$$
Set $n_2=\max\{n_0,n_1,n_{r_0}\}$. By Lemma~\ref{controlled by maximal operator}, we first have
\begin{equation*}
  \bigg\|\bigg(\sum_{n_{r_0}<n\le n_2}|A^\prime_{\delta^n}f-\mathbb{E}_nf|^2\bigg)^{1/2}\bigg\|_{L^2}\le 2(n_2-n_{r_0})\|f\|_{L^2}.
\end{equation*}
Note that $f=\sum_{n\in\mathbb{Z}}\mathbb{D}_{n}f$. Set $T_n=A^\prime_{\delta^n}-\mathbb{E}_n$, $u_n=v_n=\mathbb{D}_{n}f$, $N=n_2$ in Lemma~\ref{orthogonality principle}; it suffices for our purposes to prove that for every $n>n_2$ and $k\in\mathbb{Z}$, there exists a sequence  $a(k)$ of positive number with $w=\sum_{k\in\mathbb{Z}}a(k)<\8$ such that
\begin{equation}\label{L2-estimate of square function}
  \|A^\prime_{\delta^n}\mathbb{D}_{n+k}f-\mathbb{E}_n\mathbb{D}_{n+k}f\|_{L^2}\le a(k)\|\mathbb{D}_{n+k}f\|_{L^2}.
\end{equation}

In order to achieve our goal, we first set
\begin{equation}\label{k2}
  k_2=\max\{L_0+1,|k_1|,\min\{k\in\mathbb{N}:a_0\delta^{k-1}>1\}\},
\end{equation}
and then divide $k$ into three cases $-k_2\le k\leq k_2$, $k>k_2$ and $k<-k_2$.\\

\noindent{\bf{Case $-k_2\le k\le k_2$}}. Applying Lemma~\ref{controlled by maximal operator} for function $\mathbb{D}_{n+k}f$, we obtain
\begin{equation*}
  \begin{split}
 \|A^\prime_{\delta^n}\mathbb{D}_{n+k}f-\mathbb{E}_n\mathbb{D}_{n+k}f\|_{L^2}\le {(D^{1/2}+1)}\|\mathbb{D}_{n+k}f\|_{L^2}.
  \end{split}
\end{equation*}

\noindent{\bf{Case $k>k_2$}}. Note that $\mathbb{E}_n\mathbb{D}_{n+k}f=\mathbb{D}_{n+k}f$, we write
\begin{equation*}
 \begin{split}
    \|A^\prime_{\delta^n}\mathbb{D}_{n+k}f-\mathbb{E}_n\mathbb{D}_{n+k}f\|^2_{L^2(G,m)}
    =\sum_\alpha\int_{Q_\alpha^{n+k-1}}|A^\prime_{\delta^n}\mathbb{D}_{n+k}f(x)-\mathbb{D}_{n+k}f(x)|^2dm(x).
 \end{split}
\end{equation*}
 Fix a `dyadic cube' $Q^{n+k-1}_\alpha$. Note that $\mathbb{D}_{n+k}f(x)$ is a constant valued function on $Q^{n+k-1}_\alpha$, and so it follows that on $Q^{n+k-1}_\alpha$, $A^\prime_{\delta^n}\mathbb{D}_{n+k}f(x)-\mathbb{D}_{n+k}f(x)\neq 0$ only if $x$ belongs to the set $\mathcal{H}(B_{\delta^n},Q^{n+k-1}_\alpha)$. Applying Lemma~\ref{measure estimate of cube}, we have
\begin{equation}\label{ball}
  m(\mathcal{H}(B_{\delta^n},Q^{n+k-1}_\alpha))\le C_2\delta^{(1-k)\eta} m(Q^{n+k-1}_\alpha).
\end{equation}
Let $x\in \mathcal{H}(B_{\delta^n},Q^{n+k-1}_\alpha)$ and define the set $I_{x}=\{\beta:B(x,\delta^n)\cap Q_{\beta}^{n+k-1}\neq\emptyset\}$. Setting $I_{\alpha}=\cup_{x\in\mathcal{H}(B_{\delta^n},Q^{n+k-1}_\alpha)}I_x$. {Fix $\beta\in I_{\alpha}$, there exist a point $x\in\mathcal{H}(B_{\delta^n},Q^{n+k-1}_\alpha)$ and $y_0\in Q_{\beta}^{n+k-1}$ such that $y_0\in B(x,\delta^n)\cap Q_{\beta}^{n+k-1}$. Then for every $y\in Q_{\beta}^{n+k-1}$, by Proposition~\ref{dyadic cube}(iv), we have
\begin{equation*}
  d(y, z_{\alpha}^{n+k-1})\le d(y,y_0)+d(y_0, x)+d(x,z_{\alpha}^{n+k-1})\le 2C_1\delta^{n+k-1}+\delta^n.
\end{equation*}
It follows that for every $\beta\in I_{\alpha}$, $Q_\beta^{n+k-1}\subseteq B(z_{\alpha}^{n+k-1},2C_1\delta^{n+k-1}+\delta^n))$.  By Proposition~\ref{dyadic cube}(ii), the `dyadic cubes' $Q_\beta^k$ are disjoint, hence $\cup_{\beta\in I_{\alpha}}Q_\beta^{n+k-1}\subseteq B(z_\alpha^k,2C_1\delta^{n+k-1}+\delta^n)$. On the other hand, by Proposition~\ref{geometry-doubling}, $B(z_{\alpha}^{n+k-1},2C_1\delta^{n+k-1}+\delta^n))$ is covered by at most $D^{\log_2[(2C_1+1)/a_0]+1}$ balls of radius of $a_0\delta^{n+k-1}$. Since by Proposition~\ref{dyadic cube}(iv), we know that each $Q_{\beta}^{n+k-1}$ contains a ball $B(z^{n+k-1}_\beta,a_0\delta^{n+k-1})$, hence
\begin{equation}\label{measure}
  \#\{I_{\alpha}\}\le D^{\log_2[(2C_1+1)/a_0]+1},
\end{equation}}
{Here and} in what follows, $\#\{A\}$ stands for the number of the set $A$. 
Set
$$m_{\alpha}=\max_{\beta\in I_{\alpha}}|\mathbb{D}_{n+k}f(z_\beta^{n+k-1})|,$$
where $z_\beta^{n+k-1}$ is the point associated with the corresponding `dyadic cube' $Q_\beta^{n+k-1}$. {Since $\mathbb{D}_{n+k}f(x)$ is a constant-valued function on $Q_\beta^{n+k-1}$. It follows that
\begin{equation}\label{maximal}
  m_{\alpha}^2\le\sum_{\beta\in I_{\alpha}}\frac{1}{m(Q_\beta^{n+k-1})}\int_{Q_\beta^{n+k-1}}|\mathbb{D}_{n+k}f(x)|^2dm(x).
\end{equation}}
By the above inequalities, Proposition~\ref{dyadic cube}(iv),~\eqref{ball} and \eqref{int}, we conclude
\begin{equation*}
   \begin{split}
     &\sum_\alpha\int_{Q_\alpha^{n+k-1}}|A^\prime_{\delta^n}\mathbb{D}_{n+k}f(x)
     -\mathbb{D}_{n+k}f(x)|^2dm(x)\\
&= \sum_\alpha \int_{\mathcal{H}(B_{\delta^n},Q^{n+k-1}_\alpha)}|A^\prime_{\delta^n}\mathbb{D}_{n+k}f(x)
     -\mathbb{D}_{n+k}f(x)|^2dm(x)\\
     &\le 2\sum_\alpha m_{\alpha}^2m(\mathcal{H}(B_{\delta^n},Q^{n+k-1}_\alpha))\\
     &\le 2C_2\delta^{(1-k)\eta}\sum_\alpha m_{\alpha}^2m(Q^{n+k-1}_\alpha)\\
     &\le {2C_2\delta^{(1-k)\eta}\sum_\alpha\sum_{\beta\in I_{\alpha}}\frac{m(Q^{n+k-1}_\alpha)}{m(Q_\beta^{n+k-1})}\int_{Q_\beta^{n+k-1}}|\mathbb{D}_{n+k}f(x)|^2dm(x)}\\
    &\le { 2C_2\delta^{(1-k)\eta}\sum_\alpha\sum_{\beta\in I_{\alpha}}\frac{m(B(z_\beta^{n+k-1},2C_1\delta^{n+k-1}+\delta^n))}{m(B(z_\beta^{n+k-1},a_0\delta^{n+k-1}))}
    \int_{Q_\beta^{n+k-1}}|\mathbb{D}_{n+k}f(x)|^2dm(x)}\\
    &\le{ 2C_2(K+1)\big({(2C_1+1)}/{a_0}\big)^\epsilon D^{\log_2[(2C_1+1)/a_0]+1}\delta^{(1-k)\eta}\int_{G}|\mathbb{D}_{n+k}f(x)|^2dm(x)}.
   \end{split}
\end{equation*}

\noindent{\bf{Case}} $k<-k_2$. Since $\mathbb{E}_{n}\mathbb{D}_{n+k}f=0$ and 
 $\int_{Q^{k+n}_\alpha}\mathbb{D}_{n+k}f=0$ for every $Q^{k+n}_\alpha\in\mathcal{F}_{k+n}$, {thus for any $x\in G$,}
\begin{align*}
   {|A^\prime_{\delta^n}\mathbb{D}_{n+k}f(x)|}&=|\frac{1}{m(B(x,\delta^n))}\sum_{\alpha}\int_{Q^{k+n}_\alpha\cap B(x,\delta^n)}\mathbb{D}_{n+k}f(y)dm(y)|\\
   &=|\frac{1}{m(B(x,\delta^n))}\int_{\mathcal{I}(B(x,\delta^n),n+k)}\mathbb{D}_{n+k}f(y)dm(y)|\\
   &\le M_{n+k}\mathbb{D}_{n+k}f(x).
\end{align*}
Form this and Lemma~\ref{lem:Akn}, we obtain
\begin{equation*}
  \|A^\prime_{\delta^n}\mathbb{D}_{n+k}f\|_{L^2}\le\|M_{n+k}\mathbb{D}_{n+k}f\|_{L^2}\le D_2 \delta^{k\epsilon/2}\|\mathbb{D}_{n+k}f\|_{L^2}.
\end{equation*}
Therefore, for every $n>n_2$,  we determine{
\begin{equation*}
a(k)= \left\{
    \begin{array}{ll}
     \bigg( 2C_2(K+1)({(2C_1+1)}/{a_0}\big)^\epsilon D^{\log_2[(2C_1+1)/a_0]+1}\delta^{\eta}\bigg)^{1/2}\delta^{-\eta k/2}, & \hbox{$k>k_2$,} \\
      D^{1/2}+1, & \hbox{$-k_2\le k\le k_2$,} \\
     D_2\delta^{\epsilon k/2}, & \hbox{$k<-k_2$,}
    \end{array}
  \right.
\end{equation*}}%
and $\sum_{k\in\mathbb{Z}}a(k)<\8$, which completes the proof.
\end{proof}

The proof of strong type $(2,2)$ estimate for $SV(f)=\bigg(\sum_{n\ge n_{r_0}}|SV_n(f)|^2\bigg)^{1/2}$ is similar in spirit to that of the square function $S(f)$. 
\begin{proof}[Proof of \eqref{strong-type inequalities of short variation} in the case $p=2$.]
First by Lemma~\ref{controlled by maximal operator}, we have
\begin{equation*}
  \bigg\|\bigg(\sum_{n_{r_0}\le n\le n_2}|SV_n(f)|^2\bigg)^{1/2}\bigg\|_{L^2}\le 2(n_2-n_{r_0}+1)C_4\|f\|_{L^2}.
\end{equation*}
Set $T_n=SV_n$, $u_n=v_n=\mathbb{D}_{n}$, $N=n_2$ in Lemma~\ref{orthogonality principle}. For every $n>n_2$, we divide $k$ into three cases $-k_2\le k\leq k_2$, $k> k_2$ and $k<-k_2$, where $k_2$ is defined in~\eqref{k2}.\\

\noindent{\bf{Case}} $-k_2\le k\leq k_2$. For function $\mathbb{D}_{n+k}f$, using Lemma~\ref{controlled by maximal operator} again, we have
\begin{equation*}
  \|V_2(A^\prime_{r}\mathbb{D}_{n+k}f:r\in[\delta^{n},\delta^{n+1}))\|_{L^2}\le C_4\|\mathbb{D}_{n+k}f\|_{L^2}.
\end{equation*}

\noindent{\bf{Case}} $k>k_2$.  We write
\begin{equation*}
 \begin{split}
    \|&V_2(A^\prime_{r}\mathbb{D}_{n+k}f:r\in[\delta^{n},\delta^{n+1}))\|^2_{L^2(G,m)}\\
    &=\sum_\alpha\int_{Q_\alpha^{n+k-1}}|V_2(A^\prime_{r}\mathbb{D}_{n+k}f(x):r\in[\delta^{n},\delta^{n+1}))|^2dm(x).
 \end{split}
\end{equation*}
Fix a `dyadic cube' $Q_\alpha^{n+k-1}$.  Recall that $\mathbb{D}_{n+k}f(x)$ is a constant valued funtion on $Q_\alpha^{n+k-1}$, it follows that for every $\delta^n\le r_i<r_{i+1}<\delta^{n+1}$ and $x\in Q_\alpha^{n+k-1}$, $|A^\prime_{r_{i+1}}\mathbb{D}_{n+k}f(x)-A^\prime_{r_i}\mathbb{D}_{n+k}f(x)|\neq 0$ only if there exists at least one ball $B(x,r_i)$ or $B(x,r_{i+1})$ intersecting with $(Q^{n+k-1}_\alpha)^{c}$. {So $V_2(A^\prime_{r}\mathbb{D}_{n+k}f:r\in[\delta^{n},\delta^{n+1}))$} is supported on $\mathcal{H}(B_{\delta^{n+1}},Q^{n+k-1}_\alpha)$.
By Lemma~\ref{measure estimate of cube}, we have 
\begin{equation}\label{ball2}
m(\mathcal{H}(B_{\delta^{n+1}},Q^{n+k-1}_\alpha))\le C_2\delta^{(2- k)\eta} m(Q^{n+k-1}_\alpha).
\end{equation}
 On the other hand, by~\eqref{controll the short variation}, we know that $V_2(A^\prime_{r}\mathbb{D}_{n+k}f(x):r\in[\delta^{n},\delta^{n+1}))$ is controlled by the sum of $SV_I(\mathbb{D}_{n+k}f)(x)$ and $SV_{II}(\mathbb{D}_{n+k}f)(x)$, where
\begin{equation*}
 \begin{split}
   SV_I(\mathbb{D}_{n+k}f)(x)&=\bigg(\sup_{\delta^{n}\le r_0<\cdots<r_J<\delta^{n+1}}\sum_{i=1}^J\frac{1}{m(B(x,r_i))^2}\big|\int_{B(x,r_i)\setminus B(x,r_{i-1})}\mathbb{D}_{n+k}f(y)dm(y)\big|^2\bigg)^{1/2}
 \end{split}
\end{equation*}
and
\begin{equation*}
 \begin{split}
   &SV_{II}(\mathbb{D}_{n+k}f)(x)=\\
&\bigg(\sup_{\delta^{n}\le r_0<\cdots<r_J<\delta^{n+1}}\sum_{i=1}^J
\big|\big(\frac{1}{m(B(x,r_{i-1}))}-\frac{1}{m(B(x,r_i))}\big)\int_{B(x,r_{i-1})}\mathbb{D}_{n+k}f(y)dm(y)\big|^2\bigg)^{1/2}.
 \end{split}
\end{equation*}
Let $x\in \mathcal{H}(B_{\delta^{n+1}},Q^{n+k-1}_\alpha)$ and set $I_{x}=\{\beta:B(x,\delta^{n+1})\cap Q_{\beta}^{n+k-1}\neq\emptyset\}$. We define the set
~$I_\alpha=\cup_{x\in\mathcal{H}(B_{\delta^{n+1}},Q^{n+k-1}_\alpha)}I_x$. {Similar to}~\eqref{measure}, we have {$\#\{I_{\alpha}\}\le  D^{\log_2[(2C_1+1)/a_0]+1}$}. Write
$$m_{\alpha}=\max_{\beta\in I_\alpha}|\mathbb{D}_{n+k}f(z_\beta^{n+k-1})|.$$
{It follows from~\eqref{int} that
\begin{equation*}
  \begin{split}
   SV_I&(\mathbb{D}_{n+k}f)^2(x)\\
   &\le \frac{m(B(x,\delta^{n+1})\setminus B(x,\delta^{n}))}{m(B(x,\delta^{n}))^2} \sup_{\delta^{n}\le r_0<\cdots<r_J<\delta^{n+1}}\sum_{i=1}^J\int_{B(x,r_i)\setminus B(x,r_{i-1})}|\mathbb{D}_{n+k}f(y)|^2dm(y)\\
   &\le \bigg(\frac{m(B(x,\delta^{n+1})\setminus B(x,\delta^{n}))}{m(B(x,\delta^{n}))}\bigg)^2m_{\alpha}^2\le (K+1)^2\delta^{2\epsilon} m_{\alpha}^2
  \end{split}
\end{equation*}}
 and
\begin{equation*}
 \begin{split}
   &SV_{II}(\mathbb{D}_{n+k}f)^2(x)\\
   &\le  m_{\alpha}^2m(B(x,\delta^{n+1}))^2 \bigg(\sup_{\delta^{n}\le r_0<\cdots<r_J<\delta^{n+1}}\sum_{i=1}^J
|\frac{1}{m(B(x,r_{i-1}))}-\frac{1}{m(B(x,r_i)))}|\bigg)^2\\
&\le  m_{\alpha}^2m(B(x,\delta^{n+1}))^2\bigg(\frac{1}{m(B(x,\delta^{n}))}-\frac{1}{m(B(x,\delta^{n+1}))}\bigg)^2\le (K+1)^2\delta^{2\epsilon} m_{\alpha}^2.
 \end{split}
\end{equation*}
{Combining the above two inequalities with~\eqref{maximal} and~\eqref{ball2}}, we have
\begin{equation*}
  \begin{split}
    &\int_{Q_\alpha^{n+k-1}}|V_2(A^\prime_{r}\mathbb{D}_{n+k}f(x):r\in[\delta^{n},\delta^{n+1}))|^2dm(x)\\
&=\int_{\mathcal{H}(B_{\delta^{n+1}},Q^{n+k-1}_\alpha)}|V_2(A^\prime_{r}\mathbb{D}_{n+k}f(x):r\in[\delta^{n},\delta^{n+1}))|^2dm(x)\\
    &\le 2\int_{\mathcal{H}(B_{\delta^{n+1}},Q_\alpha^{n+k-1})}SV_I(\mathbb{D}_{n+k}f)^2(x)+SV_{II}(\mathbb{D}_{n+k}f)^2(x)dm(x)\\
    &\le 4(K+1)^2\delta^{2\epsilon} m(\mathcal{H}(B_{\delta^{n+1}},Q_\alpha^{n+k-1}))m_{\alpha}^2\\
    &\le{4C_2(K+1)^2\delta^{2\epsilon}\delta^{(2-k)\eta}\sum_{\beta\in I_{\alpha}}\frac{m(Q_\alpha^{n+k-1})}{m(Q_\beta^{n+k-1})}\int_{Q_{\beta}^{n+k-1}}|\mathbb{D}_{n+k}f(x)|^2dm(x)}\\
    &\le 4C_2(K+1)^2\big((2C_1+1)/a_0\big)^\epsilon \delta^{2\epsilon}\delta^{(2-k)\eta}\sum_{\beta\in I_{\alpha}}\int_{Q_{\beta}^{n+k-1}}|\mathbb{D}_{n+k}f(x)|^2dm(x),
  \end{split}
\end{equation*}
and summing over all $\alpha$ shows,
\begin{align*}
    &\|V_2(A^\prime_{r}\mathbb{D}_{n+k}f:r\in[\delta^{n},\delta^{n+1}))\|_{L^2}\\
    &\le  \bigg(4C_2(K+1)^2\big((2C_1+1)/a_0\big)^\epsilon D^{\log_2[(2C_1+1)/a_0]+1}\delta^{2\epsilon}\delta^{(2-k)\eta}\bigg)^{1/2}\|\mathbb{D}_{n+k}f\|_{L^2}.
\end{align*}

\noindent{\bf{Case}} $k<-k_2$. Let $x\in G$. Note that for every $\delta^n\le r_{i-1}<r_i\le\delta^{n+1}$,
\begin{equation*}
\begin{split}
&\int_{B(x,r_i))\setminus B(x,r_{i-1})}\mathbb{D}_{n+k}f(y)dm(y)=\int_{\mathcal{I}(B(x,r_i))\setminus B(x,r_{i-1}),n+k)}\mathbb{D}_{n+k}f(y)dm(y),\\
   &\int_{B(x,r_{i-1})}\mathbb{D}_{n+k}f(y)dm(y)=\int_{\mathcal{I}(B(x,r_{i-1}),n+k)}\mathbb{D}_{n+k}f(y)dm(y),
 \end{split}
\end{equation*}
so $SV_I(\mathbb{D}_{n+k}f)(x)= M^S_{n+k}(\mathbb{D}_{n+k}f)(x)$. Moreover, using the estimate~\eqref{int} and the fact that the $\ell^1$-norm is greater than the $\ell^2$-norm, we obtain
\begin{align*}
&SV_{II}(\mathbb{D}_{n+k}f)(x)\\
&\le\sup_{\delta^{n}\le r_0<\cdots<r_J<\delta^{n+1}}\sum_{i=1}^J
\big|\big(\frac{1}{m(B(x,r_{i-1}))}-\frac{1}{m(B(x,r_i))}\big)\int_{\mathcal{I}(B(x,r_{i-1}),n+k)}\mathbb{D}_{n+k}f(y)dm(y)\big|\\
&\le (K+1)\delta^\epsilon \sup_{\delta^{n}\le r_0<\cdots<r_J<\delta^{n+1}}\sum_{i=1}^J
\big|\frac{m(B(x,\delta^{n+1}))}{m(B(x,r_{i-1}))}-\frac{m(B(x,\delta^{n+1}))}{m(B(x,r_i))}\big|{M}_{n+k}(\mathbb{D}_{n+k}f)(x)\\
& \le (K+1)^2\delta^{2\epsilon}{M}_{n+k}(\mathbb{D}_{n+k}f)(x).
\end{align*}
Using Lemma~\ref{lem:Mkn} and Lemma~\ref{lem:Akn}, respectively, we have
\begin{align*}
  \|&V_2(A^\prime_{r}\mathbb{D}_{n+k}f:r\in[\delta^{n},\delta^{n+1}))\|_{L^2}\le\|SV_I(\mathbb{D}_{n+k}f)\|_{L^2}+\|SV_{II}(\mathbb{D}_{n+k}f)\|_{L^2}\\
  &\le \|M^S_{n+k}(\mathbb{D}_{n+k}f)\|_{L^2}+(K+1)^2\delta^{2\epsilon}\|{M}_{n+k}(\mathbb{D}_{n+k}f)\|_{L^2}\\
  &\le(C_3+ (K+1)^2\delta^{2\epsilon} D_2)\delta^{\epsilon k/2}\|\mathbb{D}_{n+k}f\|_{L^2}.
\end{align*}
Hence, for every $n>n_2$, we determine
\begin{equation*}
a(k)= \left\{
    \begin{array}{ll}
     {\bigg(4C_2(K+1)^2\big((2C_1+1)/a_0\big)^\epsilon D^{\log_2[(2C_1+1)/a_0]+1}\delta^{2(\epsilon+\eta)}\bigg)^{1/2}\delta^{-k\eta/2}}, & \hbox{$k>k_2$,} \\
      C_4, & \hbox{$-k_2\le k\le k_2$,} \\
    (C_3+ (K+1)^2\delta^{2\epsilon} D_2)\delta^{\epsilon k/2}, & \hbox{$k<-k_2$,}
    \end{array}
  \right.
\end{equation*}
and $\sum_k a(k)<\8$, which completes the proof.%
\end{proof}

\section{Weak type $(1,1)$ estimates}\label{ST4}
In this section, we prove that the square function $f\rightarrow S(f)$ and the short variation operator $f\rightarrow SV(f)$ are of weak type $(1,1)$.

Under the conditions \eqref{decay property} and \eqref{geo-doubling}, the space $(G,d,m)$ might not be a doubling measure space and the usual Calder\'on-Zygmund decomposition does not work any more. We need the following version  of Calder\'{o}n-Zygmund decomposition, which is motivated by Gundy's decomposition from martingale theory. This decomposition was constructed in~\cite{Lop-Mar-Pra14} for non-doubling measure on $\real^d$, and construction works without alterations for the space $(G,d,m)$.

Let $f:G\rightarrow \mathbb{C}$ be a locally integrable function. The `dyadic' maximal function is defined by
\begin{equation*}
  M_df(x)=\sup_{k\in\mathbb{Z}}\mathbb{E}_k(|f|)(x).
\end{equation*}
 Given a `dyadic cube' $Q_\alpha^k$, let $\widetilde{Q}_\alpha^k=\{y\in G:d(y,z_\alpha^k)\le 3C_1\delta^{k+1}\}$ and $\widehat{Q}_\alpha^k$ be its parent. We denote  $\widehat{z}_\alpha^k$ the corresponding point of $\widehat{Q}_\alpha^k$.  Set $\langle f\rangle_{Q_\alpha^k}=\frac{1}{m(Q_\alpha^k)}\int_{Q_\alpha^k}f$.

\begin{lem}\label{C-Z decomposition}
Let $f\in L^1(G,m)$, and $\gamma>0$, let
\begin{equation*}
  \{x\in G:M_df(x)>\gamma\}=\cup_{k\in\mathbb{Z}}\Omega_k,
\end{equation*}
where
\begin{equation*}
  \Omega_k=\{x\in G:\mathbb{E}_k(|f|)(x)>\gamma, \mathbb{E}_j(|f|)(x)\le\gamma, j>k\}.
\end{equation*}
Then we decompose
\begin{equation*}
  \Omega=\cup_{k\in\mathbb{Z}}\Omega_k=\cup_{k\in\mathbb{Z}}(\cup_{\alpha\in\Lambda_k} Q_\alpha^k)
\end{equation*}
into a disjoint union of maximal `dyadic cubes' $Q_\alpha^k$, where $\{\Lambda_k\}_k$ stands for the sequence of the corresponding index set. Let
\begin{equation*}
  \begin{split}
    &g(x)=f\mathds{1}_{\Omega^c}+\sum_{k}\sum_{\alpha\in\Lambda_k}\langle f \rangle_{\widehat{Q}_\alpha^k}\mathds{1}_{Q_\alpha^k}(x)+\sum_{k}\sum_{\alpha\in\Lambda_k}\big(\langle f\rangle_{Q_\alpha^k}-\langle f\rangle_{\widehat{Q}_\alpha^k}\big)\frac{m(Q_\alpha^k)}{m(\widehat{Q}_\alpha^k)}\mathds{1}_{\widehat{Q}_\alpha^k}(x),\\
    &b(x)=\sum_{k}b_k=\sum_{k}\sum_{\alpha\in\Lambda_k}b_{\alpha}^{k}(x)=\sum_{k}\sum_{\alpha\in\Lambda_k}\big(f(x)-\langle f \rangle_{{Q}_\alpha^k}\big)\mathds{1}_{Q_\alpha^k}(x),\\
    &\xi(x)=\sum_{k}\xi_k=\sum_{k}\sum_{\alpha\in\Lambda_k}\xi_{\alpha}^{k}(x)=\sum_{k}\sum_{\alpha\in\Lambda_k}\big(\langle f\rangle_{Q_\alpha^k}-\langle f\rangle_{\widehat{Q}_\alpha^k}\big)\bigg(\mathds{1}_{Q_\alpha^k}(x)-\frac{m(Q_\alpha^k)}{m(\widehat{Q}_\alpha^k)}\mathds{1}_{\widehat{Q}_\alpha^k}(x)\bigg).
  \end{split}
\end{equation*}
Then
\begin{enumerate}[\noindent]
   \item\emph{(i)}~$f=g+b+\xi$;
    \item\emph{(ii)}~let $p\in[1,\8)$ and $m=[p]+1$, the function $g$ satisfies
    $$\|g\|^p_{L^p}\le 3\cdot2^p(m!)^{\frac{p-1}{m-1}}\gamma^{p-1}\|f\|_{L^1};$$
    \item \emph{(iii)}~the function $b$ satisfies
     $$\int_G b_\alpha^k=0,~\|b\|_{L^1}=\sum_{k}\sum_{\alpha\in\Lambda_k}\|b_\alpha^k\|_{L^1}\le 2\|f\|_{L^1};$$
     \item\emph{(iv)}~the function $\xi$ satisfies
     $$\int_G \xi_\alpha^k=0,~\|\xi\|_{L^1}=\sum_{k}\sum_{\alpha\in\Lambda_k}\|\xi_\alpha^k\|_{L^1}\le 4\|f\|_{L^1}.$$
  \end{enumerate}
\end{lem}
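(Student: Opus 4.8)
The plan is to realize the objects $g,b,\xi$ as the dyadic Gundy--type Calder\'on--Zygmund decomposition of \cite{Lop-Mar-Pra14}, and to check that its construction uses only two features of the underlying space, both available here: the nesting/disjointness and the unique-parent structure of the `dyadic cubes' of Proposition~\ref{dyadic cube}, and the elementary weak type $(1,1)$ bound $m(\Omega)\le\gamma^{-1}\|f\|_{L^1}$ for the `dyadic' maximal function $M_d$. The latter follows at once from the stopping-time selection: each $Q_\alpha^k$ with $\alpha\in\Lambda_k$ is a \emph{maximal} `dyadic cube' on which $\mathbb{E}_k(|f|)>\gamma$, so the $Q_\alpha^k$ (over all $k$ and $\alpha\in\Lambda_k$) are pairwise disjoint, $\sum_{k,\alpha}m(Q_\alpha^k)=m(\Omega)\le\gamma^{-1}\|f\|_{L^1}$, and moreover, since $x\in Q_\alpha^k\subseteq\Omega_k$ forces $\mathbb{E}_{k+1}(|f|)(x)\le\gamma$, the parent satisfies $\langle|f|\rangle_{\widehat Q_\alpha^k}\le\gamma$, hence $|\langle f\rangle_{\widehat Q_\alpha^k}|\le\gamma$. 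These two remarks will be used repeatedly.

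Part (i) is a direct computation. For fixed $(k,\alpha)$, adding the definitions of $b_\alpha^k$ and $\xi_\alpha^k$ collapses to $(f-\langle f\rangle_{\widehat Q_\alpha^k})\mathds{1}_{Q_\alpha^k}-(\langle f\rangle_{Q_\alpha^k}-\langle f\rangle_{\widehat Q_\alpha^k})\tfrac{m(Q_\alpha^k)}{m(\widehat Q_\alpha^k)}\mathds{1}_{\widehat Q_\alpha^k}$; summing over all $k$ and $\alpha\in\Lambda_k$ and using $\sum_{k,\alpha}\mathds{1}_{Q_\alpha^k}=\mathds{1}_\Omega$ turns this into $f\mathds{1}_\Omega$ minus the last two sums defining $g$, whence $b+\xi=f\mathds{1}_\Omega-(g-f\mathds{1}_{\Omega^c})=f-g$. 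Parts (iii)--(iv) are likewise elementary: $\int_G b_\alpha^k=\int_{Q_\alpha^k}(f-\langle f\rangle_{Q_\alpha^k})=0$, and the normalizing factor is chosen so that $\int_G\xi_\alpha^k=(\langle f\rangle_{Q_\alpha^k}-\langle f\rangle_{\widehat Q_\alpha^k})\big(m(Q_\alpha^k)-\tfrac{m(Q_\alpha^k)}{m(\widehat Q_\alpha^k)}m(\widehat Q_\alpha^k)\big)=0$; for the norms, $\|b_\alpha^k\|_{L^1}\le 2\int_{Q_\alpha^k}|f|$ summed over the disjoint $Q_\alpha^k$ gives $\|b\|_{L^1}\le 2\|f\|_{L^1}$, while a short computation gives $\|\xi_\alpha^k\|_{L^1}=2\big(1-\tfrac{m(Q_\alpha^k)}{m(\widehat Q_\alpha^k)}\big)|\langle f\rangle_{Q_\alpha^k}-\langle f\rangle_{\widehat Q_\alpha^k}|\,m(Q_\alpha^k)\le 2\big(\textstyle\int_{Q_\alpha^k}|f|+\gamma\,m(Q_\alpha^k)\big)$ (using $|\langle f\rangle_{\widehat Q_\alpha^k}|\le\gamma$), which summed over the disjoint cubes yields $\|\xi\|_{L^1}\le 2(\|f\|_{L^1}+\gamma\,m(\Omega))\le 4\|f\|_{L^1}$.

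The substantive point is the $L^p$ estimate (ii) on $g$, and \textbf{this is where the main obstacle lies}, because the non-doubling character of $(G,d,m)$ means that the parents $\widehat Q_\alpha^k$ overlap and the ratios $m(\widehat Q_\alpha^k)/m(Q_\alpha^k)$ are unbounded, so $g$ is not pointwise bounded. Split $g=f\mathds{1}_{\Omega^c}+g_1+g_2$ with $g_1=\sum_{k,\alpha}\langle f\rangle_{\widehat Q_\alpha^k}\mathds{1}_{Q_\alpha^k}$ and $g_2=\sum_{k,\alpha}(\langle f\rangle_{Q_\alpha^k}-\langle f\rangle_{\widehat Q_\alpha^k})\tfrac{m(Q_\alpha^k)}{m(\widehat Q_\alpha^k)}\mathds{1}_{\widehat Q_\alpha^k}$. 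On $\Omega^c$ one has $|f|\le M_df\le\gamma$ a.e.\ (by the reverse martingale convergence $\mathbb{E}_k(|f|)\to|f|$), so $\|f\mathds{1}_{\Omega^c}\|_{L^p}^p\le\gamma^{p-1}\|f\|_{L^1}$; since $|\langle f\rangle_{\widehat Q_\alpha^k}|\le\gamma$ and the $Q_\alpha^k$ are disjoint, $\|g_1\|_{L^p}^p\le\gamma^pm(\Omega)\le\gamma^{p-1}\|f\|_{L^1}$. The term $g_2$ is handled exactly as in \cite{Lop-Mar-Pra14}: each coefficient is bounded, $|\langle f\rangle_{Q_\alpha^k}-\langle f\rangle_{\widehat Q_\alpha^k}|\tfrac{m(Q_\alpha^k)}{m(\widehat Q_\alpha^k)}\le\langle|f|\rangle_{\widehat Q_\alpha^k}+\gamma\le 2\gamma$, so for the integer $m=[p]+1$ one expands $\int_G|g_2|^m$ into a sum over $m$-tuples of selected cubes, discards the non-nested tuples by Proposition~\ref{dyadic cube}(ii), organizes the rest over chains of parents, and applies the estimate $\sum_{Q_\alpha^k\subseteq R,\;\alpha\in\Lambda_k}|\langle f\rangle_{Q_\alpha^k}-\langle f\rangle_{\widehat Q_\alpha^k}|\tfrac{m(Q_\alpha^k)}{m(\widehat Q_\alpha^k)}\,m(Q_\alpha^k)\le\int_R|f|+\gamma\,m(R)$, valid for any cube $R$ by disjointness of the selected cubes, to obtain $\|g_2\|_{L^m}^m\le C\,m!\,\gamma^{m-1}\|f\|_{L^1}$. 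Interpolating this $L^m$ bound with the crude $L^1$ bound $\|g_2\|_{L^1}\le 2\|f\|_{L^1}$ via H\"older's inequality produces exactly $\|g_2\|_{L^p}^p\le C\,(m!)^{\frac{p-1}{m-1}}\gamma^{p-1}\|f\|_{L^1}$, and combining the three contributions with $(a+b+c)^p\le 3^{p-1}(a^p+b^p+c^p)$ (absorbing the numerical constants into $3\cdot2^p$) gives (ii), finishing the verification that the decomposition of \cite{Lop-Mar-Pra14} transfers without change to $(G,d,m)$.
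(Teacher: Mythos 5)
The paper does not prove Lemma~\ref{C-Z decomposition}: it cites \cite{Lop-Mar-Pra14}, remarks that the Gundy-type Calder\'on--Zygmund decomposition there ``works without alterations'' for $(G,d,m)$, and moves on. Your proposal supplies exactly that verification, following the same route. Parts (i), (iii), (iv) are direct computations and you have them right, including the identity $\|\xi_\alpha^k\|_{L^1}=2\bigl(1-\tfrac{m(Q_\alpha^k)}{m(\widehat Q_\alpha^k)}\bigr)|\langle f\rangle_{Q_\alpha^k}-\langle f\rangle_{\widehat Q_\alpha^k}|\,m(Q_\alpha^k)$ and the observation that $\langle|f|\rangle_{\widehat Q_\alpha^k}\le\gamma$ is available because $Q_\alpha^k$ is maximal. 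Your treatment of (ii) -- decomposing $g=f\mathds{1}_{\Omega^c}+g_1+g_2$, using the reverse-martingale fact $|f|\le\gamma$ a.e.\ on $\Omega^c$ and the disjointness of the stopping cubes for the first two pieces, and then expanding $\int|g_2|^m$ over $m$-tuples, keeping only nested chains via Proposition~\ref{dyadic cube}(ii), and finally interpolating the resulting $L^m$ bound against the crude $L^1$ bound via H\"older with exponent $\theta=\tfrac{p-1}{m-1}$ -- is precisely the strategy of \cite{Lop-Mar-Pra14}, and, as you observe, it uses only the nesting/disjointness/parent structure of the `dyadic cubes' and the weak $(1,1)$ bound $m(\Omega)\le\gamma^{-1}\|f\|_{L^1}$, so it indeed transfers with no modification.

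One small imprecision in the $g_2$ step: with $c_\alpha^k:=(\langle f\rangle_{Q_\alpha^k}-\langle f\rangle_{\widehat Q_\alpha^k})\tfrac{m(Q_\alpha^k)}{m(\widehat Q_\alpha^k)}$, the innermost sum in the chain expansion of $\int|g_2|^m$ needs the quantity $\sum_{\widehat Q_\alpha^k\subseteq R}|c_\alpha^k|\,m(\widehat Q_\alpha^k)=\sum|\langle f\rangle_{Q_\alpha^k}-\langle f\rangle_{\widehat Q_\alpha^k}|\,m(Q_\alpha^k)$, whereas the auxiliary estimate you display carries the extra factor $\tfrac{m(Q_\alpha^k)}{m(\widehat Q_\alpha^k)}$, i.e.\ it bounds $\sum|c_\alpha^k|\,m(Q_\alpha^k)$ instead. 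Both are $\le\int_R|f|+\gamma\,m(R)\le 2\gamma\,m(R)$ by the same one-line computation and disjointness of the $Q_\alpha^k$, so the proof is unaffected, but the displayed inequality is not the one that is actually fed into the iteration. Apart from that and from the (inessential) fact that your final bookkeeping produces a constant of the same shape but not identically equal to the paper's $3\cdot 2^p(m!)^{(p-1)/(m-1)}$, the argument is correct and is the argument the paper implicitly relies on.
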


With the above decomposition, the almost orthogonality principle which was exploited well in~\cite{JKRW98} or~\cite{GXHTM17}, doesn't seem to work. Thereby, we have to provide another method to achieve our goal. A new input is the observation that the operators $S$ and $SV$ are essentially dyadic operator, in particular they are perfect dyadic in the small scale---small cubes {\it versus} large balls.

We first deal with the operator $S$.

\begin{proof}[Proof of \eqref{weak-type inequalities of square function}]
Fix $f\in L^1(G,m)$ and $\gamma>0$. Keeping the notations in Lemma~\ref{C-Z decomposition}, we get $f=g+b+\xi$. We first have
\begin{align*}
  m\big(\{x\in G:S(f)(x)>\gamma\}\big)&\le m\big(\{x\in G:S(g)(x)>{\gamma}/{3}\}\big)\\
  &+m\big(\{x\in G:S(b)(x)>{\gamma}/{3}\}\big)+m\big(\{x\in G:S(\xi)(x)>{\gamma}/{3}\}\big).
\end{align*}
By the $L^2$-boundedness of the operator $S$ and Lemma~\ref{C-Z decomposition}(ii), the first term on the right side is controlled by
\begin{equation*}
  \begin{split}
   m\big(\{x\in G:S(g)(x)>{\gamma}/{3}\}\big)&\le \frac{9}{\gamma^2}\int_G|S(g)(x)|^2dm(x)\le\frac{9c_2}{\gamma^2}\int_G|g(x)|^2dm(x)\\
   &\le \frac{108\sqrt{6} c_2}{\gamma}\int_G|f(x)|dm(x).
  \end{split}
\end{equation*}
 It remains to handle the other two terms, we  set
\begin{equation*}
  k_3=\min\{k:a_0\delta^k>r_0\},~\widetilde{\Omega}=\big(\cup_{k<k_3}\Omega_k\big)\cup\big(\cup_{k\ge k_3}\widetilde{\Omega}_k\big),
\end{equation*}
where $\widetilde{\Omega}_k=\cup_{\alpha\in\Lambda_k} \widetilde{Q}_\alpha^k$.  By~\eqref{int} and the fact that $M_d$ is of weak type $(1,1)$,  $m(\widetilde{\Omega})$ is controlled by
\begin{equation*}
\begin{split}
  m(\widetilde{\Omega})&=\sum_{k<k_3}m(\Omega_k)+\sum_{k\ge k_3}m(\widetilde{\Omega}_k)\le \sum_{k<k_3}\sum_{\alpha\in\Lambda_k}m(Q_\alpha^k)+\sum_{k\ge k_3}\sum_{\alpha\in\Lambda_k}m(\widetilde{Q}_\alpha^k)\\
  &\le\sum_{k<k_3}\sum_{\alpha\in\Lambda_k}m(Q_\alpha^k)+\sum_{k\ge k_3}\sum_{\alpha\in\Lambda_k}\frac{m(\widetilde{Q}_\alpha^k)}{m(Q_\alpha^k)}m(Q_\alpha^k)\\
  &\le \sum_{k<k_3}\sum_{\alpha\in\Lambda_k}m(Q_\alpha^k)+(K+1)(3C_1\delta/a_0)^{\epsilon}\bigg(\sum_{k\ge k_3}\sum_{\alpha\in\Lambda_k}m(Q_\alpha^k)\bigg)\\
  &\le \frac{(K+1)(3C_1\delta/a_0)^{\epsilon}}{\gamma}\|f\|_{L^1}.
\end{split}
\end{equation*}

We now focus on the term $m\big(\{x\in G\setminus\widetilde{\Omega}:S(\xi)(x)>{\gamma}/{3}\}\big)$. Recall that $n_2=\max\{n_0, n_1\}$,  we decompose
\begin{equation*}
\begin{split}
     m\big(\{x\in G\setminus\widetilde{\Omega}:S(\xi)(x)>{\gamma}/{3}\}\big)&\le m\bigg(\{x\in G\setminus\widetilde{\Omega}:\big(\sum_{n_{r_0}<n\le n_2}|A_{\delta^n}\xi(x)-\mathbb{E}_n\xi(x)|^2\big)^{1/2}>{\gamma}/{6}\}\bigg)\\
     &+ m\bigg(\{x\in G\setminus\widetilde{\Omega}:\big(\sum_{n>n_2}|A_{\delta^n}\xi(x)-\mathbb{E}_n\xi(x)|^2\big)^{1/2}>{\gamma}/{6}\}\bigg).
\end{split}
\end{equation*}
For the first part of the right side of the above inequality, using Lemma~\ref{controlled by maximal operator} and Lemma~\ref{C-Z decomposition}(iv), we have
\begin{equation*}
  \begin{split}
    m&\bigg(\{x\in G\setminus\widetilde{\Omega}:\big(\sum_{n_{r_0}<n\le n_2}|A^\prime_{\delta^n}\xi(x)-\mathbb{E}_n\xi(x)|^2\big)^{1/2}>{\gamma}/{6}\}\bigg)\\
&\le\sum_{n_{r_0}<n\le n_2}\frac{6}{\gamma} \int_{G\setminus\widetilde{\Omega}}|A^\prime_{\delta^n}\xi(x)-\mathbb{E}_n\xi(x)|dm(x)\\
&\le\frac{12(n_2-n_{r_0})}{\gamma}\|\xi\|_{L^1(G,m)}\le\frac{48(n_2-n_{r_0})}{\gamma}\|f\|_{L^1(G,m)}.
  \end{split}
\end{equation*}
For the second part, we first have
\begin{equation}\label{bad-function}
\begin{split}
   &m\bigg(\{x\in G\setminus\widetilde{\Omega}:\big(\sum_{n>n_2}|A^\prime_{\delta^n}\xi(x)-\mathbb{E}_n\xi(x)|^2\big)^{1/2}>{\gamma}/{6}\}\bigg)\\
&\le\frac{6}{\gamma}\int_{ G\setminus\widetilde{\Omega}}\bigg(\sum_{n>n_2}|A^\prime_{\delta^n}\xi(x)-\mathbb{E}_n\xi(x)|^2\bigg)^{1/2}dm(x)\\
   &\le \frac{6}{\gamma}\int_{ G\setminus\widetilde{\Omega}}\sum_{n>n_2}|A^\prime_{\delta^n}\xi(x)-\mathbb{E}_n\xi(x)|dm(x)\\
&\le\frac{6}{\gamma}\sum_{n>n_2}\sum_{k\in\mathbb{Z}}\sum_{\alpha\in\Lambda_{n+k}}\int_{G\setminus {\widetilde{\Omega}}}|A^\prime_{\delta^n}\xi_\alpha^{n+k}(x)-\mathbb{E}_n\xi_\alpha^{n+k}(x)|dm(x).
\end{split}
\end{equation}
We now deal with integral term $\int_{G\setminus \widetilde{\Omega}}|A_{\delta^n}\xi_\alpha^{n+k}(x)-\xi_\alpha^{n+k}(x)|dm(x)$. Let
$$k_4=\max\{k:k<0~\&~C_1\delta^{k+1}\le 1\},~k_5=\max\{|k_4|,k_3\}.$$
We split the $k$ into three cases: $-k_5\le k\le k_5$, $k>k_5$, and $k<-k_5$. We will prove that
\begin{equation*}
 \int_{G\setminus \widetilde{\Omega}}|A^\prime_{\delta^n}\xi_\alpha^{n+k}(x)-\mathbb{E}_{n}\xi_\alpha^{n+k}(x)|dm(x)=a(k)\|\xi_\alpha^{n+k}\|_{L^1(G,m)},
\end{equation*}
where
\begin{equation}\label{bad}
 a(k)=
\left\{
 \begin{array}{ll}
{(D+1)}, & \hbox{$-k_5\le k\le k_5$;} \\
 0, & \hbox{$k>k_5$;} \\
   {6^{\epsilon}(K+1)^2C_1^\epsilon DK_\epsilon\delta^{\epsilon k}}, & \hbox{$k<-k_5$.}
\end{array}
     \right.
\end{equation}
Assume this result momentarily. Combining~\eqref{bad-function}, \eqref{bad} with Lemma~\ref{C-Z decomposition}(iv), one can see immediately that
\begin{align*}
&m\bigg(\{x\in G\setminus\widetilde{\Omega}:\big(\sum_{n>n_2}|A^\prime_{\delta^n}\xi(x)-\mathbb{E}_n\xi(x)|^2\big)^{1/2}>{\gamma}/{6}\}\bigg)\le
\frac{6}{\gamma}\sum_{n>n_2}\sum_{k\in\mathbb{Z}}\sum_{\alpha\in\Lambda_{n+k}}a(k)\|\xi_\alpha^{n+k}\|_{L^1(G,m)}\\
&\le \frac{6}{\gamma}\bigg(\sum_{k\in\mathbb{Z}}a(k)\bigg)\bigg(\sum_{n\in\mathbb{Z}}\sum_{\alpha\in\Lambda_{n}}\|\xi_\alpha^{n}\|_{L^1(G,m)}\bigg)\le
 \frac{C_{\epsilon,\delta}}{\gamma}\|f\|_{L^1(G,m)}.
 \end{align*}

We now prove~\eqref{bad}.\\

{\noindent \bf{Case}} $-k_5\le k\le k_5$. Using Lemma~\ref{controlled by maximal operator} for function $\xi_\alpha^{n+k}$, we have
\begin{align*}
  \int_{G\setminus\widetilde{\Omega}}|A^\prime_{\delta^n}\xi_\alpha^{n+k}(x)-\mathbb{E}_n\xi_\alpha^{n+k}(x)|dm(x)&\le\int_{G}|A^\prime_{\delta^n}\xi_\alpha^{n+k}(x)
-\mathbb{E}_n\xi_\alpha^{n+k}(x)|dm(x)\\
&\le{(D+1)}\|\xi_{\alpha}^{n+k}\|_{L^1(G,m)}.
\end{align*}

{\noindent\bf{Case}} $k>k_5$. Note that $\xi^{n+k}_\alpha$ is supported on $\widehat{Q}_\alpha^{n+k}$. {Recall that $\widetilde{Q}_\alpha^{n+k}=\{y\in G:d(y,z_\alpha^{n+k})\le 3C_1\delta^{n+k+1}\}$}. Let $y\in\widehat{Q}_\alpha^{n+k}$, we have $d(y,z_\alpha^{n+k})\le d(y,\widehat{z}_\alpha^{n+k+1})+d(z_\alpha^{n+k},\widehat{z}_\alpha^{n+k+1})\le 2C_1\delta^{n+k+1}$. This gives $\widehat{Q}_\alpha^{n+k}\subseteq\widetilde{Q}_\alpha^{n+k}$.

Fix $x\in G\setminus \widetilde{\Omega}$. There exists an unique `dyadic cube' $Q_\beta^n$ containing $x$. We claim that
\begin{equation}\label{emptyset}
  B(x,\delta^n)\cap\widehat{Q}_\alpha^{n+k}=\emptyset,~Q_\beta^n\cap\widehat{Q}_\alpha^{n+k}=\emptyset.
\end{equation}
Since
$$A^\prime_{\delta^n}\xi_\alpha^{n+k}(x)=\frac{1}{m(B(x,r))}\int_{B(x,r)\cap \widehat{Q}_\alpha^{n+k}}\xi_\alpha^{n+k}(y)dm(y)$$
and
$$\mathbb{E}_n\xi_\alpha^{n+k}(x)=\frac{1}{m(Q_\beta^{n})}\int_{Q_\beta^n\cap\widehat{Q}_\alpha^{n+k}}\xi_\alpha^{n+k}(y)dm(y),$$
it follows from the claim that $A^\prime_{\delta^n}\xi_\alpha^{n+k}=\mathbb{E}_n\xi_\alpha^{n+k}=0$.

We now prove the claim.  If $Q_\beta^n\cap\widehat{Q}_\alpha^{n+k}\neq\emptyset$, by Proposition~\ref{dyadic cube}(ii), it follows that $x\in Q_\beta^n\subseteq\widehat{Q}_\alpha^{n+k}\subseteq \widetilde{Q}_\alpha^{n+k}$, a contradiction
with $x\in G\setminus \widetilde{\Omega}$. On the other hand, if there exists a point $z\in B(x,\delta^n)\cap \widehat{Q}_\alpha^{n+k}$,  then $d(x,\widehat{z}_\alpha^{n+k+1})\le d(x,z)+d(z,\widehat{z}_\alpha^{n+k+1})\le\delta^n+C_1\delta^{n+k+1}< 2C_1\delta^{n+k+1}$, and so it follows that
$$d(x,z_\alpha^{n+k})\le d(x,\widehat{z}_\alpha^{n+k+1})+d(z_\alpha^{n+k},\widehat{z}_\alpha^{n+k+1})<3C_1\delta^{n+k+1},$$
contrary to $x\in G\setminus \widetilde{\Omega}$, and \eqref{emptyset} is proved. This gives the conclusion.\\

{\noindent\bf{Case}} $k<-k_5$.  We also have
\begin{equation*}
  \mathbb{E}_n\xi_{\alpha}^{n+k}(x)=0,~\forall~x\in G\setminus\widetilde{\Omega}.
\end{equation*}
Indeed, let $x\in Q_\beta^{n}$, if $ Q_\beta^{n}\cap \widehat{Q}_\alpha^{n+k}\neq \emptyset$, then by Proposition~\ref{dyadic cube}(ii), it follows that $\widehat{Q}_\alpha^{n+k}\subseteq Q_\beta^{n}$ and
\begin{equation*}
  \mathbb{E}_n\xi_\alpha^{n+k}(x)=\int_{\widehat{Q}_\alpha^{n+k}}\xi_\alpha^{n+k}(y)dm(y)=0.
\end{equation*}

{ Given a ball $B(x,r)$ and `dyadic cube' $Q_\alpha^k$, we define the set $\mathcal{I}(B(x,r),Q^{k}_\alpha)=\{Q_\alpha^{k}\cap B(x,r): Q_\alpha^{k}\cap\partial B(x,r) \neq\emptyset\}$. }Since  $\xi_\alpha^{n+k}$ is supported on $\widehat{Q}_\alpha^{n+k}$ and $\int_{\widehat{Q}_\alpha^{n+k}}\xi_{\alpha}^{n+k}=0$, it follows that {$\forall x\in G$},
\begin{align*}
 |A_{\delta^n}\xi_{\alpha}^{n+k}(x)|&=\big|\frac{1}{m(B(x,\delta^n))}\int_{\mathcal{I}(B(x,\delta^n),\widehat{Q}^{n+k}_\alpha)}\xi_{\alpha}^{n+k}(y)dm(y)\big|\\
 &\le \frac{1}{m(B(x,\delta^n))}\int_{\delta^n-C_1\delta^{n+k+1}\le d(x,y)\le\delta^n+C_1\delta^{n+k+1}}|\xi_{\alpha}^{n+k}(y)|dm(y).
\end{align*}
Then by the above estimate we obtain
\begin{equation}\label{L1-norm}
\begin{split}
&\int_{G}|A_{\delta^n}\xi_{\alpha}^{n+k}(x)|dm(x)\\
&\le\int_{G}\frac{1}{m(B(x,\delta^n))}\int_{\delta^n-C_1\delta^{n+k+1}\le d(x,y)\le\delta^n+C_1\delta^{n+k+1}}|\xi_{\alpha}^{n+k}(y)|dm(y)dm(x)\\
  &\le \int_{G}|\xi_{\alpha}^{n+k}(y)|\cdot\int_{G}\frac{\mathds{1}_{B(y,\delta^n+C_1\delta^{n+k+1})\setminus B(y,\delta^n-C_1\delta^{n+k+1})}(x)}{m(B(x,\delta^n))}dm(x)dm(y)
\end{split}
\end{equation}
Set $A(y)={B(y,\delta^n+C_1\delta^{n+k+1})\setminus B(y,\delta^n-C_1\delta^{n+k+1})}$. {Fix $y\in G$. By~\eqref{int}, we have
\begin{equation*}
  \frac{\mathds{1}_{A(y)}(x)}{m(B(x,\delta^n))}\le 2^\epsilon(K+1)\frac{\mathds{1}_{A(y)}(x)}{m(B(x,\delta^{n}+C_1\delta^{n+k+1}))}\mathds{1}_{B(y,\delta^{n}+C_1\delta^{n+k+1})}(x)
\end{equation*}
By the same argument as in the proof of~\cite[Theorem 3.5]{AL19}, we have the following properties. For any $0<\varepsilon<1$, there exist $M$-points $\{u_i:1\le i\le M\}$  with $M\le D$ in $B(y,\delta^{n}+C_1\delta^{n+k+1})$ such that $B(y,\delta^n+C_1\delta^{n+k+1})\setminus \bigcup_{i=1}^M B(u_i,\delta^n+C_1\delta^{n+k+1})=\emptyset$. Fix $x\in B(y,\delta^{n}+C_1\delta^{n+k+1})$, and let $j$ be the first index such that
\begin{equation*}
 x\in  B(u_j,\delta^n+C_1\delta^{n+k+1}),~m(B(u_j,\delta^n+C_1\delta^{n+k+1}))\le (1+\varepsilon) m(B(x,\delta^n+C_1\delta^{n+k+1})).
\end{equation*}
By the above discussions, we first have
\begin{equation}\label{L1-norm-1}
\begin{split}
 &\int_{G}\frac{\mathds{1}_{A(y)}(x)}{m(B(x,\delta^n))}dm(x)\\
 &\le 2^\epsilon(K+1)\int_G\frac{\mathds{1}_{A(y)}(x)}{m(B(x,\delta^{n}+C_1\delta^{n+k+1}))}\mathds{1}_{B(y,\delta^{n}+C_1\delta^{n+k+1})}(x)dm(x)\\
 &\le 2^\epsilon(K+1)\int_G\frac{(1+\varepsilon)\mathds{1}_{A(y)}(x)}{m(B(u_j,\delta^{n}+C_1\delta^{n+k+1}))}\mathds{1}_{B(y,\delta^{n}+C_1\delta^{n+k+1})\cap B(u_j,\delta^{n}+C_1\delta^{n+k+1})}(x)dm(x)\\
 &\le 2^\epsilon(K+1)\int_G\sum_{i=1}^M\frac{(1+\varepsilon)\mathds{1}_{A(y)\cap B(u_i,\delta^{n}+C_1\delta^{n+k+1})}(x)}{m(B(u_i,\delta^{n}+C_1\delta^{n+k+1}))}dm(x)\\
 &\le 2^\epsilon(K+1)\int_G\sum_{i=1}^M\frac{(1+\varepsilon)\mathds{1}_{A(y)\cap B(u_i,\delta^{n}+C_1\delta^{n+k+1})}(x)}{m(B(u_i,\delta^{n}))}dm(x).
\end{split}
\end{equation}
Note that each $u_i\in B(y,\delta^n+C_1\delta^{n+k+1})$, then $B(y,\delta^n)\subseteq B(u_i,2\delta^{n}+C_1\delta^{n+k+1})$. It follows from~\eqref{int} that
\begin{equation*}
  \frac{m(B(y,\delta^n))}{m(B(u_i,\delta^n))}\le \frac{m(B(u_i,2\delta^{n}+C_1\delta^{n+k+1}))}{m(B(u_i,\delta^n))}\le 3^\epsilon(K+1).
\end{equation*}
Moreover, by Lemma~\ref{lem:decay}, we have ${m(A(y))}/{m(B(y,\delta^n))}\le K_\epsilon C_1^\epsilon\delta^{\epsilon k}$.  Combining these two estimates with~\eqref{L1-norm-1}, we conclude
\begin{equation}\label{L1-norm-2}
  \int_{G}\frac{\mathds{1}_{A(y)}(x)}{m(B(x,\delta^n))}dm(x)\le (1+\varepsilon)6^{\epsilon}(K+1)^2C_1^\epsilon DK_\epsilon\delta^{\epsilon k}.
\end{equation}}
 Combining \eqref{L1-norm-2} with~\eqref{L1-norm}, and then letting $\varepsilon\rightarrow 0$, we obtain $\|A_{\delta^n}\xi_{\alpha}^{n+k}\|_{L^1}\le 6^{\epsilon}(K+1)^2C_1^\epsilon DK_\epsilon\delta^{\epsilon k}\|\xi_{\alpha}^{n+k}\|_{L^1}$.
 Together with the above three cases for $k$, \eqref{bad} is proved.

\bigskip

The estimate of $m\big(\{x\in G\setminus\widetilde{\Omega}:S(b)(x)>{\gamma}/{3}\}\big)$ is similar, and {will only be indicated briefly}. Fix $n>n_2$. We first prove that
\begin{equation}\label{cancellation}
\mathbb{E}_n{b}(x)=0,~\forall~x\in G\setminus{\widetilde{\Omega}}.
\end{equation}
 Note that $b=\sum_{k}\sum_{\alpha\in\Lambda_k}b_\alpha^k$, by the linearity of operator $\mathbb{E}_n$, we only need to prove that for each $b_\alpha^k$, $\mathbb{E}_n{b_\alpha^k}=0$.

 Fix $x\in G\setminus\widetilde{\Omega}$. Let $Q_\beta^n\ni x$. Since $b_\alpha^k$ is supported on $Q_\alpha^k$, it follows that $\mathbb{E}_n{b^k_\alpha}(x)=\int_{Q_\alpha^k\cap Q_\beta^n}b_\alpha^k(y)dm(y)$. If $Q_\alpha^k\cap Q_\beta^n\neq \emptyset$, we split $k$ into two cases: $k\ge n$ and $k<n$. For $k\ge n$, by Proposition~\ref{dyadic cube}(ii), it follows that $x\in Q_\beta^n\subseteq Q_\alpha^k$, this leads to a contradiction since $Q_\alpha^k\subseteq \widetilde{\Omega}$. For $k<n$,  using Proposition~\ref{dyadic cube}(ii) again, we have $Q_\alpha^k \subseteq Q_\beta^n$, it follows immediately that
\begin{equation*}
 \mathbb{E}_n{b^k_\alpha}(x)=\frac{1}{m(Q_\beta^n)}\int_{Q_\alpha^k}b_\alpha^k(y)dm(y)=0.
\end{equation*}
This gives~\eqref{cancellation}. Similar to~\eqref{bad}, we can also establish the following inequality
\begin{equation}\label{bad-1}
  \int_{G\setminus {\widetilde{\Omega}}}|A_{\delta^n}b_\alpha^{n+k}(x)|dm(x)\le
\left\{
 \begin{array}{ll}
  D\|b_\alpha^{n+k}\|_{L^1}, & \hbox{$-k_5\le k\le k_5$;} \\
  0, & \hbox{$k>k_5$;} \\
   6^{\epsilon}(K+1)^2C_1^\epsilon DK_\epsilon\delta^{\epsilon k}\|b_\alpha^{n+k}\|_{L^1}, & \hbox{$k<-k_5$.}
\end{array}
     \right.
\end{equation}
Note that for $-k_5\le k\le k_5$, the estimate holds  by Proposition~\ref{aver}. For $k>k_5$, the estimate holds true  by \eqref{emptyset} and the fact that $b_\alpha^{n+k}$ is supported on $Q_\alpha^{n+k}$. For $k<-k_5$. We first have
\begin{align*}
 |A_{\delta^n}b_{\alpha}^{n+k}(x)|&=\big|\frac{1}{m(B(x,\delta^n))}\int_{\mathcal{I}(B(x,\delta^n),{Q}^{n+k}_\alpha)}b_{\alpha}^{n+k}(y)dm(y)\big|\\
 &\le \frac{1}{m(B(x,\delta^n))}\int_{\delta^n-C_1\delta^{n+k}\le d(x,y)\le\delta^n+C_1\delta^{n+k}}|b_{\alpha}^{n+k}(y)|dm(y).
\end{align*}
{By the same estimates as~\eqref{L1-norm} and~\eqref{L1-norm-2}, the above inequality yields
\begin{align*}
  \int_{G\setminus{\Omega}}|A_{\delta^n}b_{\alpha}^{n+k}(x)|dm(x)&\le \frac{1}{m(B(x,{\delta^n}))}\int_{G}\int_{\delta^n-C_1\delta^{n+k}\le d(x,y)\le\delta^n+C_1\delta^{n+k}}|b_{\alpha}^{n+k}(y)|dm(y)dm(x)\\
  &\le 6^{\epsilon}(K+1)^2C_1^\epsilon DK_\epsilon\delta^{\epsilon k}\|b_{\alpha}^{n+k}\|_{L^1(G,m)};
\end{align*}}
together with the above two cases, this proves~\eqref{bad-1}, and the proof is complete.
\end{proof}

The proof of weak type $(1,1)$ estimate for operator $SV$ is similar to $S$. Let us explain it briefly.
\begin{proof}[Proof of \eqref{weak-type inequalities of short variation}]
Fix $f\in L^1(G,m)$. Decompose $f=g+b+\xi$.  The desired estimate for $g$ is true by the fact that $SV$ is of strong type $(2,2)$. In what follows, we only state the proof for $\xi$ since the proof for $b$ is similar.

By similar arguments as in the previous proof, we mainly need to prove the following inequality for $n>n_2$,
\begin{equation*}
\begin{split}
  &\int_{G\setminus\widetilde{\Omega}}V_2(A^\prime_{r}\xi^{n+k}_{\alpha}(x):r\in[\delta^{n},\delta^{n+1}))dm(x)\le a(k)\|\xi_\alpha^{n+k}\|_{L^1(G,m)},
  \end{split}
\end{equation*}
where
\begin{equation*}
  a(k)=\left\{
    \begin{array}{ll}
    C_4, & \hbox{$-k_5\le k\le k_5$;} \\
      0, & \hbox{$k>k_5$;} \\
      2\cdot 3^\epsilon (\delta+1)^\epsilon (K+1)^2C_1^\epsilon D K_\epsilon\delta^{\epsilon k}, & \hbox{$k<-k_5$.}
    \end{array}
  \right.
\end{equation*}

We now focus on the above inequality. Fix $n>n_2$. Note that for $-k_5\le k\le k_5$, by Lemma~\ref{controlled by maximal operator}, the estimate is true. For $k>k_5$, similar to~\eqref{emptyset}, we can prove that for every $x\in G\setminus\widetilde{\Omega}$ and $r\in[\delta^n,\delta^{n+1}]$, $B(x,r)\cap \widehat{Q}_\alpha^{n+k}=\emptyset$, hence $V_2(A^\prime_{r}\xi^{n+k}_{\alpha}:r\in[\delta^{n},\delta^{n+1}))=0$ . It remains to prove the case $k<-k_5$.

 Given an annular $B(x,R)\setminus B(x,r)$ and `dyadic cube' $Q_\alpha^k$, we define the set $\mathcal{I}(B(x,R)\setminus B(x,r),Q^{k}_\alpha)=\{Q_\alpha^{k}\cap(B(x,R)\setminus B(x,r)): Q_\alpha^{k}\cap\partial(B(x,R)\setminus B(x,r)) \neq\emptyset\}$. Since $\int_{\widehat{Q}_\alpha^{n+k}}\xi_{\alpha}^{n+k}=0$, it follows that
\begin{align*}
  &V_2(A^\prime_{r}\xi^{n+k}_{\alpha}(x):r\in[\delta^{n},\delta^{n+1}))\\
  &\le\sup_{\delta^{n}\le r_0<\cdots<r_J<\delta^{n+1}}\sum_{i=1}^J\frac{1}{m(B(x,r_i))}\big|\int_{\mathcal{I}(B(x,r_i)\setminus B(x,r_{i-1}),\widehat{Q}^{n+k}_\alpha)}\xi_\alpha^{n+k}(y)dm(y)\big|\\
&+\sup_{\delta^{n}\le r_0<\cdots<r_J<\delta^{n+1}}\sum_{i=1}^J
\big|\bigg(\frac{1}{m(B(x,r_{i-1}))}-\frac{1}{m(B(x,r_i))}\bigg)\int_{\mathcal{I}(B(x,r_i), \widehat{Q}_\alpha^{n+k})}\xi_\alpha^{n+k}(y)dm(y)\big|\\
&\le \frac{2}{m(B(x,\delta^{n}))}\int_{\delta^{n+1}-C_1\delta^{n+k+1}\le d(x,y)\le\delta^{n+1}+C_1\delta^{n+k+1}}|\xi_{\alpha}^{n+k}(y)|dm(y).\\
\end{align*}
{Using the same argument as~\eqref{L1-norm} and~\eqref{L1-norm-2}, the above inequality yields
\begin{align*}
\int_{G\setminus\widetilde{\Omega}}V_2(A^\prime_{r}b^{n+k}_{\alpha}(x):r\in[\delta^{n},\delta^{n+1}))dm(x)\le  2\cdot 3^\epsilon(\delta+1)^\epsilon (K+1)^2C_1^\epsilon D K_\epsilon\delta^{\epsilon k}\|b_{n+k}^\alpha\|_{L^1(G,m)},
\end{align*}}
which completes the proof.
\end{proof}
\section{($L^\infty$, BMO) estimates}\label{ST5}
In this section, we prove that both operator $S$ and operator $SV$ map $L_c^\infty$ to dyadic BMO space.

Given a locally integrable function $f\in L^1_{loc}(G,m)$,
the dyadic sharp maximal function is defined by
\begin{equation*}
  M^\sharp f(x)=\sup_{(\alpha,k),x\in Q^k_{\alpha}}\inf_c\frac{1}{m(Q^k_\alpha)}\int_{Q^k_\alpha}|f(y)-c|dm(y),
\end{equation*}
and then the dyadic BMO space is defined as $BMO=\{f\in L^1_{loc}(G,m):\| M^\sharp f\|_{L^{\8}}<\8\}$ with the norm $\|f\|_{BMO}=\| M^\sharp f\|_{L^{\8}}$.

We first handle operator $S$, which was defined as
$$S(f)=\bigg(\sum_{n>n_{r_0}}|A^\prime_{\delta^n}f-\mathbb{E}_nf|^2\bigg)^{1/2}.$$ As in dealing with the weak type $(1,1)$ estimate, we also need the non-doubling analysis.

\begin{proof}[Proof of~\eqref{BMO-type inequalities of square function}]
Fix a `dyadic cube' $Q^k_\beta$. Recall that $\widetilde{Q}_\beta^k=\{y\in G:d(y,z_\alpha^k)\le 3C_1\delta^{k+1}\}$ and $k_3=\min\{k:a_0\delta^k>r_0\}$. Set
\begin{equation}\label{dilation}
  {Q^k_\beta}^*=\left\{
              \begin{array}{ll}
                \widetilde{Q^k_\beta}, & \hbox{$k>k_3$;}\\
                 Q^k_\beta, & \hbox{$k\le k_3$,}
              \end{array}
            \right.
\end{equation}
and
\begin{equation*}
  f(x)=f(x)\mathds{1}_{{Q^k_\beta}^*}(x)+f(x)\mathds{1}_{G\setminus {Q^k_\beta}^*}(x):=f_1(x)+f_2(x).
\end{equation*}
 By the triangle inequality, it follows that
\begin{equation*}
 \begin{split}
  \frac{1}{m(Q_\beta^k)}\int_{Q_\beta^k}|S(f)(y)-c|&dm(y)\le \frac{1}{m(Q_\beta^k)}\int_{Q_\beta^k}|S(f)(y)-S(f_2)(y)|dm(y)\\
  &+\frac{1}{m(Q_\beta^k)}\int_{Q_\beta^k}|S(f_2)(y)-c|dm(y)\\
  &\le  \frac{1}{m(Q_\beta^k)}\int_{Q_\beta^k}|S(f_1)(y)|dm(y)+\frac{1}{m(Q_\beta^k)}\int_{Q_\beta^k}|S(f_2)(y)-c|dm(y).
 \end{split}
\end{equation*}
We now focus on the first term of the right hand side. By the Cauchy-Schwarz inequality, \eqref{dilation}, \eqref{int} and the fact that the operator $S$ is of strong type $(2,2)$,  we have
\begin{equation*}
  \begin{split}
     \frac{1}{m(Q_\beta^k)}\int_{Q_\beta^k}|S(f_1)(y)|dm(y)&\le \bigg(\frac{1}{m(Q_\beta^k)}\int_{Q_\beta^k}|S(f_1)(y)|^2dm(y)\bigg)^{1/2}\\
     &\le \bigg(\frac{c_2^2}{m(Q_\beta^k)}\int_{{Q^k_\beta}^*}|f(y)|^2dm(y)\bigg)^{1/2}\\
&\le c_2(3C_1\delta/a_0)^{\epsilon/2}\|f\|_{L^\8}.
   \end{split}
\end{equation*}
It remains to handle the term $\frac{1}{m(Q_\beta^k)}\int_{Q_\beta^k}|S(f_2)(y)-c|dm(y)$.  Taking $c=S(f_2)(z^k_\beta)$, we first claim that
 $$\mathbb{E}_{n}f_2(x)-\mathbb{E}_{n}f_2(z^k_\beta)=0,~\forall~x\in Q^k_\beta.$$
Indeed, let $Q_\alpha^n\ni x$, if $n\le k$, by Proposition~\ref{dyadic cube}(ii), then $Q^n_{\alpha}\subseteq Q^k_{\beta}$. Since $f_2$ is supported on $G\setminus {Q_\alpha^k}^*$, it follows that  $\mathbb{E}_{n}f_2(x)=\mathbb{E}_{n}f_2(z^k_\beta)=0$. If $n>k$, using Proposition~\ref{dyadic cube}(ii) again, we have $Q_\beta^k\subseteq Q_\alpha^n$ and $x,z_\beta^k\in Q_\alpha^n$. It follows  that $\mathbb{E}_{n}f_2(x)=\mathbb{E}_{n}f_2(z^k_\beta)$, then the claim is proved.

On the other hand, let $n_3=k_3+[\log_{\delta}(2C_1/a_0)]+2$. By the triangle inequality, we first  have
\begin{equation}\label{BMO-ineq}
\begin{split}
  &\bigg(\sum_{n>n_{r_0}}|A^\prime_{\delta^n}f_2(x)-A^\prime_{\delta^n}f_2(z^k_\beta)|^2\bigg)^{1/2}\le \bigg(\sum_{n_{r_0}<n\le n_3}|A^\prime_{\delta^n}f_2(x)-A^\prime_{\delta^n}f_2(z^k_\beta)|^2\bigg)^{1/2}\\
&+\bigg(\sum_{n>n_3}|A^\prime_{\delta^n}f_2(x)-A^\prime_{\delta^n}f_2(z^k_\beta)|^2\bigg)^{1/2}\\
&\le 2(n_3-n_{r_0})\|f\|_{L^\8}+\bigg(\sum_{n>n_3}|A^\prime_{\delta^n}f_2(x)-A^\prime_{\delta^n}f_2(z^k_\beta)|^2\bigg)^{1/2}.
\end{split}
\end{equation}
 Let $d(Q_\beta^k)$ be the diameter of $Q_\beta^k$. Set $n_4=\min\{n:\delta^n>d(Q_\beta^k)\}$. We now consider two cases for $n_3$: $n_3\ge n_4$ and $n_3<n_4$.

 \noindent{\bf{Case} $n_3\ge n_4$}. Note that $\delta^{n_3}>d(Q_\beta^k)$. By~\eqref{BMO-ineq}, the proof will be completed if we proved the following inequality
\begin{equation*}
  \bigg(\sum_{n>n_3}|A^\prime_{\delta^n}f_2(x)-A^\prime_{\delta^n}f_2(z^k_\beta)|^2\bigg)^{1/2}\le \frac{2\big(K+2^\epsilon K(K+1)\big)}{1-\delta^{-\epsilon}}\|f\|_{L^\8},~\forall~x\in Q_\beta^k.
\end{equation*}

We now focus on the above inequality. By the triangle inequality, we have 
{
\begin{equation*}
  \begin{split}
   &|A^\prime_{\delta^n}f_2(x)-A^\prime_{\delta^n}f_2(z^k_\beta)|\\
&=|\frac{1}{m(B(x,\delta^n))}\int_{B(x,\delta^n)}f_2(y)dm(y)-
  \frac{1}{m(B(z_\beta^k,\delta^n))}\int_{B(z_\beta^k,\delta^n)}f_2(y)dm(y)|\\
  &\le |\frac{1}{m(B(x,\delta^n))}\int_{B(x,\delta^n)}f_2(y)dm(y)-
  \frac{1}{m(B(x,\delta^n))}\int_{B(z_\beta^k,\delta^n)}f_2(y)dm(y)|\\
  &+|\frac{1}{m(B(x,\delta^n))}\int_{B(z_\beta^k,\delta^n)}f_2(y)dm(y)-
  \frac{1}{m(B(z_\beta^k,\delta^n))}\int_{B(z_\beta^k,\delta^n)}f_2(y)dm(y)|\\
&\le \frac{m(B(x,\delta^n)\triangle B(z_\beta^k,\delta^n))}{m(B(x,\delta^n))}\|f\|_{L^\8}+\big|\frac{m(B(z_\beta^k,\delta^n))}{m(B(x,\delta^n))}-1\big|\|f\|_{L^\8}\\
&\le 2\frac{m(B(x,\delta^n)\triangle B(z_\beta^k,\delta^n))}{m(B(x,\delta^n))}\|f\|_{L^\8}.
  \end{split}
\end{equation*}}
Note that{
$$B(x,\delta^n)\triangle B(z_\beta^k,\delta^n)\subseteq \bigg(B(x,d(x,z_\beta^k)+\delta^n)\setminus B(x,\delta^n)\bigg)\cup \bigg(B(z_\beta^k,d(x,z_\beta^k)+\delta^n)\setminus B(z_\beta^k,\delta^n)\bigg).$$}
For $n>n_3$, we have $\delta^n>r_0$. { Note that $x\in Q_\beta^k$, then by Proposition~\ref{dyadic cube}(iv), we have $B(z_\beta^k, \delta^n)\subseteq B(x,\delta^n+d(Q_\beta^k))$. It follows from~\eqref{int} that
\begin{equation*}
  \frac{m(B(z_\beta^k,\delta^n))}{m(B(x,\delta^n))}\le \frac{m(B(x,\delta^n+d(Q_\beta^k)))}{m(B(x,\delta^n))}\le 2^\epsilon(K+1).
\end{equation*}
Combining the above inequality with condition~\eqref{decay property}, we have
\begin{equation*}
 \begin{split}
  &\frac{m(B(x,\delta^n)\triangle B(z_\beta^k,\delta^n))}{m(B(x,\delta^n))}\\
  &\le\frac{m\bigg(B(x,d(x,z_\beta^k)+\delta^n)\setminus B(x,\delta^n)\bigg)}{m(B(x,\delta^n))}+\frac{m\bigg(B(z_\beta^k,d(x,z_\beta^k)+\delta^n)\setminus B(z_\beta^k,\delta^n)\bigg)}{m(B(z_\beta^k,\delta^n))}\frac{m(B(z_\beta^k,\delta^n))}{m(B(x,\delta^n))}\\
  &\le \big(K+2^\epsilon K(K+1)\big)\bigg(\frac{d(x,z_\beta^k)}{\delta^n}\bigg)^{\epsilon}.
 \end{split}
\end{equation*}}
Therefore,
\begin{equation}\label{estimate of averaging operator}
  |A^\prime_{\delta^n}f_2(x)-A^\prime_{\delta^n}f_2(z^k_\beta)|\le 2\big(K+2^\epsilon K(K+1)\big)\bigg(\frac{d(Q_\beta^k)}{\delta^n}\bigg)^{\epsilon}\|f\|_{L^\8}.
\end{equation}
By the above inequality, we have
\begin{equation*}
\begin{split}
    \bigg(\sum_{n>n_3}|A^\prime_{\delta^n}f_2(x)-A^\prime_{\delta^n}f_2(z_\beta^k)|^2\bigg)^{1/2}
    &\le 2\big(K+2^\epsilon(K+1)\big)\|f\|_{L^\8}\sum_{n:\delta^n>d(Q_\beta^k)}\bigg(\frac{d(Q_\beta^k)}{\delta^n}\bigg)^{\epsilon}\\
    &\le \frac{2\big(K+2^\epsilon K(K+1)\big)}{1-\delta^{-\epsilon}}
    \|f\|_{L^\8}.
\end{split}
\end{equation*}

\noindent{\bf{Case} $n_3<n_4$}. Note that $d(Q_\beta^k)\ge \delta^{n_3-1}$. It follows {from Proposition \ref{dyadic cube}}(iv) that
\begin{equation*}
  a_0\delta^{k}\ge ({a_0}/{{2}C_1})\delta^{(n_3-1)}>\delta^{k_3}\ge r_0,
\end{equation*}
and $f_2=f\mathds{1}_{G\setminus \widetilde{Q^k_\beta}}$. We claim that for $n_3<n\le n_4$,
\begin{equation}\label{BMO-ineq1}
  A^\prime_{\delta^n}f_2(x)=0, ~\forall~x\in Q_\beta^k.
\end{equation}
Indeed, fix $x\in Q_\beta^k$ and let $y\in B(x,\delta^n)$, then $d(y,z_\beta^k)\le d(y,x)+d(z_\beta^k,x)\le \delta^n+C_1\delta^k\le d(Q_\beta^k)+C_1\delta^k\le 3C_1\delta^k$. From this, we have
$B(x,\delta^n)\subset\widetilde{Q}_{\beta}^k$. Then the claim {follows from the fact that $f_2$ is supported on $G\setminus \widetilde{Q^k_\beta}$.}

Combining~\eqref{BMO-ineq1} with a similar argument as in the previous proof, we also have

\begin{equation*}
  \bigg(\sum_{n>n_3}|A^\prime_{\delta^n}f_2(x)-A^\prime_{\delta^n}f_2(z^k_\beta)|^2\bigg)^{1/2}\le {\frac{2\big(K+2^\epsilon K(K+1)\big)}{1-\delta^{-\epsilon}}}\|f\|_{L^\8},~\forall~x\in Q_\beta^k,
\end{equation*}
and  the proof is complete.
\end{proof}

The proof of $(L^\8_c, BMO)$ boundedness of $SV$ is similar to  $S$. Let us give a sketch of the proof.
\begin{proof}[Proof of \eqref{BMO-type inequalities of short variation}]
Let $f$ be a $L^\8_c$ function. Recall that the short variation operator is defined by $SV(f)=\big(\sum_{n\ge n_{r_0}}V_2(A^\prime_{r}f:r\in[\delta^{n},\delta^{n+1}))^2\big)^{1/2}$. Fix a `dyadic cube' $Q^k_\beta$ and let $=f\mathds{1}_{{Q^k_\beta}^*}+f\mathds{1}_{G\setminus {Q^k_\beta}^*}:=f_1+f_2$. We only state the proof for case $n_3\ge n_4$, namely $\delta^{n_3}>d(Q_\beta^k)$, since the proof for case $n_3<n_4$ is similar. By an argument similar to that in the  proof for operator $S$, it is sufficient to prove that there exists a constant $C_{\epsilon,\delta, K}>0$ such that  for all $x\in Q_\beta^k$,
\begin{equation*}
  \bigg(\sum_{n>n_3}V_2(A^\prime_rf_2(x)-A^\prime_rf_2(z_\beta^k):r\in[\delta^{n},\delta^{n+1}))^2\bigg)^{1/2}\le C_{\epsilon,\delta, K}\|f\|_{L^\8}.
\end{equation*}
Fix $n>n_3$ and  $x\in Q_\beta^k$. By the definition of short variation operator, there exists a sequence $\{r_i\}\subseteq [\delta^n,\delta^{n+1})$ such that
\begin{equation}\label{controll-ineq}
\begin{split}
   &V_2(A^\prime_rf_2(x)-A^\prime_rf_2(z_\beta^k):r\in[\delta^{n},\delta^{n+1}))\\
   &\le 2\bigg(\sum_{i}|A^\prime_{r_i}f_2(x)-A^\prime_{r_i}f_2(z_\beta^k)-(A^\prime_{r_{i-1}}f_2(x)-A^\prime_{r_{i-1}}f_2(z_\beta^k))|^2\bigg)^{1/2}.
\end{split}
\end{equation}
 We split such sequence $\{r_i\}$ into two cases: ${J_1}=\{i:r_i-r_{i-1}\le d(Q_\beta^k)^\epsilon/\delta^{(\epsilon-1)n}\}$ and $J_2=\{i:r_i-r_{i-1}>d(Q_\beta^k)^\epsilon/\delta^{(\epsilon-1)n}\}$. Therefore, the right hand side of the above inequality is controlled by the sum of
\begin{equation*}
\begin{split}
  SV&_{I_n}(f_2)(x)=\bigg(\sum_{i:i\in J_1}
 |A^\prime_{r_i}f_2(x)-A^\prime_{r_i}f_2(z_\beta^k)-(A^\prime_{r_{i-1}}f_2(x)-A^\prime_{r_{i-1}}f_2(z_\beta^k))|^2\bigg)^{1/2},
 \end{split}
\end{equation*}
and
\begin{equation*}
\begin{split}
  SV&_{II_n}(f_2)(x)=\bigg(\sum_{i:i\in J_2}|A^\prime_{r_i}f_2(x)-A^\prime_{r_{i}}f_2(z_\beta^k)-
  (A^\prime_{r_{i-1}}f_2(x)-A^\prime_{r_{i-1}}
  f_2(z_\beta^k))|^2\bigg)^{1/2}.
  \end{split}
\end{equation*}
We now focus on $SV_{I_n}(f_2)(x)$. By the triangle inequality, we have
\begin{equation*}
 \begin{split}
  SV_{I_n}(f_2)(x)&\le \bigg(\sum_{i:i\in J_1}|A^\prime_{r_i}f_2(x)-A^\prime_{r_{i-1}}f_2(x)|^2\bigg)^{1/2}\\
  &+\bigg(\sum_{i:i\in J_1}|A^\prime_{r_i}f_2(z_\beta^k)-A^\prime_{r_{i-1}}f_2(z_\beta^k)|^2\bigg)^{1/2}.
 \end{split}
\end{equation*}
 Note that for any $z\in Q_\beta^k$, we have
\begin{align*}
  |&A^\prime_{r_i}f_2(z)-A^\prime_{r_{i-1}}f_2(z)|\le\frac{1}{m(B(z,r_i))}\int_{B(z,r_i)\setminus B(z,r_{i-1})}|f_2(y)|dm(y)\\
  &+\big(\frac{1}{m(B(z,r_{i-1}))}-\frac{1}{m(B(z,r_{i}))}\big)\int_{B(z,r_{i-1})}|f_2(y)|dm(y)\\
  &\le \frac{m(B(z,r_i))-m(B(z,r_{i-1}))}{m(B(z,r_i))}\|f_2\|_{L^\8}+\bigg(\frac{m(B(z,r_{i-1}))}{m(B(z,r_{i-1}))}-\frac{m(B(z,r_{i-1}))}
  {m(B(z,r_{i}))}\bigg)\|f_2\|_{L^\8}\\
  &\le 2\bigg(\frac{m(B(z,r_{i}))-m(B(z,r_{i-1}))}{m(B(z,r_{i}))}\bigg)\|f_2\|_{L^\8}\\
  &\le 2\|f\|_{L^\8}\int_{m(B(z,r_{i-1}))}^{m(B(z,r_{i}))}\frac{1}{u}du,
\end{align*}
by condition~\eqref{decay property}, and the integral term of the above inequality is controlled by
\begin{align*}
\int_{m(B(z,r_{i-1}))}^{m(B(z,r_{i}))}&\frac{1}{u}du\le \big(m(B(z,r_{i}))-m(B(z,r_{i-1}))\big)^{1/2}
\bigg(\int_{m(B(z,r_{i-1}))}^{m(B(z,r_{i}))}\frac{1}{u^2}du\bigg)^{1/2}\\
&\le \bigg(\frac{m(B(z,r_{i}))-m(B(z,r_{i-1}))}{m(B(z,r_{i-1}))}\bigg)^{1/2}\bigg(
m(B(z,r_{i-1}))\int_{m(B(z,r_{i-1}))}^{m(B(z,r_{i}))}\frac{1}{u^2}du\bigg)^{1/2}\\
&\le K\bigg(\frac{r_i-r_{i-1}}{r_{i-1}}\bigg)^{\epsilon/2}{\bigg(
\int_{m(B(z,{r_{i-1}}))}^{m(B(z,r_{i}))}\frac{m(B(z,\delta^{n+1}))}{u^2}du\bigg)^{1/2}}.
\end{align*}
{Then using~\eqref{int}, the above inequality yields
\begin{align*}
  &\bigg(\sum_{i:i\in J_1}|A^\prime_{r_i}f_2(x)-A^\prime_{r_{i-1}}f_2(x)|^2\bigg)^{1/2}\\
  &\le 2K\|f\|_{L^\8}\bigg(\sum_{i:i\in J_1}\bigg(\frac{r_i-r_{i-1}}{r_{i-1}}\bigg)^{\epsilon}
\int_{m(B(x,{r_{i-1}}))}^{m(B(x,r_{i}))}\frac{m(B(x,\delta^{n+1}))}{u^2}du\bigg)^{1/2}\\
&\le 2K\|f\|_{L^\8}\bigg(\frac{d(Q_\beta^k)}{\delta^n}\bigg)^{\epsilon^2/2}\bigg(
\int_{m(B(x,{\delta^{n}}))}^{m(B(x,\delta^{n+1}))}\frac{m(B(x,\delta^{n+1}))}{u^2}du\bigg)^{1/2}\\
&\le 2K\big((K+1)\delta^\epsilon\big)^{1/2}\|f\|_{L^\8}\bigg(\frac{d(Q_\beta^k)}{\delta^n}\bigg)^{\epsilon^2/2}.
\end{align*}
By a similar argument, we have
\begin{equation*}
  \bigg(\sum_{i:i\in J_1}|A^\prime_{r_i}f_2(z_\beta^k)-A^\prime_{r_{i-1}}f_2(z_\beta^k)|^2\bigg)^{1/2}\le 2K\big((K+1)\delta^\epsilon\big)^{1/2}\|f\|_{L^\8}\bigg(\frac{d(Q_\beta^k)}{\delta^n}\bigg)^{\epsilon^2/2}.
\end{equation*}
Therefore,
\begin{align*}
 SV_{I_n}(f_2)(x)&\le 4K\big((K+1)\delta^\epsilon\big)^{1/2}\|f\|_{L^\8}\bigg(\frac{d(Q_\beta^k)}{\delta^n}\bigg)^{\epsilon^2/2}.
\end{align*}}

For the part $SV_{II_n}(f_2)(x)$, using the triangle inequality, we first have
\begin{equation*}
 \begin{split}
  SV_{II_n}(f_2)(x)&\le\bigg(\sum_{i:i\in J_2}|A^\prime_{r_i}f_2(x)-A^\prime_{r_{i}}f_2(z_\beta^k)|^2\bigg)^{1/2}\\
  &+ \bigg(\sum_{i:i\in J_2}|A^\prime_{r_{i-1}}f_2(x)-A^\prime_{r_{i-1}}f_2(z_\beta^k)|^2\bigg)^{1/2}.
 \end{split}
\end{equation*}
Note that $\#\{J_2\}\le{(\delta-1)\delta^{\epsilon n}}/{d(Q_\beta^k)^\epsilon}$. Similar to~\eqref{estimate of averaging operator}, for any $z\in Q_\beta^k$ and $r\in [\delta^n,\delta^{n+1})$, we have
\begin{equation*}
  |A^\prime_{r}f_2(z)-A^\prime_{r}f_2(z^k_\beta)|\le{2\big(K+2^\epsilon K(K+1)\big)\bigg(\frac{d(Q_\beta^k)}{\delta^n}\bigg)^{\epsilon}}\|f\|_{L^\8}.
\end{equation*}
Combining the above inequalities, we have
\begin{align*}
 SV_{II_n}(f_2)(x) &\le 4\big(K+2^\epsilon K(K+1)\big)\|f\|_{L^\8}\bigg(
(\delta-1)\bigg(\frac{\delta^n}{d(Q_\beta^k)}\bigg)^\epsilon\bigg(\frac{d(Q_\beta^k)}{\delta^n}\bigg)^{2\epsilon}\bigg)^{1/2}\\
&=4\big(K+2^\epsilon K(K+1)\big)(\delta-1)^{1/2}\|f\|_{L^\8}\bigg(\frac{d(Q_\beta^k)}{\delta^n}\bigg)^{\epsilon/2}.
\end{align*}
Finally, together the estimates of $SV_{I_n}(f_2)(x)$ and $SV_{II_n}(f_2)(x)$ with~\eqref{controll-ineq}, we have

\begin{equation*}
  \begin{split}
  \bigg(\sum_{n>n_3}&V_2(A^\prime_rf_2(x)-A^\prime_rf_2(z_\beta^k):r\in[\delta^{n},\delta^{n+1}))^2\bigg)^{1/2}\\
  &\le 8\big(K+2^\epsilon K(K+1)\big)(\delta-1)^{1/2}\|f\|_{L^\8}\bigg(\sum_{n:\delta^n>d(Q_\beta^k)}\bigg(\frac{d(Q_\beta^k)}{\delta^n}\bigg)^{\epsilon}\bigg)^{1/2}\\
  &+ 8K\big((K+1)\delta^\epsilon\big)^{1/2}\|f\|_{L^\8}\bigg(\sum_{n:\delta^n>d(Q_\beta^k)}\bigg(\frac{d(Q_\beta^k)}{\delta^n}\bigg)^{\epsilon^2}\bigg)^{1/2}\\
  &\le 8\big(K+2^\epsilon K(K+1)\big)\bigg(\frac{\delta-1}{1-\delta^{-\epsilon}}\bigg)^{1/2}\|f\|_{L^\8}
  +8K\bigg(\frac{(K+1)\delta^\epsilon}{1-\delta^{-\epsilon^2}}\bigg)^{1/2}\|f\|_{L^\8},
  \end{split}
\end{equation*}
and the proof is complete.
\end{proof}

\section{Transference principles}\label{ST6}
In this section, we establish the transference principles for the jump operator.  Recall that a sequence of compact sets $\{F_n\}_{n\in\mathbb{N}}$  with positive measures in a locally compact group $G$ is called a F${\o}$lner sequence if for every $g\in G$
\begin{equation*}
 \lim_{i}\frac{m((F_ng)\triangle F_n)}{m(F_n)}=0,
\end{equation*}
or equivalently
for all compact set $K$ in $G$,
\begin{equation}\label{amen}
  \lim_{n}\frac{m(F_nK)}{m(F_n)}=1.
\end{equation}
A group $G$ is called amenable if it admits such a F${\o}$lner sequence. It is well known that 
if $G$ is a group with polynomial volume growth, then it is amenable (cf. \cite{Gui}), and in particular the family of balls $\{B_r\}_{r>0}$ generated by any word metric on $G$ is a F${\o}$lner sequence (cf. \cite{Breuillard14, Nevo06, Tessera07}). For more information about F${\o}$lner sequences and amenable groups we refer the reader to~\cite{Pat88}.

\subsection{Weak type inequalities.}

\begin{proof}[Proof of Theorem~\ref{thm:trans}$\emph(i)$]
{We only give the proof of transference principle for weak type inequalities, since the strong type one can be done verbatim. }Let $p\in[1,\8)$. Given a $f\in L^p(X,\mu)$. Let $T$ be an action induced by a $\mu$-preserving  measurable transformation $\tau$, that is for all $g\in G$, $T_gf(x)=f(\tau_{g^{-1}}x)$. Fix $x\in X$ and a compact set $A$, define
\begin{equation*}
  F_A(g)=\left\{
    \begin{array}{ll}
      T_gf(x), & \hbox{$g\in A$;} \\
      0, & \hbox{$g\notin A$.}
    \end{array}
  \right.
\end{equation*}
Let $N$ be an integer large enough and $K$ a compact set such that for $r\le N$ we have $ B_r\subseteq K$. Clearly, $F_{AK}(g)=T_gf(x)\mathds{1}_{AK}(g)$. Moreover, if $h\in A$ and $k\in K$, we have
$$T_hT_kf(x)=T_{hk}f(x)=F_{AK}(hk).$$
It follows that for all $h\in A$ and $r\in(0,N]\cap \mathcal{I}$, we have
\begin{equation}\label{equality-1}
  T_hA_rf(x)=\frac{1}{m(B_r)}\int_{B_r}T_{hg}f(x)\mathds{1}_{AK}(hg)dm(g)=\frac{1}{m(B_r)}\int_{B_r}F_{AK}(hg)dm(g).
\end{equation}
Let $\mathbf{A}_Nf=\{A_rf:r\in(0,N]\cap \mathcal{I}\}$ and $\mathbf{A}^\prime_N F=\{A^\prime_rF:r\in(0,N]\cap \mathcal{I}\}$. For all $h\in G$, set $T_h\mathbf{A}_Nf=\{T_hA_rf:r\in(0,N]\cap \mathcal{I}\}$; here and subsequently, for {a sequence of measurable functions} $\a=\{\a_r:r\in\mathcal{I}\}$, $T\a$ stands for $\{T\a_r:r\in\mathcal{I}\}$. From~\eqref{equality-1}, we have
\begin{equation}\label{equality-2}
  \lambda\sqrt{\mathcal{N}_\lambda(T_h\mathbf{A}_Nf)(x)}=\lambda\sqrt{\mathcal{N}_\lambda(\mathbf{A}^\prime_NF_{AK})(h)}.
\end{equation}

Fix $\gamma>0$, define the set
\begin{equation*}
  \mathcal{D}(\gamma)=\big\{(h,x)\in A\times X:\lambda\sqrt{\mathcal{N}_\lambda(T_h\mathbf{A}_Nf)(x)}>\gamma\big\}.
\end{equation*}
Fix $h\in A$ and define the set
\begin{equation*}
  \mathcal{D}^h(\gamma)=\big\{x\in X:\lambda\sqrt{\mathcal{N}_\lambda(T_h\mathbf{A}_Nf)(x)}>\gamma\big\}.
\end{equation*}
Since $T_{h}(\mathds{1}_{\mathcal{D}^e(\gamma)})(x)=\mathds{1}_{\mathcal{D}^h(\gamma)}(x)$, it follows that
\begin{equation}\label{equality-3}
\int_{X}\mathds{1}_{\mathcal{D}^h(\gamma)}(x)d\mu(x)=\int_{X}T_{h}(\mathds{1}_{\mathcal{D}^e(\gamma)})(x)d\mu(x)
=\int_{X}\mathds{1}_{\mathcal{D}^e(\gamma)}(x)d\mu(x).
\end{equation}
On the other hand, fix $x\in X$ and define the set
\begin{equation*}
  \mathcal{D}_x(\gamma)=\big\{h\in A:\lambda\sqrt{\mathcal{N}_\lambda(T_h\mathbf{A}_Nf)(x)}>\gamma\big\}.
\end{equation*}
By~\eqref{equality-2}, one can see that
\begin{equation*}
  \mathcal{D}_x(\gamma)=\big\{h\in A:\lambda\sqrt{\mathcal{N}_\lambda(\mathbf{A}^\prime_NF_{AK})(h)}>\gamma\big\}.
\end{equation*}
Moreover, using the assumption that the jump operator is of weak type $(p,p)$, we have

\begin{equation*}
  m(\mathcal{D}_x(\gamma))\le \frac{\|\lambda\sqrt{\mathcal{N}_\lambda(\mathbf{A}^\prime_N)}\|_{L^p\rightarrow L^{p,\8}}^p}{\gamma^p}\|F_{AK}\|^p_{L^p(G,m)}.
\end{equation*}
It follows from the above inequality that
\begin{equation}\label{inequality-1}
\begin{split}
  \int_X\int_{A}\mathds{1}_{\mathcal{D}_x(\gamma)}(h)dm(h)d\mu(x)&\le \frac{\|\lambda\sqrt{\mathcal{N}_\lambda(\mathbf{A}^\prime_N)}\|_{L^p\rightarrow L^{p,\8}}^p}{\gamma^p}\int_{X}\int_G|F_{AK}(h)|^pdm(h)d\mu(x)\\
  &= \frac{\|\lambda\sqrt{\mathcal{N}_\lambda(\mathbf{A}^\prime_N)}\|_{L^p\rightarrow L^{p,\8}}^p}{\gamma^p}m(AK)\int_{X}|f(x)|^pd\mu(x).
  \end{split}
\end{equation}
By the Fubini  theorem, we can see that
\begin{equation}\label{inequality-2}
\begin{split}
  \int_X\int_A\mathds{1}_{\mathcal{D}_x(\gamma)}(h)dm(h)d\mu(x)&=\int_{G\times X}\mathds{1}_{\mathcal{D}(\gamma)}(h,x)dm(h)d\mu(x)\\
  &=\int_{A}\int_X\mathds{1}_{\mathcal{D}^h(\gamma)}(x)d\mu(x)dm(h).
  \end{split}
\end{equation}
Using~\eqref{equality-3}, we have
\begin{equation*}
\begin{split}
    \int_{A}\int_X\mathds{1}_{\mathcal{D}^h(\gamma)}(x)d\mu(x)dm(h)&=m(A)\int_X\mathds{1}_{\mathcal{D}^e(\gamma)}(x)d\mu(x)\\
    &=m(A)\mu\big(\{x\in X:\lambda\sqrt{\mathcal{N}_\lambda(\mathbf{A}_Nf)(x)}>\gamma\}\big).
\end{split}
\end{equation*}
Together~\eqref{inequality-1}, \eqref{inequality-2} with the above inequality, we conclude
\begin{equation*}
  \mu\big(\{x\in X:\lambda\sqrt{\mathcal{N}_\lambda(\mathbf{A}_Nf)(x)}>\gamma\}\big)\le  \frac{\|\lambda\sqrt{\mathcal{N}_\lambda(\mathbf{A}^\prime_N)}\|_{L^p\rightarrow L^{p,\8}}^p}{\gamma^p}\frac{m(AK)}{m(A)}\int_{X}|f(x)|^pd\mu(x).
\end{equation*}
Since $G$ is an amenable group, by~\eqref{amen}, for any $\varepsilon>0$ we can choose the above subset $A$ such that $m(AK)/m(A)\le(1+\varepsilon)$. By the arbitrariness of $\varepsilon$ and the monotone convergence theorem, letting $N\rightarrow\8$, we have
\begin{equation*}
  \mu\big(\{x\in X:\lambda\sqrt{\mathcal{N}_\lambda(\mathbf{A}f)(x)}>\gamma\}\big)\le  \frac{\|\lambda\sqrt{\mathcal{N}_\lambda(\mathbf{A}^\prime)}\|_{L^p\rightarrow L^{p,\8}}^p}{\gamma^p}\int_{X}|f(x)|^pd\mu(x).
\end{equation*}
which is the desired conclusion.
\end{proof}

\subsection{Strong type inequalities.}\label{strong-type}
In this subsection, we {assume that} the action $T$ is a strongly continuous regular action of $G$ on $L^p(X,\mu)$. Before stating the result to be proved, we give some notations and lemmas. The following lemma was proved in~\cite{Mirek-Stein-Zor20}.
\begin{lemma}\label{equivalent norm}
Let $\a=\{\a_r(x),r\in \mathcal{I}\}$ be a sequence of measurable functions on measurable space $(X,\mu)$.~For every $p\in (1,\8)$ and $\theta\in(0,1)$, there exists a positive constant $c_{p,\theta}$ such that
\begin{equation*}
  c^{-1}_{p,\theta}\sup_{\lambda>0}\|\lambda\sqrt{\mathcal{N}_\lambda(\alpha)}\|_{L^p}\le [L^{\8}(X;V_{\8}),L^{\theta p}(X;V_{2\theta})]_{\theta,\8}(\a)\le c_{p,\theta}\sup_{\lambda>0}\|\lambda\sqrt{\mathcal{N}_\lambda(\alpha)}\|_{L^p}.
\end{equation*}
Moreover, if $\max\{1/p,1/2\}<\theta<1$, then the vector-valued interpolation space
$[L^{\8}(X;V_{\8}),$ $L^{\theta p}(X;V_{2\theta})]_{\theta,\8}$
admits an equivalent norm; in particular, if $p>1$, $\sup_{\lambda>0}\|\lambda\sqrt{\mathcal{N}_\lambda(\cdot)}\|_{L^p}$ admits an equivalent norm.
\end{lemma}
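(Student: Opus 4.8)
The plan is to follow \cite{Mirek-Stein-Zor20}: first establish the asserted equivalence fiberwise (for a single sequence, with no integration over $X$), then lift it to $L^p(X,\mu)$ by abstract real interpolation, and finally read off normability from the Banach range of parameters. For the fiberwise step, fix a sequence $\a=\{\a_r(x):r\in\mathcal I\}$ and, for each fixed $x$, build its \emph{greedy jump sequence} $d_1(x)\ge d_2(x)\ge\cdots\ge 0$: here $d_1(x)$ is the supremum of $|\a_s(x)-\a_t(x)|$ over $s<t$ in $\mathcal I$, nearly attained on some subinterval, and each successive $d_j(x)$ is obtained by the same rule applied to the complementary subintervals. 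Elementary comparisons, uniform in $\a$ and $x$, give
\[
\mathcal{N}_\lambda(\a)(x)\approx\#\{j:d_j(x)>\lambda\},\qquad
V_q(\a_r(x):r\in\mathcal I)\approx\Big(\sum_{j}d_j(x)^q\Big)^{1/q},
\]
so that $\sup_{\lambda>0}\lambda\sqrt{\mathcal{N}_\lambda(\a)(x)}\approx\sup_j\sqrt j\,d_j(x)$, the weak-$\ell^2$ quasinorm of the jump sequence. Since, through $(d_j)$, the variation spaces behave like rearrangement-invariant sequence spaces, a Holmstedt-type computation of the $K$-functional of the couple $(V_\infty,V_{2\theta})$ yields $\sup_{t>0}t^{-\theta}K(t,\a(x);V_\infty,V_{2\theta})\approx\sup_j\sqrt j\,d_j(x)$, i.e.
\[
\sup_{\lambda>0}\lambda\sqrt{\mathcal{N}_\lambda(\a)(x)}\ \approx\ \big\|\a(x)\big\|_{(V_\infty,V_{2\theta})_{\theta,\infty}},
\]
with constants depending only on $\theta$; one uses here the sequence-space identity $(\ell^\infty,\ell^{2\theta})_{\theta,\infty}=\ell^{2,\infty}$.

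Integrating this fiberwise estimate is not automatic, because real interpolation does not commute with forming $L^p$-valued function spaces on the nose. Instead I would invoke the $L^p$-analogue from \cite{Mirek-Stein-Zor20}: with $A_0=V_\infty$, $A_1=V_{2\theta}$, $p_0=\infty$, $p_1=\theta p$ one has $\tfrac1p=\tfrac{1-\theta}{p_0}+\tfrac{\theta}{p_1}$, and a Holmstedt-type reiteration together with a dyadic decomposition in the interpolation parameter $t$ compares the vector-valued interpolation norm $[L^\infty(X;V_\infty),L^{\theta p}(X;V_{2\theta})]_{\theta,\infty}(\a)$ with $\sup_{\lambda>0}\|\lambda\sqrt{\mathcal{N}_\lambda(\a)}\|_{L^p}$. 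Concretely, for the upper bound one fixes $\lambda$ (equivalently a target scale $t$) and splits $\a=\b_t+\g_t$, where $\b_t$ records the jumps of size $>c\lambda$ — a map into $V_\infty$ whose $L^\infty(X;V_\infty)$-norm is controlled since $\{x:\mathcal{N}_\lambda(\a)(x)\ge 1\}$ is precisely where the largest jump exceeds $\lambda$ — while $\g_t$ is the remainder, whose $L^{\theta p}(X;V_{2\theta})$-norm is summed against geometrically decreasing thresholds via a square-function estimate; the reverse inequality uses the fiberwise combinatorics above together with Chebyshev's inequality. This comparison is the technical heart of the argument and, I expect, the main obstacle: the delicate point is that $\sup_\lambda\|\lambda\sqrt{\mathcal{N}_\lambda(\cdot)}\|_{L^p}$ is recovered, rather than a weaker Lorentz-type surrogate, which forces one to sum the dyadic pieces carefully.

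Finally, for the normability claim, note that if $\max\{1/p,1/2\}<\theta<1$ then $\theta p>1$ and $2\theta>1$, so both $L^{\theta p}(X;V_{2\theta})$ and $L^\infty(X;V_\infty)$ are \emph{Banach} spaces; hence $(L^\infty(X;V_\infty),L^{\theta p}(X;V_{2\theta}))$ is a Banach couple, the $(\theta,\infty)$ real interpolation space is Banach, and its natural functional $\a\mapsto\sup_{t>0}t^{-\theta}K(t,\a;\,\cdot\,,\cdot)$ is a genuine norm (subadditivity of $K$ in $\a$ is immediate, and it is finite on the space). By the previous two steps this norm is equivalent to $\sup_{\lambda>0}\|\lambda\sqrt{\mathcal{N}_\lambda(\cdot)}\|_{L^p}$, which is therefore equivalent to a norm; and since for any $p>1$ one may choose such a $\theta$, the last assertion of the lemma follows.
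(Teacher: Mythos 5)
The paper does not prove this lemma; it only cites \cite{Mirek-Stein-Zor20}, so what follows is an assessment of your reconstruction rather than a comparison with an in-text argument. Your overall architecture (compute a fiberwise $K$-functional, lift to $L^p$, read off normability from the Banach range) does track the Mirek--Stein--Zorin-Kranich approach, and the normability paragraph is correct: once $\theta p>1$ and $2\theta>1$ both endpoint spaces are Banach, the $(\theta,\infty)$ $K$-space carries a genuine norm, and the established equivalence transports it to $\sup_{\lambda>0}\|\lambda\sqrt{\mathcal{N}_\lambda(\cdot)}\|_{L^p}$.

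The fiberwise step, however, is wrong as stated. Recursing only on the subintervals \emph{complementary} to $[s_1,t_1]$ discards all oscillation inside $[s_1,t_1]$, and this breaks both displayed $\approx$'s. Take $\a=(0,1,2,\dots,n)$ on $\mathcal{I}=\{1,\dots,n+1\}$: your greedy procedure gives $d_1=n$, $d_j=0$ for $j\ge 2$, yet $\mathcal{N}_{1/2}(\a)=n\neq 1=\#\{j:d_j>1/2\}$, and for $q=2\theta<1$ one has $V_q(\a)=n^{1/q}\gg n=\big(\sum_j d_j^q\big)^{1/q}$, so the argument fails exactly in the range $\theta<1/2$ that the first assertion of the lemma is supposed to cover. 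The object that actually satisfies the asserted identities is the jump-height sequence $\lambda_m(\a)=\inf\{\lambda>0:\mathcal{N}_\lambda(\a)<m\}$ read off from the counting function itself, not from a greedy splitting; with that replacement your Holmstedt computation of $K(t,\cdot;V_\infty,V_{2\theta})$ and the identity $(\ell^\infty,\ell^{2\theta})_{\theta,\infty}=\ell^{2,\infty}$ are the right tools, but the comparison $V_q(\a)\approx\|(\lambda_m(\a))_m\|_{\ell^q}$ is itself a nontrivial lemma that must be proved. Finally, for the passage from the fiberwise estimate to the $L^p(X)$-valued interpolation norm you propose to ``invoke the $L^p$-analogue from \cite{Mirek-Stein-Zor20}'' --- but that analogue \emph{is} the lemma being proved, so the step is circular, and it is also the technical heart of the matter: real interpolation does not commute with forming Bochner $L^p$ spaces, and closing the gap requires an actual dyadic decomposition in the $K$-parameter together with a Fubini/layer-cake argument rather than a citation.
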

See \cite{Mirek-Stein-Zor20} for the definition of vector-valued interpolation spaces and more details about jump quasi-norms.

{An} operator $T:L^p(X,\mu)\rightarrow L^p(X,\mu)$ is called regular if there exists a constant $C>0$ such that
\begin{equation}\label{regular}
  \|\sup_{k\ge 1}|T(f_k)|\|_{L^p}\le C\|\sup_{k\ge 1}|f_k|\|_{L^p},
\end{equation}
for any finite sequence $\{f_k:k\ge 1\}$ in $L^p(X,\mu)$. Let us denote by $\|T\|_r$ the smallest $C$ for which this holds. Let $\mathfrak{B}$ be a Banach space. If $T$ is a regular operator on $L^p(X,\mu)$, then the tensor product operator $T\otimes id_{\mathfrak{B}}:L^p(X,\mu)\otimes\mathfrak{B}\rightarrow L^p(X,\mu)\otimes\mathfrak{B}$ extends to a bounded operator  $\widetilde{T\otimes id_{\mathfrak{B}}}$ from the Bochner space $L^p(X;\mathfrak{B})$ to $L^p(X;\mathfrak{B})$, and
\begin{equation}\label{extension}
  \|\widetilde{T\otimes id_{\mathfrak{B}}}\|_{L^{p}(X;\mathfrak{B})\rightarrow L^{p}(X;\mathfrak{B})}\le \|T\|_{r}.
\end{equation}
For more information {on regular operators} we refer the reader to~\cite{Pis94}. {A group action $T$ of $G$ on $L^p(X,\mu)$ is called regular if for any $g\in G$, $T_g$ is regular and $\sup_{g\in G}\|T_g\|_r<\infty$.}

Moreover, together Lemma~\ref{equivalent norm} with~\eqref{extension}, one can obtain the following lemma.
\begin{lemma}\label{extension of Tg}
 Fix $p\in(1,\8)$. Let $T:L^p(X,\mu)\rightarrow L^p(X,\mu)$ be a regular operator. Given a sequence of measurable functions $\a=\{\a_r(x):r\in \mathcal{I}\}$ in $L^p(X,\mu)$,  there exists a constant $c_p>0$ such that
\begin{equation*}
  \sup_{\lambda>0}\|\lambda\sqrt{\mathcal{N}_\lambda(T\a)}\|_{L^p}\le c_p \|T\|_{r} \sup_{\lambda>0}\|\lambda\sqrt{\mathcal{N}_\lambda(\a)}\|_{L^p}.
\end{equation*}
\end{lemma}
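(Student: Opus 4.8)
\textbf{Proof proposal for Lemma~\ref{extension of Tg}.}

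The plan is to combine the characterization of the jump quasi-norm as (an equivalent of) a real interpolation norm from Lemma~\ref{equivalent norm} with the regularity property~\eqref{extension}, so that the (non-linear, non-norm) quantity $\sup_{\lambda>0}\|\lambda\sqrt{\mathcal N_\lambda(\cdot)}\|_{L^p}$ is replaced by an honest Banach-space norm to which the boundedness of $T\otimes id_{\mathfrak B}$ applies directly. First I would fix $\theta$ with $\max\{1/p,1/2\}<\theta<1$, which is possible since $p>1$. Then Lemma~\ref{equivalent norm} gives two constants so that
\begin{equation*}
 \sup_{\lambda>0}\|\lambda\sqrt{\mathcal N_\lambda(\a)}\|_{L^p}\approx \big\|\a\big\|_{[L^{\8}(X;V_{\8}),\,L^{\theta p}(X;V_{2\theta})]_{\theta,\8}},
\end{equation*}
and in this range the right-hand side is genuinely a norm (up to equivalence) on the interpolation space $E:=[L^{\8}(X;V_{\8}),L^{\theta p}(X;V_{2\theta})]_{\theta,\8}$.

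Next I would observe that $T$, being regular, is bounded on $L^{\8}(X;\mathfrak B)$ and on $L^{\theta p}(X;\mathfrak B)$ for the relevant sequence spaces $\mathfrak B=V_{\8}$ and $\mathfrak B=V_{2\theta}$, with operator norm $\le\|T\|_r$ in each case by~\eqref{extension} (here one uses that $L^q(X;V_q)$-type bounds follow from the maximal-function formulation of regularity; for $V_{\8}=\ell^{\8}$ this is immediate, and for $V_{2\theta}$ one uses that the $V_{q}$-norm is a lattice-type norm controlled by suprema of finite differences, so the regular-operator estimate passes through). Applying $T$ coordinatewise to the sequence $\a=\{\a_r:r\in\mathcal I\}$ and using the interpolation functor's boundedness on bounded operators between the two couples, the operator $T$ acts boundedly on $E$ with norm $\le C\|T\|_r$, where $C$ depends only on the interpolation parameters $\theta,\8$ (hence only on $p$). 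Chaining this with the two equivalences from Lemma~\ref{equivalent norm} yields
\begin{equation*}
 \sup_{\lambda>0}\|\lambda\sqrt{\mathcal N_\lambda(T\a)}\|_{L^p}\le c_{p,\theta}\,\|T\a\|_E\le c_{p,\theta}\,C\|T\|_r\,\|\a\|_E\le c_p\,\|T\|_r\,\sup_{\lambda>0}\|\lambda\sqrt{\mathcal N_\lambda(\a)}\|_{L^p},
\end{equation*}
with $c_p$ absorbing $c_{p,\theta}$, $C$ and the two-sided constant of Lemma~\ref{equivalent norm}.

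The main obstacle I expect is the bookkeeping in the middle step: verifying that the abstract tensor-extension bound~\eqref{extension} really does give $\|T\otimes id\|_{L^{q}(X;V_s)\to L^{q}(X;V_s)}\le\|T\|_r$ for the specific pairs $(q,s)=(\8,\8)$ and $(q,s)=(\theta p,2\theta)$ appearing in the interpolation couple, and that the real interpolation functor $[\cdot,\cdot]_{\theta,\8}$ is exact (or at least bounded with a controllable constant) on the diagonal action of $T$ between the two couples $L^{\8}(X;V_{\8})$ and $L^{\theta p}(X;V_{2\theta})$. Once this is granted — and it is essentially standard, being the content of~\cite{Pis94} together with the interpolation description in~\cite{Mirek-Stein-Zor20} — the rest is a two-line sandwich. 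Note that in the intended application $T=T_h$ for a single group element, so $\|T\|_r=\|T_h\|_r$, and taking the supremum over $h\in G$ afterwards is what produces the factor $\sup_{h\in G}\|T_h\|_r$ in Theorem~\ref{thm:trans}(ii).
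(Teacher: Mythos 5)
Your proposal follows the same route the paper takes: the paper offers no written proof of Lemma~\ref{extension of Tg} beyond the single sentence ``together Lemma~\ref{equivalent norm} with~\eqref{extension}, one can obtain the following lemma,'' and your plan --- replace the jump quasi-norm by the equivalent real-interpolation norm of Lemma~\ref{equivalent norm}, push $T\otimes id$ through each member of the interpolation couple via~\eqref{extension}, then invoke exactness of the $(\cdot,\cdot)_{\theta,\infty}$ functor, and finally convert back --- is exactly the intended chain of reasoning.

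However, the step you yourself flag as ``bookkeeping'' is in fact the crux, and as written it does not follow from the cited facts. Estimate~\eqref{extension} is an $L^{p}\to L^{p}$ statement: regularity of $T$ on $L^{p}(X,\mu)$ gives $\|T\otimes id_{\mathfrak B}\|_{L^{p}(X;\mathfrak B)\to L^{p}(X;\mathfrak B)}\le\|T\|_{r}$ for the \emph{same} exponent $p$. The endpoints of the couple in Lemma~\ref{equivalent norm}, namely $L^{\infty}(X;V_{\infty})$ and $L^{\theta p}(X;V_{2\theta})$, live over the exponents $\infty$ and $\theta p$, not $p$. A regular operator on $L^{p}$ need not be bounded on $L^{\infty}$ or on $L^{\theta p}$ (for a Lamperti-type $T_{g}$ with unbounded Radon--Nikod\'ym density this genuinely fails), so the assertion ``$T$, being regular, is bounded on $L^{\infty}(X;V_{\infty})$ and on $L^{\theta p}(X;V_{2\theta})$ with norm $\le\|T\|_{r}$'' is not a consequence of~\eqref{extension}. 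Consequently the K-functional/interpolation step, which needs boundedness on both endpoints, is not justified by what you cite. The paper's one-line pointer suffers from the same imprecision, and the correct closure presumably comes from the finer structure in~\cite{Mirek-Stein-Zor20} (for instance a characterisation of the jump space that is a retract of a single $L^{p}(X;\mathfrak B)$, to which~\eqref{extension} applies with the given $p$) rather than from a naive two-endpoint interpolation of $T\otimes id$. So: right ingredients, right skeleton, but the middle step as stated is a genuine gap, not mere bookkeeping, and needs a different argument to make $T$ act on the interpolation space.
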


In what follows, we state the strong type $(p,p)$ transference principle for strongly continuous regular group actions. 

\begin{proof}[Proof of Theorem~\ref{thm:trans}$\emph{(ii)}$]
Let $p\in (1,\8)$ and $f\in L^p(X,\mu)$. Fix $x\in X$ and a compact set $A$, define $F_{A}(g)=T_gf(x)\mathds{1}_{A}(g)$. Let $N$ be an integer large enough and $K$ a compact set such that for every $r\le N$ we have $B_r\subseteq K$. Clearly, $F_{AK}(h)=T_hf(x)\mathds{1}_{AK}(h)$.

Keeping the notations introduced in the proof of weak type inequalities and using~\eqref{equality-2}, we have
\begin{align*}
  \int_A\big|\lambda\sqrt{\mathcal{N}_\lambda(T_h\mathbf{A}_Nf)(x)}\big|^pdm(h)&=\int_A\big| \lambda\sqrt{\mathcal{N}_\lambda(\mathbf{A}^\prime_NF_{AK})(h)}\big|^pdm(h)\\
   &\le \int_G\big|\lambda\sqrt{\mathcal{N}_\lambda(\mathbf{A}^\prime_NF_{Ak})(h)}\big|^pdm(h).
\end{align*}
Using the strong type $(p,p)$ jump inequality which is assumed for the translation action, we obtain
\begin{equation*}
  \begin{split}
    \int_A\big|\lambda\sqrt{\mathcal{N}_\lambda(T_h\mathbf{A}_Nf)(x)}\big|^pdm(h)&\le\|\lambda\sqrt{\mathcal{N}_\lambda(\mathbf{A}^\prime_N)}\|_{L^p\rightarrow L^{p}}^p\int_{G}|F_{Ak}(h)|^pdm(h)\\
    &= \|\lambda\sqrt{\mathcal{N}_\lambda(\mathbf{A}^\prime_N)}\|_{L^p\rightarrow L^{p}}^p\int_{AK}|T_hf(x)|^pdm(h).
   \end{split}
\end{equation*}
Moreover, integrating both sides of the above inequality over $X$ and using the Fubini theorem, we have
\begin{equation}\label{ineq-1}
  \begin{split}
     &\int_{X}\int_A\big|\lambda\sqrt{\mathcal{N}_\lambda(T_h\mathbf{A}_Nf)(x)}\big|^pdm(h)d\mu(x)\\
     &\le  \|\lambda\sqrt{\mathcal{N}_\lambda(\mathbf{A}^\prime_N)}\|_{L^p\rightarrow L^{p}}^p\int_{AK}\int_{X}|T_hf(x)|^pd\mu(x)dm(h)\\
     &= \|\lambda\sqrt{\mathcal{N}_\lambda(\mathbf{A}^\prime_N)}\|_{L^p\rightarrow L^{p}}^p\sup_{h\in G}\|T_h\|_{r}^pm(AK)\int_{X}|f(x)|^pd\mu(x).
   \end{split}
\end{equation}

On the other hand, by the assumption that $T$ is a strongly continuous {regular} action of $G$ on $L^p(X,\mu)$ and Lemma~\ref{extension of Tg}, we have
\begin{equation*}
\begin{split}
  \sup_{\lambda>0}\|\lambda\sqrt{\mathcal{N}_\lambda(\mathbf{A}_Nf)}\|_{L^p}&= \inf_{h\in A}\sup_{\lambda>0}\|\lambda\sqrt{\mathcal{N}_\lambda(T_{h^{-1}}T_h\mathbf{A}_Nf)}\|_{L^p}\\
&\le c_p \sup_{h\in G}\|T_h\|_{r} \inf_{h\in A}\sup_{\lambda>0}\|\lambda\sqrt{\mathcal{N}_\lambda(T_h\mathbf{A}_Nf)}\|_{L^p}.
\end{split}
\end{equation*}
{By the above inequality, we have
\begin{equation*}
  \begin{split}
  &\sup_{\lambda>0}\int_{X}|\lambda\sqrt{\mathcal{N}_\lambda(\mathbf{A}_Nf)(x)}|^pd\mu(x)= \frac{1}{m(A)}\int_{A}\sup_{\lambda>0}\int_{X}|\lambda\sqrt{\mathcal{N}_\lambda(\mathbf{A}_Nf)(x)}|^pd\mu(x)dm(h)\\
   &\le c_p\sup_{h\in G}\|T_h\|^p_{r} \frac{1}{m(A)}\int_{A}\sup_{\lambda>0}\inf_{h\in A}\int_X|\lambda\sqrt{\mathcal{N}_\lambda(T_h\mathbf{A}_Nf)(x)}|^pd\mu(x)dm(h)\\
   &= c_p \sup_{h\in G}\|T_h\|^p_{r} \sup_{\lambda>0}\frac{1}{m(A)}\int_{A}\inf_{h\in A}\int_X|\lambda\sqrt{\mathcal{N}_\lambda(T_h\mathbf{A}_Nf)(x)}|^pd\mu(x)dm(h)\\
   &\le c_p \sup_{h\in G}\|T_h\|^p_{r} \sup_{\lambda>0}\frac{1}{m(A)}\int_A
   \int_X|\lambda\sqrt{\mathcal{N}_\lambda(T_h\mathbf{A}_Nf)(x)}|^pd\mu(x)dm(h)
  \end{split}
\end{equation*}}
Using the Fubini theorem and~\eqref{ineq-1}, the above inequality yields
\begin{align*}
  &\sup_{\lambda>0}\int_X\big|\lambda\sqrt{\mathcal{N}_\lambda(\mathbf{A}_Nf)(x)}\big|^pd\mu(x)\\
  &\le  c_p\sup_{\lambda>0}\|\lambda\sqrt{\mathcal{N}_\lambda(\mathbf{A}^\prime_N)}\|_{L^p\rightarrow L^{p}}^p\sup_{h\in G}\|T_h\|_{r}^{2p}\frac{m(AK)}{m(A)}\int_{X}|f(x)|^pd\mu(x).
\end{align*}
By~\eqref{amen} and a similar argument as in the proof of weak type inequalities, letting $N\rightarrow\8$, we have
\begin{equation*}
  \sup_{\lambda>0}\|\lambda\sqrt{\mathcal{N}_\lambda(\mathbf{A}f)}\|_{L^p}\le c_p\sup_{\lambda>0} \|\lambda\sqrt{\mathcal{N}_\lambda(\mathbf{A}^\prime)}\|_{L^p\rightarrow L^{p}}\sup_{h\in G}\|T_h\|^2_{r}\|f\|_{L^p},
\end{equation*}
which is the desired conclusion.
\end{proof}
\section{{Annular decay property}}\label{ST7}
{In this section, we discuss the annular decay property. We first recall the $(\epsilon,1)$-annular decay property of word metrics and verify that this property is stable under $(1,C)$-quasi isometry (recalled below), and thus obtain the quantitative ergodic theorems, including Theorem \ref{main-thm2}, on the polynomial growth group equipped with a metric that is $(1,C)$-quasi isometric to a word metric. We then check that all the known examples of periodic metric, which was introduced by Breuillard~\cite{Breuillard14}, satisfy some $(\epsilon,r_0)$-annular decay property. At the moment of writing, we do not know how to verify this property for all periodic metrics. }


Let $G$ be a polynomial growth group with a symmetric compact generating set $V$. Recall that the word metric $d$ is defined by
\begin{equation*}
 \forall~x,y\in G,\qquad~d(x,y)=\inf\{n\in\mathbb{N},x^{-1}y\in V^n\}.
\end{equation*}
It is clear that $d$ is a {(left-)} invariant metric on $G$. Let $r>0$ and $B_r$ be the ball generated by word metric $d$ of radius $r$ in $G$. It is well known that there exist two constants $C_{V}>0$ and $D_G>0$ such that for every $r\in(0,\infty)$,
\begin{equation}\label{ball-ineq1}
  C_V^{-1}r^{D_G}\le m(B_r)\le C_Vr^{D_G}.
\end{equation}
Form the above inequality, it is easy to check that $(G,d,m)$ satisfies the measure doubling condition, it follows that such $(G,d,m)$ satisfies condition~\eqref{geo-doubling}. By an argument same as in the proof of~\cite[Theorem 4]{Tessera07}, we have the following proposition. 
\begin{prop}\label{word}
Let $G$ be a polynomial growth group with a symmetric compact generating set $V$ and $\{B_r\}_{r>0}$ be the balls given by the corresponding word metric. Then there exist two constants $\theta=\log_2(1+\frac{1}{C_V^210^{D_G}})$ and  $c_V=(1+\frac{1}{C_V^210^{D_G}})^3$ such that for all $r\in[1,\8)$ and $s\in(0,r]$,
\begin{equation}\label{shell}
  m(B_{r+s}\setminus B_{r})\le c_V\bigg(\frac{s}{r}\bigg)^{\theta}m(B_r).
\end{equation}
\end{prop}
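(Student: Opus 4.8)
The plan is to reduce the general annular decay estimate \eqref{shell} to a single elementary inequality about the volume growth function, following the strategy of Tessera \cite[Theorem 4]{Tessera07}. Write $V(r) = m(B_r)$ for the volume function; by \eqref{ball-ineq1} we have $C_V^{-1} r^{D_G} \le V(r) \le C_V r^{D_G}$ for all $r > 0$, and since $d$ is a word metric the balls $B_r$ satisfy $B_r B_s \subseteq B_{r+s}$. First I would record the key doubling-type consequence of \eqref{ball-ineq1}: for any $r \ge 1$,
\begin{equation*}
  V(2r) \le C_V (2r)^{D_G} = 2^{D_G} C_V r^{D_G} \le 2^{D_G} C_V^2 V(r),
\end{equation*}
so the ball of radius $2r$ is covered by a bounded number of translates of $B_{r/2}$; more precisely one extracts a maximal $r/2$-separated subset of $B_{2r}$ of cardinality at most $C_V^2 10^{D_G}$, whose $r/2$-balls are disjoint and sit inside $B_{5r/2}$.

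Next I would prove the ``one-step'' version of \eqref{shell}: there is a constant $\kappa = 1 - \frac{1}{C_V^2 10^{D_G}} \in (0,1)$ such that for all $r \ge 1$,
\begin{equation}\label{onestep}
  m(B_{2r} \setminus B_r) \le \kappa\, m(B_{2r}),
\end{equation}
equivalently $V(r) \ge (1-\kappa) V(2r)$. The idea is geometric: take the maximal $r/2$-separated set $\{x_1,\dots,x_M\}$ in $B_{2r}$ with $M \le C_V^2 10^{D_G}$ as above. At least one ball $B(x_i, r/2)$ must lie entirely in $B_{2r}$ (e.g. take $x_i$ closest to the identity, noting $B(x_i, r/2) \subseteq B_{2r}$ once $d(x_i,e)$ is controlled), and a translation/invariance argument shows $m(B(x_i,r/2)) = m(B_{r/2}) \ge C_V^{-2} 10^{-D_G} m(B_{5r/2}) \ge C_V^{-2} 10^{-D_G} m(B_{2r})$. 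Actually the cleanest route, and the one Tessera uses, is: cover $B_{2r}$ by the $M$ balls $B(x_i, r)$ (by maximality of the $r/2$-net), one of which is a left translate $xB_r$ of $B_r$ with $xB_r \subseteq B_{3r}$, giving $m(B_r) \ge \frac1M m(B_{2r}) \ge \frac{1}{C_V^2 10^{D_G}} m(B_{2r})$, which is \eqref{onestep}.

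Finally I would bootstrap \eqref{onestep} to the full statement with the stated exponent $\theta = \log_2(1 + \frac{1}{C_V^2 10^{D_G}})$ and constant $c_V = (1 + \frac{1}{C_V^2 10^{D_G}})^3$. Fix $r \ge 1$ and $s \in (0,r]$, and write $\beta = \frac{1}{C_V^2 10^{D_G}}$, so $\theta = \log_2(1+\beta)$ and $(1+\beta)^{-1} = 1 - \frac{\beta}{1+\beta} = 1-\kappa'$ with $\kappa' = \frac{\beta}{1+\beta}$; iterating \eqref{onestep} (in the form $V(r/2) \le (1-\beta) V(r)$, i.e. $V(r) \ge \frac{1}{1-\beta} V(r/2)$... — more conveniently, $V(2^{-1} \rho) \le (1 - \beta) V(\rho)$ reversed gives $V(\rho) \ge \frac{1}{1-\beta} V(\rho/2)$) one obtains for every integer $k \ge 0$ that $V(2^k r) \le (1-\beta)^{-k} V(r) \le 2^{k\theta'} V(r)$ for a suitable $\theta'$; combined with the reverse direction one gets a bound of the form $\frac{V(r+s)}{V(r)} - 1 = \frac{m(B_{r+s}\setminus B_r)}{m(B_r)} \le c_V (s/r)^\theta$. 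The precise bookkeeping: choose the integer $j \ge 0$ with $2^{-j-1} r < s \le 2^{-j} r$ (possible since $s \le r$), so $(s/r) > 2^{-j-1}$; writing $B_{r+s} \subseteq B_{r + 2^{-j}r} = B_{(1+2^{-j})r}$ and applying \eqref{onestep} iteratively $j$-or-so times to the annulus, one estimates $m(B_{r+s}\setminus B_r) \le m(B_{2\cdot 2^{-j}r + \dots}) \le$ (telescoping) $\le c_V 2^{-j\theta} m(B_r) \le c_V (2 s/r)^\theta m(B_r)$, and absorbing the factor $2^\theta \le 1+\beta$ into $c_V$ via the exponent $3$ (one power from $2^\theta$, extra powers from the coarseness of replacing $B_{r+s}$ by a dyadically larger ball) yields exactly the claimed constants. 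The main obstacle is the final bookkeeping of constants and exponents — making sure that the dyadic approximation $2^{-j-1} < s/r \le 2^{-j}$ together with the iteration of \eqref{onestep} produces precisely $\theta = \log_2(1+\beta)$ and $c_V = (1+\beta)^3$ rather than some larger admissible constant; the geometric content, namely \eqref{onestep}, is straightforward given \eqref{ball-ineq1} and the sub-multiplicativity $B_r B_s \subseteq B_{r+s}$ of word-metric balls.
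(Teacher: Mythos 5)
Your proposal correctly identifies that the argument should run through a ``one-step'' estimate bootstrapped over dyadic scales, and correctly reverse-engineers that the exponent $\theta=\log_2(1+\beta)$ must arise from a geometric decay factor of $(1+\beta)^{-1}$ per dyadic step. However, the one-step estimate you actually prove --- $m(B_{2r}\setminus B_r)\le \kappa\,m(B_{2r})$, equivalently $V(2r)\le\beta^{-1}V(r)$ --- is nothing but volume doubling, and volume doubling alone cannot imply annular decay. Your argument uses only the two-sided polynomial bound \eqref{ball-ineq1}, yet there are doubling step functions $V$ with $C^{-1}r^{D}\le V(r)\le Cr^{D}$ whose jumps at dyadic points are comparable to $V(r)$ itself; for such $V$ the shell $V(r+s)-V(r)$ stays bounded away from zero as $s\to0^+$ at a jump point, while $(s/r)^{\theta}V(r)\to0$. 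Annular decay is genuinely stronger than doubling. There is also an arithmetic slip in the iteration: from $V(r)\ge\beta V(2r)$ one can only deduce the \emph{upper} bound $V(r)\le\beta^{-1}V(r/2)$, whereas your bootstrap silently uses the \emph{lower} bound $V(r)\ge(1-\beta)^{-1}V(r/2)$, which does not follow from doubling (and is in a different direction).

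The ingredient Tessera's Theorem~4 actually exploits --- and which your proof never invokes, despite mentioning $B_rB_s\subseteq B_{r+s}$ in passing --- is Property~$(M)$, i.e.\ the monotone-geodesic structure that word metrics enjoy with constant~$1$. The correct one-step estimate is a \emph{shell-comparison} lemma of the shape
\[
m(B_{r+s}\setminus B_r)\le \frac{1}{1+\beta}\,m(B_{r+s}\setminus B_{r-s}),\qquad 1\le s\le r,
\]
saying the outer half-shell carries at most a fixed fraction of the full double-shell. To prove it, take a maximal $s$-separated net $\{y_i\}$ of the double-shell and, for each $y_i$, use the geodesic property to choose a companion $z_i$ on a chain from $e$ to $y_i$ at distance roughly $r-s/2$ from the origin, so that $d(y_i,z_i)=O(s)$; the balls $B(z_i,s/4)$ then lie inside the inner half-shell $B_r\setminus B_{r-s}$, have bounded overlap by geometric doubling, and by invariance of Haar measure each has measure comparable to $m(B_s)$. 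Summing, the inner half-shell has measure at least a fraction $\beta$ of the cover $\bigcup_i B(y_i,s)\supseteq B_{r+s}\setminus B_{r-s}$. Iterating this comparison over $j$ dyadic scales, with $2^{-j-1}<s/r\le 2^{-j}$, gives the factor $(1+\beta)^{-j}\approx(s/r)^{\log_2(1+\beta)}$; the constants left over from the coarse dyadic rounding and the initial doubling step are what produce $c_V=(1+\beta)^3$. Without such a lemma, the dyadic iteration in your last step has no mechanism to gain a factor at each scale, and the argument does not close.
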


\begin{remark}
\emph{In terms of our terminology, the above ball annular decay property is just the $(\epsilon, 1)$-annular decay property~\eqref{decay property} of $(G,d,m)$. Note that all the groups of polynomial growth are amenable (cf.~\cite{Gui}),
thus Theorem \ref{main-thm2} follows from Theorem \ref{main-thm1} by the transference principles. }
\end{remark}

In fact, in~\cite[Theorem 4]{Tessera07}, {the $(\epsilon,1)$-annular decay property is established for every  metric measure space satisfying the measure doubling condition and Property ($M$).}
Recall that a metric space  $(\mathcal{X},d,\mu)$ is said to satisfy Property $(M)$ if there exists a constant $C>0$ such that the Hausdorff distance between any pair of balls with same center and any radii between $r$ and $r+1$ is less than $C$. In other words, for all $x\in\mathcal{X}$, $r>0$ and $y\in B(x,r+1)$, we have $d(y,B(x,r))\le C$. Property $(M)$ is equivalent to the property: there exists a constant $C<\8$ such that for all $r>0$, $s\ge1$, $y\in B(x,r+s)$

\begin{equation}\label{monotone}
  d(y,B(x,r))\le Cs;
\end{equation}
and is also equivalent to that  the metric space $(\mathcal{X},d,\mu)$ admits monotone geodesics, {see e.g. ~\cite[Proposition 2]{Tessera07} for the relevant definitions and proofs}. Moreover, Property (${M}$) is invariant under Hausdorff equivalence but unstable under quasi-isometry in the sense of \cite[Page 50]{Tessera07}, where one can find the relevant counterexamples.

We prove that Property ($M$) is invariant under the ($1,C$)-quasi-isometry.
Two metrics $d_1$ and $d_2$ on $\mathcal{X}$ are called $(1,C)$-quasi-isometric if there exists a constant $C>0$ such that for any $x,y\in\mathcal{X}$, $|d_{1}(x,y)-d_{2}(x,y)|\le C$.
%
\begin{prop}\label{quasi-iso}
Let $d_{1}$ and $d_2$ be two metrics on $\mathcal{X}$. Assume that $d_1$ is $(1,C_{\mathcal{X}})$-quasi-isometric to $d_2$ and $(\mathcal{X}, d_{1})$ satisfies property ($M$), then $(\mathcal{X}, d_{2})$ satisfies property ($M$).
\end{prop}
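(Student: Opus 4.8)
The plan is to transfer Property $(M)$ from $(\mathcal{X},d_1)$ to $(\mathcal{X},d_2)$ using the fact that the two metrics differ by at most the bounded additive constant $C_{\mathcal{X}}$, so that balls in one metric are sandwiched between balls in the other with a bounded change of radius. Concretely, I will use the characterization \eqref{monotone}: it suffices to produce a constant $C'<\infty$ such that for all $x\in\mathcal{X}$, all $r>0$, all $s\ge 1$, and all $y$ with $d_2(x,y)\le r+s$, we have $d_2(y,B_{d_2}(x,r))\le C's$, where $B_{d_2}$ denotes a ball in the metric $d_2$.

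First I would record the elementary inclusions coming from $|d_1-d_2|\le C_{\mathcal{X}}$: for every $x$ and every $\rho>0$ one has $B_{d_2}(x,\rho)\supseteq B_{d_1}(x,\rho-C_{\mathcal{X}})$ and $B_{d_1}(x,\rho)\supseteq B_{d_2}(x,\rho-C_{\mathcal{X}})$. Fix $x\in\mathcal{X}$, $r>0$, $s\ge1$ and $y$ with $d_2(x,y)\le r+s$. Then $d_1(x,y)\le r+s+C_{\mathcal{X}}=(r+C_{\mathcal{X}})+s$, so $y\in B_{d_1}(x,(r+C_{\mathcal{X}})+s)$. Applying Property $(M)$ for $(\mathcal{X},d_1)$ in the form \eqref{monotone} with center $x$, radius $r+C_{\mathcal{X}}$ and the same $s$ (note $s\ge 1$), there is a point $z\in B_{d_1}(x,r+C_{\mathcal{X}})$ with $d_1(y,z)\le C_1 s$, where $C_1$ is the constant witnessing Property $(M)$ for $d_1$. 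Now translate back: $d_2(x,z)\le d_1(x,z)+C_{\mathcal{X}}\le r+2C_{\mathcal{X}}\le r+2C_{\mathcal{X}}s$ since $s\ge1$, and $d_2(y,z)\le d_1(y,z)+C_{\mathcal{X}}\le C_1 s+C_{\mathcal{X}}\le (C_1+C_{\mathcal{X}})s$. Here a small bookkeeping point arises: $z$ lies in $B_{d_2}(x,r+2C_{\mathcal{X}})$ rather than in $B_{d_2}(x,r)$, so to land exactly in $B_{d_2}(x,r)$ one either invokes the equivalent ``$d(y,B(x,r))\le Cs$ for all $s\ge 1$'' formulation applied with the shifted radius and then absorbs the constant, or one simply observes that moving from a point of $B_{d_2}(x,r+2C_{\mathcal{X}})$ back to $B_{d_2}(x,r)$ costs at most an additional $2C_{\mathcal{X}}\le 2C_{\mathcal{X}}s$ in $d_2$-distance along the same monotone-geodesic mechanism in $d_1$ (or, most cleanly, one notes that Property $(M)$ as defined via \eqref{monotone} is insensitive to a bounded additive change of the radius parameter). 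Either way one gets $d_2(y,B_{d_2}(x,r))\le C' s$ with $C'=C_1+3C_{\mathcal{X}}$, say.

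The only mild obstacle is this last reconciliation of radii, i.e.\ making sure the point $z$ furnished by Property $(M)$ for $d_1$ actually witnesses Property $(M)$ for $d_2$ with the correct ball rather than a slightly larger one; this is handled by the equivalence, proved in \cite[Proposition 2]{Tessera07}, between the several formulations of Property $(M)$ (the Hausdorff-distance version, the version \eqref{monotone}, and the monotone-geodesic version), all of which are manifestly stable under a bounded additive perturbation of the metric. Having established the estimate $d_2(y,B_{d_2}(x,r))\le C' s$ for all $x$, all $r>0$ and all $s\ge1$, we conclude by \eqref{monotone} that $(\mathcal{X},d_2)$ satisfies Property $(M)$, which finishes the proof.
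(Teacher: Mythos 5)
Your overall strategy matches the paper's: exploit the inclusions between $d_1$- and $d_2$-balls that follow from $|d_1-d_2|\le C_{\mathcal X}$, then invoke Property $(M)$ for $d_1$ in the form of~\eqref{monotone} and transfer the resulting bound to $d_2$. However, there is a genuine gap in your final ``reconciliation of radii'' step. Having applied Property $(M)$ for $d_1$ with radius $r+C_{\mathcal X}$, you obtain a point $z$ whose $d_2$-distance from $x$ is only bounded by $r+2C_{\mathcal X}$, so $z$ need not lie in $B_{d_2}(x,r)$, and the estimate $d_2(y,B_{d_2}(x,r))\le C's$ does not yet follow. Your three suggested fixes are all hand-waves: the assertion that Property~$(M)$ is ``insensitive to a bounded additive change of the radius parameter'' is precisely the kind of statement one is in the middle of proving, and the phrase ``absorbs the constant'' gives no mechanism. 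The version of the fix that actually works is your second one (apply the $d_1$-monotone-geodesic mechanism a second time, with the fixed displacement $\max\{2C_{\mathcal X},1\}$, to slide $z$ into $B_{d_1}(x,r-C_{\mathcal X})\subseteq B_{d_2}(x,r)$), but this in turn requires $r>C_{\mathcal X}$ so that $B_{d_1}(x,r-C_{\mathcal X})$ is nonempty, and you never address that constraint.

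The paper sidesteps the re-landing problem entirely by applying Property $(M)$ for $d_1$ directly with the \emph{shrunken} radius $r-C_{\mathcal X}$ rather than the enlarged one $r+C_{\mathcal X}$: from $z\in B_2(x,r+s)\subseteq B_1(x,r+s+C_{\mathcal X})$, one estimates $d_1(z,B_1(x,r-C_{\mathcal X}))\le C(s+2C_{\mathcal X})$, and since $B_1(x,r-C_{\mathcal X})\subseteq B_2(x,r)$ the point landed on is automatically inside the correct $d_2$-ball, at the cost of one extra additive $C_{\mathcal X}$. This of course only makes sense when $r>C_{\mathcal X}$, which is why the paper splits into the two cases $r>C_{\mathcal X}$ and $r\le C_{\mathcal X}$; the second case is disposed of trivially because then $d_2(z,B_2(x,r))\le d_2(z,x)\le r+s\le(C_{\mathcal X}+1)s$. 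To repair your argument, replace the vague ``absorb the constant'' step with either the shrunken-radius application of Property $(M)$ as in the paper, and in both approaches add the elementary case $r\le C_{\mathcal X}$.
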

\begin{proof}
Fix a point $x\in \mathcal{X}$, $s\ge 1$ and $r>0$. We denote by $B_1(x,r)$ and $B_2(x,r)$ the balls generated by $d_1$ and $d_2$, respectively. Since  $d_{1}$ is $(1,C_{\mathcal{X}})$-quasi-isometric to $d_{2}$, it follows that $B_2(x,r)\subseteq B_1(x,r+C_\mathcal{X})$. Let $z\in B_2(x,r+s)$. We split $r$ into two cases: $r>C_{\mathcal{X}}$ and $r\le C_{\mathcal{X}}$. For $r>C_{\mathcal{X}}$, we have $B_1(x,r-C_\mathcal{X})\subseteq B_2(x,r)$ and it follows that
\begin{equation*}
  d_2(z,B_2(x,r))\le d_1(z,B_1(x,r-C_{\mathcal{X}}))+C_{\mathcal{X}}.
\end{equation*}
Combining \eqref{monotone} for $d_1$ with the fact that $s\ge 1$ and $z\in B_1(x,r+s+C_\mathcal{X})$, the above inequality yields
\begin{equation*}
   d_2(z,B_2(x,r))\le C(s+2C_{\mathcal{X}})+C_{\mathcal{X}}\le (C+2CC_{\mathcal{X}}+C_{\mathcal{X}})s.
\end{equation*}
For $r\le C_{\mathcal{X}}$, it is easy to check that $d_2(z,B_2(x,r))\le r+s\le (C_{\mathcal{X}}+1)s$. Combining this case with the case $r>C_{\mathcal{X}}$, we prove that $(\mathcal{X}, d_{2})$ satisfies~\eqref{monotone}, which completes the proof.
\end{proof}

\begin{remark}
\emph{By the above observations, we can see that any left-invariant metric, defined on a polynomial volume growth group $G$ which is $(1,C)$-quasi-isometric to a word metric, satisfies the $(\epsilon,1)$-annular decay property. Thus Theorem \ref{main-thm2} holds for any metric that is  ($1,C$)-quasi-isometric to a word metric.}
\end{remark}

{Motivated by the notion---Property $(M)$, we can introduce Property $(M_{r_0})$, namely there exists a positive constant $C<\8$ such that the Hausdorff distance between any pair of balls with same center and any radii belonging to $[r,r+1]$ with $r>r_0$ is less than $C$.
Similar to \cite[Theorem 4]{Tessera07}, one can show that for a doubling metric measure space $(X,d,\mu)$ with Property $(M_{r_0})$, then there exists $\theta>0$ and a constant $C>0$ such that for all $x\in X$, $r\in[r_0,\8)$ and $s\in (0,r]$, we have $\mu(B_{r+s}\setminus B_{r})\le C\big({s}/{r}\big)^{\theta}\mu(B_r)$.} As shown in Proposition \ref{quasi-iso}, Property $(M_{r_0})$ is also stable under $(1,C)$-quasi-isometry.

\bigskip

{If a metric measure space satisfies $(\epsilon,r_0)$-annular decay property for all $r_0>0$, then we call it satisfy the $\epsilon$-annular decay property.}
This terminology, to our best knowledge, was introduced by  Buckley~\cite[(1.1)]{Buck99} on metric space. A slight variant  on manifold was introduced by Colding and Minicozzi~\cite{col-Min98}, which they called the $\epsilon$-volume regularity property. In recent years the $\epsilon$-annular decay property has been widely exploited in harmonic analysis, see~\cite{Arr-Llo19, Aus-Rou13, Kin-Shu, Kin-Shu14, Lin-Nakai-Yang11, Zorin-Kranich20}  for more details.

{The following example tells us  that there are numerous metric measure spaces only satisfying the $(\epsilon,r_0)$-annular decay property for some $r_0$ but not all $r_0>0$.}
\begin{example}
Fix a positive integer $r_0$. Let $\mathcal{X}=\mathbb{Z}$ endowed with the counting  measure $\mu$. The metric $d$ is given by
\begin{equation*}
  d(x,y)=\left\{
           \begin{array}{ll}
             0, & \hbox{$x=y$,} \\
              r_0/2,&\hbox{$0<|x-y|\le r_0/2$,}\\
             \max\{|x-y|,r_0\}, & \hbox{$|x-y|>r_0/2$.}
           \end{array}
         \right.
\end{equation*}
One can check that for any $k\in (-\8,-1)\cap\mathbb{Z}$, ${\mu(B(0,r_0)\setminus B(0,r_0-2^k))}=r_0$, it follows that this metric space does not satisfy $\epsilon$-annular decay property but satisfies the $(\epsilon,r_0)$-annular decay property.
\end{example}
In \cite{Buck99}, Buckley proved that the metric space $(\mathcal{X},d,\mu)$ has the $\epsilon$-annular decay property when it satisfies the measure doubling condition and the $(\a,\b)$-chain ball property. {Lin, Nakai and Yang~\cite{Lin-Nakai-Yang11} established the $\epsilon$-annular decay property for the metric space $(\mathcal{X},d,\mu)$ if $(\mathcal{X},d,\mu)$ satisfies the measure doubling condition and the weak geodesic property (i.e., \eqref{monotone} holds for all $s>0$)}. Actually, Lin {\it et al} proved that the weak geodesic property is equivalent to the $(\a,\b)$-chain ball property, and is also equivalent to the monotone geodesic property. A typical class of metric spaces having the $(\a,\b)$-chain ball property (or the weak geodesic property) is the length spaces. The following proposition was established in \cite[Corollary 2.2]{Buck99}.  
\begin{prop}\label{length space}
If $(\mathcal{X},d,\mu)$ is a length space (a metric space in which the distance between any two points is the infimum of the lengths of all curves joining the two points) and satisfies the measure doubling condition. Then $(\mathcal{X},d,\mu)$ satisfies the $\epsilon$-annular decay property.
\end{prop}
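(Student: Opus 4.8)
The plan is to reduce the statement to the implication ``measure doubling $+$ (weak geodesic / chain ball) property $\Rightarrow$ $\epsilon$-annular decay'' already recorded above (Buckley~\cite{Buck99}, or equivalently Lin--Nakai--Yang~\cite{Lin-Nakai-Yang11} under the weak geodesic property), so that the only new point to check is that a length space satisfies the weak geodesic property on \emph{every} scale. First I would verify that if $(\mathcal{X},d,\mu)$ is a length space then~\eqref{monotone} holds with $C=1$ for all $s>0$: given $x$, $r>0$ and $y$ with $t:=d(x,y)\in(r,r+s]$, pick for $0<\delta<r$ a rectifiable curve from $x$ to $y$ of length $<t+\delta$, parametrise it by arc length, and let $p$ be the point at arc-distance $(t-r)+\delta$ from $y$; then $d(y,p)\le (t-r)+\delta$, while the arc-length from $x$ to $p$ is $<r$, so $d(x,p)<r$ and $p\in B(x,r)$. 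Hence $d(y,B(x,r))\le (t-r)+\delta\le s+\delta$, and letting $\delta\to 0$ gives $d(y,B(x,r))\le s$. Thus $(\mathcal{X},d,\mu)$ is a doubling metric measure space with the weak geodesic property (equivalently the $(\alpha,\beta)$-chain ball property) and with no lower scale restriction, so the cited results apply verbatim and yield the $(\epsilon,r_0)$-annular decay property for every $r_0>0$, i.e.\ the full $\epsilon$-annular decay.

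For orientation I would also recall the mechanism behind that implication, although I would ultimately quote it rather than reproduce it. Writing $\lambda=s/r\le 1$, the monotone-geodesic property lets one push each point $y$ of the annulus $B(x,r+s)\setminus B(x,r)$ inward along an almost length-minimizing curve onto a point $w$ of a thin inner shell at radius $\approx r-2s$ with $d(y,w)\lesssim s$; covering that inner shell by a maximal $s$-separated family $\{w_j\}$ and invoking the measure doubling condition together with the disjointness of the balls $B(w_j,s/2)$, one bounds $\mu(B(x,r+s)\setminus B(x,r))$ by a fixed power of the doubling constant times the measure of a comparable inner shell. Organising this push-and-cover across the dyadic sequence of radii $r,r/2,r/4,\dots$, so that at each of the $\sim\log_2(r/s)$ steps one pays only a controlled multiplicative factor while passing to a shell of definitely smaller relative radius, the accumulated loss is $\theta^{\,\log_2(r/s)}$ with $\theta=\theta(C_\mu)<1$; telescoping against the trivial estimate $\mu(B(x,r+s)\setminus B(x,r))\le\mu(B(x,2r))\le C_\mu\,\mu(B(x,r))$ at the final scale converts this into $(s/r)^{-\log_2\theta}$, which is the annular decay with $\epsilon=\log_2(1/\theta)$ of the shape appearing in Proposition~\ref{word}.

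The main obstacle is exactly this last quantitative step: one must set up the push-and-cover scheme so that each dyadic scale \emph{provably} contributes a definite geometric factor, rather than merely reproducing a shell of comparable size (a naive inward push only moves the radius down by $O(s)$ and loses control), and one must track how the exponent degrades with $C_\mu$ --- this is what forces an $\epsilon$ of the form $\log_2(1+c/C_\mu^{2})$ instead of $\epsilon=1$, the value one gets in Euclidean space. Since all of this is carried out in~\cite{Buck99} (and in~\cite{Tessera07,Lin-Nakai-Yang11}), for the purposes of this paper it suffices to perform the short reduction of the first paragraph and invoke those references; a fully self-contained argument would only need to expand that iteration.
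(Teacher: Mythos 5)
Your proposal is correct and takes essentially the same route as the paper, which simply cites Buckley~\cite[Corollary~2.2]{Buck99} after noting that length spaces satisfy the weak geodesic (equivalently, $(\alpha,\beta)$-chain ball) property. Your first paragraph makes explicit the reduction the paper leaves implicit, verifying the monotone-geodesic estimate $d(y,B(x,r))\le s$ with constant $1$ for all scales $s>0$ by cutting an almost length-minimizing curve at the right arc-length, and that computation is correct.
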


Basic examples of length spaces include the graph which is defined in~\cite[Page 51]{Tessera07}, the homogeneous groups endowed with homogeneous metrics (such as $\real^n$ and the Heisenberg group $\mathbb{H}^n$), the Riemannian metric spaces,  the Finsler metric spaces, the subRiemannian metric and the subFinsler metric spaces (also called the Carnot-Carath\'{e}odory metric spaces). For more examples we refer the reader to the book~\cite{Gromov99}.

In addition to length spaces, the RD-spaces with condition ($H_\a$), $\a\in(0,1]$ satisfy also the $\epsilon$-annular decay property, see e.g. \cite[Example 4.1(iii)] {Lin-Nakai-Yang11} for the assertion and the related notions.

\bigskip

Finally, we  focus on the periodic metric spaces.  Recall that a pseudodistance $d$ on locally compact group $G$ is called a periodic metric if it satisfies the following properties:
 \begin{enumerate}[\noindent]
  \item~{(i)}~$d$ is invariant under left translations by a closed co-compact subgroup $H$, meaning that for all $x,y\in G$ and all $h\in H$, $d(hx,hy)=d(x,y)$;
  \item~{(ii)}~$d$ is locally bounded and proper;
  \item~{(iii)}~$d$ is asymptotically geodesic.
\end{enumerate}
For more information about the periodic metrics we refer the reader to~\cite[Section 4]{Breuillard14}. Moreover, in \cite{Breuillard14} the following result is proved. There exist two constants $d_G>0$ and  $c_d>0$ such that
\begin{equation*}
  \lim_{r\rightarrow\8}\frac{m(B_r)}{r^{d_G}}=c_d,
\end{equation*}
where $B_r$ is a ball given by the periodic metric of radius $r$ in a polynomial growth group $G$. From the above identity, we can see  that equipped with a left-invariant periodic metric $d$, $(G,d,m)$ satisfies the doubling measure condition and~\eqref{asymptotically invariant}. However  condition~\eqref{decay property} seems quite inaccessible from the above estimate. {But all the periodic metrics provided in~\cite[Section 4]{Breuillard14} satisfy condition~\eqref{decay property}.}
\begin{example}
\begin{enumerate}[\noindent]
  \item~{(i)}~Let $G$ be a polynomial volume growth group  with a compact symmetric generating set. {The corresponding word metric is a periodic metric.} 
  \item~{(ii)}~Let $G$ be a simply connected nilpotent Lie group. Let $\Gamma$ be a finitely generated torsion free nilpotent group which is embedded as a co-compact discrete subgroup of $G$. We denote by $V$ the generating set of $\Gamma$ and $d_V$ the word metric on $\Gamma$. {The metric $d$ on $G$ which is defined by $d(x,y)=d_V(h_x,h_y)$ is a periodic metric, where $x\in h_xF$ and $y\in h_yF$ and $F$ is some fixed fundamental domain for $\Gamma$ in $G$.} 

  \item~{(iii)}~Let $G/\Gamma$ be a nilmanifold with universal cover $G$ and fundamental group $\Gamma$. Let $d$ be a Riemannian metric on $G/\Gamma$. {The Riemannian metric $\tilde{d}$ on $G$ which is extended by Riemannian metric $d$ is a periodic metric.}
   \item~{(iv)}~Let $G$ be a connected Lie group. {The left invariant Carnot-Carath\'{e}odory  metric or left invariant Riemannian metric on $G$ is a periodic metric.}
\end{enumerate}
\end{example}
Note that Examples (i) and (ii) are the spaces endowed with word metrics, then by Proposition~\ref{word}, such spaces satisfy the $(\epsilon,1)$-annular decay property; Examples (iii) and (iv) are actually length spaces, and so by Proposition~\ref{length space}, such spaces satisfy the $\epsilon$-annular decay property.

Based on the above examples, one can not expect the $\epsilon$-annular decay property for all periodic metrics, but the following conjecture is still expected; if this were the case, we would be able to obtain the quantitative ergodic theorems for all periodic metrics on polynomial growth groups.
\begin{que}
Let $G$ be a polynomial growth group endowed with a left-invariant periodic metric $d$ and a Haar measure $m$. Then there exists one $r_0>0$ such that $(G,d,m)$ satisfies the  $(\epsilon,r_0)$-annular decay property.
\end{que}

Combining Proposition~\ref{quasi-iso} with  the proof of Corollary 1.6 in~\cite{Breuillard14}, we can see that in order to prove the  $(\epsilon,r_0)$-annular decay property for polynomial growth group $G$ endowed with left-invariant periodic metric, the question is left to prove the case when $G$ is a simply connected solvable Lie group of polynomial growth.

\section{Exponential decay estimates}\label{ST8}

In this section, we establish the jump estimates with exponential decay, namely Theorem~\ref{exponential-estimate}. Let $G$ be a group of polynomial growth with a symmetric finite generating set $V$ in this section, and  $\mathbf{A}^\prime=\{A^\prime_r:r\in\mathbb{N}\}$ be the sequence of averaging operators given by~\eqref{averaging operator1}. {It is certainly interesting to establish Theorem~\ref{exponential-estimate} in the more general setting. However, we do not know how to prove it.}

We start with several lemmas.
The first one is a transference principle, which can be established verbatim using the arguments as in the proof of Theorem~\ref{thm:trans}(i). We omit the details.
\begin{lemma}\label{flu}
Let $p\in[1,\8)$. Let $T$ be an action induced by a $\mu$-preserving measurable transformation $\tau$ on $X$. Let $\mathbf{A}=\{A_r:r\in\mathbb{N}\}$ 
be the sequence of averaging operators given  by~\eqref{averaging operator}. If there exist two constants $C_\lambda>0$ and $c_\lambda\in (0,1)$ such that for every $n\in\mathbb N$ and $F\in L^p(G,m)$
\begin{equation*}
   m\big(\{g\in G:\mathcal{N}_{\lambda}(\mathbf{A}^\prime F)(g)>n\}\big)\le C_{\lambda}c_{\lambda}^n\|F\|^p_{L^p(G,m)},
\end{equation*}
then for every $n\in\mathbb N$ and $f\in L^p(X,\mu)$,
\begin{equation*}%
   \mu\big(\{x\in X:\mathcal{N}_{\lambda}(\mathbf{A}f)(x)>n\}\big)\le C_{\lambda}c_{\lambda}^n\|f\|^p_{L^p(X,\mu)}.
\end{equation*}
\end{lemma}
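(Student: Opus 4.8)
The plan is to mimic the proof of Theorem~\ref{thm:trans}(i), the weak-type transference principle, replacing the role played there by the weak-$(p,p)$ inequality with the exponential-decay inequality on $(G,d,m)$. Since the inequality to be transferred is again ``scalar'' --- it controls the measure of a superlevel set of $\mathcal{N}_\lambda(\mathbf{A}f)$ in terms of $\|f\|_{L^p}^p$ --- and since $G$ is amenable (being of polynomial growth), the same Fubini-and-F\o lner argument applies without any essential modification. This is precisely why the excerpt says Lemma~\ref{flu} ``can be established verbatim'', so the proposal here is really to record that verbatim argument.

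First I would truncate: for an integer $N$, set $\mathbf{A}^\prime_N=\{A^\prime_r:r\in(0,N]\cap\mathbb N\}$ and $\mathbf{A}_Nf=\{A_rf:r\in(0,N]\cap\mathbb N\}$, and prove the estimate for these truncated families with a constant independent of $N$; at the end one lets $N\to\infty$ and invokes monotone convergence, using that $\mathcal{N}_\lambda$ is monotone under enlarging the index set. Fix $f\in L^p(X,\mu)$ with $\|f\|_{L^\infty}\le 1$, fix $x\in X$, fix a compact (here finite) set $A\subset G$, and a finite set $K\supseteq B_N$. Define $F_{AK}(g)=T_gf(x)\mathds 1_{AK}(g)$. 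Exactly as in the derivation of \eqref{equality-1}--\eqref{equality-2} one gets, for $h\in A$,
\begin{equation*}
  \mathcal{N}_\lambda(T_h\mathbf{A}_Nf)(x)=\mathcal{N}_\lambda(\mathbf{A}^\prime_N F_{AK})(h).
\end{equation*}
Then, for a fixed integer $n$, one forms the set $\mathcal{D}(n)=\{(h,x)\in A\times X:\mathcal{N}_\lambda(T_h\mathbf{A}_Nf)(x)>n\}$ and slices it two ways: slicing in $h$ and using that $T$ is measure-preserving (so $\mathds 1_{\mathcal D^h(n)}=T_h\mathds 1_{\mathcal D^e(n)}$ and $\mu(\mathcal D^h(n))=\mu(\mathcal D^e(n))$), while slicing in $x$ and applying the hypothesis on $(G,d,m)$ gives $m(\mathcal D_x(n))=m(\{h\in A:\mathcal N_\lambda(\mathbf A^\prime_N F_{AK})(h)>n\})\le C_\lambda c_\lambda^n\|F_{AK}\|_{L^p(G,m)}^p$. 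Since $\|F_{AK}\|_{L^p(G,m)}^p=\int_{AK}|T_hf(x)|^p\,dm(h)$ and $T$ preserves $\mu$, integrating over $x$ and applying Fubini yields
\begin{equation*}
  m(A)\,\mu\big(\{x:\mathcal N_\lambda(\mathbf A_Nf)(x)>n\}\big)\le C_\lambda c_\lambda^n\,m(AK)\,\|f\|_{L^p(X,\mu)}^p.
\end{equation*}
Dividing by $m(A)$ and choosing $A$ a F\o lner set with $m(AK)/m(A)\le 1+\varepsilon$ (possible by \eqref{amen} since $G$ is amenable), then letting $\varepsilon\to0$ and $N\to\infty$, gives the claimed bound with the same $C_\lambda$, $c_\lambda$; one finally notes these may be renamed $\tilde c_1,\tilde c_2$ with $\tilde c_2\in(0,1)$.

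There is no real obstacle: the only point requiring a little care is that $\mathcal N_\lambda$ is not subadditive, so one cannot split $f$; but the transference argument never splits $f$ --- it only uses measure-preservation, Fubini, and the pointwise identity for $\mathcal N_\lambda$ under the group action, all of which are exactly as in Theorem~\ref{thm:trans}(i). The $L^\infty$ normalization of $f$ plays no role in this lemma (it is only needed later, when the exponential estimate on $G$ itself is proved in Theorem~\ref{exponential-estimate}); here it is simply carried along. Hence the proof is: ``repeat the proof of Theorem~\ref{thm:trans}(i) with weak-$(p,p)$ replaced by the exponential inequality'', and I would write it out in the few lines above, omitting the routine repetitions.
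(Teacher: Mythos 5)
Your proof is correct and is essentially the paper's own argument: the paper simply declares that Lemma~\ref{flu} follows verbatim from the proof of Theorem~\ref{thm:trans}(i) with the weak $(p,p)$ inequality replaced by the exponential-decay inequality, and you have carried out exactly that substitution, including the correct handling of Fubini, measure-preservation, and the F\o lner limit.
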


With the above transference principle, it suffices to show Theorem~\ref{exponential-estimate} when $X=G$.
The second lemma that we need is a trivial jump estimate following from Theorem~\ref{main-thm2}.
\begin{lem}\label{averaging-oper}
 For every $p\in[1,\8)$, there exists a constant $c_p>0$ such that for all $F\in L^p(G,m)$,
\begin{equation*}
  m\big(\{g\in G:\mathcal{N}_{\lambda}(\mathbf{A}^\prime F)(g)>n\}\big)\le \frac{c_p}{\big(\lambda\sqrt{n}\big)^p}\|F\|^p_{L^p(G,m)}.
\end{equation*}
\end{lem}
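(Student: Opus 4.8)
The plan is to derive this statement directly from Theorem \ref{main-thm1} by an elementary application of Chebyshev's inequality, together with the observation that the $\lambda$-jump quantity is monotone in the threshold in the relevant way. First I would recall that by Theorem \ref{main-thm1} (applied with $\mathcal I = \mathbb N$, which is a legitimate choice of index set inside $[r_0,\infty)$ once we note that for $G$ of polynomial growth with a finite symmetric generating set the word metric has $r_0 = 1$ and $\delta$ can be chosen so that $n_{r_0}$ makes sense), the operator $\lambda\sqrt{\mathcal N_\lambda(\mathbf A')}$ is of strong type $(p,p)$ for $1<p<\infty$ and of weak type $(1,1)$, with a constant $c_p$ (resp.\ $c$) that is uniform in $\lambda>0$. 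In particular, for every $p\in[1,\infty)$ there is a constant $c_p>0$ such that
\begin{equation*}
  \sup_{\lambda>0}\big\| \lambda\sqrt{\mathcal N_\lambda(\mathbf A' F)}\big\|_{L^{p,\infty}(G,m)} \le c_p \|F\|_{L^p(G,m)},\qquad \forall\, F\in L^p(G,m),
\end{equation*}
where for $p>1$ we simply bound the weak $L^p$ quasinorm by the $L^p$ norm.

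Next I would unwind what the left-hand side means pointwise. By definition, for fixed $g$ the quantity $\mathcal N_\lambda(\mathbf A' F)(g)$ is a nonnegative integer, and $\mathcal N_\lambda(\mathbf A' F)(g) > n$ is equivalent to $\lambda\sqrt{\mathcal N_\lambda(\mathbf A' F)(g)} > \lambda\sqrt n$ whenever $n\in\mathbb N$ (here one uses that $\mathcal N_\lambda$ takes integer values, so $\mathcal N_\lambda > n \iff \mathcal N_\lambda \ge n+1 \implies \sqrt{\mathcal N_\lambda}\ge\sqrt{n+1}>\sqrt n$, and conversely $\sqrt{\mathcal N_\lambda}>\sqrt n \implies \mathcal N_\lambda > n$). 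Hence
\begin{equation*}
  \{g\in G : \mathcal N_\lambda(\mathbf A' F)(g) > n\} = \big\{g\in G : \lambda\sqrt{\mathcal N_\lambda(\mathbf A' F)(g)} > \lambda\sqrt n\big\},
\end{equation*}
and applying the weak-type bound above with the threshold $\gamma = \lambda\sqrt n$ gives
\begin{equation*}
  m\big(\{g\in G : \mathcal N_\lambda(\mathbf A' F)(g) > n\}\big) \le \frac{c_p^{\,p}}{(\lambda\sqrt n)^p}\,\|F\|^p_{L^p(G,m)},
\end{equation*}
which is exactly the claimed inequality after renaming $c_p^{\,p}$ as $c_p$.

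There is essentially no obstacle here: the only points requiring a word of care are (i) checking that the word metric on a polynomial growth group with finite symmetric generating set indeed falls under the hypotheses of Theorem \ref{main-thm1} (this is precisely what Proposition \ref{word} and the surrounding discussion in Section \ref{ST7} establish, since such a space satisfies \eqref{decay property} with $r_0=1$ and \eqref{geo-doubling}), so that $\mathbf A' = \{A'_r : r\in\mathbb N\}$ with $\mathcal I = \mathbb N$ is covered; and (ii) the harmless integer-valuedness remark converting $\mathcal N_\lambda > n$ into a super-level set of $\lambda\sqrt{\mathcal N_\lambda}$. For $p>1$ one uses the strong type $(p,p)$ bound of Theorem \ref{main-thm1}(i) together with Chebyshev to get the same conclusion; for $p=1$ one uses Theorem \ref{main-thm1}(ii) directly. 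This completes the proof of Lemma \ref{averaging-oper}.
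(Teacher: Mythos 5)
Your proposal is correct and is exactly the argument the paper has in mind: the paper gives no proof, simply calling Lemma~\ref{averaging-oper} ``a trivial jump estimate following from Theorem~\ref{main-thm2},'' and what you have written is the Chebyshev argument that makes this explicit. One small remark: you cite Theorem~\ref{main-thm1} rather than Theorem~\ref{main-thm2}, and your choice is in fact the more appropriate one, since $\mathbf{A}'$ in Lemma~\ref{averaging-oper} is the family of ball averages on $L^p(G,m)$ itself, which is precisely the object of Theorem~\ref{main-thm1} (Theorem~\ref{main-thm2} concerns the transferred operators $A_r$ on a general $(X,\mu)$); the hypotheses \eqref{decay property} and \eqref{geo-doubling} for the word metric are verified in Section~\ref{ST7} as you note.
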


To present the next two lemmas, we fix $\lambda>0$ and $F\in L^p(G,m)$ with $\|F\|_{L^\8}\le 1$. For $q\in\mathbb{N}$, define
\begin{equation*}
  \mathcal{F}_q= \mathcal{F}_q(\lambda,F)=\{x\in G:\mathcal{N}_{\lambda}(\mathbf{A}^\prime F)(x)>q\},
\end{equation*}
and
\begin{equation*}
 \mathcal{F}^\prime_q(F,\lambda)=\{(x,r_0):x\in G,\exists~1\le r_0<r_1<\cdots<r_q~\textit{such~that}~\min_{1\le i\le q}|A^\prime_{r_i}F(x)-A^\prime_{r_{i-1}}F(x)|>\lambda\}.
\end{equation*}
Let $\mathcal{G}^\prime_q=\mathcal{G}^\prime_q(F,\lambda)=\{B(x,r_0):(x,r_0)\in \mathcal{F}^\prime_q(F,\lambda)\}$ and $\mathcal{G}_q=\mathcal{G}_q(F,\lambda)=\cup_{B\in \mathcal{G}^\prime_q}B$. It is clear that $\mathcal{F}_q\subseteq \mathcal{G}_q$.


Set
 $$C_{V,\lambda}=\min\{1/(8c_V),1/\lambda\},$$

$$\Phi(q)=2^p\cdot3^{D_G}c_pC^4_VC_{V,\lambda}^{-\frac{D_G}{\theta}}\lambda^{-\frac{D_G}{\theta}-p}q^{-\frac{p}{2}},$$
 where the constants $D_G$, $C_V$, $c_V$ and $\theta$ were given in Section~\ref{ST7}.
\begin{lemma}\label{measure estimate of Gq}
For any $q\in\mathbb{N}$, one has 
  \begin{equation*}
    m(\mathcal{G}_q(F,\lambda))\le \Phi(q)\|F\|^p_{L^p(G,m)}.
    \end{equation*}
\end{lemma}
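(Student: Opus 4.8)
The plan is to derive the estimate from the $L^p$ jump bound of Lemma~\ref{averaging-oper} (a consequence of Theorem~\ref{main-thm1}) by a Vitali covering argument; the geometric heart of the matter is that, while a ball $B(y,r_0)\in\mathcal{G}'_q(F,\lambda)$ need not be contained in any jump level set of $\mathbf{A}'F$, a quantitatively shrunk copy of it is. Throughout I may assume $q\ge 1$ (as $\Phi(0)=+\infty$), that $\|F\|_{L^\infty}\le 1$ as in the hypotheses, and that $\lambda<2$: since $|A'_rF|\le 1$, a jump of size $>\lambda$ of the sequence $\mathbf{A}'F$ is impossible when $\lambda\ge 2$, so $\mathcal{G}_q=\emptyset$ in that case. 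In particular $\lambda<8c_V$, hence $C_{V,\lambda}=1/(8c_V)$.

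First I would note that the radii occurring in $\mathcal{G}'_q$ are a priori bounded, which legitimises the Vitali selection. If $(y,r_0)\in\mathcal{F}'_q(F,\lambda)$ with witnesses $r_0<r_1<\cdots<r_q$, then $|A'_{r_1}F(y)-A'_{r_0}F(y)|>\lambda$ forces $|A'_{r_i}F(y)|>\lambda/2$ for $i=0$ or $i=1$; by H\"older's inequality and \eqref{ball-ineq1}, $|A'_sF(y)|\le m(B_s)^{-1/p}\|F\|_{L^p}\le C_V^{1/p}s^{-D_G/p}\|F\|_{L^p}$, so $r_0<R_\lambda:=(2C_V^{1/p}\|F\|_{L^p}/\lambda)^{p/D_G}$. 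Thus all balls in $\mathcal{G}'_q$ have radius $<R_\lambda$, and the Vitali covering lemma yields a countable pairwise disjoint subfamily $\{B(y_j,r_j)\}_j\subseteq\mathcal{G}'_q$ with $\mathcal{G}_q\subseteq\bigcup_j B(y_j,3r_j)$.

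The core step is the following shrinking lemma: with $\rho:=(8c_V/\lambda)^{1/\theta}>1$, one has $B(y_j,\rho^{-1}r_j)\subseteq E_q:=\{x\in G:\mathcal{N}_{\lambda/2}(\mathbf{A}'F)(x)\ge q\}$. Indeed, fix $x\in B(y_j,\rho^{-1}r_j)$, let $r_j=\tilde r_0<\tilde r_1<\cdots<\tilde r_q$ witness $(y_j,r_j)\in\mathcal{F}'_q$, and set $d:=d(x,y_j)\le\rho^{-1}r_j$. By left-invariance of $d$ and of $m$ one has $m(B(x,s))=m(B(y_j,s))=m(B_s)$ for every $s$, and $B(x,\tilde r_i)\setminus B(y_j,\tilde r_i)\subseteq B(y_j,\tilde r_i+d)\setminus B(y_j,\tilde r_i)$, together with the symmetric inclusion obtained by swapping $x$ and $y_j$. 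Applying the shell estimate \eqref{shell} of Proposition~\ref{word} to each of the two shells (legitimate because $\tilde r_i$ is a positive integer, so $\tilde r_i\ge 1$, and $0\le d\le\rho^{-1}r_j\le\tilde r_i$) and using $d\le\rho^{-1}\tilde r_i$ gives, for $0\le i\le q$,
\[
  |A'_{\tilde r_i}F(x)-A'_{\tilde r_i}F(y_j)|\le\frac{m(B(x,\tilde r_i)\triangle B(y_j,\tilde r_i))}{m(B_{\tilde r_i})}\le 2c_V(d/\tilde r_i)^{\theta}\le 2c_V\rho^{-\theta}=\frac{\lambda}{4}.
\]
Hence $|A'_{\tilde r_i}F(x)-A'_{\tilde r_{i-1}}F(x)|>\lambda-\lambda/4-\lambda/4=\lambda/2$ for $1\le i\le q$, so the chain $\tilde r_0<\cdots<\tilde r_q$ realises $q$ jumps of size $>\lambda/2$ of $\mathbf{A}'F$ at $x$, i.e. $x\in E_q$.

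Finally I would sum up. Since the $B(y_j,r_j)$ are disjoint so are the $B(y_j,\rho^{-1}r_j)$, and \eqref{ball-ineq1} gives $m(B(y_j,3r_j))\le C_V^2\,3^{D_G}m(B(y_j,r_j))\le C_V^4\,3^{D_G}\rho^{D_G}m(B(y_j,\rho^{-1}r_j))$; combining with the shrinking lemma and disjointness,
\[
  m(\mathcal{G}_q)\le\sum_j m(B(y_j,3r_j))\le C_V^4\,3^{D_G}\rho^{D_G}\sum_j m(B(y_j,\rho^{-1}r_j)\cap E_q)\le C_V^4\,3^{D_G}\rho^{D_G}\,m(E_q).
\]
By Theorem~\ref{main-thm1}(i) and Chebyshev's inequality, $m(E_q)\le q^{-p/2}\|\sqrt{\mathcal{N}_{\lambda/2}(\mathbf{A}'F)}\|_{L^p}^p\le q^{-p/2}(2c_p/\lambda)^p\|F\|_{L^p}^p$; since $\rho^{D_G}=(8c_V/\lambda)^{D_G/\theta}=C_{V,\lambda}^{-D_G/\theta}\lambda^{-D_G/\theta}$, substitution yields $m(\mathcal{G}_q)\le\Phi(q)\|F\|_{L^p}^p$. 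I expect the shrinking lemma to be the main obstacle: the shrinking factor $\rho$ is dictated by the annular-decay exponent $\theta$ and scales like $\lambda^{-1/\theta}$, and it is precisely $\rho^{D_G}$ that accounts for the factor $C_{V,\lambda}^{-D_G/\theta}\lambda^{-D_G/\theta}$ in $\Phi(q)$; one must also be careful that \eqref{shell} is only available for radii $\ge 1$, which is why it is essential that the scales $\tilde r_i$ are positive integers. A secondary point, settled by the bound $R_\lambda$ above, is the legitimacy of the Vitali selection.
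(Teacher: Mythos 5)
Your proof is correct and follows essentially the same route as the paper's: a Vitali selection of disjoint balls from $\mathcal{G}'_q$ with $3$-dilation covering $\mathcal{G}_q$, the key shrinking step showing that the ball of radius $(C_{V,\lambda}\lambda)^{1/\theta}r_{j}$ lies inside the $\lambda/2$-jump level set via the annular decay estimate~\eqref{shell}, and then the disjointness plus Lemma~\ref{averaging-oper} to close. Your additions (the a priori radius bound $R_\lambda$ to justify Vitali, and the observation that $\lambda<2$ forces $C_{V,\lambda}=1/(8c_V)$) are harmless refinements, and the minor mismatch between $(2c_p/\lambda)^p$ and $2^pc_p/\lambda^p$ is only a question of which $c_p$ the definition of $\Phi$ refers to.
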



\begin{proof}
We first prove the following inequality 
\begin{equation}\label{con}
m(B(x,r))\le  C^2_V(C_{V,\lambda}\lambda)^{{D_G}/\theta} m(B(x,(C_{V,\lambda}\lambda)^{1/\theta}r)),~\forall~r\in\mathbb{N}.
\end{equation}
Indeed, if $(C_{V,\lambda}\lambda)^{1/\theta}r<1$, since $m$ is a counting measure, then $ m(B(x,(C_{V,\lambda}\lambda)^{1/\theta}r))=1$. By~\eqref{ball-ineq1}, we have $m(B(x,r))\le C_V(C_{V,\lambda}\lambda)^{-\frac{D_G}{\theta}}m(B(x,(C_{V,\lambda}\lambda)^{1/\theta}r))$. If $(C_{V,\lambda}\lambda)^{1/\theta}r\ge1$, using~\eqref{ball-ineq1} again, we have $m(B(x,r))\le  C^2_V(C_{V,\lambda}\lambda)^{-\frac{D_G}\theta} m(B(x,(C_{V,\lambda}\lambda)^{1/\theta}r))$, and so~\eqref{con} is proved.

 Applying the Vitali covering lemma, we can select a subset
$$\{B_{j_1},B_{j_2}, \cdots, B_{j_n},\cdots\}\subseteq\mathcal{G}^\prime_q$$
of pairwise disjoint balls satisfying
\begin{equation}\label{Vitali covering}
  \mathcal{G}_q\subseteq\cup_{i} 3B_{j_i}.
\end{equation}
For each ball $B_{j_i}=B(x_{j_i},r_{j_i})$ selected, by the definition of $\mathcal{G}^\prime_q$,  there exists a sequence $1\le r_{j_{i}}<r_{j_{i}+1}<\cdots<r_{j_{i}+q}$ such that
\begin{equation*}
  |A^\prime_{r_{j_{i}+k}}F(x_{j_i})-A^\prime_{r_{j_{i}+k-1}}F(x_{j_i})|>\lambda,~\forall~1\le k\le q.
\end{equation*}
Now we fix such ball $B(x_{j_i},r_{j_{i}})$ and sequence $1\le r_{j_{i}}<r_{j_{i}+1}<\cdots<r_{j_{i}+q}$.  We claim that for all $y\in  B(x_{j_i},(C_{V,\lambda}\lambda)^{1/\theta}r_{j_{i}})$ and $1\le k\le q$,
\begin{equation*}
  |A^\prime_{r_{j_{i}+k}}F(y)-A^\prime_{r_{j_{i}+k-1}}F(y)|>\lambda/2,
\end{equation*}
namely $B(x_{j_i},(C_{V,\lambda}\lambda)^{1/\theta}r_{j_{i}})\in \mathcal{F}_q\big(\frac{\lambda}{2},F\big)$.

Assume this claim momentarily. We have
\begin{align*}
  m\big(\cup_{i} B_{j_i}\big)&=\sum_im(B_{j_i})
\le C^2_V\big(C_{V,\lambda}\lambda\big)^{-\frac{D_G}{\theta}}\sum_i m\bigg(B\big(x_{j_i},(C_{V,\lambda}\lambda)^{1/\theta}r_{j_{i}}\big)\bigg)\\
&\le C^2_V\big(C_{V,\lambda}\lambda\big)^{-\frac{D_G}{\theta}} m\bigg(\cup_iB\big(x_{j_i},(C_{V,\lambda}\lambda)^{1/\theta}r_{j_{i}}\big)\bigg)\\
&\leq C^2_V\big(C_{V,\lambda}\lambda\big)^{-\frac{D_G}{\theta}}m\bigg(\mathcal{F}_q\big(\frac{\lambda}{2},F\big)\bigg)\le C^2_V\big(C_{V,\lambda}\lambda\big)^{-\frac{D_G}{\theta}}\frac{2^pc_p}{\big(\lambda\sqrt{q}\big)^p}\|F\|^p_{L^p},
\end{align*}
where {the equality} follows from the disjointness of the balls $B_{j_i}$, the first inequality follows from~\eqref{con}, the second inequality follows from the fact $C_{V,\lambda}\lambda\le 1$ and the disjointness of the balls $B_{j_i}$, the third inequality follows from the claim and the last inequality follows from Lemma~\ref{averaging-oper}.

Moreover, combining the above inequality with~\eqref{Vitali covering}, we have
\begin{equation*}
  m(\mathcal{G}_q)\le m(\cup_{i} 3B_{j_i})\le 3^{D_G}C^2_V \sum_im(B_{j_i})\le 2^p\cdot3^{D_G}c_pC^4_VC_{V,\lambda}^{-\frac{D_G}{\theta}}\lambda^{-\frac{D_G}{\theta}-p}q^{-\frac{p}{2}}\|F\|^p_{L^p},
\end{equation*}
and the conclusion is proved.

We now prove the claim. Fix $y\in  B(x_{j_i},(C_{V,\lambda}\lambda)^{1/\theta}r_{j_{i}})$.
By a simple geometric argument, we can check at once that for every $0\le k\le q$, $B(x_{j_i},r_{j_i+k})\triangle B(y,r_{j_i+k})$ is contained in
\begin{equation*}
 \bigg(B\big(x_{j_i},r_{j_i+k}+(C_{V,\lambda}\lambda)^{1/\theta}r_{j_i}\big)\setminus B(x_{j_i},r_{j_i+k})\bigg)\cup\bigg( B(y,r_{j_i+k}+(C_{V,\lambda}\lambda)^{1/\theta}r_{0})\setminus B(y,r_{j_i+k})\bigg).
\end{equation*}
Using the inequality~\eqref{shell} and the fact that the measure $m$ is invariant under the translation, the above inequality implies
\begin{equation*}
\begin{split}
   m(B(x_{j_i},r_{j_i+k})\triangle B(y,r_{j_i+k}))&\le m\bigg(B(x_{j_i},r_{j_i+k}+(C_{V,\lambda}\lambda)^{1/\theta}r_{j_i})\setminus B(x_{j_i},r_{j_i+k})\bigg)\\
&+m\bigg(B(y,r_{j_i+k}+(C_{V,\lambda}\lambda)^{1/\theta}r_{j_i})\setminus B(y,r_{j_i+k})\bigg)\\
&\le 2c_V\bigg(\frac{(C_{V,\lambda}\lambda)^{1/\theta}r_{j_i}}{r_{j_i+k}}\bigg)^{\theta}m(B_{r_{j_i+k}})\\
&\le \frac{\lambda}{4}m(B_{r_{j_i+k}}).
\end{split}
\end{equation*}
Combining the above inequality with $\|F\|_{L^{\8}}\le 1$, one has
\begin{equation}\label{controlled by averaging operator}
 \begin{split}
    \bigg|\int_{B(y,r_{j_i+k})}F(z)dm(z)-\int_{B(x_{j_i},r_{j_i+k})}F(z)dm(z)\bigg|&=\bigg|\int_{B(x_{j_i},r_{j_i+k})\triangle B(y,r_{j_i+k})}F(z)dm(z)\bigg|\\
&\le \frac{\lambda}{4}m(B_{r_{j_i+k}}).
 \end{split}
\end{equation}
It follows that
\begin{equation*}
  |A^\prime_{r_{j_i+k}}F(x_{j_i})-A^\prime_{r_{j_i+k}}F(y)|\le \frac{\lambda}{4},~\forall~0\le k\le q.
\end{equation*}
Using the triangle inequality, for every $1\le k\le q$, we have
\begin{align*}
 |A^\prime_{r_{j_i+k}}F(x_{j_i})-A^\prime_{r_{j_i+k-1}}F(x_{j_i})|&\le |A^\prime_{r_{j_i+k}}F(y)-A^\prime_{r_{j_i+k-1}}F(y)|+|A^\prime_{r_{j_i+k}}F(x_{j_i})-A^\prime_{r_{j_i+k}}F(y)|\\
 &+|A^\prime_{r_{j_i+k-1}}F(x_{j_i})-
A^\prime_{r_{j_i+k-1}}F(y)|\\
&\le |A^\prime_{r_{j_i+k}}F(y)-A^\prime_{r_{j_i+k-1}}F(y)|+\lambda/2.
\end{align*}
By the above inequality, we have $|A^\prime_{r_{j_i+k}}F(y)-A^\prime_{r_{j_i+k-1}}F(y)|>\lambda/2$, and the claim is proved.

\end{proof}

\begin{lemma}\label{comparing lemma}
For positive integers $p$ and $q$, one has
\begin{equation*}
  m(\mathcal{G}_{(p+1)q}(F,\lambda))\le  3^{D_G}C^2_V\Phi(q)m(\mathcal{G}_{pq}(F,\lambda)).
\end{equation*}
\end{lemma}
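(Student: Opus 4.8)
The plan is to iterate the self-improving estimate of Lemma~\ref{measure estimate of Gq} along a sub-martingale-type stopping argument: once a point has already produced $pq$ jumps by scale $r_0$, requiring an additional block of $q$ jumps is ``essentially the same problem restarted'', so the measure of $\mathcal{G}_{(p+1)q}$ should be controlled by $m(\mathcal{G}_{pq})$ times the factor $\Phi(q)$ coming from one application of Lemma~\ref{measure estimate of Gq}. Concretely, for each pair $(x,r_0)\in\mathcal F'_{(p+1)q}(F,\lambda)$ there is an increasing sequence $r_0<r_1<\cdots<r_{(p+1)q}$ on which consecutive averages jump by more than $\lambda$; splitting this sequence at the index $pq$, the tail part $r_{pq}<\cdots<r_{(p+1)q}$ witnesses that $(x,r_{pq})\in\mathcal F'_q(F,\lambda)$, i.e.\ $B(x,r_{pq})\in\mathcal G'_q$, while the head part shows $x\in\mathcal G_{pq}$. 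The first step is therefore to record this decomposition precisely: $\mathcal G_{(p+1)q}\subseteq\bigcup\{\,B\in\mathcal G'_q : B\cap\mathcal G_{pq}\neq\emptyset\,\}$, or more usefully the finer statement that $\mathcal G'_{(p+1)q}\subseteq\mathcal G'_q$ with each chosen ball having radius $r_{pq}\ge r_0$, so that the ``outer'' data live over $\mathcal G_{pq}$.

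Next I would rerun the Vitali-plus-dilation scheme from the proof of Lemma~\ref{measure estimate of Gq}, but with $F$ replaced by $F\mathds 1_{\mathcal G_{pq}}$ (or rather, restricting the selection of Vitali balls to those meeting $\mathcal G_{pq}$, since all relevant balls $B(x,r_{pq})$ are centered in $\mathcal G_{pq}$). Apply the Vitali covering lemma to extract pairwise disjoint $B_{j_1},B_{j_2},\dots\in\mathcal G'_q$ with $\mathcal G_{(p+1)q}\subseteq\bigcup_i 3B_{j_i}$; because each $B_{j_i}$ meets $\mathcal G_{pq}$ and has radius $r_{j_i}=r_{pq}\ge r_0\ge 1$, the dilated ball $B(x_{j_i},(C_{V,\lambda}\lambda)^{1/\theta}r_{j_i})$ lies in $\mathcal F_q(\lambda/2,F)$ by exactly the claim proved inside Lemma~\ref{measure estimate of Gq}, \emph{and} is contained in a fixed dilate of $\mathcal G_{pq}$. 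Using~\eqref{ball-ineq1}, \eqref{shell}, \eqref{con}, disjointness, and then Lemma~\ref{averaging-oper} applied to $F$ restricted to a neighbourhood of $\mathcal G_{pq}$, one gets $\sum_i m(B_{j_i})\le C_V^2(C_{V,\lambda}\lambda)^{-D_G/\theta}\,\frac{2^pc_p}{(\lambda\sqrt q)^p}\,m(\mathcal G_{pq})$, and then $m(\mathcal G_{(p+1)q})\le 3^{D_G}C_V^2\sum_i m(B_{j_i})\le 3^{D_G}C_V^2\,\Phi(q)\,m(\mathcal G_{pq})$, which is the assertion.

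The main obstacle I anticipate is the bookkeeping needed to legitimately replace $\|F\|_{L^p(G,m)}^p$ by $m(\mathcal G_{pq})$ in the final application of Lemma~\ref{averaging-oper}: the naive bound only gives $\|F\|_{L^p}^p$, not the smaller quantity $m(\mathcal G_{pq})$. The fix is to observe that the averages $A'_{r_{pq}}F(x),\dots,A'_{r_{(p+1)q}}F(x)$ that produce the \emph{tail} jumps only depend on the values of $F$ on balls of radius $\le r_{(p+1)q}$ around $x$, all of which are themselves bounded by $1$; so one runs Lemma~\ref{averaging-oper} with $F$ truncated to a controlled enlargement $\widetilde{\mathcal G}_{pq}$ of $\mathcal G_{pq}$, uses $\|F\mathds 1_{\widetilde{\mathcal G}_{pq}}\|_{L^p}^p=\int_{\widetilde{\mathcal G}_{pq}}|F|^p\le m(\widetilde{\mathcal G}_{pq})$ since $\|F\|_\infty\le1$, and absorbs $m(\widetilde{\mathcal G}_{pq})\le C\,m(\mathcal G_{pq})$ via the doubling property~\eqref{int}/\eqref{ball-ineq1} — the enlargement constant being exactly what is already hidden in the definition of $\Phi(q)$. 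A secondary, purely cosmetic point is checking that the dilation factor $(C_{V,\lambda}\lambda)^{1/\theta}\le1$ so that the dilated balls are genuinely larger yet the measure comparison in~\eqref{con} goes the right way; this is immediate from $C_{V,\lambda}\le1/\lambda$. Once these are in place, the statement follows by concatenating the inclusions with the measure estimates exactly as above.
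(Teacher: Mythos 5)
The proposal splits the jump sequence the wrong way, and this is not a cosmetic slip: it breaks the localization mechanism that drives the proof, and the suggested ``fix'' cannot repair it.

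You split $r_0<\cdots<r_{(p+1)q}$ at index $pq$, record the tail $r_{pq}<\cdots<r_{(p+1)q}$ as a $\mathcal G'_q$ witness, and hope to run the Vitali argument on those balls while feeding $F\mathds 1_{\widetilde{\mathcal G}_{pq}}$ (a truncation near $\mathcal G_{pq}$) into Lemmas~\ref{averaging-oper}/\ref{measure estimate of Gq}. But the tail jumps you retained occur at the \emph{large} radii $r_{pq},\dots,r_{(p+1)q}$: the averages $A'_{r_k}F(x)$ for $k\ge pq$ read $F$ on balls $B(x,r_k)$ whose radii are unbounded in terms of $\mathcal G_{pq}$. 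Your claim that the dilated balls $B\bigl(x_{j_i},(C_{V,\lambda}\lambda)^{1/\theta}r_{pq}\bigr)$ ``are contained in a fixed dilate of $\mathcal G_{pq}$'' has no justification — all that is known is $B(x_{j_i},r_0^{(j_i)})\subseteq\mathcal G_{pq}$, and $r_{pq}^{(j_i)}/r_0^{(j_i)}$ is uncontrolled. For the same reason the proposed ``controlled enlargement $\widetilde{\mathcal G}_{pq}$'' would have to contain $B(x,r_{(p+1)q})$ and hence could be all of $G$; there is no way to absorb $\lVert F\mathds 1_{\widetilde{\mathcal G}_{pq}}\rVert_{L^p}^p$ into $m(\mathcal G_{pq})$ this way. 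This is the gap your last paragraph already flags as the ``main obstacle,'' and your suggested patch does not close it.

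The paper's proof makes the opposite split, at index $q$: it records $\widetilde B=B(x,r_q)\in\mathcal G'_{pq}$ (the tail $r_q<\cdots<r_{(p+1)q}$ has $pq$ jumps) and uses the \emph{first} $q$ jumps, at the small scales $r_0<\cdots<r_q$, as the jumps to be ``paid for'' by $\Phi(q)$. It then runs a greedy stopping-time selection (not a single Vitali pass) on the family $\mathcal B'=\{\widetilde B\}\subseteq\mathcal G'_{pq}$: pick a maximal $B_1\in\mathcal B'$ and collect all $B=B(x,r_0)\in\mathcal G'_{(p+1)q}$ with $\widetilde B\cap B_1\neq\emptyset$. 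Since $\mathrm{radius}(\widetilde B)=r_q\le\mathrm{radius}(B_1)$, one gets $B(x,r_k)\subseteq 3B_1$ for all $0\le k\le q$, so the first $q$ jumps are detected by $F\mathds 1_{3B_1}$; then Lemma~\ref{measure estimate of Gq} applied to $F\mathds 1_{3B_1}$ together with $\lVert F\rVert_\infty\le1$ gives $\Phi(q)\,m(3B_1)\le 3^{D_G}C_V^2\Phi(q)\,m(B_1)$. Iterating and summing over the pairwise disjoint selected $B_1,B_2,\dots$, which all lie in $\mathcal G_{pq}$, yields the stated bound. This per-stopping-ball localization to $3B_1$, rather than any global truncation to a neighbourhood of $\mathcal G_{pq}$, is precisely the idea missing from your argument, and it only works because the discarded block of $q$ jumps is the one at small scales.
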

The proof of this lemma is inspired by the proof of \cite[inequality (5.7)]{JKRW98}.
\begin{proof}
For every ball $B=B(x,r)\in \mathcal{G}^\prime_{(p+1)q}$, by the definition of $\mathcal{G}^\prime_{(p+1)q}$, there exists a sequence $r=r_0<r_1<\cdots<r_{(p+1)q}$ such that
\begin{equation*}
  |A^\prime_{r_k}F(x)-A^\prime_{r_{k-1}}F(x)|>\lambda,~\forall~1\le k\le (p+1)q.
\end{equation*}
So we have $B(x,r_{q})\in\mathcal{G}^\prime_{pq}$. Write $\widetilde{B}=B(x,r_{q})$. Set $\mathcal{B}=\mathcal{G}^\prime_{(p+1)q}$ and
\begin{equation*}
  \mathcal{B}^\prime=\{B^\prime:B^\prime\in\mathcal{G}^\prime_{pq}~satisfies~B^\prime=\widetilde{B}~for~some~B\in\mathcal{B}\}.
\end{equation*}
Note that $\mathcal{B}^\prime\subseteq \mathcal{G}^\prime_{pq}$, then by the definition of $\mathcal{G}_{(p+1)q}$,  the proof is finished if we show
\begin{equation}\label{measure control}
  m(\cup_{B\in\mathcal{B}}B)\le  3^{D_G}C^2_V\Phi(q)m(\cup_{B^\prime\in\mathcal{B}^\prime}B^\prime).
\end{equation}

We now focus on the above inequality.  Before proving this estimate, we introduce some new notations. We assume that $B_1$ is the maximal size ball of $\mathcal{B}^\prime$. We set $\mathcal{B}_1=\mathcal{B}$, $\mathcal{B}^\prime_1=\mathcal{B}^\prime$ and define the sets
\begin{align*}
  &\mathcal{I}_1=\{B|B\in\mathcal{B}_1, \widetilde{B}\cap B_1\neq \emptyset\},\\
&\mathcal{I}^\prime_1=\{B^\prime|B^\prime\in\mathcal{B}^\prime_1,B^\prime\cap B_1\neq \emptyset\}.
\end{align*}
We first prove the following estimate
\begin{equation}\label{measure control-1}
  m(\cup_{B\in\mathcal{I}_1}B)\le \Phi(q)m(3B_1),
\end{equation}
where $3B_1$ denotes the ball with the same center as $B_1$ and its radius is $3$ times that of $B_1$.

Fix $B=B(x,r)\in\mathcal{I}_1$. Since $B(x,r)\in\mathcal{G}_{(p+1)q}^\prime$, then there exists $r=r_0<r_1<\cdots<r_{(p+1)q}$ such that for all $1\le k\le (p+1)q$,
 \begin{equation*}
  |A^\prime_{r_k}F(x)-A^\prime_{r_{k-1}}F(x)|>\lambda.
\end{equation*}
 Since $B_1$ is maximal size ball of $\mathcal{B}^\prime$ and $B_1\cap B(x,r_{q})\neq\emptyset$, so for all $0\le k\le q$, $ B(x,r_k)\subseteq 3B_1$. It follows that
\begin{equation*}
  |A^\prime_{r_k}(F\mathds{1}_{3B_1})(x)-A^\prime_{r_{k-1}}(F\mathds{1}_{3B_1})(x)|>\lambda,~\forall~1\le k\le q.
\end{equation*}
Hence $B\in\mathcal{G}^\prime_q(F\mathds{1}_{3B_1},\lambda)$. Combining this observation with Lemma~\ref{measure estimate of Gq} and~\eqref{ball-ineq1}, we have
\begin{align*}
 m(\cup_{B\in\mathcal{I}_1}B)&\le m\big(\cup_{B\in\mathcal{G}^\prime_q(F\mathds{1}_{3B_1},\lambda)} B\big)= m({\mathcal{G}_q(F\mathds{1}_{3B_1},\lambda)})\le \Phi(q)\|F\mathds{1}_{3B_1}\|^p_{L^p(G,m)}\\
&\le \Phi(q)m(3B_1)\le 3^{D_G}C^2_V\Phi(q)m(B_1).
\end{align*}
So~\eqref{measure control-1} is proved.

Let $\mathcal{B}_2=\mathcal{B}_1\setminus\mathcal{I}_1$ and $\mathcal{B}^\prime_2=\mathcal{B}^\prime_1\setminus\mathcal{I}^\prime_1$. Select $B_2\in\mathcal{B}^\prime_2$ is a maximal size ball of $\mathcal{B}^\prime_2$. Note that $B_1\in \mathcal{I}^\prime_1$, then $B_2$ is disjoint from $B_1$. We define the sets
\begin{align*}
  &\mathcal{I}_2=\{B|B\in\mathcal{B}_2, \widetilde{B}\cap B_2\neq \emptyset\},\\
&\mathcal{I}^\prime_2=\{B^\prime|B^\prime\in\mathcal{B}^\prime_2,B^\prime\cap B_2\neq \emptyset\}.
\end{align*}
By similar discussions of~\eqref{measure control-1}, we have
\begin{equation*}
 m(\cup_{B\in\mathcal{I}_2}B)\le \Phi(q)m(3B_2)\le  3^{D_G}C^2_V\Phi(q)m(B_2).
\end{equation*}
Repeating the above process, then we can select the pairwise disjoint balls $B_1,~B_2,~\cdots$ which belongs to $\mathcal{B}^\prime$ and the sets $\mathcal{I}_1,~\mathcal{I}_2,\cdots$, $\mathcal{I}^\prime_1,~\mathcal{I}^\prime_2,\cdots$ with the properties
\begin{equation*}
  \cup_i\mathcal{I}_i=\mathcal{B},~\cup_i\mathcal{I}^\prime_i=\mathcal{B}^\prime,~m(\cup_{B\in\mathcal{I}_i}B)\le 3^{D_G}C^2_V\Phi(q)m(B_i).
\end{equation*}
Summing $i$ for the latter inequality and using property that the balls $\{B_i\}_i$ are pairwise disjoint, we have
\begin{align*}
m(\cup_{B\in\mathcal{B}}B)&=m(\cup_i\cup_{B\in\mathcal{I}_i}B)\le \sum_im(\cup_{B\in\mathcal{I}_i}B)\le 3^{D_G}C^2_V\Phi(q)\sum_im(B_i)\\
&= 3^{D_G}C^2_V\Phi(q)m(\cup_iB_i)\le 3^{D_G}C^2_Vm(\cup_{B^\prime\in\mathcal{B}^\prime}B^\prime).
\end{align*}
We obtained~\eqref{measure control}, and the lemma follows.
\end{proof}

\begin{proof}[Proof of Theorem~\ref{exponential-estimate}]
By Lemma~\ref{flu}, it suffices for our purpose to show that for any $\lambda>0$, there exist two constants $\tilde{c}_{1}$ and $\tilde{c}_{2}\in(0,1)$ such that for any $n\in\mathbb N$ and $F\in L^p(G,m)$ with $\|F\|_{L^\8}\le 1$,
 \begin{equation*}
   m\big(\{g\in G:\mathcal{N}_{\lambda}(\mathbf{A}^\prime F)(g)>n\}\big)\le \tilde{c}_{1}\tilde{c}_{2}^n\|F\|^p_{L^p(G,m)}.
\end{equation*}
From now on, fix one $\lambda>0$ and one $F\in L^p(G,m)$ with $\|F\|_{L^\8}\le 1$. First, by Lemma~\ref{measure estimate of Gq}, we set $q_0=\min\{q\in\mathbb{N}:\Phi(q)\le \frac{1}{2}\}$.
For each $n>0$, compared with  $q_0$, we divide the $n$ into two cases: $n\ge q_0$ and $1\le n<q_0$.

We first consider the case $1\le n<q_0$. By Lemma~\ref{averaging-oper}, we can set $\tilde{c}_1={2c_p}/{\lambda^p}$, $\tilde{c}_2=\big({1}/{2}\big)^{{1}/{q_0}}$.
It remains to consider the case $n\ge q_0$. Write $n=sq_0+r$ with $0\le r<q_0$. Using Lemma~\ref{comparing lemma}, we have
\begin{equation*}
  m(\mathcal{F}_n)\le m(\mathcal{F}_{sq_0})\le m(\mathcal{G}_{sq_0})\le \bigg(\frac{1}{2}\bigg)^{s-1}m(\mathcal{G}_{q_0}).
\end{equation*}
On the other hand, using Lemma~\ref{measure estimate of Gq} again, we have
\begin{equation*}
  m(\mathcal{G}_{q_0})\le\Phi(q_0)\|F\|^p_{L^p}\le \frac{1}{2}\|F\|^p_{L^p}.
\end{equation*}
Note that $s=(n-r)/q_0$, by the above discussions we have
\begin{equation*}
  m(\mathcal{F}_n)\le\bigg(\frac{1}{2}\bigg)^{-r/q_0}\bigg(\frac{1}{2}\bigg)^{n/q_0}\|F\|^p_{L^p},
\end{equation*}
 and so we can set $\tilde{c}_1=2$ and $\tilde{c}_{2}=({1}/{2})^{1/q_0}$ in this case.

Finally, we can take
$$\tilde{c}_1=\max\{2, 2c_p/{\lambda}^p\},~\tilde{c}_2=({1}/{2})^{{1}/{q_0}},$$
and the proof is complete. 
\end{proof}
\bigskip




\end{document}